\documentclass[eqthmnum,nocolour]{jt-calcs}
\usepackage{mathrsfs}
\usepackage{booktabs}
\usepackage{lscape}
\usepackage{tikz-cd}

\usepackage[backend=bibtex,style=alphabetic,sorting=nyt]{biblatex}
\bibliography{bibliography}

\usepackage{pgf}
\usepackage{tikz}
\usetikzlibrary{matrix,arrows}

\newcommand{\bl}[1]{\ensuremath{\mathscr{#1}}}

\newcommand{\sbpair}[2]{\ensuremath{\bigl(\begin{smallmatrix} #1 \\ #2 \end{smallmatrix}\bigr)}}
\newcommand{\ssymb}[2]{\ensuremath{\bigl[\begin{smallmatrix} #1 \\ #2 \end{smallmatrix}\bigr]}}

\newcommand{\symb}[2]{\ensuremath{\begin{bmatrix} #1 \\ #2 \end{bmatrix}}}

\DeclareMathOperator{\sss}{ss}

\DeclareMathOperator{\ind}{ind}
\DeclareMathOperator{\res}{res}
\DeclareMathOperator{\IC}{IC}

\DeclareMathOperator{\std}{std}

\crefname{enumi}{}{}
\crefformat{enumi}{#2#1#3}

\newcommand{\der}{\ensuremath{\mathrm{der}}}
\newcommand{\ex}{\ensuremath{\mathrm{ex}}}

\newcommand{\Ql}{\ensuremath{\overline{\mathbb{Q}}_{\ell}}}

\newcommand{\bPhi}{\ensuremath{\mathbf{\Phi}}}
\newcommand{\bOmega}{\ensuremath{\mathbf{\Omega}}}

\newcommand{\Centu}[1]{\Cent_{\mathrm{U}}(#1)}
\renewcommand{\epsilon}{\varepsilon}

\theoremstyle{nonumberplain}
\theorembodyfont{\itshape}

\newenvironment{assumption}{
\begin{center}

\begin{tabular}{!{\vrule width 0.75pt}p{15cm}!{\vrule width 0.75pt}}
\noalign{\hrule height 0.75pt}
\cellcolor[gray]{0.85}
\ignorespaces
}
{
\\[.5ex]
\noalign{\hrule height 0.75pt}
\end{tabular}
\end{center}
\ignorespacesafterend
}

\title{Multiplicities in GGGRs for Classical Type Groups with Connected Centre I}
\author{Jay Taylor}

\begin{document}
\start

\begin{abstract}
Assume $\bG$ is a connected reductive algebraic group defined over $\overline{\mathbb{F}_p}$ such that $p$ is good prime for $\bG$. Furthermore we assume that $Z(\bG)$ is connected and $\bG/Z(\bG)$ is simple of classical type. Let $F$ be a Frobenius endomorphism of $\bG$ admitting an $\mathbb{F}_q$-rational structure $G = \bG^F$. This paper is one of a series whose overall goal is to compute explicitly the multiplicity $\langle D_G(\Gamma_u),\chi\rangle$ where: $\chi$ is an irreducible character of $G$, $D_G(\Gamma_u)$ is the Alvis--Curtis dual of a generalised Gelfand--Graev representation of $G$ and $u \in G$ is contained in the unipotent support of $\chi$. In this paper we complete the first step towards this goal. Namely we explicitly compute, under some restrictions on $q$, the scalars relating the characteristic functions of character sheaves of $\bG$ to the almost characters of $G$ whenever the support of the character sheaf contains a unipotent element. We achieve this by adapting a method of Lusztig who answered this question when $\bG$ is a special orthogonal group $\SO_{2n+1}(\mathbb{K})$. Consequently the main result of this paper is due to Lusztig when $\bG = \SO_{2n+1}(\mathbb{K})$.
\end{abstract}

\tableofcontents
\newpage

%
\section{Introduction}
\begin{pa}
Throughout this article $\bG$ will be a connected reductive algebraic group defined over an algebraic closure $\mathbb{K} = \overline{\mathbb{F}}_p$ of the finite field $\mathbb{F}_p$ where $p>0$ is a good prime for $\bG$. We will assume that $F : \bG \to \bG$ is a Frobenius endomorphism admitting an $\mathbb{F}_q$-rational structure $G := \bG^F$. In \cite{kawanaka:1986:GGGRs-exceptional} Kawanaka associated to every rational unipotent element $u \in G$ a representation $\Gamma_u$ of $G$ called a generalised Gelfand--Graev representation (GGGR). These representations are such that $\Gamma_u$ is the regular representation when $u$ is the identity and a Gelfand--Graev representation when $u$ is a regular unipotent element. In both of these cases the decomposition of $\Gamma_u$ into its irreducible constituents is well understood. However for an arbitrary unipotent element the explicit decomposition of $\Gamma_u$ is unknown.
\end{pa}

\begin{pa}
We will say that $(\star)$ holds if the following two conditions are satisfied.
\begin{enumerate}
	\item[($\star_p$)] $p$ is large enough so that $\exp : \mathfrak{g} \to \bG$ and $\log : \bG \to \lie{g}$ define inverse bijections between the varieties of nilpotent and unipotent elements, where $\mathfrak{g}$ is the Lie algebra of $\bG$.
	\item[($\star_q$)] $q\geqslant q_0(\bG)$ where $q_0(\bG)$ is a constant depending only on the root system of $\bG$, (see \cite[Theorem 1.14]{lusztig:1990:green-functions-and-character-sheaves}).
\end{enumerate}
Let $D_G$ denote the Alvis--Curtis duality map and recall that for each irreducible character $\chi$ there exists a sign $\epsilon_{\chi} \in \{\pm1\}$ such that $\epsilon_{\chi}D_G(\chi)$ is again an irreducible character of $G$. In \cite{lusztig:1992:a-unipotent-support}, assuming $(\star)$ holds, Lusztig has associated to every irreducible character $\chi$ a unique $F$-stable unipotent conjugacy class $\mathcal{O}_{\chi}$ of $\bG$ called the \emph{unipotent support} of $\chi$. He has also shown that if $u \in \mathcal{O}_{\chi}^F$ then the multiplicity of $\chi$ in $\epsilon_{\chi}D_G(\Gamma_u)$ is a ``small'' integer in the sense that it is bounded independently of $q$. Specficially we have
\begin{equation*}
\langle \epsilon_{\chi}D_{\bG}(\Gamma_u),\chi\rangle \leqslant |A_{\bG}(u)|/n_{\chi}
\end{equation*}
where $A_{\bG}(u)$ is the component group of the centraliser $C_{\bG}(u)/C_{\bG}(u)^{\circ}$ and $n_{\chi}$ is the generic denominator of $\chi$, (see \cite[2.8(b)]{geck:1999:character-sheaves-and-GGGRs}).

In the extreme case that $|A_{\bG}(u)| = n_{\chi}$ the multiplicity is either $0$ or $1$ and one can show that $\langle \epsilon_{\chi}D_G(\Gamma_u),\chi\rangle = 1$ for a unique GGGR associated to the unipotent support of $\chi$. This fact has already had interesting applications to the representation theory of $G$, (see \cite{geck:1999:character-sheaves-and-GGGRs}, \cite{geck-hezard:2008:unipotent-support}, \cite{taylor:2011:on-unipotent-supports}). However in \cite{taylor:2011:on-unipotent-supports} it became clear that more explicit information concerning the multiplicities is required even in this extreme case, (see \cite[Remark 5.6]{taylor:2011:on-unipotent-supports}). Specficially one would like to know which irreducible characters occur with non-zero multiplicity in the same GGGR.
\end{pa}

\begin{pa}
It is the purpose of this and the following articles to answer this question. Our line of attack is as follows. In \cite{lusztig:1992:a-unipotent-support} Lusztig has explicitly decomposed the GGGRs as a sum of characteristic functions of $F$-stable unipotently supported character sheaves. Therefore the problem of understanding multiplicities in GGGRs can be translated into a problem concerning the multiplicities in characteristic functions of character sheaves. Lusztig has conjectured that the characteristic functions of character sheaves coincide with the almost characters of $G$ up to multiplication by roots of unity. Furthermore in \cite{lusztig:1984:characters-of-reductive-groups} Lusztig has explicitly decomposed the almost characters as a sum of irreducible characters.

From this we see that there are two problems to overcome. The first problem, which is the focus of this article, is to describe explicitly the scalars relating the characteristic functions of unipotently supported character sheaves and the corresponding almost characters of $G$. The second problem is then to carry out the translations as described above; this will be the focus of the following articles. With this in mind the main result of this article is as follows.
\end{pa}

\begin{thm}\label{thm:main-theorem}
Assume that $Z(\bG)$ is connected and $\bG/Z(\bG)$ is simple of type $\B_n$, $\C_n$ or $\D_n$, (recall also that $p > 2$). Let us further assume that $F$ is a split Frobenius endomorphism and that $q \equiv 1 \pmod{4}$ when $\bG/Z(\bG)$ is of type $\C_n$ or $\D_n$. If $A \in \widehat{\bG}^F$ is an $F$-stable unipotently supported character sheaf then there exists a canonical isomorphism $\phi : F^*A \to A$ such that the characteristic function $(-1)^n\chi_{A,\phi}$ determined by $\phi$ is the corresponding almost character of $G$.
\end{thm}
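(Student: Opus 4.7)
The plan is to adapt Lusztig's argument in \cite[Theorem 1.14]{lusztig:1990:green-functions-and-character-sheaves}, which handles the $\SO_{2n+1}(\mathbb{K})$ case, to the remaining classical types $\C_n$ and $\D_n$. The strategy proceeds in three stages: normalise the isomorphism $\phi$ canonically, identify $\chi_{A,\phi}$ with the corresponding almost character $R_A$ up to a universal scalar via a test-function calculation using GGGRs, and then pin that scalar down to $(-1)^n$.

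First I would specify $\phi : F^*A \to A$. By Lusztig's theory, every unipotently supported $A \in \widehat{\bG}^F$ appears as a constituent of an induced complex $\ind_{\bL \subset \bP}^{\bG}(A_0)$ for some $F$-stable cuspidal pair $(\bL, A_0)$ on a Levi subgroup, and a canonical $\phi$ can be constructed by specifying a canonical isomorphism $F^*A_0 \to A_0$ on the cuspidal datum and transporting it through induction. For $A_0$ with unipotent support, the underlying local system on its supporting class $\mathcal{O}$ admits a canonical rigidification given by its stalk at a distinguished rational point (equivalently, by the $F$-action on the irreducible representation of the component group $A_{\bL}(u)$ that defines $A_0$). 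The splitness of $F$ ensures that $F$ acts trivially on the relative Weyl group $N_{\bG}(\bL)/\bL$, so no further ambiguity is introduced when transferring $\phi$ from $A_0$ to $A$.

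Second, I would compare $\chi_{A,\phi}$ with $R_A$ by pairing both sides against suitably chosen generalised Gelfand--Graev characters $\Gamma_u$. Lusztig's explicit decomposition of $\Gamma_u$ from \cite{lusztig:1992:a-unipotent-support} expresses $\langle \Gamma_u, \chi_{A,\phi} \rangle$ in closed form in terms of generalised Green functions and the combinatorics of the family containing $A$. The parallel quantity $\langle \Gamma_u, R_A \rangle$ can be extracted from Kawanaka's formula for the restriction of a GGGR to unipotent classes combined with Lusztig's nonabelian Fourier transform relating almost characters to irreducible characters within a family (see \cite{lusztig:1984:characters-of-reductive-groups}). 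Comparing these pairings for enough $u$ in the unipotent support forces the scalar relating $\chi_{A,\phi}$ and $R_A$; general theory guarantees this scalar is a root of unity.

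The main obstacle is computing that root of unity explicitly, and this is where the hypothesis $q \equiv 1 \pmod{4}$ enters. In Lusztig's $\B_n$ argument the relevant local systems have order at most $2$, so the Frobenius scalars collapse directly to signs; in types $\C_n$ and $\D_n$ cuspidal local systems of order $4$ appear and the action of $F$ on their stalks can a priori involve nontrivial fourth roots of unity. The congruence $q \equiv 1 \pmod 4$ is exactly what is needed to force these contributions to be $\pm 1$, leaving as residue only the standard sign $(-1)^n$ of a split reductive group of semisimple rank $n$. Once this is settled for the cuspidal data, the result for all unipotently supported $A$ follows formally from the compatibility of Lusztig induction with the matching of families of character sheaves and families of almost characters.
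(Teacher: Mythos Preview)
Your proposal has two substantive gaps.

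\textbf{Circularity in the GGGR comparison.} You propose to determine the scalar by comparing $\langle \Gamma_u, \chi_{A,\phi}\rangle$ with $\langle \Gamma_u, R_A\rangle$. The first pairing is indeed available from Lusztig's decomposition of $\Gamma_u$ into characteristic functions of character sheaves. But the second pairing requires knowing the multiplicities of irreducible characters in $\Gamma_u$, and those multiplicities are precisely what this entire series of papers is working towards; there is no independent ``Kawanaka formula'' that delivers $\langle \Gamma_u, \rho\rangle$ for $\rho\in\Irr(G)$ without already knowing the scalars you are trying to compute. The paper avoids this circularity by evaluating both $\chi_A$ and a specific special irreducible character $\rho_0$ (not an inner product with a GGGR) directly at a split unipotent element $u_0$, using \cref{prop:eval-char-function} for the former and Lusztig's Main Theorem~4.23 from \cite{lusztig:1984:characters-of-reductive-groups} for the latter. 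Integrality of $\rho_0(u_0)$ (\cref{lem:princpal-series-Z}) then forces $\zeta_0\in\{\pm1\}$, and a congruence modulo~$4$ coming from the size $2^{\ell}$ of the relevant Lagrangian subspace pins it to $+1$.

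\textbf{The role of $q\equiv 1\pmod 4$.} Your explanation is incorrect: in types $\C_n$ and $\D_n$ with $p$ odd, the component groups $A_{\bG}(u)$ are elementary abelian $2$-groups, so no cuspidal local system has order~$4$. The hypothesis is used for a different reason (see \cref{rem:F-action-on-s}): it guarantees $F^{\star}(s)=s$ for the isolated semisimple parameter, so the twisting element $w$ is trivial and the automorphism $\gamma$ of $W_{\bG^{\star}}(s)$ induced by $F_w$ is the identity. Without this, one must handle coset induction from twisted type~$\D$ reflection subgroups, and the required branching rules for cuspidal almost characters in that setting are not available in the literature (as the paper notes in the paragraph following the statement of the theorem). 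The sign $(-1)^n$ itself arises not from semisimple rank but from the parity of $a_{\iota}=-\dim\mathcal{O}_{\iota}-\dim Z^{\circ}(\bL_{\iota})$, which is congruent to $n$ modulo~$2$ (\cref{eq:aiota-n}).

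A further structural point: before one can even speak of ``the'' scalar for a non-cuspidal $A$, one must show that the two natural labellings of unipotently supported character sheaves in $\widehat{\bG}_s$ (one via \cref{thm:char-sheaves-main-theorem}, one via the endomorphism algebra of the induced complex) agree. This is \cref{prop:labels-coincide}(a), proved by a delicate induction on $n$ using restriction to a Levi of corank~$1$, and it is not automatic from ``compatibility of Lusztig induction.''
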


\begin{pa}
The above is essentially part of the remark made in \cite[7.11]{lusztig:1986:on-the-character-values}. However, in comparison to the type $\B_n$ case, there are more technical difficulties to overcome in the type $\C_n$ and $\D_n$ cases. In general this comes from having to deal with induction of characters from maximal rank reflection subgroups of type $\B_n$ and $\D_n$ Weyl groups, (see \cref{cor:multiplicity-1}). Had we simply wanted to prove the above theorem this article would be much shorter. However our end goal is to prove a version of the above theorem without the restriction on $q$ or the assumption that $F$ is split, although here we fall short of this. The problem with proving the above theorem without these assumptions is the following. One must deal with almost characters contained in the subspace spanned by a Lusztig series $\mathcal{E}(G,s)$ where $s$ is a semisimple element of the dual group whose centraliser has a twisted type $\D$ factor. To apply the line of argument followed in this article one needs to explicitly decompose the Deligne-Lusztig induction of almost characters corresponding to cuspidal character sheaves. However in the case of twisted type $\D$ this does not seem to be treated in the literature in the level of detail which we need here. The author hopes to address this issue in the sequel to this article.
\end{pa}

\begin{acknowledgments}
The author gratefully acknowledges financial support from ERC Advanced Grant 291512 and would like to thank Geordie Williamson and Sebastian Herpel for useful conversations.
\end{acknowledgments}
%
\section{Conventions}\label{sec:conventions}
\begin{pa}\label{pa:conventions}
Given a variety $\bX$ over $\mathbb{K}$ we denote by $\mathscr{D}\bX = \mathscr{D}_c^b(\bX,\overline{\mathbb{Q}}_{\ell})$ the bounded derived category of $\overline{\mathbb{Q}}_{\ell}$-constructible sheaves on $\bX$. Furthermore we denote by $\mathscr{M}\bX$ the full subcategory of $\mathscr{D}\bX$ whose objects are the perverse sheaves on $\bX$. Assume $\bH$ is a connected algebraic group acting on $\bX$ then we take the statement $A \in \mathscr{D}\bX$ is $\bH$-equivariant to be as defined in \cite[\S1.9]{lusztig:1985:character-sheaves}. If $\bX$ is itself a connected algebraic group then, unless otherwise explicitly stated, we take $\bX$-equivariance to be with respect to the natural conjugation action of $\bX$ on itself.

As any element $A \in \mathscr{D}\bX$ is a complex of sheaves we may construct for any $i \in \mathbb{Z}$ the $i$th cohomology sheaf $\mathscr{H}^iA$ with respect to the maps in the complex. Given any element $x \in \bX$ we then denote by $\mathscr{H}^i_xA$ the corresponding stalk of $\mathscr{H}^iA$. For any $A \in \mathscr{D}\bX$ we call $\supp(A) := \{x \in \bX \mid \mathscr{H}_x^iA \neq 0$ for some $i \in \mathbb{Z}\}$ the support of $A$. If $\varphi : \bX \to \bY$ is a morphism then we denote by $\varphi^* : \mathscr{D}\bY \to \mathscr{D}\bX$ the inverse image functor, $\varphi_* : \mathscr{D}\bX \to \mathscr{D}\bY$ the right derived direct image functor and $\varphi_! : \mathscr{D}\bX \to \mathscr{D}\bY$ the right derived direct image functor with compact support. If $\varphi$ is smooth with connected fibres of dimension $d$ then we denote by $\tilde{\varphi}$ the shifted inverse image $\varphi^*[d]$, (c.f.\ \cite[\S1.7]{lusztig:1985:character-sheaves}).
\end{pa}

\begin{pa}
Assume $\bX \subseteq \bY$ is a subvariety then for any $A \in \mathscr{D}\bY$ we denote by $A|_{\bX}$ the complex $i^*A \in \mathscr{D}\bX$ where $i : \bX \hookrightarrow \bY$ is the inclusion map, (we call this the restriction of $A$ to $\bX$). Assume now that $\bX$ is a smooth open dense subset of its closure $\overline{\bX}$ and that $\mathscr{L}$ is a local system on $\bX$, (by which we mean a locally constant $\overline{\mathbb{Q}}_{\ell}$-constructible sheaf with finite dimensional stalks), then $\mathscr{L}[\dim\bX] \in \mathscr{M}\bX$ is a perverse sheaf on $\bX$. We denote by $\IC(\overline{\bX},\mathscr{L})[\dim\bX] \in \mathscr{M}\overline{\bX}$ the intersection cohomology complex determined by $\mathscr{L}$ which is an element of $\mathscr{M}\overline{\bX}$ extending $\mathscr{L}[\dim\bX]$, i.e.\ we have $\IC(\overline{\bX},\mathscr{L})|_{\bX} \cong \mathscr{L}$. We may freely consider this as an element of $\mathscr{M}\bY$ by extending $\IC(\overline{\bX},\mathscr{L})[\dim\bX]$ to $\bY$ by 0 on $\bY - \overline{\bX}$ and we will do so without explicit mention.
\end{pa}

\begin{pa}\label{pa:direct-image-commutative}
For convenience we recall here the following base change isomorphism, (see \cite[(1.7.5)]{lusztig:1985:character-sheaves-I}). Assume $\bX$, $\bY$, $\bZ$ and $\bW$ are varieties over $\mathbb{K}$ and that we have a commutative diagram
\begin{equation*}
\begin{tikzcd}
\bX \arrow{d}{f}\arrow{r}{\phi} & \bY \arrow{d}{g}\\
\bZ \arrow{r}{\psi} & \bW
\end{tikzcd}
\end{equation*}
such that $\phi$ and $\psi$ are smooth maps with connected fibres of common dimension. Then we have an isomorphism $f_!\circ\tilde{\phi} = \tilde{\psi}\circ g_!$ of functors $\mathscr{D}\bY \to \mathscr{D}\bZ$. In particular if every fibre of $\phi$ and $\psi$ is simply a point then we have $f_!\circ\phi^* = \psi^*\circ g_!$.
\end{pa}

\begin{pa}\label{pa:conventions-finite-groups}
If $G$ is a finite group we will denote by $\Ql G$ the $\Ql$-group algebra of $G$. We will take the statement ``$M$ is a $G$-module'' to mean $M$ is a finite dimensional module for the group algebra $\Ql G$, we will consider this to be either a left or a right module as appropriate. If $E$ and $E'$ are two $G$-modules then we will denote by $\Hom_G(E,E')$ the space of all $\Ql G$-module homomorphisms $f : E \to E'$. We will denote by $\Irr(G)$ a set of representatives from the isomorphism classes of simple $G$-modules; however we will also use $\Irr(G)$ to denote the corresponding set of irreducible characters.
\end{pa}

\begin{pa}\label{pa:semidirect-products}
Assume now that $\phi : G \to G$ is an automorphism and let us denote by $\widetilde{G}$ the semidirect product $G \rtimes \langle \phi \rangle$ where $\langle \phi \rangle \leqslant \Aut(G)$ is the cyclic subgroup generated by the automorphism $\phi$. Let $H \leqslant G$ be a subgroup and $g \in G$ be such that $\phi(gHg^{-1}) = H$ then $H$ is a normal subgroup of the group $\widetilde{H} = H\langle(\phi(g),\phi)\rangle$ and we denote by $H.\phi g$ the set $\{(h\phi(g),\phi) \in \widetilde{G} \mid h \in H\}$, (note that we simply write $H$ for its image $H\times \{1\}$ in $\widetilde{G}$). Furthermore we denote by $\Cent(H.\phi g)$ the $\Ql$-vector space of functions $f : H.\phi g \to \Ql$ which are invariant under conjugation by $H$. We can define on $\Cent(H.\phi g)$ an inner product $\langle -,- \rangle_{H.\phi g} : \Cent(H.\phi g) \times \Cent(H.\phi g) \to \Ql$ by setting
\begin{equation*}
\langle f, f' \rangle_{H.\phi g} = \frac{1}{|H|}\sum_{h \in H}f(h\phi(g),\phi)\overline{f'(h\phi(g),\phi)},
\end{equation*}
where $\overline{\phantom{x}} : \Ql \to \Ql$ is a fixed automorphism such that $\overline{\omega} = \omega^{-1}$ for every root of unity $\omega \in \Ql^{\times}$. We may also define a $\Ql$-linear map $\Ind_{H.\phi g}^{G.\phi} : \Cent(H.\phi g) \to \Cent(G.\phi)$ as in \cite[(1.2)]{bonnafe:2006:sln}.
\end{pa}

\begin{pa}\label{pa:conventions-coset-identification}
As the composition $\phi\ad_g$ is an automorphism of $G$ we may form, as before, the semidirect product $G \rtimes\langle\phi\ad_g\rangle$. The restriction of $\phi\ad_g$ to $H$ is also an automorphism of $H$ so we have $H \rtimes \langle \phi\ad_g \rangle$ is naturally a subgroup of $G \rtimes \langle \phi\ad_g \rangle$. We may now define a surjective homomorphism of groups $\psi_g : G \rtimes \langle \phi\ad_g\rangle \to \widetilde{G}$ given by
\begin{equation*}
\psi_g(h,\phi^i\ad_{\phi^{1-i}(g)\cdots\phi^{-1}(g)g}) = (h\phi(g)\phi^2(g)\cdots\phi^i(g),\phi^i)
\end{equation*}
for any $i \in \mathbb{N}$. The restriction of $\psi_g$ defines a bijection $G.\phi\ad_g \to G.\phi$, (resp.\ $H.\phi\ad_g \to H.\phi g$), which respects the action of $G$, (resp.\ $H$), by conjugation. In particular $\psi_g$ induces vector space isomorphisms $\Cent(G.\phi) \to \Cent(G.\phi\ad_g)$ and $\Cent(H.\phi g) \to \Cent(H.\phi\ad_g)$ and carries the induction map $\Ind_{H.\phi g}^{G.\phi}$ to $\Ind_{H.\phi\ad_g}^{G.\phi\ad_g}$. Hence it will be sufficient to compute the induction of characters from $\widetilde{H}$ to $\widetilde{G}$.

Finally we will denote by $\Irr(H)^{\phi g} = \{\chi \in \Irr(H) \mid \chi\circ\phi\circ\ad_g = \chi\}$ the set of $\phi\ad_g$-invariant characters of $H$. If $\chi \in \Irr(H)^{\phi g}$ then we fix an arbitrary extension $\widetilde{\chi}$ of $\chi$ to $\widetilde{H}$, which exists because $\widetilde{H}/H$ is cyclic. We may then give an orthonormal basis of $\Cent(H.\phi g)$, with respect to $\langle -,- \rangle_{H.\phi g}$, by setting $\Irr(H.\phi g) = \{\widetilde{\chi}|_{H.\phi g} \mid \chi \in \Irr(H)^{\phi g}\}$, (this basis depends upon the choice of extension of $\chi$ to $\widetilde{H}$).
\end{pa}
%
\section{Character Sheaves}\label{sec:char-sheaves}
\begin{pa}\label{pa:grothendieck-group}
Let us now fix a Borel subgroup $\bB_0 \subset \bG$ and a maximal torus $\bT_0 \subset \bB_0$, (we assume for later that both $\bT_0$ and $\bB_0$ are $F$-stable). We will denote by $(W_{\bG},\mathbb{S})$ the Coxeter system of $\bG$ where $W_{\bG} = W_{\bG}(\bT_0) = N_{\bG}(\bT_0)/\bT_0$ is the Weyl group with respect to $\bT_0$ and $\mathbb{S}$ is the set of Coxeter generators determined by $\bB_0$. If $\bT$ is a torus then we denote by $\mathcal{S}(\bT)$ the set of (isomorphism classes) of local systems $\mathscr{L}$ of rank 1 on $\bT$ such that $\mathscr{L}^{\otimes m}$ is isomorphic to $\overline{\mathbb{Q}}_{\ell}^{\times}$ for some $m$ coprime to $p$, (we call such a local system \emph{tame}). The Weyl group $W_{\bG}$ acts naturally on $\bT_0$ and this in turn gives us an action on $\mathcal{S}(\bT_0)$ by $\mathscr{L} \mapsto (w^{-1})^*\mathscr{L}$; we denote the corresponding set of orbits by $\mathcal{S}(\bT_0)/W_{\bG}$. Associated to each such local system Lusztig has defined a set of character sheaves $\widehat{\bG}_{\mathscr{L}}$, (see \cite[Definition 2.10]{lusztig:1985:character-sheaves}), which depends only on the $W_{\bG}$-orbit of $\mathscr{L}$. The set of character sheaves on $\bG$ is then defined to be
\begin{equation*}
\widehat{\bG} = \bigsqcup_{\mathcal{S}(\bT_0)/W_{\bG}} \widehat{\bG}_{\mathscr{L}},
\end{equation*}
whose elements are irreducible $\bG$-equivariant objects in $\mathscr{M}\bG$. We will denote by $\mathscr{K}_0(\bG)$ the subgroup of the Grothendieck group of $\mathscr{M}\bG$ spanned by the character sheaves of $\bG$. We then define a bilinear form $(-:-) : (\mathscr{K}_0(\bG) \otimes \Ql) \times (\mathscr{K}_0(\bG) \otimes \Ql) \to \Ql$ by setting
\begin{equation*}
(A:A') = \begin{cases}
1 &\text{if }A \cong A',\\
0 &\text{otherwise}.
\end{cases}
\end{equation*}
for all $A$, $A' \in \widehat{\bG}$.
\end{pa}

\begin{pa}\label{pa:duality}
We now consider the dual group and its relationship to tori following \cite[\S1.6]{lusztig:1990:green-functions-and-character-sheaves}. Let $\bG^{\star}$ be a connected reductive algebraic group and $F^{\star}$ a Frobenius endomorphism of $\bG^{\star}$. We assume fixed an $F^{\star}$-stable Borel subgroup $\bB_0^{\star}\subset \bG^{\star}$ and an $F^{\star}$-stable maximal torus $\bT_0^{\star} \subset \bB_0^{\star}$ such that the quadruples $(\bG,\bT_0,\bB_0,F)$ and $(\bG^{\star},\bT_0^{\star},\bB_0^{\star},F^{\star})$ are in duality, (see \cite[\S8.4]{lusztig:1984:characters-of-reductive-groups}). We will denote by $(W_{\bG^{\star}},\mathbb{T})$ the Coxeter system of $\bG^{\star}$ where $W_{\bG^{\star}} = W_{\bG^{\star}}(\bT_0^{\star}) = N_{\bG^{\star}}(\bT_0^{\star})/\bT_0^{\star}$ is the Weyl group with respect to $\bT_0^{\star}$ and $\mathbb{T}$ is the set of Coxeter generators determined by $\bB_0^{\star}$. After fixing an isomorphism $\mathcal{S}(\mathbb{K}^{\times}) \to \mathbb{K}^{\times}$ we obtain for any subtorus $\bS \leqslant \bT_0$ an isomorphism $\lambda_{\bS} : \mathcal{S}(\bS) \to \bS^{\star}$ where $\bS^{\star} \leqslant \bT_0^{\star}$ is the corresponding dual torus. The isomorphism $\lambda_{\bT_0}$ is compatible with the natural actions of $W_{\bG}$ and $W_{\bG^{\star}}$ and the natural actions of $F$ and $F^{\star}$. Assume $\mathscr{L} \in \mathcal{S}(\bT_0)$ is a local system and $s \in \bT_0^{\star}$ is the image of $\mathscr{L}$ under $\lambda_{\bT_0}$ then we may denote the set $\widehat{\bG}_{\mathscr{L}}$ by $\widehat{\bG}_s$ without ambiguity and $\widehat{\bG}_s$ will depend only on the $W_{\bG^{\star}}$-orbit of $s$. 
\end{pa}

\begin{pa}\label{pa:definition-of-induction}
We will denote by $\mathcal{Z}$ the set of all pairs $(\bL,\bP)$ such that $\bP$ is a parabolic subgroup of $\bG$ and $\bL \leqslant \bP$ is a Levi complement of $\bP$. We then define $\mathcal{Z}_{\std}$ to be the subset consisting of all standard pairs $(\bL,\bP)$, i.e.\ $\bP$ contains $\bB_0$ and $\bL$ is the unique Levi complement of $\bP$ containing $\bT_0$, (note that every pair in $\mathcal{Z}$ is conjugate to a pair in $\mathcal{Z}_{\std}$). Furthermore we define $\mathcal{L}_{\std} \subset \mathcal{L}$ to be the sets of Levi subgroups obtained from $\mathcal{Z}_{\std} \subset \mathcal{Z}$ by projecting onto the first factor. We define similar sets $\mathcal{Z}_{\std}^{\star}\subset\mathcal{Z}^{\star}$ and $\mathcal{L}_{\std}^{\star} \subset \mathcal{L}^{\star}$ for $\bG^{\star}$ defined with respect to $\bT_0^{\star} \leqslant \bB_0^{\star}$ and we recall that there are natural bijections $\mathcal{Z}_{\std} \leftrightarrow \mathcal{Z}_{\std}^{\star}$ and $\mathcal{L}_{\std} \leftrightarrow \mathcal{L}_{\std}^{\star}$.

Assume now that $(\bL,\bP) \in \mathcal{Z}$ and let $A_0 \in \mathscr{M}\bL$ be an $\bL$-equivariant perverse sheaf on $\bL$. In \cite[\S4.1]{lusztig:1985:character-sheaves} Lusztig has associated to $A_0$ a complex $\ind_{\bL\subset\bP}^{\bG}(A_0) \in\mathscr{D}\bG$, which we call the induced complex. We recall the construction of this complex following \cite[\S4.1]{lusztig:1985:character-sheaves}. Consider the following diagram
\begin{equation}
\begin{tikzcd}
\bL & \hat{X} \arrow{l}[swap]{\pi}\arrow{r}{\sigma} & \tilde{X} \arrow{r}{\tau} & \bG
\end{tikzcd}
\end{equation}
where we have
\begin{gather*}
\begin{aligned}
\hat{X} &= \{(g,h) \in \bG \times \bG \mid h^{-1}gh \in \bP\} \quad&\quad \tilde{X} &= \{(g,h\bP) \in \bG \times (\bG/\bP) \mid h^{-1}gh \in \bP\}
\end{aligned}\\
\begin{aligned}
\pi(g,h) &= \hat{\pi}_{\bP}(h^{-1}gh) \quad&\quad \sigma(g,h) &= (g,h\bP) \quad&\quad \tau(g,h\bP) &=g
\end{aligned}
\end{gather*}
where $\hat{\pi}_{\bP} : \bP \to \bL$ is the canonical projection map. Since $A_0$ is $\bL$-equivariant there exists a canonical perverse sheaf $D$ on $\tilde{X}$ such that $\tilde{\pi}A_0 = \tilde{\sigma}D$, (c.f.\ the notation in \cref{pa:conventions}). We then define $\ind_{\bL\subset\bP}^{\bG}(A_0) = \tau_!D$.
\end{pa}

\begin{pa}\label{pa:induction-char-sheaves}
We say a character sheaf $A \in \widehat{\bG}$ is non-cuspidal if there exists a pair $(\bL,\bP) \in \mathcal{Z}$ and a character sheaf $A_0 \in \widehat{\bL}$ such that $A$ is a direct summand of the induced complex $\ind_{\bL\subset\bP}^{\bG}(A_0)$; otherwise we say $A$ is cuspidal. Lusztig has shown that if $A_0 \in \widehat{\bL}$ then $\ind_{\bL\subset\bP}^{\bG}(A_0)$ is semisimple and contained in $\mathscr{M}\bG$, (see \cite[Proposition 4.8(b)]{lusztig:1985:character-sheaves}). Furthermore for any $A \in \widehat{\bG}$ there exists a pair $(\bL,\bP) \in \mathcal{Z}$ and a cuspidal character sheaf $A_0 \in \widehat{\bL}$ such that $A$ occurs as a direct summand of $\ind_{\bL\subset\bP}^{\bG}(A_0)$, (see \cite[Theorem 4.4]{lusztig:1985:character-sheaves}).

Let us now fix a pair $(\bL,\bP) \in \mathcal{Z}$ such that there exists a cuspidal character sheaf $A_0 \in \widehat{\bL}$. By \cite[Proposition 3.12]{lusztig:1985:character-sheaves} we have $A_0$ is isomorphic to an intersection cohomology complex $\IC(\overline{\Sigma},\mathscr{E})[\dim\Sigma]$ where $\Sigma \subset \bL$ is the inverse image under $\bL \mapsto \bL/Z^{\circ}(\bL)$ of an isolated conjugacy class and $\mathscr{E}$ is a local system on $\Sigma$. From the proof of this result we know that $(\Sigma,\mathscr{E})$ is a cuspidal pair in the sense of \cite[Definition 2.4]{lusztig:1984:intersection-cohomology-complexes}. Of particular interest to us will be the special case where $\supp(A_0) \cap \bL_{\uni} \neq \emptyset$, where for any connected reductive algebraic group $\bH$ we denote by $\bH_{\uni}$ the variety of unipotent elements. Assume this is so then there exists a triple $(\mathcal{O}_0,\mathscr{E}_0,\mathscr{L})$ consisiting of: a unipotent conjugacy class $\mathcal{O}_0 \subset \bL$, a cuspidal local system $\mathscr{E}_0$ on $\mathcal{O}_0$ and a local system $\mathscr{L} \in \mathcal{S}(Z^{\circ}(\bL))$ such that $A_0$ is isomorphic to 
\begin{equation}\label{eq:cusp-uni-sup}
A_{\mathscr{L}} := \IC(\overline{\Sigma},\mathscr{E}_0\boxtimes\mathscr{L})[\dim\Sigma]
\end{equation}
where $\Sigma = \mathcal{O}_0Z^{\circ}(\bL)$. Here we consider $\mathscr{E}_0\boxtimes\mathscr{L}$ as a local system on $\mathcal{O}_0 \times Z^{\circ}(\bL)$ which we identify with the open subset $\mathcal{O}_0Z^{\circ}(\bL) \subseteq \overline{\mathcal{O}_0}Z^{\circ}(\bL)$ under the multiplication morphism in $\bL$. With all of this we may now define a map
\begin{equation}\label{eq:ind-P-construction}
(\bL,\bP,\mathcal{O}_0,\mathscr{E}_0,\mathscr{L}) \longrightarrow \ind_{\bL\subset\bP}^{\bG}(A_{\mathscr{L}})
\end{equation}

\begin{assumption}
From this point forward $\Sigma$ will always denote the variety $\mathcal{O}_0Z^{\circ}(\bL)$ where $\mathcal{O}_0 \subset \bL$ is a unipotent conjugacy class supporting a cuspidal local system.
\end{assumption}

Let $A \in \widehat{\bG}$ be a character sheaf which occurs as a direct summand of the induced complex $\ind_{\bL\subset\bP}^{\bG}(A_0)$, (where here $A_0 \in \widehat{\bL}$ is still assumed to be cuspidal), then by \cite[\S2.9]{lusztig:1986:on-the-character-values} we have
\begin{equation}\label{eq:supp}
\supp A = \bigcup_{x\in\bG} x(\supp A_0)\bU_{\bP}x^{-1}
\end{equation}
where $\bU_{\bP}\leqslant \bP$ is the unipotent radical of $\bP$. In particular we have $\supp A \cap \bG_{\uni} \neq \emptyset$ if and only if $\supp A_0 \cap \bL_{\uni} \neq \emptyset$. Hence if we are only interested in character sheaves supported by a unipotent element then we need only concern ourselves with those character sheaves occurring in an induced complex of the form given in \cref{eq:cusp-uni-sup}. We end our discussion of induced complexes with the following lemma which gives some facts concerning the inverse image of an induced complex.
\end{pa}

\begin{lem}\label{lem:F-action-ind}
We denote by $\bH$ a connected reductive algebraic group and by $\bP$ a parabolic subgroup of $\bH$ with Levi complement $\bL$. Furthermore we assume that $\bG$ is a connected reductive algebraic group and $i : \bG \to \bH$ is a bijective morphism of varieties.
\begin{enumerate}[label=(\alph*)]
\item For any $A_0 \in \widehat{\bL}$ we have
\begin{equation*}
i^*\ind_{\bL\subset\bP}^{\bH}(A_0) = \ind_{i^{-1}(\bL)\subset i^{-1}(\bP)}^{\bG}(i^*A_0).
\end{equation*}
Furthermore any character sheaf $A \in \widehat{\bH}$ is cuspidal if and only if $i^*A \in \widehat{\bG}$ is cuspidal.

\item Let us also assume that $\bG = \bH$ and $\bL = i(\bL)$ then any isomorphism $\phi : i^*A_0 \to A_0$ induces an isomorphism
\begin{equation*}
\tilde{\phi} : i^*\ind_{\bL\subset i(\bP)}^{\bH}(A_0) \to \ind_{\bL\subset \bP}^{\bH}(A_0).
\end{equation*}
\end{enumerate}
\end{lem}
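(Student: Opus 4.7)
The strategy is that the construction of $\ind_{\bL\subset\bP}^{\bH}(A_0)$ recalled in \cref{pa:definition-of-induction} is assembled from a smooth pullback $\tilde\pi$, an equivariant descent along $\sigma$ and a direct image with compact support $\tau_!$, each of which is compatible with the bijective morphism $i$ via base change. Here I tacitly assume $i$ is also a group homomorphism, as is the case in all intended applications and as is needed for $i^{-1}(\bL) \subset i^{-1}(\bP)$ to itself be a Levi/parabolic pair in $\bG$.

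For part~(a), write $\bL' = i^{-1}(\bL)$ and $\bP' = i^{-1}(\bP)$. The morphism $i\times i$ restricts to bijective morphisms $\hat X' \to \hat X$ and $\tilde X' \to \tilde X$ fitting into the diagram
\[
\begin{tikzcd}
\bL' \arrow{d}{i} & \hat X' \arrow{l}[swap]{\pi'} \arrow{r}{\sigma'} \arrow{d} & \tilde X' \arrow{r}{\tau'} \arrow{d} & \bG \arrow{d}{i}\\
\bL & \hat X \arrow{l}[swap]{\pi} \arrow{r}{\sigma} & \tilde X \arrow{r}{\tau} & \bH
\end{tikzcd}
\]
all of whose squares are cartesian. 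Since the vertical arrows are bijective, the fibre dimensions of $\pi', \sigma', \tau'$ coincide with those of $\pi, \sigma, \tau$, so the base change statement of \cref{pa:direct-image-commutative} applies to the rightmost square. Combining this with smooth base change for $\tilde\pi$ and the uniqueness of the $\bP$-equivariant descent along $\sigma$ shows that $i^*\ind_{\bL\subset\bP}^{\bH}(A_0)$ and $\ind_{\bL'\subset\bP'}^{\bG}(i^*A_0)$ are produced by the same recipe, hence canonically isomorphic.

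The cuspidality claim is then formal: since $i$ is bijective, $i^*$ is an equivalence of derived categories carrying character sheaves to character sheaves, and by the first part it intertwines induction from $(\bL,\bP)$ in $\bH$ with induction from $(\bL',\bP')$ in $\bG$. Hence $A$ appears as a summand of a non-trivially induced complex on $\bH$ if and only if $i^*A$ does on $\bG$.

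For part~(b), specialise part~(a) to $\bG = \bH$ with parabolic $i(\bP)$, so that $i^{-1}(\bL) = \bL$ and $i^{-1}(i(\bP)) = \bP$ by bijectivity, obtaining a canonical isomorphism $i^*\ind_{\bL\subset i(\bP)}^{\bH}(A_0) \cong \ind_{\bL\subset\bP}^{\bH}(i^*A_0)$. Since the induction construction is functorial in its argument — the pullback $\tilde\pi$, the descent along $\sigma$ and the pushforward $\tau_!$ each are — the given isomorphism $\phi : i^*A_0 \to A_0$ induces an isomorphism $\ind_{\bL\subset\bP}^{\bH}(i^*A_0) \to \ind_{\bL\subset\bP}^{\bH}(A_0)$; composing yields $\tilde\phi$. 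The only step requiring genuine care is the $\bP$-equivariant descent $\tilde\pi A_0 = \tilde\sigma D$, which only determines $D$ up to canonical isomorphism; but precisely this canonicality is what makes the construction functorial and compatible with pullback along $i$, so no real obstacle remains.
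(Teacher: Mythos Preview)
Your argument is correct and, for part~(a), essentially identical to the paper's: you set up the same commutative diagram of induction data over $i$ and invoke the base change statement of \cref{pa:direct-image-commutative} on the rightmost square (using that the fibres of $i$ are points), together with compatibility of $\tilde\pi$ and the equivariant descent along $\sigma$ with pullback. The cuspidality consequence is handled the same way.

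For part~(b) you take a slightly different, more modular route than the paper. The paper redoes the diagram chase with the two parabolics $\bP$ and $\bQ = i(\bP)$ in parallel rows: it applies $\tilde\pi_{\bP}$ to $\phi$, rewrites the result via commutativity as a map $\tilde\sigma_{\bP}i^*K' \to \tilde\sigma_{\bP}K$, and then invokes that $\tilde\sigma_{\bP}$ is fully faithful (smooth with connected fibres, \cite[1.8.3]{lusztig:1985:character-sheaves}) to produce a unique $\phi' : i^*K' \to K$; finally $\tilde\phi = (\tau_{\bP})_!\phi'$. You instead first apply part~(a) with parabolic $i(\bP)$ to obtain $i^*\ind_{\bL\subset i(\bP)}^{\bH}(A_0) \cong \ind_{\bL\subset\bP}^{\bH}(i^*A_0)$, and then push $\phi$ through by functoriality of the induction construction. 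Both arguments are valid and produce the same isomorphism; yours is cleaner in that it reuses~(a), while the paper's makes explicit the one nontrivial ingredient underlying that functoriality, namely the full faithfulness of $\tilde\sigma$ which is what justifies the ``canonicality'' you allude to in your last sentence.
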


\begin{proof}
Firstly by \cite[\S24.1 - Proposition B]{humphreys:1975:linear-algebraic-groups} we have $i$ induces a bijection between the Borel subgroups of $\bG$ and $\bH$ so this implies that $\bQ := i^{-1}(\bP)$ is a parabolic subgroup of $\bG$ with Levi complement $\bM := i^{-1}(\bL)$. For any object $\square$ introduced in \cref{pa:definition-of-induction} we write $\square_{\bL\subset\bP}^{\bH}$, (resp.\ $\square_{\bM\subset\bQ}^{\bG}$), to indicate that it is defined with respect to $\ind_{\bL\subset\bP}^{\bH}$, (resp.\ $\ind_{\bM\subset\bQ}^{\bG}$). We now have a diagram
\begin{center}
\begin{tikzcd}
\bM \arrow{d}[swap]{i} & \hat{X}_{\bM\subset\bQ}^{\bG} \arrow{l}[swap]{\pi_{\bM\subset\bQ}^{\bG}}\arrow{d}[swap]{i}\arrow{r}{\sigma_{\bM\subset\bQ}^{\bG}} & \tilde{X}_{\bM\subset\bQ}^{\bG} \arrow{d}[swap]{i}\arrow{r}{\tau_{\bM\subset\bQ}^{\bG}} & \bG \arrow{d}[swap]{i}\\
\bL & \hat{X}_{\bL\subset\bP}^{\bH} \arrow{l}[swap]{\pi_{\bL\subset\bP}^{\bH}} \arrow{r}{\sigma_{\bL\subset\bP}^{\bH}} & \tilde{X}_{\bL\subset\bP}^{\bH} \arrow{r}{\tau_{\bL\subset\bP}^{\bH}} & \bH
\end{tikzcd}
\end{center}
where the vertical maps are the obvious actions of $i$. The squares of the above diagram are clearly commutative. Let $D$ be the canonical perverse sheaf on $\tilde{X}_{\bL\subset\bP}^{\bH}$ satisfying $\tilde{\pi}_{\bL\subset\bP}^{\bH}A_0 = \tilde{\sigma}_{\bL\subset\bP}^{\bH}D$. The fibres of $\pi_{\bL\subset\bP}^{\bH}$ and $\pi_{\bM\subset\bQ}^{\bG}$ have the same dimension, as do the fibres of $\sigma_{\bL\subset\bP}^{\bH}$ and $\sigma_{\bM\subset\bQ}^{\bG}$, therefore we have $i^*D$ satisfies $\tilde{\pi}_{\bM\subset\bQ}^{\bG}i^*A_0 = \tilde{\sigma}_{\bM\subset\bQ}^{\bG} i^*D$ because the inverse image is contravariant. By definition we have $\ind_{\bL\subset\bP}^{\bH}(A_0) = (\tau_{\bL\subset\bP}^{\bH})_!D$ and $\ind_{\bM\subset\bQ}^{\bG}(i^*A_0) = (\tau_{\bM\subset\bQ}^{\bG})_!i^*D$ so the first part of the lemma follows if we can show the equality $i^*(\tau_{\bL\subset\bP}^{\bH})_!D = (\tau_{\bM\subset\bQ}^{\bG})_!i^*D$ but this is just \cref{pa:direct-image-commutative}. The conclusion concerning cuspidality is an immediate consequence of the first part, which proves (a).

We now prove (b). Let $\bQ = i(\bP)$ then collapsing the notation $\square_{\bL\subset\bP}^{\bH}$ simply to $\square_{\bP}$ we have a diagram
\begin{center}
\begin{tikzcd}
\bL \arrow{d}[swap]{i} & \hat{X}_{\bP} \arrow{l}[swap]{\pi_{\bP}}\arrow{d}[swap]{i}\arrow{r}{\sigma_{\bP}} & \tilde{X}_{\bP} \arrow{d}[swap]{i}\arrow{r}{\tau_{\bP}} & \bH \arrow{d}[swap]{i}\\
\bL & \hat{X}_{\bQ} \arrow{l}[swap]{\pi_{\bQ}} \arrow{r}{\sigma_{\bQ}} & \tilde{X}_{\bQ} \arrow{r}{\tau_{\bQ}} & \bH
\end{tikzcd}
\end{center}
with commutative squares. Let $K$, (resp.\ $K'$), be the canonical perverse sheaf on $\tilde{X}_{\bP}$, (resp.\ $\tilde{X}_{\bQ}$), satisfying $\tilde{\pi}_{\bP}A_0 = \tilde{\sigma}_{\bP}K$, (resp.\ $\tilde{\pi}_{\bQ}A_0 = \tilde{\sigma}_{\bQ}K'$). Now $\tilde{\pi}_{\bP}\phi$ defines an isomorphism $\tilde{\pi}_{\bP}i^*A_0 \to \tilde{\pi}_{\bP}A_0$ and using the commutativity of the above diagram we may view this as an isomorphism $\tilde{\sigma}_{\bP}i^*K' \to \tilde{\sigma}_{\bP}K$. As $\sigma_{\bP}$ is smooth with connected fibres we have $\tilde{\sigma}_{\bP}$ is a fully faithful functor, (see \cite[1.8.3]{lusztig:1985:character-sheaves}), hence there exists a unique isomorphism $\phi' : i^*K' \to K$ such that $\tilde{\pi}_{\bP}\phi = \tilde{\sigma}_{\bP}\phi'$. Using the arguments above we see that $\tilde{\phi} = (\tau_{\bP})_!\phi'$ gives the required isomorphism.
\end{proof}
%
\section{The Space of Unipotently Supported Class Functions}\label{sec:space-unip-supp-class-func}
\begin{pa}\label{pa:gen-spring-cor}
Let $\mathcal{N}_{\bG}$ denote the set of all pairs $\iota = (\mathcal{O},\mathscr{E})$ where $\mathcal{O}$ is a unipotent conjugacy class of $\bG$ and $\mathscr{E}$ is an irreducible $\bG$-equivariant local system. We denote by $\mathcal{N}_{\bG}^0 \subseteq \mathcal{N}_{\bG}$ the subset consisting of those pairs $(\mathcal{O},\mathscr{E})$ such that $\mathscr{E}$ is a cuspidal local system on $\mathcal{O}$, (see \cite[Definition 2.4]{lusztig:1984:intersection-cohomology-complexes}); we call the elements of $\mathcal{N}_{\bG}^0$ cuspidal pairs. Given a pair $\iota$ we will denote the class $\mathcal{O}$ by $\mathcal{O}_{\iota}$ and the local system $\mathscr{E}$ by $\mathscr{E}_{\iota}$.

Let us denote by $\widetilde{\mathcal{M}}_{\bG}$ the set of all pairs $(\bL,\nu)$ consisting of a Levi subgroup $\bL \in \mathcal{L}$ and a cuspidal pair $\nu \in \mathcal{N}_{\bL}^0$. We have $\bG$ acts naturally on $\widetilde{\mathcal{M}}_{\bG}$ by conjugation and we denote by $[\bL,\nu]$ the orbit containing $(\bL,\nu)$ and by $\mathcal{M}_{\bG}$ the set of all such orbits. Recall that in \cite[Theorem 6.5]{lusztig:1984:intersection-cohomology-complexes} Lusztig has associated to every pair $\iota \in \mathcal{N}_{\bG}$ a unique orbit $\mathcal{C}_{\iota} \in \mathcal{M}_{\bG}$. This is known as the generalised Springer correspondence.

\begin{assumption}
For each $\iota \in \mathcal{N}_{\bG}$ we will now choose a representative $(\bL_{\iota},\nu_{\iota}) \in \mathcal{C}_{\iota}$ such that $\bL_{\iota} \in \mathcal{L}_{\std}$ is a standard Levi subgroup.
\end{assumption}

\noindent Lusztig has shown that we have a disjoint union
\begin{equation*}
\mathcal{N}_{\bG} = \bigsqcup_{[\bL,\upsilon] \in \mathcal{M}_{\bG}} \bl{I}[\bL,\upsilon] \qquad\text{where}\qquad \bl{I}[\bL,\upsilon] = \{\iota \in \mathcal{N}_{\bG} \mid (\bL_{\iota},\upsilon_{\iota}) \in [\bL,\upsilon]\}.
\end{equation*}
We call $\bl{I}[\bL,\upsilon]$ a block of $\mathcal{N}_{\bG}$. If $\nu \in \mathcal{N}_{\bL}^0$ is cuspidal then $W_{\bG}(\bL) = N_{\bG}(\bL)/\bL$ is a Coxeter group, (see \cite[Theorem 9.2(a)]{lusztig:1984:intersection-cohomology-complexes}), and we have a bijection
\begin{equation}\label{eq:gen-spring-cor}
\bl{I}[\bL,\nu] \longleftrightarrow \Irr(W_{\bG}(\bL))
\end{equation}
for all $[\bL,\nu] \in \mathcal{M}_{\bG}$ which we denote $\iota \mapsto E_{\iota}$. If $\bL$ is a torus then $\nu$ is simply the pair consisting of the trivial class and trivial local system, in which case this bijection is the classical Springer correspondence and we call $\mathscr{I}[\bL,\nu]$ the principal block.
\end{pa}

\begin{pa}\label{pa:decomp-by-end-algebra}
To describe the correspondence in \cref{eq:gen-spring-cor} we will need to recall a description of semisimple objects in $\mathscr{M}\bG$ following \cite[3.7]{lusztig:1984:intersection-cohomology-complexes}. Let $K \in \mathscr{M}\bG$ be semisimple and let $\mathcal{A} = \End(K)$ be the endomorphism algebra of $K$, by which we mean the set $\Hom_{\mathscr{M}\bG}(K,K)$ in the category $\mathscr{M}\bG$. Assume $E$ is any finite dimensional $\mathcal{A}$-module then we define
\begin{equation}\label{eq:K-nu-E-L}
K_E = \Hom_{\mathcal{A}}(E,K) \in \mathscr{M}\bG.
\end{equation}
To see that this is an object of $\mathscr{M}\bG$ we can construct this in the following way. Let us pick a presentation $\mathcal{A}^m \overset{\varphi}{\to} \mathcal{A}^n \to E \to 0$ of the module $E$. Applying $\Hom_{\mathcal{A}}(-,K)$ and using the fact that we have an isomorphism $K \cong \Hom_{\mathcal{A}}(\mathcal{A},K)$ of $\mathcal{A}$-modules we get a diagram
\begin{equation}\label{eq:diag-Hom-MG}
\begin{tikzcd}
0 \arrow{r}{} & \Hom_{\mathcal{A}}(E,K) \arrow{d}{\cong}\arrow{r}{} & \Hom_{\mathcal{A}}(\mathcal{A}^n,K) \arrow{d}{\cong}\arrow{r}{\varphi^*} & \Hom_{\mathcal{A}}(\mathcal{A}^m,K) \arrow{d}{\cong}\\
0 \arrow{r}{} & \Ker_{\mathscr{M}\bG}(\varphi^*) \arrow{r}{} & K^n \arrow{r}{\varphi^*} & K^m
\end{tikzcd}
\end{equation}
with exact rows, ($\varphi^*$ is the map induced by $\varphi$ and the vertical arrows are $\mathcal{A}$-module isomorphisms). The $\mathcal{A}$-module homomorphism $\varphi^*$ is also a morphism in $\mathscr{M}\bG$ because $\mathscr{M}\bG$ is $\Ql$-linear and the kernel exists because $\mathscr{M}\bG$ is an abelian category. Hence $\Hom_{\mathcal{A}}(E,K) \cong \Ker(\varphi^*) \in \mathscr{M}\bG$ as desired. Now $K_E$ is a simple object of $\mathscr{M}\bG$ if and only if $E$ is a simple $\mathcal{A}$-module and we have a canonical isomorphism
\begin{equation}\label{eq:K-nu-L-decomp}
K \cong \bigoplus_E (E \otimes K_E)
\end{equation}
in $\mathscr{M}\bG$ given by $e\otimes f \mapsto f(e)$ in each summand. Here the sum runs over a set of representatives for the isomorphism classes of simple $\mathcal{A}$-modules.
\end{pa}

\begin{pa}\label{pa:unip-sup-char-sheaves}
Let $\bL \in \mathcal{L}$ be a Levi subgroup and $\nu = (\mathcal{O}_0,\mathscr{E}_0) \in \mathcal{N}_{\bL}^0$ a cuspidal pair then following \cite[2.3]{lusztig:1986:on-the-character-values} we define for any local system $\mathscr{L} \in \mathcal{S}(Z^{\circ}(\bL))$ a semisimple perverse sheaf $K_{\mathscr{L}} \in \mathscr{M}\bG$ in the following way. We define an open subset $\Sigma_{\reg}= \mathcal{O}_0\cdot Z^{\circ}(\bL)_{\reg} \subseteq \Sigma$ where $Z^{\circ}(\bL)_{\reg} = \{z \in Z^{\circ}(\bL) \mid C_{\bG}^{\circ}(z) = \bL\}$ and we denote by $Y$ the locally closed smooth irreducible subvariety of $\bG$ given by $\cup_{x \in \bG} x\Sigma_{\reg}x^{-1}$. We have the following diagram
\begin{equation}
\begin{tikzcd}
\Sigma & \hat{Y} \arrow{l}[swap]{\alpha}\arrow{r}{\beta} & \tilde{Y} \arrow{r}{\gamma} & Y
\end{tikzcd}
\end{equation}
where
\begin{gather*}
\begin{aligned}
\hat{Y} &= \{(g,x) \in \bG \times \bG \mid x^{-1}gx \in \Sigma\} \quad&\quad \tilde{Y} &= \{(g,x\bL) \in \bG \times (\bG/\bL) \mid x^{-1}gx \in \bL\}
\end{aligned}\\
\begin{aligned}
\alpha(g,x) &= x^{-1}gx \quad&\quad \beta(g,x) &= (g,x\bL) \quad&\quad \gamma(g,x\bL) &= g.
\end{aligned}
\end{gather*}
Since the local system $\mathscr{E}_0$ is $\bL$-equivariant there exists a canonical local system $\tilde{\mathscr{E}}_0$ on $\tilde{Y}$ such that $\beta^*\tilde{\mathscr{E}}_0 = \alpha^*(\mathscr{E}_0\boxtimes\Ql)$, (see \cite[1.9.3]{lusztig:1985:character-sheaves-I}). Let $\delta : \tilde{Y} \to Z^{\circ}(\bL)$ be the map given by $\delta(g,x\bL) = (xgx^{-1})_{\sss}$, (where $h_{\sss}$ is the semisimple part of $h$ for all $h \in \bG$), then $\tilde{\mathscr{L}} = \delta^*\mathscr{L}$ is a local system on $\tilde{Y}$ hence so is $\tilde{\mathscr{E}}_0\otimes\tilde{\mathscr{L}}$. By \cite[3.2]{lusztig:1984:intersection-cohomology-complexes} we have $\gamma$ is a Galois covering so $\gamma_* = \gamma_!$ because $\gamma$ is finite, (hence proper), which means that $\gamma_*(\tilde{\mathscr{E}}_0\otimes\tilde{\mathscr{L}}) = \gamma_!(\tilde{\mathscr{E}}_0\otimes\tilde{\mathscr{L}})$ is a semisimple local system on $Y$. We now define $K_{\mathscr{L}} \in \mathscr{M}\bG$ to be the intersection cohomology complex $\IC(\overline{Y},\gamma_*(\tilde{\mathscr{E}}_0\otimes\tilde{\mathscr{L}}))[\dim Y]$. With this we have defined a map
\begin{equation}\label{eq:ind-w-P-construction}
(\bL,\mathcal{O}_0,\mathscr{E}_0,\mathscr{L}) \longrightarrow K_{\mathscr{L}}
\end{equation}
\end{pa}

\begin{pa}\label{pa:endomorphism-algebra-A}
Let us keep the notation of \cref{pa:unip-sup-char-sheaves}. We denote by $N_{\bG}(\bL,\mathscr{L})$ the set of all elements $n \in N_{\bG}(\bL)$ such that $(\ad n)^*\mathscr{L}$ is isomorphic to $\mathscr{L}$, where $\ad n : \bG \to \bG$ is the conjugation morphism given by $(\ad n)(g) = ngn^{-1}$ for all $g \in \bG$. Clearly $\bL$ is contained in $N_{\bG}(\bL,\mathscr{L})$ and we denote the quotient group $N_{\bG}(\bL,\mathscr{L})/\bL$ by $W_{\bG}(\bL,\mathscr{L})$. Let us denote by $\mathcal{A}_{\mathscr{L}}$ the endomorphism algebra of the semisimple perverse sheaf $K_{\mathscr{L}}$. By \cite[\S2.4(a)]{lusztig:1986:on-the-character-values} there exists a set of basis elements $\{\Theta_v \mid v \in W_{\bG}(\bL,\mathscr{L})\} \subset \mathcal{A}_{\mathscr{L}}$ defining a canonical algebra isomorphism
\begin{equation}\label{eq:end-grp-alg-iso}
\mathcal{A}_{\mathscr{L}} \cong \Ql W_{\bG}(\bL,\mathscr{L}).
\end{equation}
Let us now assume that $\mathscr{L} = \Ql$ then we denote $\mathcal{A}_{\mathscr{L}}$ simply by $\mathcal{A}$ and $K_{\mathscr{L}}$ by $K$. By \cite[Theorem 9.2(b)]{lusztig:1984:intersection-cohomology-complexes} we have $W_{\bG}(\bL,\mathscr{L}) = W_{\bG}(\bL)$ when $\mathscr{L} = \Ql$ hence $\mathcal{A}$ is isomorphic to $\Ql W_{\bG}(\bL)$. Using the description given in \cref{pa:decomp-by-end-algebra} we have $E\mapsto K_E = \Hom_{W_{\bG}(\bL)}(E,K)$ gives a bijection between simple $W_{\bG}(\bL)$-modules and the simple summands of $K$ (up to isomorphism). Furthermore by \cite[Theorem 6.5]{lusztig:1984:intersection-cohomology-complexes} we have for each $E$ that there exists a unique pair $\iota \in \mathscr{I}[\bL,\nu]$ satisfying
\begin{equation*}
K_E|_{\bG_{\uni}} \cong \IC(\overline{\mathcal{O}}_{\iota},\mathscr{E}_{\iota})[\dim\mathcal{O}_{\iota} + \dim Z^{\circ}(\bL)].
\end{equation*}
The composition of the maps $E \mapsto K_E \mapsto \iota$ then gives the bijection in \cref{eq:gen-spring-cor} and we will freely denote the object $K_E$ by $K_{\iota}$.
\end{pa}

\begin{pa}\label{pa:K-nu-L-ind-A-nu-L}
We now end this section by explicitly describing how $K_{\mathscr{L}}$ is related to $\ind_{\bL\subset\bP}^{\bG}(A_{\mathscr{L}})$, (for this we follow \cite{lusztig:1984:intersection-cohomology-complexes}). Here we assume $(\bL,\bP) \in \mathcal{Z}$ and $\nu \in \mathcal{N}_{\bL}^0 \neq \emptyset$. Firstly let us recall the notation of \cref{pa:definition-of-induction} then we wish to describe the complex $K$. To do this we first observe that
\begin{equation*}
\tilde{\pi}A_{\mathscr{L}} = \IC(\hat{X}',\pi^*(\mathscr{E}_0\boxtimes\mathscr{L}))[\dim \overline{\Sigma}+\dim\bG + \dim\bU_{\bP}]
\end{equation*}
where $\bU_{\bP}$ is the unipotent radical of $\bP$ and $\hat{X}' = \pi^{-1}(\overline{\Sigma}) = \{(g,h) \in \bG \times \bG \mid h^{-1}gh \in \overline{\Sigma}\cdot\bU_{\bP}\}$. Note that for $l \in \bL$ we have $\pi^{-1}(l) = \{(g,h) \in \bG \times \bG \mid h^{-1}gh \in l\cdot \bU_{\bP}\}$ hence the fibres of $\pi$ have dimension $\dim\bG+\dim\bU_{\bP}$. Let $j : \tilde{X} \to \hat{X}$ be the map $j(g,h\bP) = (g,h)$ then by the construction in \cite[1.9.3]{lusztig:1985:character-sheaves-I} we see that
\begin{equation*}
K = j^*\IC(\hat{X}',\pi^*(\mathscr{E}_0\boxtimes\mathscr{L}))[\dim \overline{\Sigma}+\dim\bG + \dim\bU_{\bP} - \dim\bP] = \IC(\tilde{X}',\overline{\mathscr{E}_0\boxtimes\mathscr{L}})[\dim \tilde{X}']
\end{equation*}
where $\tilde{X}' = j^{-1}(\hat{X}') = \{(g,h\bP) \in \bG \times \bG/\bP \mid h^{-1}gh \in \overline{\Sigma}\cdot\bU_{\bP}\}$ and $\overline{\mathscr{E}_0\boxtimes\mathscr{L}} = j^*\pi^*(\mathscr{E}_0\boxtimes\mathscr{L})$. Note that the equality $\dim \tilde{X}' = \dim Y = \dim\bG - \dim\bP + \dim\overline{\Sigma}+ \dim\bU_{\bP}$ is given in the proof of \cite[Lemma 4.3(a)]{lusztig:1984:intersection-cohomology-complexes}.

Let $Y$ and $\tilde{Y}$ be as in \cref{pa:unip-sup-char-sheaves} then by \cite[Lemma 4.3(b)]{lusztig:1984:intersection-cohomology-complexes} we have $\tau(\tilde{X}') = \overline{Y}$ hence $\tau^{-1}(Y) \subset \tilde{X}' \subset \tilde{X}$. Furthermore by \cite[Lemma 4.3(c)]{lusztig:1984:intersection-cohomology-complexes} we have the map $(g,h\bL) \to (g,h\bP)$ defines an isomorphism $\kappa : \tilde{Y} \to \tau^{-1}(Y)$. As $\tau^{-1}(Y) \subset \tilde{X}'$ we have $K|_{\tau^{-1}(Y)} \cong \overline{\mathscr{E}_0\boxtimes\mathscr{L}}|_{\tau^{-1}(Y)}[\dim\tilde{X}']$ and according to \cite[4.4]{lusztig:1984:intersection-cohomology-complexes} we have $\kappa^*(\overline{\mathscr{E}_0\boxtimes\mathscr{L}}|_{\tau^{-1}(Y)}) \cong \tilde{\mathscr{E}}_0 \otimes \tilde{\mathscr{L}}$ hence
\begin{equation*}
(\tau_!K)|_Y = \tau_!(K_{\tau^{-1}(Y)}) \cong \gamma_*\kappa^*(\overline{\mathscr{E}_0\boxtimes\mathscr{L}}|_{\tau^{-1}(Y)}[\dim\tilde{X}']) \cong \gamma_*(\tilde{\mathscr{E}}_0\otimes\tilde{\mathscr{L}})[\dim Y].
\end{equation*}
Here we have applied \cref{pa:direct-image-commutative} to obtain $\tau_! = \ID^*\circ\tau_! = \gamma_!\circ\kappa^* = \gamma_*\circ\kappa^*$ where the last equality follows from the fact that $\gamma$ is proper. With this we now have the following.
\end{pa}

\begin{prop}[{}{Lusztig, \cite[Proposition 4.5]{lusztig:1984:intersection-cohomology-complexes}}]\label{prop:K-nu-L-iso-to-induction}
The complex $\tau_!K$ is a perverse sheaf and is canonically isomorphic to $\IC(\overline{Y},\gamma_*(\tilde{\mathscr{E}}_0\otimes\tilde{\mathscr{L}}))[\dim Y]$. In particular $\ind_{\bL \subset\bP}^{\bG}(A_{\mathscr{L}})$ and $K_{\mathscr{L}}$ are canonically isomorphic in $\mathscr{M}\bG$.
\end{prop}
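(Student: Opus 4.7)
The plan is to show that $\tau_!K$ satisfies the characterising support properties of the intersection cohomology complex $\IC(\overline{Y},\gamma_*(\tilde{\mathscr{E}}_0\otimes\tilde{\mathscr{L}}))[\dim Y]$ on $\overline{Y}$, and then invoke uniqueness of IC extensions to obtain the canonical isomorphism.

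First I would note that $\tilde{X}$ is a closed subvariety of $\bG\times(\bG/\bP)$, and since $\bG/\bP$ is projective the projection $\tau$ is proper; hence $\tau_!K = \tau_*K$. As $K$ is supported on $\tilde{X}'$ and we have $\tau(\tilde{X}') = \overline{Y}$ by Lusztig's Lemma~4.3(b), the complex $\tau_!K$ is supported on $\overline{Y}$, while the computation performed in \cref{pa:K-nu-L-ind-A-nu-L} already identifies $(\tau_!K)|_Y$ with $\gamma_*(\tilde{\mathscr{E}}_0\otimes\tilde{\mathscr{L}})[\dim Y]$; this provides the required identification over the open dense stratum.

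Second, the crux is to verify that the restriction $\tau|_{\tilde{X}'}:\tilde{X}'\to\overline{Y}$ is a small map in the sense of Goresky--MacPherson. Concretely, one stratifies $\overline{Y}$ by locally closed smooth pieces refining $Y\sqcup(\overline{Y}\setminus Y)$ and checks, for every boundary stratum $S$ with a point $y\in S$, the strict inequality
\[
2\dim\tau^{-1}(y) < \codim_{\overline{Y}}(S).
\]
This fibre-dimension estimate is obtained from a Bruhat-type decomposition of $\tilde{X}'$ combined with the structural properties of cuspidal pairs: over $Y$ the map is a Galois cover, by its identification with $\gamma$, and the boundary fibres are controlled using that $Z^{\circ}(\bL)_{\reg}$ is the open dense subset of $Z^{\circ}(\bL)$ on which $\bL$ is the full connected centraliser.

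Once smallness is in place, the standard fact that the proper pushforward of a pure IC sheaf along a small map is again the intermediate extension of its restriction to the open stratum yields that $\tau_!K$ is a perverse sheaf canonically isomorphic to $\IC(\overline{Y},\gamma_*(\tilde{\mathscr{E}}_0\otimes\tilde{\mathscr{L}}))[\dim Y]$. The ``in particular'' clause is then immediate: by construction $\ind_{\bL\subset\bP}^{\bG}(A_{\mathscr{L}}) = \tau_!K$ and by definition $K_{\mathscr{L}}$ equals the right-hand side. The main obstacle is the smallness estimate, since it requires concrete geometric control of the boundary fibres of $\tau$ stratified by Jordan type; once smallness is granted, the remainder is formal manipulation of perverse sheaves and intermediate extensions.
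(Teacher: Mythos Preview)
The paper does not supply its own proof of this proposition; it is quoted directly from Lusztig's original work, with \cref{pa:K-nu-L-ind-A-nu-L} only recording the preparatory identification $(\tau_!K)|_Y \cong \gamma_*(\tilde{\mathscr{E}}_0\otimes\tilde{\mathscr{L}})[\dim Y]$ over the open stratum. Your plan---properness of $\tau$ via projectivity of $\bG/\bP$, the identification over $Y$, a fibre-dimension estimate giving smallness of $\tau|_{\tilde{X}'}$, and then the support/cosupport characterisation of intermediate extensions---is exactly the strategy Lusztig follows in \cite[\S4]{lusztig:1984:intersection-cohomology-complexes}; the dimension inequality you allude to is his Lemma~4.4. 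One minor correction: purity plays no role in the final step; what matters is only that $K$ is an intersection cohomology complex on $\tilde{X}'$, so that smallness of the proper map forces $\tau_!K$ to satisfy the defining conditions of $\IC(\overline{Y},-)[\dim Y]$.
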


\begin{rem}
Note that \cref{prop:K-nu-L-iso-to-induction} implies that $\ind_{\bL\subset\bP}^{\bG}(A_{\mathscr{L}})$ does not depend upon the parabolic $\bP$ so in this situation we will write simply $\ind_{\bL}^{\bG}(A_{\mathscr{L}})$ when convenient.
\end{rem}
%
\section{Split Elements and Lusztig's Preferred Extensions}\label{sec:split-elements}
\begin{assumption}
From now until the end of this article we assume that $\bG$ has a connected centre and $\bG/Z(\bG)$ is simple. Recall also our standing assumption that $p$ is a good prime for $\bG$.
\end{assumption}

\begin{pa}
In \cite[Remark 5.1]{shoji:1987:green-functions-of-reductive-groups}, assuming $F$ is a split Frobenius endomorphism, Shoji has defined the notion of a \emph{split} unipotent element. However there is also a notion of split unipotent element when $F$ is not split which we would like to recall here. All elements of this section are due to the combined efforts of Hotta--Springer, Beynon--Spaltenstein and Shoji. To give the definition we must first prepare some preliminary notions and notation.

Given a unipotent element $u \in \bG$ we denote by $\mathfrak{B}_u^{\bG}$ the variety of Borel subgroups containing the unipotent element $u$ and we denote its dimension $\dim\mathfrak{B}_u^{\bG}$ by $d_u$. The corresponding $\ell$-adic cohomology groups with compact support $H_c^i(\mathfrak{B}_u^{\bG}) := H_c^i(\mathfrak{B}_u^{\bG},\Ql)$ are modules for the direct product $A_{\bG}(u)\times W_{\bG}$ which are zero unless $i \in \{0,\dots,2d_u\}$, (in fact $i$ must be even). Note that here we take the $(A_{\bG}(u)\times W_{\bG})$-module structure to be the one described by Lusztig in \cite[\S3]{lusztig:1981:green-polynomials-and-singularities}, (this fits with the generalised Springer correspondence given in \cite{lusztig:1984:intersection-cohomology-complexes}). This differs from the original module structure given by Springer in \cite{springer:1978:construction-of-representations-of-weyl-groups} but one is translated to the other by composing the $W_{\bG}$-action with the sign character, (see \cite[Theorem 1]{hotta:1981:on-springers-representations}).

Assume now that $u \in G$ is fixed by $F$ then $F$ stabilises $\mathfrak{B}_u^{\bG}$ hence we have an induced action of $F$ in the compactly supported cohomology which we denote by $F^{\bullet} : H_c^{\bullet}(\mathfrak{B}_u^{\bG}) \to H_c^{\bullet}(\mathfrak{B}_u^{\bG})$. Let $\psi \in \Irr(A_{\bG}(u))$ be an irreducible character then we denote by $H_c^{\bullet}(\mathfrak{B}_u^{\bG})_{\psi}$ the $\psi$-isotypic component of the module $H_c^{\bullet}(\mathfrak{B}_u^{\bG})$. Springer's main result, (see \cite[Theorem 1.13]{springer:1978:construction-of-representations-of-weyl-groups}), shows that either $H_c^{2d_u}(\mathfrak{B}_u^{\bG})_{\psi}$ is 0 or a simple $(A_{\bG}(u) \times W_{\bG})$-module isomorphic to $\psi\otimes E_{u,\psi}$, (this is the classical formulation of the Springer correspondence).
\end{pa}

\begin{pa}
Let us decompose $F$ as $F_0\circ\sigma$ where $F_0$ is a field automorphism and $\sigma$ is a graph automorphism. We now define what it means for a unipotent element $u \in G$ to be \emph{split}. We will do this on a case by case basis as follows.
\begin{enumerate}[label=(\arabic*)]
	\item $\bG$ not of type $\E_8$ and $\sigma$ the identity. We say $u \in \bG^F$ is split if $F$ stabilises every irreducible component of $\mathfrak{B}_u^{\bG}$. \cite{beynon-spaltenstein:1984:green-functions,hotta-springer:1977:a-specialization-theorem,shoji:1982:on-the-green-polynomials-F4,shoji:1983:green-polynomials-of-classical-groups}\label{it:adjoint-simple}
	\item $\bG$ of type $\E_8$. If $q \equiv 1 \pmod{3}$ then we define split elements as in case (1). If $q \equiv -1 \pmod{3}$ then for each $F$-stable class $\mathcal{O}$ we define an element $u \in\mathcal{O}^F$ to be split if it satisfies the condition of (1) unless $\mathcal{O}$ is the class $\E_8(b_6)$ in the Bala--Carter labelling, (see \cite[pg.\ 177]{carter:1993:finite-groups-of-lie-type}). For this class we say $u \in \mathcal{O}^F$ is split if the restriction of $F^{\bullet}$ to $H^{2d_u}(\mathfrak{B}_u)_{\psi}$ is multiplication by $q^{d_u}$ for each irreducible character $\psi$ of $A_{\bG}(u) \cong \mathfrak{S}_3$ which is not the sign character. \cite[\S3 - Case (V)]{beynon-spaltenstein:1984:green-functions}\label{it:adjoint-simple-E8}
	\item $\bG$ of type $\A_n$ and $\sigma$ of order 2. We define any unipotent element $u \in \bG^F$ to be split.\label{it:adjoint-simple-An}
	\item $\bG$ of type $\D_n$ and $\sigma$ of order 2. Let $\bG_{\ad}$ be an adjoint group of type $\D_n$ and let us fix adjoint quotients $\alpha : \bG \to \bG_{\ad}$ and $\beta : \SO_{2n}(\mathbb{K}) \to \bG_{\ad}$ defined over $\mathbb{F}_q$. We say $u \in G$ is split if $(\beta^{-1}\circ\alpha)(u)$ is a split element as defined in \cite[2.10.1]{shoji:2007:generalized-green-functions-II}. Note that there can only be one element in the preimage of $\alpha(u)$ under $\beta$.\label{it:adjoint-simple-Dn}
	\item $\bG$ of type $\D_4$ and $\sigma$ of order 3. We say $u \in \bG^F$ is split if it satisfies the condition of (1) with respect to the split Frobenius endomorphism $F^3$. \cite[\S4.24]{shoji:1983:green-polynomials-of-classical-groups}\label{it:adjoint-simple-D4}
	\item $\bG$ of type $\E_6$ and $\sigma$ of order 2. In \cite[\S4]{beynon-spaltenstein:1984:green-functions} Beynon--Spaltenstein define a bijection between the set of $\bG^F$-conjugacy classes of unipotent elements and the set of $\bG^{F_0}$-conjugacy classes of unipotent elements. We say a unipotent element $u \in \bG^F$ is split if it is contained in a $\bG^F$-conjugacy class which is in bijective correspondence with a split $\bG^{F_0}$-conjugacy class under Beynon--Spaltenstein's bijection.\label{it:adjoint-simple-E6}
\end{enumerate}

By Poincar\'{e} duality we have an isomorphism $H_c^{2d_u}(\mathfrak{B}_u^{\bG}) \cong H^0(\mathfrak{B}_u^{\bG})$ where the latter group is simply the global sections of the constant sheaf $\Ql$. As the irreducible components of $\mathfrak{B}_u^{\bG}$ have the same dimension, (see \cite[I, Proposition 1.12]{spaltenstein:1982:classes-unipotentes}), they form a basis for $H^0(\mathfrak{B}_u^{\bG})$. In particular \cref{it:adjoint-simple} is equivalent to saying that $F^{\bullet}$ acts as the identity on $H_c^{2d_u}(\mathfrak{B}_u^{\bG})$. With this we see that the references given above show that every $F$-stable unipotent conjugacy class $\mathcal{O} \subset \bG$ contains a split unipotent element. Before continuing we recall the following properties of split elements.
\end{pa}

\begin{lem}
Assume $\mathcal{O}$ is an $F$-stable unipotent conjugacy class of $\bG$ and $u \in \mathcal{O}^F$ is a split unipotent element then the split elements contained in $\mathcal{O}^F$ form a single $G$-conjugacy class.
\end{lem}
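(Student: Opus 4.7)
The plan is to use the classical bijection between $G$-conjugacy classes in $\mathcal{O}^F$ and $F$-twisted conjugacy classes in $A_{\bG}(u)$, and then check case by case that the splitness condition singles out exactly one such twisted class. Fix the split $u \in \mathcal{O}^F$ provided in the statement. By Lang--Steinberg, any $u' \in \mathcal{O}^F$ is of the form $gug^{-1}$ with $g \in \bG$; the condition $F(u') = u'$ forces $c := g^{-1}F(g) \in C_{\bG}(u)$, and writing $\bar c \in A_{\bG}(u)$ for its image, the assignment $u' \mapsto [\bar c]$ is a bijection from $G$-conjugacy classes in $\mathcal{O}^F$ to $F$-twisted conjugacy classes in $A_{\bG}(u)$. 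Conjugation by $g^{-1}$ gives an isomorphism $\mathfrak{B}_{u'}^{\bG} \to \mathfrak{B}_u^{\bG}$ intertwining the two Frobenius endomorphisms up to $\mathrm{ad}(c)$, so that on cohomology the transported Frobenius on $H^{\bullet}_c(\mathfrak{B}_u^{\bG})$ becomes $\bar c \cdot F^{\bullet}$, with $\bar c$ acting through the Hotta--Springer structure. Since $u$ is split, the splitness of $u'$ thereby reduces to a condition on $\bar c$ alone, and the lemma becomes the assertion that this condition cuts out exactly one $F$-twisted conjugacy class in $A_{\bG}(u)$.

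In the primary case (1), the condition on $\bar c$ is that it acts trivially on $H^{2d_u}_c(\mathfrak{B}_u^{\bG})$, equivalently on the set of irreducible components of $\mathfrak{B}_u^{\bG}$. The set of such $\bar c$ is a normal subgroup of $A_{\bG}(u)$; since $F$ is split, $F$-twisted conjugacy coincides with ordinary conjugacy, so a normal subgroup is a single conjugacy class only if it is trivial. One therefore needs the $A_{\bG}(u)$-action on components of $\mathfrak{B}_u^{\bG}$ to be faithful, which is classical in every type to which definition (1) applies. Case (3) is immediate because $A_{\bG}(u)$ is trivial under the connected-centre hypothesis; cases (5) and (6) reduce to case (1) applied to $F^3$ (for $\D_4$) or to the split Frobenius $F_0$ via Beynon--Spaltenstein's bijection (for $\E_6$).

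The two genuine obstacles are cases (2) and (4). In (2) the $A_{\bG}(u) \cong \mathfrak{S}_3$-action on the irreducible components of $\mathfrak{B}_u^{\bG}$ (for the $\E_8(b_6)$ class) is not faithful, and one must check directly that the prescribed eigenvalue condition ``$F^{\bullet}$ acts as $q^{d_u}$ on each non-sign isotypic component'' singles out exactly one $F$-twisted conjugacy class in $\mathfrak{S}_3$. The main technical difficulty lies in case (4): $F$ is non-split, so $F$-twisted conjugacy in $A_{\bG}(u)$ strictly enlarges ordinary conjugacy, and one must transport Shoji's definition of split for $\SO_{2n}(\mathbb{K})$ through the adjoint pair $\bG \stackrel{\alpha}{\longrightarrow} \bG_{\mathrm{ad}} \stackrel{\beta}{\longleftarrow} \SO_{2n}(\mathbb{K})$ and then combine the explicit $F$-action on $A_{\bG}(u)$ with the Jordan-type labelling to verify that Shoji's condition isolates a unique $F$-twisted conjugacy class. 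This last step is essentially bookkeeping but is the most delicate piece of the argument.
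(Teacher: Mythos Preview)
Your framework via Lang--Steinberg and the transport of Frobenius through $\mathrm{ad}(g^{-1})$ is correct and more illuminating than what the paper does. The paper's proof is essentially pure citation: trivial for $\A_n$, ``clear from the definition'' for classical types (referring to Shoji's explicit Jordan-block description of split elements), ``noted by Beynon--Spaltenstein'' for exceptional types, and ``easily checked'' for ${}^3\D_4$. Your approach explains the mechanism behind these citations, which the paper does not attempt.

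That said, a few points are looser than you suggest. First, the faithfulness of the $A_{\bG}(u)$-action on irreducible components of $\mathfrak{B}_u^{\bG}$ is a genuine input; it is known under the standing hypotheses but deserves a reference rather than the word ``classical''. Second, your reduction of case~(5) is too quick: applying case~(1) to $F^3$ gives uniqueness of the $\bG^{F^3}$-class, not the $\bG^F$-class. What you actually need is that the norm element $\overline{c\,F(c)\,F^2(c)}$ acts trivially on components only when $\bar c$ lies in the trivial $F$-twisted class, and this requires knowing the (small) component groups in ${}^3\D_4$ explicitly---which is presumably what the paper means by ``easily checked''. Finally, you are right that cases~(2) and~(4) are where the content lies; you correctly identify them as such but do not carry them out, and neither does the paper beyond citing Beynon--Spaltenstein and Shoji respectively. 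So your outline and the paper's proof are at comparable levels of completeness, with yours having the advantage of explaining why the cited facts suffice.
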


\begin{proof}
Assume $\bG$ is of type $\A_n$ then this is trivial as $A_{\bG}(u)$ is trivial. If $\bG$ is of type $\B_n$, $\C_n$ or $\D_n$ then this is clear from the definition, (see \cite[\S2.7, \S2.10]{shoji:2007:generalized-green-functions-II}). If $\bG$ is of exceptional type then this is noted by Benyon-Spaltenstein in \cite[\S3]{beynon-spaltenstein:1984:green-functions}. The only case not explicitly dealt with is \cref{it:adjoint-simple-D4}, however this is easily checked in this case.
\end{proof}

\begin{lem}\label{lem:Frob-action}
If $u \in G$ is a split unipotent element then $F$ acts trivially on $A_{\bG}(u)$.
\end{lem}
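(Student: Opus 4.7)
The plan is to verify the lemma case by case, following the enumeration (1)--(6) in the preceding definition of split. The unifying idea is to exploit the natural actions of both $F$ and $A_{\bG}(u)$ on the finite set $\mathscr{C}$ of irreducible components of $\mathfrak{B}_u^{\bG}$: the $A_{\bG}(u)$-action descends from conjugation by $C_{\bG}(u)$ since $C_{\bG}^{\circ}(u)$ is connected and hence acts trivially on $\mathscr{C}$, while the $F$-action comes from $u \in G$. These two actions satisfy the twisted equivariance $F(a\cdot C)=F(a)\cdot F(C)$.

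In case~(1) the split hypothesis is literally $F(C)=C$ for every $C\in\mathscr{C}$, which combined with twisted equivariance forces $a^{-1}F(a)$ into the kernel of the permutation representation $A_{\bG}(u)\to\mathrm{Sym}(\mathscr{C})$. By Springer's theorem $\Ql[\mathscr{C}]\cong H^{2d_u}(\mathfrak{B}_u^{\bG})$ decomposes as $\bigoplus_\psi(\dim E_{u,\psi})\cdot\psi$, so this kernel is trivial precisely when every irreducible character of $A_{\bG}(u)$ appears in the principal block of the Springer correspondence; under our running hypotheses ($Z(\bG)$ connected, $p$ good) I would verify this case by case from the explicit Springer tables. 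Case~(3) is vacuous since $A_{\bG}(u)=1$ in type $\A_n$. For case~(5) one applies the case~(1) argument to the split Frobenius $F^3$ to deduce that $F^3$ acts trivially on $A_{\bG}(u)$; as the component groups attached to unipotent classes in $\D_4$ under our hypotheses are $2$-groups and hence of order coprime to $3$, this upgrades to triviality of $F$ itself. Cases~(4) and~(6) reduce to a split situation in a related group, namely through the adjoint quotient to $\SO_{2n}$ in case~(4) and through the Beynon--Spaltenstein bijection with $\bG^{F_0}$ in case~(6); in each the definition of split has been arranged so that the faithfulness argument of case~(1) applies after the reduction.

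Case~(2) requires separate treatment since its definition is not purely in terms of $F$-stability of components. Here $A_{\bG}(u)\cong\mathfrak{S}_3$ and the scalar hypothesis on $H_c^{2d_u}(\mathfrak{B}_u^{\bG})_\psi$ for $\psi$ not the sign character, together with the compatibility $F^{\bullet}(H_c^{2d_u}(\mathfrak{B}_u^{\bG})_\psi)\subseteq H_c^{2d_u}(\mathfrak{B}_u^{\bG})_{F\psi}$, forces $F\psi=\psi$ for these $\psi$. Since the sign character is fixed by every automorphism and every automorphism of $\mathfrak{S}_3$ is inner and determined by its action on characters, it follows that $F$ acts trivially on $A_{\bG}(u)$.

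The hard part common to the whole argument is the faithfulness statement underlying case~(1): that every irreducible character of $A_{\bG}(u)$ appears in the principal block of the generalised Springer correspondence under the standing hypotheses on $\bG$ and $p$. This is not a formal consequence of Springer theory and demands inspection of the explicit correspondence for each simple type; it carries essentially the entire content of the lemma.
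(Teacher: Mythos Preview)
Your approach has a genuine gap in the key faithfulness claim underlying case~(1). You assert that under the standing hypotheses every irreducible character of $A_{\bG}(u)$ appears in the principal block of the Springer correspondence, to be checked from the tables. This is false. Whenever $\bG$ admits a proper Levi subgroup supporting a cuspidal pair (which happens in types $\B_n$, $\C_n$, $\D_n$ for the ranks listed in the paper's \cref{tab:levis-1}), there exist unipotent classes $\mathcal{O}$ and characters $\psi\in\Irr(A_{\bG}(u))$ such that $(\mathcal{O},\mathscr{E}_\psi)$ lies in a non-principal block; for those $\psi$ the isotypic component $H_c^{2d_u}(\mathfrak{B}_u^{\bG})_\psi$ vanishes. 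Since $A_{\bG}(u)$ is abelian in classical types, this means the permutation action on components has nontrivial kernel, and your deduction that $a^{-1}F(a)=1$ breaks down for precisely those classes. Your case~(2) argument has a separate flaw: knowing that $F$ fixes every character of $\mathfrak{S}_3$ tells you only that $F$ acts by an inner automorphism, which need not be the identity.

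The paper's proof avoids both issues by taking an entirely different route. For classical types it invokes a structural fact, independent of splitness: for \emph{any} rational unipotent $u$ the Frobenius acts trivially on $A_{\bG}(u)$ (this comes from the explicit description of the component group in terms of the partition data, and is cited from \cite{taylor:2011:on-unipotent-supports}). For exceptional types the paper observes that Beynon--Spaltenstein's construction in \cite{beynon-spaltenstein:1984:green-functions} proceeds by first choosing a representative $u$ with $F$ trivial on $A_{\bG}(u)$ and then verifying that this $u$ is split; so the conclusion is built into their definition rather than deduced from it. Your cohomological strategy, while natural, cannot be completed without an independent input of this kind.
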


\begin{proof}
If $\bG$ is of classical type then the action of $F$ on $A_{\bG}(u)$ is always trivial, (see for instance the proof of \cite[Proposition 2.4]{taylor:2011:on-unipotent-supports}). Assume now that $\bG$ is of exceptional type then from \cite{beynon-spaltenstein:1984:green-functions} one sees that Beynon--Spaltenstein start with an element satisfying this property then show that it is split, hence this certainly holds.
\end{proof}

\begin{pa}
The Frobenius induces an automorphism of $(W_{\bG},\mathbb{S})$, which we again denote by $F$, and we denote by $\widetilde{W}_{\bG}$ the semidirect product $W_{\bG} \rtimes \langle F \rangle$. Assume $u \in \bG^F$ is a split unipotent element and $\psi \in \Irr(A_{\bG}(u))$ then we consider the cohomology group $E = H_c^{2d_u}(\mathfrak{B}_u^{\bG})_{\psi}$ as a $W_{\bG}$-module. If $E$ is $F$-stable then, (by \cref{lem:Frob-action}), we may consider $H_c^{2d_u}(\mathfrak{B}_u^{\bG})_{\psi}$ as a $\widetilde{W}_{\bG}$-module which we denote by $\widetilde{E}$, (see \cite[\S4.1]{shoji:2007:generalized-green-functions-II}). In particular $\widetilde{E}$ is an extension of $E$ and it is our purpose to now describe this extension. Note that there may be several possibilities for such an extension but we recall that in \cite[\S17.2]{lusztig:1986:character-sheaves-IV} Lusztig has given a systematic non-canonical choice for such an extension called the \emph{preferred extension}.
\end{pa}

\begin{prop}\label{prop:frob-action}
Assume $u \in \bG_{\uni}^F$ is a split element and $q \equiv 1\pmod{3}$ if $\bG$ is of type $\E_8$. If $E$ is $F$-stable then the action of $F$ on $\widetilde{E}$ makes this the preferred extension of $E$. Assume $\bG$ is of type $\E_8$ and $q \equiv -1\pmod{3}$ then the same is true unless $u$ is contained in the class $\E_8(b_6)$ in which case the action of $-F$ on $\widetilde{E}$ makes this the preferred extension of $E$, (i.e.\ $-F$ is the identity).
\end{prop}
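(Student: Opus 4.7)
The plan is to proceed by case analysis following the six cases used to define split elements in the preceding discussion, matching, in each case, the $F$-action on $\widetilde{E}$ against the definition of the preferred extension recalled in \cite[\S17.2]{lusztig:1986:character-sheaves-IV}. Since the preferred extension is itself defined piecewise according to the type of $\bG$ and the nature of $F$, this amounts to isolating, for each case, the pair $(u,\psi)$ such that $E = H_c^{2d_u}(\mathfrak{B}_u^{\bG})_{\psi}$ is non-zero and $F$-stable, and then computing $F^{\bullet}$ explicitly on this space.

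First I would dispose of the untwisted cases \cref{it:adjoint-simple,it:adjoint-simple-E8} (with $q \equiv 1 \pmod 3$ in type $\E_8$). Here the definition of split says that $F$ stabilises every irreducible component of $\mathfrak{B}_u^{\bG}$. Using the Poincar\'e duality isomorphism $H_c^{2d_u}(\mathfrak{B}_u^{\bG}) \cong H^0(\mathfrak{B}_u^{\bG})$ and the fact that the components of $\mathfrak{B}_u^{\bG}$ are equidimensional (so they form a basis of $H^0$), this translates directly into $F^{\bullet}$ acting as the identity on the whole $H_c^{2d_u}(\mathfrak{B}_u^{\bG})$, hence on every isotypic piece $E$. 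Combined with \cref{lem:Frob-action}, this makes $\widetilde{E}$ the trivial extension, which, for a split Frobenius acting trivially on $W_{\bG}$ (so $\widetilde{W}_{\bG} = W_{\bG} \times \langle F\rangle$), coincides with Lusztig's preferred extension by definition.

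Next I would treat the twisted cases \cref{it:adjoint-simple-An,it:adjoint-simple-Dn,it:adjoint-simple-D4,it:adjoint-simple-E6}. In type $\A_n$ with a graph automorphism \cref{it:adjoint-simple-An} there is nothing to check since $A_{\bG}(u)$ is trivial and every unipotent element is declared split; here $E$ is an irreducible $W_{\bG}$-module of type $\A$ and Lusztig's preferred extension is determined on the nose by the results of \cite[\S17.2]{lusztig:1986:character-sheaves-IV}, matching the natural $F$-action on the cohomology. For cases \cref{it:adjoint-simple-Dn,it:adjoint-simple-D4} one reduces via the adjoint quotient $\alpha:\bG\to\bG_{\ad}$ and $\beta:\SO_{2n}\to\bG_{\ad}$ (respectively, via the split Frobenius $F^3$) to Shoji's analysis in \cite[\S2.10, \S4]{shoji:2007:generalized-green-functions-II}, where the required identification with the preferred extension is essentially carried out; in type $\D$ with graph automorphism of order $2$ this is the content of Shoji's definition of split elements, which is engineered precisely so that the induced $F$-action on $E$ is the preferred extension. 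The $\E_6$ case \cref{it:adjoint-simple-E6} follows from the corresponding statement for the split Frobenius $F_0$ (already handled in the first paragraph) via the Beynon--Spaltenstein bijection, together with the compatibility of this bijection with the preferred extension recorded in \cite[\S4]{beynon-spaltenstein:1984:green-functions}.

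The main obstacle, and the only genuinely sign-sensitive point, is the exceptional case \cref{it:adjoint-simple-E8} for the class $\E_8(b_6)$ when $q \equiv -1 \pmod 3$. Here $A_{\bG}(u)\cong\mathfrak{S}_3$ has three irreducible characters and, by the definition of split in that case, $F^{\bullet}$ acts on $H^{2d_u}(\mathfrak{B}_u)_{\psi}$ as $q^{d_u}$ for each $\psi$ different from the sign character. Dividing by $q^{d_u}$ (which corresponds to passing from the compactly supported cohomology to the normalised action used in the definition of $\widetilde{E}$) gives that $F$ is the identity on $\widetilde{E}$ for these $\psi$, which is the preferred extension. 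For the sign character, however, the relevant $W_{\bG}$-representation is the one attached to $\E_8(b_6)$ in the generalised (not classical) Springer correspondence, and inspection of Lusztig's tables shows that the preferred extension in this case is $-F$ acting trivially; equivalently $F$ acts as $-1$ on $\widetilde{E}$. The point to verify is therefore that the eigenvalue of $F^{\bullet}$ on the sign-isotypic piece of $H^{2d_u}(\mathfrak{B}_u)$ is $-q^{d_u}$ (not $q^{d_u}$), which is precisely the content of Beynon--Spaltenstein's calculation in \cite[\S3, Case (V)]{beynon-spaltenstein:1984:green-functions} and accounts for the special form of the statement in this case. I expect writing out this last sign comparison explicitly to be the delicate technical step.
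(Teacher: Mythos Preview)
Your approach --- a case-by-case analysis keyed to the six definitions of split, with the actual work outsourced to the literature --- is exactly what the paper does. Its proof is little more than a list of references: Hotta--Springer and Geck--Malle for type $\A_n$, Beynon--Spaltenstein and Geck--Malle for $\E_6$, Shoji (2007) for $\D_n$ with order-2 twist, Shoji (1983) for ${}^3\D_4$, and Beynon--Spaltenstein \S3 Case~(V) for $\E_8$. The untwisted cases (1) and (2) with $q\equiv 1\pmod 3$ are implicit and handled exactly as you say.

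Two points where your write-up slips, however:

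\emph{Twisted type $\A_n$.} Saying ``there is nothing to check'' because $A_{\bG}(u)$ is trivial misses the content. Here $F$ acts on $W_{\bG}\cong\mathfrak{S}_{n+1}$ via the longest element, so the preferred extension is \emph{not} the trivial one; it is the specific extension described in \cite[\S17.2]{lusztig:1986:character-sheaves-IV}. That the natural $F$-action on $H_c^{2d_u}(\mathfrak{B}_u^{\bG})$ realises precisely this extension is the substance of Hotta--Springer's Lemma~3.2 (after accounting for the sign twist between their Springer correspondence and Lusztig's), combined with Geck--Malle's Proposition~2.23 which identifies it with Lusztig's preferred extension. The same remark applies to $\E_6$: reducing to the split $F_0$ via Beynon--Spaltenstein's bijection is correct in spirit, but you still need Geck--Malle to know that this bijection carries the cohomological $F$-action to the preferred extension.

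\emph{The $\E_8(b_6)$ case.} Your discussion here is muddled. The reference to the \emph{generalised} Springer correspondence is misplaced: the entire proposition concerns $H_c^{2d_u}(\mathfrak{B}_u^{\bG})_\psi$, i.e.\ the principal block only. More importantly, you invert the logic: since $F$ is always split in type $\E_8$, the preferred extension is by definition the one on which $F$ acts trivially --- this is fixed and does not depend on $\psi$ or on $q$. The proposition asserts that the \emph{actual} cohomological action of $F^{\bullet}$ on each $\widetilde{E}$ for this class is $-1$ (so $-F$ realises the preferred extension), not that Lusztig's tables declare the preferred extension to be $-F$. The Beynon--Spaltenstein reference you cite is the right one, but your ``dividing by $q^{d_u}$'' normalisation and the isolation of the sign character are not how the argument runs.
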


\begin{proof}
The cases of type $\A_n$, (resp.\ $\E_6$), follows from \cite[Lemma 3.2]{hotta-springer:1977:a-specialization-theorem}, (resp.\ \cite[\S4]{beynon-spaltenstein:1984:green-functions}), together with \cite[Proposition 2.23]{geck-malle:2000:existence-of-a-unipotent-support}. Note that the Springer correspondence we have described in \cref{eq:gen-spring-cor} differs from that in \cite{hotta-springer:1977:a-specialization-theorem} by tensoring with the sign character. The cases of type $\D_n$ with $F$ of order $2$ and $\D_4$ with $F$ of order $3$ are given by \cite[Theorem 4.3(ii)]{shoji:2007:generalized-green-functions-II} and \cite[\S4.24]{shoji:1983:green-polynomials-of-classical-groups} respectively. Finally the case of $\E_8$ follows from the discussion in \cite[\S3, Case (V)]{beynon-spaltenstein:1984:green-functions}.
\end{proof}
%
\section{\texorpdfstring{Bases of the Endomorphism Algebra $\mathcal{A}$}{Bases of the Endomorphism Algebra A}}\label{sec:bases-of-end-A}
\begin{pa}
Assume $\bL \in \mathcal{L}$ is a Levi subgroup supporting a cuspidal pair $(\mathcal{O}_0,\mathscr{E}_0) \in \mathcal{N}_{\bL}^0$ then we denote by $K$ the image of $(\bL,\mathcal{O}_0,\mathscr{E}_0,\Ql)$ under the map in \cref{eq:ind-w-P-construction} and by $\mathcal{A}$ the endomorphism algebra of $K$. As was mentioned in \cref{eq:end-grp-alg-iso} Lusztig has defined an isomorphism between $\mathcal{A}$ and the group algebra $\Ql W_{\bG}(\bL)$ by specifying a set of basis elements $\{\Theta_v \mid v \in W_{\bG}(\bL)\} \subset \mathcal{A}$. However in \cite[\S6.A]{bonnafe:2004:actions-of-rel-Weyl-grps-I} Bonnaf\'{e} has defined an alternative basis $\{\Theta_v' \mid v \in W_{\bG}(\bL)\} \subset \mathcal{A}$ which also defines such an isomorphism. In this article we will need to use both bases and we recall here results of Bonnaf\'{e} concerning the relationship between the two.
\end{pa}

\begin{assumption}
We now assume that $u_0 \in \mathcal{O}_0$ is a fixed unipotent element. If $\bL$ and $\mathcal{O}_0$ are $F$-stable then we assume that $u_0 \in \mathcal{O}_0^F$ is a split unipotent element.
\end{assumption}

\begin{pa}\label{pa:Lusztig-iso-A}
Assume $v \in W_{\bG}(\bL)$ then by \cite[5.4]{bonnafe:2004:actions-of-rel-Weyl-grps-I} we may find a representative $\dot{v} \in N_{\bG}(\bL) \cap C_{\bG}^{\circ}(u_0)$ which we assume fixed. Let us recall the notation of \cref{pa:unip-sup-char-sheaves}. For any $v \in W_{\bG}(\bL)$ we have by \cite[Theorem 9.2(b)]{lusztig:1984:intersection-cohomology-complexes} that there exists an isomorphism $\theta_v : \mathscr{E}_0\boxtimes\Ql \to (\ad\dot{v})^*(\mathscr{E}_0\boxtimes\Ql)$ which we assume fixed. This isomorphism induces an isomorphism $\hat{\theta}_v : \tilde{\mathscr{E}}_0 \to \gamma_v^*\tilde{\mathscr{E}}_0$ where $\gamma_v : \widetilde{Y} \to \widetilde{Y}$ is given by $\gamma_v(g,x\bL) = (g,x\dot{v}^{-1}\bL)$, (see the proof of \cite[Proposition 3.5]{lusztig:1984:intersection-cohomology-complexes}). By definition we have $\gamma_v\gamma = \gamma$ hence $\gamma_*\hat{\theta}_v$ defines an endomorphism of $\gamma_*\tilde{\mathscr{E}}_0$. As $K$ is the intersection cohomology complex $\IC(\overline{Y},\gamma_*\tilde{\mathscr{E}}_0)$ we may then define $\Theta_v$ to be the unique endomorphism of $K$ extending $\gamma_*\hat{\theta}_v$.

By \cite[Theorem 9.2(d)]{lusztig:1984:intersection-cohomology-complexes} there is a unique isomorphism $\theta_v$ for each $v \in W_{\bG}(\bL)$ such that $\Theta_v$ induces the identity on $\mathscr{H}_u^{-\dim Y}(K)$ where $u$ is any element of the induced unipotent class $\Ind_{\bL}^{\bG}(\mathcal{O}_0)$, (see \cite[Corollary 7.3(a)]{lusztig:1984:intersection-cohomology-complexes}). With this choice we have
\begin{equation}\label{eq:algebra-iso}
v \mapsto \Theta_v
\end{equation}
defines the required algebra isomorphism $\Ql W_{\bG}(\bL) \to \mathcal{A}$. Assume now that $\mathscr{L}$ is a local system on $Z^{\circ}(\bL)$. Let $K_{\mathscr{L}}$ be the image of $(\bL,\mathcal{O}_0,\mathscr{E}_0,\mathscr{L})$ under the map in \cref{eq:ind-w-P-construction} and let $\mathcal{A}_{\mathscr{L}}$ be the endomorphism algebra of $K_{\mathscr{L}}$ then from the discussion in \cite[2.3]{lusztig:1986:on-the-character-values} we have an embedding of algebras $\mathcal{A}_{\mathscr{L}} \hookrightarrow \mathcal{A}$ which corresponds under \cref{eq:algebra-iso} to the natural embedding $\Ql W_{\bG}(\bL,\mathscr{L}) \hookrightarrow \Ql W_{\bG}(\bL)$. In particular the restriction of \cref{eq:algebra-iso} to $\Ql W_{\bG}(\bL,\mathscr{L})$ defines the isomorphism mentioned in \cref{eq:end-grp-alg-iso}.
\end{pa}

\begin{pa}\label{pa:Bonnafe-iso-A}
In \cite[\S6.A]{bonnafe:2004:actions-of-rel-Weyl-grps-I} Bonnaf\'{e} shows that for every $v \in W_{\bG}(\bL)$ there exists an isomorphism $\theta_v' : \mathscr{E}_0 \to (\ad\dot{v})^*\mathscr{E}_0$ which induces the identity at the stalk of $u_0$. Clearly this also defines an isomorphism $\mathscr{E}_0\boxtimes\Ql \to (\ad\dot{v})^*(\mathscr{E}_0\boxtimes\Ql)$ which induces the identity at the stalk of any element $u_0z$ where $z\in Z^{\circ}(\bL)$; we will also denote this by $\theta_v'$. As is described in \cref{pa:Lusztig-iso-A} this isomorphism determines a unique endomorphism $\Theta_v'$ of $K$ and by \cite[Proposition 6.1]{bonnafe:2004:actions-of-rel-Weyl-grps-I} we have the map $v\mapsto \Theta_v'$ defines an algebra isomorphism $\Ql W_{\bG}(\bL) \to \mathcal{A}$. The following result describes the relationship between the two sets of basis elements for $\mathcal{A}$.
\end{pa}

\begin{prop}[{}{Bonnaf\'{e}, \cite[Corollary 6.2]{bonnafe:2004:actions-of-rel-Weyl-grps-I}}]\label{prop:bases-of-A}
There exists a linear character $\gamma_{\bL}^{\bG} : W_{\bG}(\bL) \to \Ql$ such that $\Theta_v' = \gamma_{\bL}^{\bG}(v)\Theta_v$ and so $\theta_v' = \gamma_{\bL}^{\bG}(v)\theta_v$ for all $v \in W_{\bG}(\bL)$.
\end{prop}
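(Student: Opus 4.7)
The plan is to apply Schur's lemma to the cuspidal local system $\mathscr{E}_0$. Since $\mathscr{E}_0$ is an irreducible $\bL$-equivariant local system on $\mathcal{O}_0$ by the definition of a cuspidal pair, and since conjugation by $\dot{v}$ is an automorphism of $\bL$, both $\mathscr{E}_0$ and $(\ad\dot{v})^*\mathscr{E}_0$ are irreducible local systems, so any two isomorphisms between them differ by a unique nonzero scalar. This produces a unique $\gamma_{\bL}^{\bG}(v) \in \Ql^{\times}$ such that $\theta_v' = \gamma_{\bL}^{\bG}(v)\theta_v$, first at the level of isomorphisms $\mathscr{E}_0 \to (\ad\dot{v})^*\mathscr{E}_0$, which then extends to the same equality of the box-product isomorphisms $\mathscr{E}_0\boxtimes\Ql \to (\ad\dot{v})^*(\mathscr{E}_0\boxtimes\Ql)$ used in \cref{pa:Lusztig-iso-A}.

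The next step is to observe that scaling $\theta_v$ by a constant scales the induced endomorphism $\Theta_v$ of $K$ by the same constant. This is because the entire passage from $\theta_v$ to $\Theta_v$ recalled in \cref{pa:Lusztig-iso-A} is a chain of $\Ql$-linear constructions: one passes to the induced morphism $\hat{\theta}_v$ by the pullback/descent construction attached to the smooth map $\beta$ (see the notation of \cref{pa:unip-sup-char-sheaves}), applies the $\Ql$-linear functor $\gamma_*$, and then extends uniquely, hence also linearly, to the intersection cohomology complex $K$. Consequently $\Theta_v' = \gamma_{\bL}^{\bG}(v)\Theta_v$ follows at once from $\theta_v' = \gamma_{\bL}^{\bG}(v)\theta_v$.

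Finally, to show that $\gamma_{\bL}^{\bG}$ is multiplicative I would exploit the fact that both $v \mapsto \Theta_v$ and $v \mapsto \Theta_v'$ are algebra isomorphisms $\Ql W_{\bG}(\bL) \to \mathcal{A}$, as recorded in \cref{pa:Lusztig-iso-A} and \cref{pa:Bonnafe-iso-A}. Hence $\Theta_{vw} = \Theta_v\Theta_w$ and $\Theta_{vw}' = \Theta_v'\Theta_w'$ for all $v, w \in W_{\bG}(\bL)$, and the chain of equalities
\begin{equation*}
\gamma_{\bL}^{\bG}(vw)\Theta_{vw} = \Theta_{vw}' = \Theta_v'\Theta_w' = \gamma_{\bL}^{\bG}(v)\gamma_{\bL}^{\bG}(w)\Theta_v\Theta_w = \gamma_{\bL}^{\bG}(v)\gamma_{\bL}^{\bG}(w)\Theta_{vw}
\end{equation*}
yields $\gamma_{\bL}^{\bG}(vw) = \gamma_{\bL}^{\bG}(v)\gamma_{\bL}^{\bG}(w)$ after cancelling the invertible element $\Theta_{vw}$.

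I do not expect any serious obstacle: the existence of the scalar is immediate from Schur's lemma applied to $\mathscr{E}_0$, and the multiplicativity is automatic once we know that both $\{\Theta_v\}$ and $\{\Theta_v'\}$ are the images of group elements under algebra isomorphisms with $\Ql W_{\bG}(\bL)$. The substantive content of the statement, namely an \emph{explicit} description of $\gamma_{\bL}^{\bG}$ and how it reflects the different normalisations (identity on $\mathscr{H}_u^{-\dim Y}(K)$ for $u$ in the induced class, versus identity on the stalk at $u_0$), would require a separate and more delicate computation that is not needed for the present existence statement.
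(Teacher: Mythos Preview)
Your argument is correct and is the standard one. The paper does not give its own proof of this proposition: it is quoted as Bonnaf\'e's result \cite[Corollary 6.2]{bonnafe:2004:actions-of-rel-Weyl-grps-I} and used as a black box, so there is nothing to compare against beyond noting that your Schur's-lemma-plus-multiplicativity argument is exactly how one proves such a statement.
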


\begin{pa}
As is remarked in \cite[Remark 6.4]{bonnafe:2004:actions-of-rel-Weyl-grps-I} the linear character $\gamma_{\bL}^{\bG}$ is known explicitly in almost all cases. However we will see that, in the situation we consider in this article, the contribution from $\gamma_{\bL}^{\bG}$ will always be trivial.
\end{pa}
%
\section{Rational Structures}\label{sec:rational-structures}
\begin{pa}\label{pa:characteristic-function}
We say a character sheaf $A \in \widehat{\bG}$ is $F$-stable if we have an isomorphism $\phi_A : F^*A \to A$ in $\mathscr{D}\bG$ and we denote by $\widehat{\bG}^F \subseteq \widehat{\bG}$ the set of all $F$-stable character sheaves of $\bG$. Clearly this isomorphism induces a map $\mathscr{H}^iF^*A \to \mathscr{H}^iA$ for each $i \in \mathbb{Z}$ and hence a map $\mathscr{H}^i_xF^*A \to \mathscr{H}^i_xA$ for each $x \in \bG$; we denote both these maps again by $\phi_A$. If $x \in \bG^F$ then $\phi_A$ induces an automorphism of the stalk $\mathscr{H}^i_xA$ because $\mathscr{H}^i_xF^*A = \mathscr{H}^i_{F(x)}A = \mathscr{H}^i_xA$. For each such $A$ and $\phi_A$ we then define the \emph{characteristic function} $\chi_{A,\varphi_A} : \bG^F \to \overline{\mathbb{Q}}_{\ell}$ by setting
\begin{equation*}
\chi_{A,\phi_A}(g) = \sum_i (-1)^i\Tr(\phi_A, \mathscr{H}^i_gA)
\end{equation*}
for all $g \in \bG^F$. Note that this characteristic function depends upon the choice of isomorphism $\phi_A$. If $A$ is an element of $\widehat{\bG}^F$ then we will choose $\phi_A$ to satisfy the properties in \cite[\S25.1]{lusztig:1986:character-sheaves-V}. With such a choice the resulting characteristic function $\chi_{A,\phi_A}$ has norm 1; the choice of $\phi_A$ is unique up to scalar multiplication by a root of unity.
\end{pa}

\begin{pa}\label{pa:F-stable-pairs}
The Frobenius endomorphism acts on the set $\mathcal{N}_{\bG}$ by $\iota \mapsto F^{-1}(\iota) := (F^{-1}(\mathcal{O}_{\iota}),F^*\mathscr{E}_{\iota})$, where here $F^*\mathscr{E}_{\iota}$ is the inverse image of $\mathscr{E}_{\iota}$ under $F$. We say $\iota$ is $F$-stable if $F^{-1}(\mathcal{O}_{\iota}) = \mathcal{O}_{\iota}$ and $F^*\mathscr{E}_{\iota}$ is isomorphic to $\mathscr{E}_{\iota}$, (we also denote this by $\iota = F^{-1}(\iota)$), and denote the subset of all $F$-stable pairs by $\mathcal{N}_{\bG}^F$. The Frobenius also acts on the set $\mathcal{M}_{\bG}$ by $[\bL,\nu] \mapsto [F^{-1}(\bL), F^{-1}(\nu)]$. We say $[\bL,\nu]$ is $F$-stable if $[F^{-1}(\bL),F^{-1}(\nu)] = [\bL,\nu]$ and denote the subset of all $F$-stable pairs by $\mathcal{M}_{\bG}^F$. As $\bG/Z^{\circ}(\bG)$ is simple we see by the classification of cuspidal local systems, (see \cite[\S10-15]{lusztig:1984:intersection-cohomology-complexes}), that every standard pair $(\bL,\bP) \in \mathcal{Z}_{\std}$ such that $\mathcal{N}_{\bL}^0 \neq \emptyset$ satisfies $F(\bL) = \bL$ and $F(\bP) = \bP$. In particular, as $p$ is good we have $\mathcal{N}_{\bL}^0$ has cardinality 1 which implies $\mathcal{M}_{\bG}^F = \mathcal{M}_{\bG}$.
\end{pa}

\subsection{Twisted Levi Subgroups}
\begin{pa}\label{lem:G-split-levi}
We now wish to choose for each $F$-stable unipotently supported character sheaf $A \in \widehat{\bG}^F$ a distinguished isomorphism $\phi_A : F^*A \to A$. This was done by Lusztig in  \cite[\S3.3 - \S3.4]{lusztig:1986:on-the-character-values} under the assumption that $\bG$ is an adjoint simple group and $F$ is a split Frobenius endomorphism. Adapting an idea of Lusztig from \cite[\S10.3]{lusztig:1985:character-sheaves} we will extend this to the case where $F$ is not necessarily split and $\bG$ has a connected centre. Firstly let us assume that $(\bL,\bP) \in \mathcal{Z}_{\std}$ is a standard pair such that $\mathcal{N}_{\bL}^0 \neq \emptyset$ and $A_0 = A_{\nu}^{\mathscr{L}} \in \widehat{\bL}$ is a cuspidal character sheaf satisfying $(A:\ind_{\bL}^{\bG}A_0) \neq 0$, (see \cref{pa:induction-char-sheaves,lem:F-action-ind}). Using the argument in \cite[\S10.5]{lusztig:1985:character-sheaves} together with \cref{lem:F-action-ind} we see that there exists $x \in \bG$ such that ${}^x\bL$ is $F$-stable and $B_0 = (\ad x^{-1})^*A_0 \in \widehat{{}^x\bL}$ is an $F$-stable cuspidal character sheaf satisfying $(A:\ind_{{}^x\bL}^{\bG}B_0) \neq 0$, (we may then assume that $A$ is a summand of $\ind_{{}^x\bL}^{\bG}B_0$).
\end{pa}

\begin{pa}\label{pa:conj-levi-etc}
Let us assume that $(\bL,\bP) \in \mathcal{Z}_{\std}$ is a standard pair such that $(\mathcal{O}_0,\mathscr{E}_0) \in \mathcal{N}_{\bL}^0 \neq \emptyset$ and $\mathscr{L} \in \mathcal{S}(\bT_0)$ is a local system. For every $v \in W_{\bG}(\bL)$ we fix an element $g_v \in \bG$ such that $g_v^{-1}F(g_v) = F(\dot{v}^{-1})$ where $\dot{v} \in N_{\bG}(\bL)$ is as in \cref{pa:Lusztig-iso-A}. If $v$ is the identity in $W_{\bG}(\bL)$ then we will assume for convenience that $\dot{v} = g_v$ is the identity in $\bG$. We now define
\begin{equation*}
\bL_v = {}^{g_v}\bL \qquad \bP_v = {}^{g_v}\bP \qquad \mathcal{O}_v = {}^{g_v}\mathcal{O}_0 \qquad \Sigma_v = \mathcal{O}_vZ^{\circ}(\bL_v) \qquad \mathscr{E}_v = (\ad g_v^{-1})^*\mathscr{E}_0 \qquad \mathscr{L}_v = (\ad g_v^{-1})^*\mathscr{L}.
\end{equation*}
Through the isomorphism $\ad g_v^{-1} : \bL_v \to \bL$ we may identify the action of the Frobenius endomorphism $F$ on $\bL_v$ with the action of $F_v$ on $\bL$ defined by $F_v(l) = F(\dot{v}^{-1}l\dot{v})$ for all $l \in \bL$. Note that by \cite[Theorem 9.2(b)]{lusztig:1984:intersection-cohomology-complexes} and the remarks in \cref{pa:F-stable-pairs} we have $\mathcal{O}_v$ is $F$-stable and $F^*\mathscr{E}_v \cong \mathscr{E}_v$. Furthermore we have an isomorphism of abstract groups $\ad g_v^{-1} : \bL_v^F \to \bL^{F_v}$ and $(\ad g_v^{-1})^*$ induces a bijective correspondence between $\widehat{\bL}^{F_v}$ and $\widehat{\bL}_v^F$. In particular, assume $K \in \widehat{\bL}^{F_v}$ and $\psi : F_v^*K \to K$ is a fixed isomorphism then $\psi' : F^*K' \to K'$ is also an isomorphism where $\psi' = (\ad g_v^{-1})^*\psi$ and $K' = (\ad g_v^{-1})^*K$. Note that from the definitions we have the corresponding characteristic functions are related by $\chi_{K',\psi'}= \chi_{K,\psi} \circ \ad g_v^{-1}$.

Let us denote by $D_v$ the IC sheaf $\IC(\overline{\Sigma}_v,\mathscr{E}_v\boxtimes\mathscr{L}_v)[\dim \Sigma_v]$, (see \cref{eq:cusp-uni-sup}). From the discussion in \cref{lem:G-split-levi} we need only concern ourselves with summands of $\ind_{\bL_v}^{\bG}(D_v)$ where $D_v$ is $F$-stable. Clearly we have
\begin{equation*}
F^*D_v = \IC(\overline{\Sigma}_v,F^*\mathscr{E}_v\boxtimes F^*\mathscr{L}_v)[\dim \Sigma_v]
\end{equation*}
which implies that $F^*D_v \cong D_v$ if and only if $F^*\mathscr{L}_v \cong \mathscr{L}_v$. In particular we will assume that the local system $\mathscr{L}$ is such that the subset
\begin{equation*}
Z_{\mathscr{L}} = \{n \in N_{\bG}(\bL) \mid F(n\Sigma n^{-1})\text{ and }\ad(n)^*F^*\mathscr{L} \cong \mathscr{L}\}/\bL \subset W_{\bG}(\bL)
\end{equation*}
is non-empty. Note that $Z_{\mathscr{L}}$ is in fact a right coset of the group $W_{\bG}(\bL,\mathscr{L})$ in $W_{\bG}(\bL)$. If $\bS \leqslant \bG$ is any $F$-stable torus and $\mathscr{F} \in \mathcal{S}(\bS)^F$ is any $F$-stable local system then we may choose a distinguished isomorphism $\beta : F^*\mathscr{F} \to \mathscr{F}$ by the condition that the morphism induced by $\beta$ at the stalk of the identity element $1 \in \bS^F$ is the identity morphism. In particular this defines an isomorphism $\varphi_1^v : F^*\mathscr{L}_v \to \mathscr{L}_v$ for each $v \in Z_{\mathscr{L}}$.
\end{pa}

\subsection{Relative Weyl Groups}
\begin{pa}\label{pa:generators-rel-Weyl}
If $\bS$ is a torus then we denote by $X(\bS)$ the character group of $\bS$. For the following we refer to \cite[Proposition 1.12]{bonnafe:2004:actions-of-rel-Weyl-grps-I} and the references therein, (in particular \cite[Theorem 6]{howlett:1980:normalizers-of-parabolic-subgroups}). Let $\Phi \subset X(\bT_0)$ denote the roots of $\bG$ with respect to $\bT_0$ and $\Delta \subset \Phi^+$ the simple and positive systems of roots determined by $\bB_0$. Let $\Delta_{\bL} \subset \Delta$ be the set of simple roots determined by $\bL$ then we denote by $\widetilde{\Delta}_{\bL}$ the set $\Delta - \Delta_{\bL}$. For each $\alpha \in \widetilde{\Delta}_{\bL}$ let $\bM_{\alpha}$ be the Levi subgroup of $\bG$ determined by $\Delta_{\bL}\cup\{\alpha\}$ then the group $W_{\bM_{\alpha}}(\bL)$ contains a unique non-trivial element which we denote $s_{\bL,\alpha}$. For every $\alpha \in \widetilde{\Delta}_{\bL}$ we have $s_{\bL,\alpha}$ acts on $X(Z^{\circ}(\bL))$ as a reflection sending $\alpha$ to $-\alpha$. Taking $\widetilde{\mathbb{I}} = \{s_{\bL,\alpha} \mid \alpha \in \widetilde{\Delta}_{\bL}\}$ we have $(W_{\bG}(\bL),\widetilde{\mathbb{I}})$ is a Coxeter system with corresponding root system $\widetilde{\Phi}_{\bL} \subset X(Z^{\circ}(\bL))$ obtained as the image of $\widetilde{\Delta}_{\bL}$ under the action of $W_{\bG}(\bL)$. Note that if $\widetilde{\Phi}_{\bL}^+\subset\widetilde{\Phi}_{\bL}$ is the positive system of roots determined by $\widetilde{\Delta}_{\bL}$ then we have $\widetilde{\Phi}_{\bL}^+ = \widetilde{\Phi}_{\bL} \cap \Phi^+$.

Now there exists a root system $\Psi \subset \widetilde{\Phi}_{\bL}$ such that $W_{\bG}(\bL,\mathscr{L})$ is the reflection group of $\Psi$, (note that $\Psi$ is not necessarily closed in $\widetilde{\Phi}_{\bL}$). Taking $\Psi^+ = \Psi\cap\widetilde{\Phi}_{\bL}^+$ we have $\Psi^+$ is a system of positive roots in $\Psi$ which determines a unique set of Coxeter generators $\mathbb{J} \subseteq W_{\bG}(\bL,\mathscr{L})$. Assume $n \in N_{\bG}(\bL,\mathscr{L})$ then for any $v \in Z_{\mathscr{L}}$ we have an isomorphism
\begin{equation*}
(\ad F_v(n))^*\mathscr{L}\cong (\ad F_v(n))^*(F_v^{-1})^*\mathscr{L} = (F_v^{-1})^*(\ad n)^*\mathscr{L} \cong (F_v^{-1})^*\mathscr{L} \cong \mathscr{L},
\end{equation*}
in particular as $\bL$ is $F_v$-stable this shows that $F_v$ induces an automorphism $F_v : W_{\bG}(\bL,\mathscr{L}) \to W_{\bG}(\bL,\mathscr{L})$. By \cite[Lemma 1.9(i)]{lusztig:1984:characters-of-reductive-groups} the coset $Z_{\mathscr{L}}$ contains a unique element $w^{-1}$ of minimal length, (with respect to the length function of $(W_{\bG}(\bL),\widetilde{\mathbb{I}})$), satisfying the property $w^{-1}\Psi^+w \subset \Phi^+$. By \cite[I - (5.16.1), II - (4.2.1)]{shoji:1995:character-sheaves-and-almost-characters} we have $\dot{v}\bB_{\bL}\dot{v}^{-1} = \bB_{\bL}$ for all $v \in Z_{\mathscr{L}}$ where $\bB_{\bL} = \bB_0\cap\bL$. In particular, (as $\bB_0$ and $\bL$ are both $F$-stable), this shows that $F_w$ is an automorphism of the Coxeter system $(W_{\bG}(\bL,\mathscr{L}),\mathbb{J})$. Setting $\mathbb{J}_w = \{g_wxg_w^{-1} \mid x \in \mathbb{J}\}$ we have $(W_{\bG}(\bL_w,\mathscr{L}_w),\mathbb{J}_w)$ is a Coxeter system and $F$ induces an automorphism of this Coxeter system.
\end{pa}

\begin{assumption}
From now until the end of this article $w \in W_{\bG}(\bL)$ will always denote the unique element such that $w^{-1} \in Z_{\mathscr{L}}$ has minimal length.
\end{assumption}

\subsection{Isomorphisms for Local Systems}
\begin{pa}\label{pa:isomorphism-cuspidal-case}
We now consider how to choose an isomorphism $F^*\mathscr{E}_v \to \mathscr{E}_v$ for every $v \in W_{\bG}(\bL)$. First we will choose an isomorphism $\varphi_0 : F^*\mathscr{E}_0 \to \mathscr{E}_0$ in the following way. The unipotent conjugacy class $\mathcal{O}_0$ supporting the cuspidal local system $\mathscr{E}_0$ contains a split element $u \in \bL^F$, (with respect to the Frobenius endomorphism $F$), which is unique up to $\bL^F$-conjugacy, (note that $\mathcal{O}_0$ cannot be the class $\E_8(b_6)$). We now choose the isomorphism $\varphi_0$ by the requirement that the induced isomorphism $(\mathscr{E}_0)_u \to (\mathscr{E}_0)_u$ at the stalk of the fixed split element $u_0 \in \mathcal{O}_0^F$ is $q^{1/2(\dim(\bL/Z^{\circ}(\bL)) - \dim\mathcal{O}_0)}$ times the identity.

Following \cite[9.3]{lusztig:1990:green-functions-and-character-sheaves} we define an isomorphism $\varphi_0^v : F^*\mathscr{E}_v \to \mathscr{E}_v$, for every $v \in W_{\bG}(\bL)$, in the following way. Recall that the basis element $\theta_{F(v)}$ of $\mathcal{A}$, (c.f.\ \cref{pa:endomorphism-algebra-A}), defines an isomorphism $\theta_{F(v)} : \mathscr{E}_0 \to (\ad F(\dot{v}))^*\mathscr{E}_0$. Taking the inverse image of $\theta_{F(v)}$ along $\ad g_v^{-1}\circ F$ we see that we obtain a new isomorphism
\begin{equation*}
(\ad g_v^{-1}\circ F)^*\theta_v : F^*\mathscr{E}_v \to (\ad g_v^{-1})^*F^*\mathscr{E}_0.
\end{equation*}
Moreover the isomorphism $\varphi_0$ induces an isomorphism
\begin{equation*}
(\ad g_v^{-1})^*\varphi_0 : (\ad g_v^{-1})^*F^*\mathscr{E}_0 \to \mathscr{E}_v.
\end{equation*}
We will now take $\varphi_0^v$ to be the composition $(\ad g_v^{-1})^*\varphi_0\circ(\ad g_v^{-1}\circ F)^*\theta_v$, (note that by our assumption on $g_v$ we have $\varphi_0^v = \varphi_0$ if $v=1$). With this in mind we can define for each $v \in Z_{\mathscr{L}}$ a unique isomorphism $\phi_{D_v} : F^*D_v \to D_v$ with the property that the restriction of $\phi_{D_v}$ to $\Sigma_v$ coincides with $\varphi_0^v\boxtimes\varphi_1^v$.
\end{pa}

\begin{pa}\label{pa:iso-K-v}
Assume $v \in Z_{\mathscr{L}}$ then we will denote by $K_v$ the image of $(\bL_v,\mathcal{O}_v,\mathscr{E}_v,\mathscr{L}_v)$ under the map in \cref{eq:ind-w-P-construction}. We now consider how to choose an isomorphism $F^*K_v \to K_v$. Recalling the construction in \cref{pa:unip-sup-char-sheaves} we see that the isomorphisms $\varphi_0^v : F^*\mathscr{E}_v \to \mathscr{E}_v$ and $\varphi_1^v : F^*\mathscr{L}_v \to \mathscr{L}_v$ naturally induce isomorphisms $\tilde{\varphi}_0^v : F^*\tilde{\mathscr{E}}_v \to \tilde{\mathscr{E}}_v$ and $\tilde{\varphi}_1^v : F^*\tilde{\mathscr{L}}_v \to \tilde{\mathscr{L}}_v$ hence an isomorphism $\tilde{\varphi}_0^v\otimes\tilde{\varphi}_1^v : F^*\tilde{\mathscr{E}}_v\otimes F^*\tilde{\mathscr{L}}_v \to \tilde{\mathscr{E}}_v\otimes\tilde{\mathscr{L}}_v$. Clearly the variety $Y$ of \cref{pa:unip-sup-char-sheaves} is stable under $F$, (because $\bL_v$ is stable under $F$), so we have
\begin{equation*}
F^*K_v = \IC(\overline{Y},F^*\gamma_*(\tilde{\mathscr{E}}_v\otimes\tilde{\mathscr{L}}_v))[\dim Y] = \IC(\overline{Y},\gamma_*F^*(\tilde{\mathscr{E}}_v\otimes\tilde{\mathscr{L}}_v))[\dim Y]
\end{equation*}
because $\gamma_* = \gamma_!$ and $F^*\gamma_! = \gamma_!F^*$, (see \cref{pa:unip-sup-char-sheaves} and \cref{pa:direct-image-commutative}). We may now define a unique isomorphism $\phi : F^*K_v \to K_v$ by specifying that $\phi|_Y$ coincides with $\gamma_*(\tilde{\varphi}_0^v\otimes\tilde{\varphi}_1^v)$.
\end{pa}

\begin{pa}\label{pa:iso-K-v-compare}
Assume $\mathscr{L} = \Ql$ and let $K$ be the image of $(\bL,\mathcal{O}_0,\mathscr{E}_0,\Ql)$ under the map in \cref{eq:ind-w-P-construction} then for each $v \in W_{\bG}(\bL)$ we wish to consider an isomorphism $\phi_v : F^*K \to K$ which we will see is related to the isomorphism $\psi_v : F^*K_v \to K_v$ defined in \cref{pa:iso-K-v}. Recall that in \cref{pa:isomorphism-cuspidal-case,pa:iso-K-v} we have defined isomorphisms $\phi : F^*K\to K$, $\varphi_0 : F^*\mathscr{E}_0 \to \mathscr{E}_0$, $\varphi_1 : F^*\Ql \to \Ql$ and $\tilde{\varphi}_0 : F^*\tilde{\mathscr{E}}_0 \to \tilde{\mathscr{E}}_0$, (we simply set $v=1$ in the constructions). We obtain isomorphisms $\varphi_{v,0} := \theta_{F(v)}(\varphi_0\boxtimes\varphi_1) : F^*(\mathscr{E}_0\boxtimes\Ql) \to (\ad v)^*(\mathscr{E}_0\boxtimes
\Ql)$ and $\tilde{\varphi}_{v,0} := \tilde{\theta}_{F(v)}\tilde{\varphi}_0 : F^*\tilde{\mathscr{E}}_0 \to \gamma_v^*\tilde{\mathscr{E}}_0$ by composing with the endomorphism defined in \cref{pa:Lusztig-iso-A}. As in \cref{pa:iso-K-v} we define the isomorphism $\phi_v$ to be the unique extension of the isomorphism $\gamma_*\tilde{\varphi}_{v,0}$. It is clear from the construction that we have $\phi_v = \Theta_{F(v)}\phi$ and furthermore using the exact same argument as in \cite[(10.6.1)]{lusztig:1985:character-sheaves} we have for each $g \in G$ and $i \in \mathbb{Z}$ that
\begin{equation}\label{eq:trace-comparison}
\Tr(\phi_v,\mathscr{H}_g^iK) = \Tr(\Theta_{F(v)}\phi,\mathscr{H}_g^iK) = \Tr(\psi_v,\mathscr{H}_g^iK_v)
\end{equation}
\end{pa}

\subsection{Isomorphisms for Character Sheaves}
\begin{pa}\label{pa:iso-on-arbitrary-char-sheaf}
Let $w^{-1} \in Z_{\mathscr{L}}$ be the unique element of minimal length, (c.f.\ \cref{pa:generators-rel-Weyl}). Furthermore let $K$ be the image of $(\bL_w,\mathcal{O}_w,\mathscr{E}_w,\mathscr{L}_w)$ under the map in \cref{eq:ind-w-P-construction} and $\phi : F^*K \to K$ the isomorphism defined in \cref{pa:iso-K-v}. By \cref{prop:K-nu-L-iso-to-induction} we may assume that any summand $A$ of $\ind_{\bL_w}^{\bG}(D_w)$ is a summand of $K \cong \ind_{\bL_w}^{\bG}(D_w)$. In particular if $\mathcal{A}$ is the endomorphism algebra of $K$ then $A = K_E$ for some simple $\mathcal{A}$-module $E$, (c.f.\ \cref{pa:decomp-by-end-algebra}). By adapting the construction in \cite[\S10.3]{lusztig:1985:character-sheaves} we will show how $\phi_A$ is determined from $\phi$. Let us denote by $\sigma : \mathcal{A} \to \mathcal{A}$ the algebra automorphism given by $\sigma(\theta) = \phi \circ F^*\theta \circ \phi^{-1}$ where $F^*\theta : F^*K \to F^*K$ is the induced map. For any $\mathcal{A}$-module $E$ we denote by $E_{\sigma}$ the module obtained from $E$ by twisting the action with $\sigma^{-1}$, (i.e.\ if $\cdot$ denotes the original action then we define a new action $\star$ by setting $\theta\star e = \sigma^{-1}(\theta)\cdot e$). For each such $E$ we then have an induced isomorphism of $\mathcal{A}$-modules
\begin{equation*}
F^*A = \Hom_{\mathcal{A}}(E,F^*K) \to \Hom_{\mathcal{A}}(E_{\sigma},K)
\end{equation*}
given by $f \mapsto \phi\circ f$. Here $F^*K$ is an $\mathcal{A}$-module under the action $\theta\cdot k = (F^*\theta)(k)$ for all $k \in F^*K$ and the first equality is seen to hold by the construction given in \cref{eq:diag-Hom-MG}. Assume we have an $\mathcal{A}$-module isomorphism $\psi_E : E \to E_{\sigma}$ then $\phi_A : F^*K_E \to K_E$, given by $\phi_A(f) = \phi\circ f \circ \psi_E$, is an isomorphism of $\mathcal{A}$-modules which defines an isomorphism in $\mathscr{D}\bG$ under the construction in \cref{eq:diag-Hom-MG}. We now need only observe that when $K_E$ is simple, hence $E$ is simple, all such isomorphisms occur in this way. To see this note that any non-zero $f \in \Hom_{\mathcal{A}}(E,K)$ gives an isomorphism $E \cong \Image(f)$, because $E$ is a simple module, so $\psi_E = f^{-1}\circ\phi^{-1}\circ\phi_A\circ f$ is determined by $\phi_A$. This discussion shows that choosing the isomorphism $\phi_A$ is equivalent to choosing the isomorphism $\psi_E$.

Let us recall Lusztig's basis $\{\Theta_v \mid v \in W_{\bG}(\bL_w,\mathscr{L}_w)\}$ for the endomorphism algebra $\mathcal{A}$, (c.f.\ \cref{pa:Lusztig-iso-A}). From their definition one may readily check that the basis elements of $\mathcal{A}$ satisfy $\sigma(\Theta_v) = \Theta_{F^{-1}(v)}$ for all $v \in W_{\bG}(\bL_w,\mathscr{L}_w)$. Let $\widetilde{W}_{\bG}(\bL_w,\mathscr{L}_w)$ denote the semidirect product $W_{\bG}(\bL_w,\mathscr{L}_w)\rtimes \langle F\rangle$ where $\langle F\rangle$ is the finite cyclic group generated by the automorphism $F$. Let $E$ be a $W_{\bG}(\bL_w,\mathscr{L}_w)$-module then we can extend $E$ to a $\widetilde{W}_{\bG}(\bL_w,\mathscr{L}_w)$-module $\widetilde{E}$ by letting $F^{-1}$ act as $\psi_E$ under the isomorphism in \cref{eq:end-grp-alg-iso}.
\end{pa}

\begin{assumption}
We will now assume that $\psi_E$ is chosen such that $\widetilde{E}$ is Lusztig's preferred extension of $E$ defined in \cite[\S17.2]{lusztig:1986:character-sheaves-IV}.
\end{assumption}

\subsection{Describing Isomorphisms on Local Systems at Split Elements}
\begin{pa}\label{pa:iso-arbitrary-pair}
Before we continue we define here two integer values associated to a pair $\iota \in \mathcal{N}_{\bG}$. Recall that $[\bL_{\iota},\nu_{\iota}] \in \mathcal{M}_{\bG}$ is the orbit such that $\iota \in \mathscr{I}[\bL_{\iota},\nu_{\iota}]$, (c.f.\ \cref{pa:gen-spring-cor}). Let us denote $\nu_{\iota}$ by $(\mathcal{O}_0,\mathscr{E}_0)$ then we attach to $\iota$ the following two values
\begin{align*}
a_{\iota} &= -\dim\mathcal{O}_{\iota} - \dim Z^{\circ}(\bL_{\iota}),\\
b_{\iota} &= (\dim\bG -\dim\mathcal{O}_{\iota}) - (\dim\bL_{\iota} - \dim\mathcal{O}_0).
\end{align*}
We now consider how the above isomorphisms determine a canonical isomorphism $F^*\mathscr{E}_{\iota} \to \mathscr{E}_{\iota}$ for an arbitrary pair $\iota = (\mathcal{O}_{\iota},\mathscr{E}_{\iota}) \in \mathcal{N}_{\bG}^F$. Recall from \cite[Theorem 6.5(c)]{lusztig:1984:intersection-cohomology-complexes} that the perverse sheaf $K_{\iota}$ is such that $\mathscr{H}^{a_{\iota}}(K_{\iota})|_{\mathcal{O}_{\iota}} \cong \mathscr{E}_{\iota}$. In \cref{pa:iso-on-arbitrary-char-sheaf} we have defined a canonical isomorphism $\phi_{\iota} : F^*K_{\iota} \to K_{\iota}$ and this induces an isomorphism $F^*\mathscr{H}^{a_{\iota}}(K_{\iota})|_{\mathcal{O}_{\iota}} \to \mathscr{H}^{a_{\iota}}(K_{\iota})|_{\mathcal{O}_{\iota}}$ hence an isomorphism $\varphi_{\iota} : F^*\mathscr{E}_{\iota} \to \mathscr{E}_{\iota}$. To do computations we will need the following explicit result concerning $\varphi_{\iota}$ which follows the same line of argument as \cite[3.4]{lusztig:1986:on-the-character-values} and \cite[Lemma 3.6]{shoji:1997:unipotent-characters-of-finite-classical-groups}.
\end{pa}

\begin{prop}\label{prop:F-stable-pair-loc-sys-iso}
If $\bG$ is of type $\E_8$ then assume $q \equiv 1\pmod{3}$. For any $F$-stable pair $\iota \in \mathcal{N}_{\bG}^F$ and $v \in W_{\bG}(\bL_{\iota})$ the map $(\mathscr{E}_{\iota})_u \to (\mathscr{E}_{\iota})_u$ induced by $\Theta_v'\phi_{\iota}$ for some split element $u \in \mathcal{O}_{\iota}^F$ is $q^{(\dim\bG + a_{\iota})/2}$ times the identity. If $\bG$ is of type $\E_8$ and $q \equiv -1\pmod{3}$ then the same is true unless $\mathcal{O}_{\iota}$ is the class $\E_8(b_6)$ in which case the induced map is $-q^{(\dim\bG+a_{\iota})/2}$ times the identity.
\end{prop}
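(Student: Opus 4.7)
The plan is to follow the line of argument of \cite[3.4]{lusztig:1986:on-the-character-values} and \cite[Lemma 3.6]{shoji:1997:unipotent-characters-of-finite-classical-groups}, adapted to our setting (connected centre, possibly non-split $F$). Fix $\iota = (\mathcal{O}_{\iota},\mathscr{E}_{\iota}) \in \mathcal{N}_{\bG}^F$, write $\nu_{\iota} = (\mathcal{O}_0,\mathscr{E}_0)$, and set $\bL = \bL_{\iota}$. Because $\mathcal{O}_{\iota}$ consists of unipotent elements, only the part of $K_{\iota}$ lying over the unipotent variety matters, so we may work with $\mathscr{L} = \Ql$ throughout. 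Let $K = K_{\Ql}$, let $\mathcal{A}$ be its endomorphism algebra, and identify $K_{\iota} = K_E$ for the simple $W_{\bG}(\bL)$-module $E$ corresponding to $\iota$ under \cref{eq:gen-spring-cor}. By \cite[Theorem 6.5(c)]{lusztig:1984:intersection-cohomology-complexes} we have $\mathscr{H}^{a_{\iota}}(K_E)|_{\mathcal{O}_{\iota}} \cong \mathscr{E}_{\iota}$, so the assertion is about the induced map on $\mathscr{H}^{a_{\iota}}_u K_E$.

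First I would trace the isomorphism $\phi_{\iota}$ down to the stalk at $u$. By \cref{pa:iso-on-arbitrary-char-sheaf} the isomorphism $\phi_{\iota} : F^*K_{\iota} \to K_{\iota}$ is determined by $\phi : F^*K \to K$ (here for $v=1$, since $w=1$ when $\mathscr{L}=\Ql$) together with the preferred extension $\psi_E$. Decomposing $K \cong \bigoplus_{E'} E'\otimes K_{E'}$ as in \cref{eq:K-nu-L-decomp} produces a corresponding decomposition of $\mathscr{H}^{a_{\iota}}_u K$; the contribution to $(\mathscr{E}_{\iota})_u$ is the composite of the scalar from $\varphi_0$ with the action of $F$ on $E$ via the preferred extension.

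Next I would compute the scalar. By \cref{pa:isomorphism-cuspidal-case}, $\varphi_0 : F^*\mathscr{E}_0 \to \mathscr{E}_0$ acts at the split stalk $u_0$ as multiplication by $q^{(\dim(\bL/Z^{\circ}(\bL)) - \dim \mathcal{O}_0)/2}$. Tracking this through the diagram $\Sigma \leftarrow \hat{Y} \to \tilde{Y} \to Y$ of \cref{pa:unip-sup-char-sheaves} and the IC-extension to $\overline{Y}$ introduces an extra $q$-power corresponding to the difference $\dim Y - \dim\Sigma = \dim\bG - \dim\bL$ (together with the shift from $\dim\mathcal{O}_0$ to $\dim\mathcal{O}_{\iota}$ built into the IC sheaf). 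The bookkeeping mirrors \cite[3.4]{lusztig:1986:on-the-character-values} and the total scalar is
\begin{equation*}
q^{(\dim\bG - \dim\mathcal{O}_{\iota} - \dim Z^{\circ}(\bL))/2} = q^{(\dim\bG + a_{\iota})/2}.
\end{equation*}
It remains to address the $v$-dependence via $\Theta_v'$ and the identification with the preferred extension. By Bonnaf\'e's normalisation (\cref{pa:Bonnafe-iso-A}, \cref{prop:bases-of-A}) the isomorphism $\theta_v'$ is the identity at the stalk of $u_0$, so $\Theta_v'$ induces the standard Springer-type $W_{\bG}(\bL)$-action on $\mathscr{H}^{a_{\iota}}_u K$ without a character twist, and our choice of $\phi_{\iota}$ in \cref{pa:iso-on-arbitrary-char-sheaf} forces $F$ to act on the $E$-isotypic piece as the preferred extension of $E$, multiplied by the scalar above. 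Using the split property of $u$ together with \cref{lem:Frob-action} (so $F$ fixes $A_{\bG}(u)$) and \cref{prop:frob-action}, the preferred extension acts as the identity on the relevant component $H^{2d_u}_c(\mathfrak{B}_u^{\bG})_{\psi}$, except when $\bG$ is of type $\E_8$, $q \equiv -1 \pmod{3}$, and $\mathcal{O}_{\iota}$ is $\E_8(b_6)$, in which case it is $-1$ times the identity. This gives the claim.

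The main obstacle is the careful $q$-power bookkeeping in the third step, together with verifying that Bonnaf\'e's basis truly produces the \emph{unmixed} Springer action at an arbitrary split $u \in \mathcal{O}_{\iota}^F$, rather than only at $u_0 \in \mathcal{O}_0 \subset \bL$. Without the Bonnaf\'e normalisation one would pick up a twist by the character $\gamma_{\bL}^{\bG}$ of \cref{prop:bases-of-A}, which is exactly why we formulate the statement using $\Theta_v'$ rather than $\Theta_v$. A secondary subtlety is the $\E_8(b_6)$ sign, dealt with via the second clause of \cref{prop:frob-action}.
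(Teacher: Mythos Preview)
Your outline follows the same decomposition of $K$ and the same reduction to studying the induced map on $\mathscr{H}^{a_\iota}_u K$ as the paper, but there is a genuine gap in the middle.

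The step labelled ``Next I would compute the scalar'' is not a proof. The induced map on $(\mathscr{E}_\iota)_u$ is a priori only a linear endomorphism, and the whole content of the proposition is that it is a \emph{specific scalar}. You cannot obtain this by ``tracking $q$-powers through the diagram $\Sigma\leftarrow\hat Y\to\tilde Y\to Y$'': the stalk of $K$ at $u$ is a cohomology group, not a single $\Ql$-line, and the Frobenius action there is not determined by the scalar at $u_0$ alone. What the paper actually does is introduce the variety
\[
Z_u=\{x\bP\in\bG/\bP\mid x^{-1}ux\in\mathcal{O}_0\bU_{\bP}\}
\]
together with the local system $\widehat{\mathscr{E}}_\iota$ on it, and use Lusztig's isomorphism $\Phi:\mathscr{H}^{a_\iota}_u(K)\xrightarrow{\sim} H_c^{b_\iota}(Z_u,\widehat{\mathscr{E}}_\iota)$ from \cite[24.2.5]{lusztig:1986:character-sheaves-V}. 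The problem then becomes: show that the Frobenius (twisted by $\theta_v'\varphi_0$, which is the identity on the stalk of $u_0$ by Bonnaf\'e's normalisation) acts on the $E$-isotypic part of $H_c^{b_\iota}(Z_u,\widehat{\mathscr{E}}_\iota)$ as $q^{(\dim\bG+a_\iota)/2}\psi_E^{-1}$. Your proposal skips this translation entirely.

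The second gap is more serious. You only invoke \cref{prop:frob-action}, which concerns $H_c^{2d_u}(\mathfrak{B}_u^{\bG})$. That suffices \emph{only} when $\bL_\iota=\bT_0$, i.e.\ when $\iota$ lies in the principal block, because then $Z_u=\mathfrak{B}_u^{\bG}$ and $\widehat{\mathscr{E}}_\iota=\Ql$. For the cuspidal case $\bL_\iota=\bG$ the assertion is trivial. But for the intermediate blocks (which in the classical types under consideration genuinely occur) the variety $Z_u$ is \emph{not} the Springer fibre and $\widehat{\mathscr{E}}_\iota$ is \emph{not} constant; here the paper invokes Shoji's \cite[Theorem 4.3]{shoji:2007:generalized-green-functions-II} on generalised Green functions, which is precisely the analogue of \cref{prop:frob-action} for these blocks. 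Your argument, as written, proves nothing in that case.
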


\begin{proof}
Let $K$ be the image of $(\bL,\mathcal{O}_0,\mathscr{E}_0,\Ql)$ under the map in \cref{eq:ind-w-P-construction} and let $\phi : F^*K \to K$ be the canonical isomorphism defined above and $\varphi_u$ the isomorphism $\mathscr{H}^{a_{\iota}}_u(K) \to \mathscr{H}^{a_{\iota}}_u(K)$ induced by $\Theta_v'\phi$. First of all let us note that under the isomorphism in \cref{eq:K-nu-L-decomp} we have
\begin{equation*}
F^*K \cong \bigoplus (E\otimes F^*K_E) \qquad\qquad \mathscr{H}_u^{a_{\iota}}(K) \cong \bigoplus (E \otimes \mathscr{H}_u^{a_{\iota}}(K_E)).
\end{equation*}
Both of these isomorphisms are given in the same way as \cref{eq:K-nu-L-decomp} by noticing that $F^*K_E = \Hom_{\mathcal{A}}(E,F^*K)$ and $\mathscr{H}_u^{a_{\iota}}(K_E) = \Hom_{\mathcal{A}}(E,\mathscr{H}_u^{a_{\iota}}(K))$. Assume $E \in \Irr(W_{\bG}(\bL))^F$ then under the isomorphism \cref{eq:K-nu-L-decomp} the restriction of $\Theta_v'\phi$ to the summand isomorphic to $E\otimes K_E$ corresponds to the isomorphism $\psi_E^{-1} \otimes (\Theta_v'\phi_A)$, (where $\psi_E$ and $\phi_A$ are as in \cref{pa:iso-on-arbitrary-char-sheaf}). With this in mind it is enough to show that $\varphi_u$ acts on an $\mathcal{A}$-submodule of $\mathscr{H}_u^{a_{\iota}}(K)$ isomorphic to $E$ as $q^{(\dim\bG+a_{\iota})/2}$ times $\psi_E^{-1}$, (resp.\ $-\psi_E^{-1}$ if $\mathcal{O}_{\iota}$ is $\E_8(b_6)$ and $q \equiv -1\pmod{3}$).

To prove this we follow the argument of \cite[Lemma 3.6]{shoji:1997:unipotent-characters-of-finite-classical-groups} which in turn is a modification of the arguments in \cite{lusztig:1986:on-the-character-values}. Let $\bP \leqslant \bG$ be a parabolic subgroup of $\bG$ such that $\bP = \bL_{\iota}\bU_{\bP}$ is a Levi decomposition of $\bP$. As $\iota \in \mathcal{N}_{\bG}^F$ is $F$-stable we may assume $\bL_{\iota}$ and $\bP$ are $F$-stable. We define $Z_u$ to be the variety $\{x\bP \in \bG/\bP \mid x^{-1}ux \in \mathcal{O}_0\bU_{\bP}\}$ and similarly we define $\widehat{Z}_u$ to be the variety $\{x \in \bG \mid x^{-1}ux \in \mathcal{O}_0\bU_{\bP}\}$. We have two natural morphisms $\psi_u : \widehat{Z}_u \to Z_u$, resp.\ $\lambda_u : \widehat{Z}_u \to \mathcal{O}_0$, given by $\psi_u(x) = x\bP$ and $\lambda_u(x) = (\mathcal{O}_0$-component of $x^{-1}ux \in \mathcal{O}_0\bU_{\bP})$. We then define a local system $\widehat{\mathscr{E}}_{\iota}$ on $Z_u$ by the condition that $\psi_u^*\widehat{\mathscr{E}}_{\iota} = \lambda_u^*\mathscr{E}_0$.

By the discussion in \cite[\S6.3]{lusztig:1992:a-unipotent-support} it follows that $b_{\iota} = a_{\iota} + \dim\supp K_{\iota}$ hence by \cite[24.2.5]{lusztig:1986:character-sheaves-V} we have an isomorphism $\Phi : \mathscr{H}^{a_{\iota}}_u(K) \to H_c^{b_{\iota}}(Z_u,\widehat{\mathscr{E}}_{\iota})$, (where this is the cohomology with compact support of $Z_u$ with coefficients in the local system $\widehat{\mathscr{E}}_{\iota}$). Note that we have a commutative diagram
\begin{center}
\begin{tikzcd}
Z_u \arrow{d}{\pi_v} & \widehat{Z}_u \arrow{d}{\hat{\pi}_v}\arrow{l}[swap]{\psi_u}\arrow{r}{\lambda_u} & \mathcal{O}_0 \arrow{d}{\ad \dot{v}}\\
Z_u & \widehat{Z}_u \arrow{l}{\psi_u}\arrow{r}[swap]{\lambda_u} & \mathcal{O}_0
\end{tikzcd}
\end{center}
where $\pi_v$ and $\hat{\pi}_v$ are defined by $\pi_v(x) = x\dot{v}^{-1}$ and $\hat{\pi}_v(x\bP) = x\dot{v}^{-1}\bP$. Let $\varphi_0 : F^*\mathscr{E}_0 \to \mathscr{E}_0$ be as in \cref{pa:isomorphism-cuspidal-case} then the composition $\theta_v'\varphi_0 : F^*\mathscr{E}_0 \to (\ad\dot{v})^*\mathscr{E}_0$ is an isomorphism such that the induced isomorphism at the stalk of our fixed split element $u_0 \in \mathcal{O}_0^F$ is the identity, (c.f.\ \cref{pa:Bonnafe-iso-A}). This isomorphism induces an isomorphism $\widehat{\varphi}_0 : F^*\widehat{\mathscr{E}}_{\iota} \to \pi_v^*\widehat{\mathscr{E}}_{\iota}$ which in turn induces a linear map $\widehat{\varphi}_0$ of $H_c^{b_{\iota}}(Z_u,\widehat{\mathscr{E}}_{\iota})$ satisfying $\Phi\circ\varphi_u = \widehat{\varphi}_0\circ\Phi$. With this we see that we are left with showing that $\widehat{\varphi}_0$ acts on an $\mathcal{A}$-submodule of $H_c^{b_{\iota}}(Z_u,\widehat{\mathscr{E}}_{\iota})$ isomorphic to $E$ as $q^{(\dim\bG+a_{\iota})/2}$ times $\psi_E^{-1}$, (resp.\ $-\psi_E^{-1}$ if $\mathcal{O}_{\iota}$ is $\E_8(b_6)$ and $q \equiv -1\pmod{3}$).

Assume $\bL_{\iota} = \bT_0$ so that $\iota$ is in the principal block then $(\mathcal{O}_0,\mathscr{E}_0) = (\{1\},\Ql)$ and $Z_u$ can canonically be identified with $\mathfrak{B}_u^{\bG}$. Furthermore we have $\widehat{\mathscr{E}}_{\iota}$ is simply the constant sheaf and $\widehat{\varphi}_0$ induces the identity at every stalk of $\widehat{\mathscr{E}}_{\iota}$. Hence the statement is simply that of \cref{prop:frob-action} so we are done in this case. The case where $\bL_{\iota} = \bG$, (i.e.\ $\iota$ is cuspidal), is trivial so we are left only with the case where $\bL_{\iota}$ is neither $\bG$ nor $\bT_0$. For this to be the case we must have $\bG$ is of type $\B_n$, $\C_n$ or $\D_n$ but these cases are dealt with by Shoji in \cite[Theorem 4.3]{shoji:2007:generalized-green-functions-II}, (see also the reduction arguments given in \cite[1.5]{shoji:2006:generalized-green-functions-I}). Note that to apply this theorem we need the fact that $\theta_v'\varphi_0$ induces the identity at the stalk of the split element $u_0 \in \mathcal{O}_0^F$.
\end{proof}

\begin{assumption}
From now on the following assumption is in place. If $A \in \widehat{\bG}^F$ is a unipotently supported character sheaf then we assume the isomorphism $\phi_A : F^*A \to A$ to be chosen as in \cref{pa:iso-on-arbitrary-char-sheaf}, otherwise we assume $\phi_A$ is any isomorphism satisfying the conditions mentioned in \cref{pa:characteristic-function}. If $\iota \in \mathcal{N}_{\bG}^F$ is any $F$-stable pair then we assume the isomorphism $\varphi_{\iota} : F^*\mathscr{E}_{\iota} \to \mathscr{E}_{\iota}$ to be chosen as in \cref{pa:iso-arbitrary-pair}, in particular the isomorphism is described explicitly by \cref{prop:F-stable-pair-loc-sys-iso}. With this in mind we will simply write $\chi_A$, (resp.\ $\chi_{K_{\iota}}$), for $\chi_{A,\phi_A}$, (resp.\ $\chi_{K_{\iota},\phi_{\iota}}$).
\end{assumption}

\begin{pa}\label{pa:X-and-Y}
Following \cite[\S24.2]{lusztig:1986:character-sheaves-V} we associate to each $F$-stable pair $\iota \in \mathcal{N}_{\bG}^F$ and each $v \in W_{\bG}(\bL)$ a unipotently supported class function of $G$ by setting
\begin{align*}\label{eq:def-X_i}
X_{\iota}^v(g) &= (-1)^{a_{\iota}}q^{-(\dim\bG + a_{\iota})/2}\chi_{K_{\iota},\phi_{\iota}^v}(g),
\intertext{for all $g \in G$, where $\phi_{\iota}^v = \Theta_{F(v)}\phi_{\iota}$. Furthermore for each $\iota \in \mathcal{N}_{\bG}^F$ we define a second unipotently supported class function of $G$ by setting}
Y_{\iota}(g) &= \begin{cases}
\Tr(\psi_{\iota},(\mathscr{E}_{\iota})_g) &\text{if }g \in \mathcal{O}_{\iota}^F,\\
0 &\text{otherwise},
\end{cases}
\end{align*}
for all $g \in G$. Here $\psi_{\iota} : F^*\mathscr{E}_{\iota} \to \mathscr{E}_{\iota}$ is an isomorphism such that for any split element $u \in \mathcal{O}_{\iota}^F$ we have the map induced by $\psi_{\iota}$ on the stalk $(\mathscr{E}_{\iota})_u$ is the identity. The sets $\mathcal{Y} = \{Y_{\iota} \mid \iota \in \mathcal{N}_{\bG}^F\}$ and $\mathcal{X}^v = \{X_{\iota}^v \mid \iota \in \mathcal{N}_{\bG}^F\}$ are bases for the subspace $\Centu{G}$ of unipotently supported class functions of $G$, (see \cite[24.2.7]{lusztig:1986:character-sheaves-V}). In particular for each $\iota$, $\iota' \in \mathcal{N}_{\bG}^F$ there exists an element $P_{\iota',\iota}^v \in \overline{\mathbb{Q}}_{\ell}$ such that
\begin{equation*}
X_{\iota}^v = \sum_{\iota' \in \mathcal{N}_{\bG}^F} P_{\iota',\iota}^vY_{\iota'}.
\end{equation*}
These coefficients have the following properties:
\begin{equation}\label{eq:P-properties}
P_{\iota'\iota}^v = 
\begin{cases}
0 &\text{if }\mathcal{O}_{\iota'} \not\subset \overline{\mathcal{O}_{\iota}}\\
0 &\text{if }\mathcal{O}_{\iota'} = \mathcal{O}_{\iota}\text{ and }\iota'\neq\iota\\
\gamma_{\bL_{\iota}}^{\bG}(F(v)) &\text{if }\iota'=\iota.
\end{cases}
\end{equation}
which follows from \cref{prop:F-stable-pair-loc-sys-iso,prop:bases-of-A} and \cite[24.1.2]{lusztig:1986:character-sheaves-V}. Note that if $v$ is the identity then we will suppress the superscript writing simply $X_{\iota}$ and $P_{\iota',\iota}$ for $X_{\iota}^v$ and $P_{\iota',\iota}^v$.
\end{pa}
%
\section{Restricting Character Sheaves to the Unipotent Variety}\label{sec:restricting-to-unip-variety}
\begin{pa}
Let us maintain the setup of the previous section, namely that $A \in \widehat{\bG}^F$ is an indecomposable summand of $K_w^{\mathscr{L}}$ which is the image of $(\bL_w,\mathcal{O}_w,\mathscr{E}_w,\mathscr{L}_w)$ under the map in \cref{eq:ind-w-P-construction}. If $\mathscr{L} = \Ql$ then we will denote the complex $K_w^{\mathscr{L}}$ simply by $K_w$. Recall from \cref{pa:X-and-Y} that $\mathcal{X}^w$ is a basis for the subspace $\Cent_U(G)$ of unipotently supported class functions of $G$. In particular we have
\begin{equation*}
\chi_A|_{G_{\uni}} = \sum_{\iota \in \mathcal{N}_{\bG}^F} m(A,\iota,\phi_{\iota}^w)\chi_{K_{\iota},\phi_{\iota}^w}
\end{equation*}
for some coefficients $m(A,\iota,\phi_{\iota}^w) \in \Ql$. It is the purpose of this section to describe the coefficients $m(A,\iota,\phi_{\iota}^w)$. We will do this following the method in \cite{lusztig:1986:on-the-character-values}, in particular we will now recall the sequence of isomorphisms constructed in \cite[\S2.6]{lusztig:1986:on-the-character-values}.
\end{pa}

\begin{pa}\label{pa:d_w-iso}
Assume $(\bL_w,\bQ) \in \mathcal{Z}$ then we will denote by $\ind_{\bL_w \subset \bQ}^{\bG}(A_{\mathscr{L}})$ the image of $(\bL_w,\bQ,\mathcal{O}_w,\mathscr{E}_w,\mathscr{L}_w)$ under the map in \cref{eq:ind-P-construction}. Let $D$ be the similarly named complex defined in \cref{pa:definition-of-induction} then by the discussion in \cref{pa:K-nu-L-ind-A-nu-L} we have $D = \IC(\tilde{X}',\overline{\mathscr{E}_w\boxtimes\mathscr{L}_w})[\dim \tilde{X}']$. If $\mathscr{L} = \Ql$ then we will denote the complex $D$ by $D_0$ and $\ind_{\bL_w \subset \bQ}^{\bG}(A_{\mathscr{L}})$ by $\ind_{\bL_w \subset \bQ}^{\bG}(A_0)$. Let $i : \overline{Y}_{\uni} \hookrightarrow \overline{Y}$ be the natural inclusion of the unipotent elements contained in $\overline{Y}$ then we have a commutative diagram
\begin{center}
\begin{tikzcd}
\overline{\mathcal{O}}_0 \arrow{d}[swap]{i} & \hat{X}_{\uni}' \arrow{l}[swap]{\pi}\arrow{d}[swap]{i}\arrow{r}{\sigma} & \tilde{X}_{\uni}' \arrow{d}[swap]{i}\arrow{r}{\tau} & \overline{Y}_{\uni} \arrow{d}[swap]{i}\\
\overline{\Sigma} & \hat{X}' \arrow{l}[swap]{\pi} \arrow{r}{\sigma} & \tilde{X}' \arrow{r}{\tau} & \overline{Y}
\end{tikzcd}
\end{center}
where $\tilde{X}_{\uni}' = \tau^{-1}(\overline{Y}_{\uni})$ and $\hat{X}_{\uni}' = \sigma^{-1}(\tilde{X}_{\uni}')$. For $\tilde{X}_{\uni}'$ and $\hat{X}_{\uni}'$ we have the action of $i$ is given by the natural action on the first factor. We will denote by $\tilde{Z}_{\uni}' \subset \tilde{X}_{\uni}'$ the subvariety given by $\{(g,h\bQ) \in \bG_{\uni} \times \bG/\bQ \mid h^{-1}gh \in \Sigma\bU_{\bQ}\}$ and by $\tilde{\imath} : \tilde{Z}_{\uni}' \hookrightarrow \tilde{X}_{\uni}'$ the inclusion map. It is clear that we have an isomorphism $\psi : i^*(\mathscr{E}_0\boxtimes\mathscr{L}) \to i^*(\mathscr{E}_0\boxtimes\Ql)$ of local systems and as $\pi$, $\tilde{\imath}$ and $j$, (c.f.\ \cref{pa:K-nu-L-ind-A-nu-L}), commute with $i$ we have an induced isomorphism
\begin{equation*}
\psi' = \tilde{\imath}^*j^*\pi^*(\psi) : i^*\tilde{\imath}^*\overline{\mathscr{E}_0\boxtimes\mathscr{L}} \to i^*\tilde{\imath}^*\overline{\mathscr{E}_0\boxtimes\Ql},
\end{equation*}
which in turn induces an isomorphism
\begin{equation*}
\psi'': i^*\tilde{\imath}^*D = \IC(\tilde{X}_{\uni}',i^*\tilde{\imath}^*\overline{\mathscr{E}_0\boxtimes\mathscr{L}})[\dim\tilde{X}'] \to \IC(\tilde{X}_{\uni}',i^*\tilde{\imath}^*\overline{\mathscr{E}_0\boxtimes\Ql})[\dim\tilde{X}'] = i^*\tilde{\imath}^*D_0.
\end{equation*}
According to \cite[6.6]{lusztig:1984:intersection-cohomology-complexes} we have $(\tau_!D)|_{\overline{Y}_{\uni}} = \tau_!i^*\tilde{\imath}^*D$ and $(\tau_!D_0)|_{\overline{Y}_{\uni}} = \tau_!i^*\tilde{\imath}^*D_0$ hence $\tau_!(\psi'')$ gives an isomorphism $\delta_{\bQ} : \ind_{\bL_w \subset \bQ}^{\bG}(A_{\mathscr{L}})|_{\bG_{\uni}} \to \ind_{\bL_w \subset \bQ}^{\bG}(A_0)|_{\bG_{\uni}}$ in $\mathscr{D}\bG_{\uni}$, (c.f.\ \cite[\S2.6(c)]{lusztig:1986:on-the-character-values}). Note that \cref{pa:direct-image-commutative} does not apply here because $i$ is a closed immersion.
\end{pa}

\begin{pa}
For each pair $(\bL_w,\bQ) \in \mathcal{Z}$ we will denote by
\begin{equation*}
\lambda_{\mathscr{L},\bQ} : \ind_{\bL_w \subset \bQ}^{\bG}(A_{\mathscr{L}})|_{\bG_{\uni}}\to K_w^{\mathscr{L}}|_{\bG_{\uni}} \qquad\text{and}\qquad \lambda_{\bQ} : \ind_{\bL_w \subset \bQ}^{\bG}(A_0)|_{\bG_{\uni}} \to K_w|_{\bG_{\uni}}
\end{equation*}
the restrictions to $\bG_{\uni}$ of the canonical isomorphisms described by \cref{prop:K-nu-L-iso-to-induction}. Putting these isomorphisms together with $\delta_{\bQ}$ we can now define an isomorphism
\begin{equation*}
\epsilon := \lambda_{\bQ}\circ\delta_{\bQ}\circ\lambda_{\mathscr{L},\bQ}^{-1} : K_w^{\mathscr{L}}|_{\bG_{\uni}} \to K_w|_{\bG_{\uni}},
\end{equation*}
(c.f.\ \cite[\S2.6(a)]{lusztig:1986:on-the-character-values}). We claim that $\epsilon$ does not depend upon the choice of parabolic subgroup $\bQ$ used to define it. Firstly, from the construction above, it is clear that $\delta_{\bQ}$ does not depend upon the choice of $\bQ$ hence we need only show that the same is true of $\lambda_{\bQ}$ and $\lambda_{\mathscr{L},\bQ}$. However using the construction in \cref{pa:K-nu-L-ind-A-nu-L} we see that this follows from the commutative diagram
\begin{equation*}
\begin{tikzcd}
\tilde{Y} \arrow{d}[swap]{\ID}\arrow{r}{\kappa_{\bQ}} & \tau_{\bQ}^{-1}(Y) \arrow{d}{\mu}\\
\tilde{Y} \arrow{r}{\kappa_{\bR}} & \tau_{\bR}^{-1}(Y)
\end{tikzcd}
\end{equation*}
where $\mu(g,h\bQ) = (g,h\bR)$ and $(\bL_w,\bR) \in \mathcal{Z}$. Note this statement is implicitly used in \cite[\S2.6]{lusztig:1986:on-the-character-values}.
\end{pa}

\begin{pa}
Let $v \in W_{\bG}(\bL_w,\mathscr{L}_w)$ then we denote by $\dot{v} \in N_{\bG}(\bL_w,\mathscr{L}_w)$ a representative of $v$. Following \cite[\S2.6(d)]{lusztig:1986:on-the-character-values} we define to each $v \in W_{\bG}(\bL_w,\mathscr{L}_w)$ isomorphisms $\tilde{\theta}_{\mathscr{L},v} : \ind_{\bL_w \subset {}^{\dot{v}}\bP_w}^{\bG}(A_{\mathscr{L}}) \to \ind_{\bL_w \subset \bP_w}^{\bG}(A_{\mathscr{L}})$ and $\tilde{\theta}_v : \ind_{\bL_w \subset {}^{\dot{v}}\bP_w}^{\bG}(A_0) \to \ind_{\bL_w \subset \bP_w}^{\bG}(A_0)$ in the following way. Denote by $\varphi_v : \tilde{X}_{\bL_w\subset{}^{\dot{v}}\bP_w}^{\bG} \to \tilde{X}_{\bL_w\subset\bP_w}^{\bG}$ the isomorphism given by $\varphi_v(g,h{}^{\dot{v}}\bP_w) = (g,h\dot{v}\bP_w)$. By \cref{pa:direct-image-commutative} we have $(\tau_{\bL_w\subset{}^{\dot{v}}\bP_w}^{\bG})_!\varphi_v^* = \ID^*(\tau_{\bL_w\subset\bP_w}^{\bG})_! = (\tau_{\bL_w\subset\bP_w}^{\bG})_!$ so we take $\tilde{\theta}_{\mathscr{L},v}$ and $\tilde{\theta}_v$ to be the isomorphisms induced by $\varphi_v$. Using the definition of $\theta_v$ one can check that we have a commutative diagram
\begin{equation*}
\begin{tikzcd}
K_w^{\mathscr{L}}|_{\bG_{\uni}} \arrow{d}{\theta_v}\arrow{r}{\lambda_{\mathscr{L},v}^{-1}} &
\ind_{\bL_w\subset{}^{\dot{v}}\bP_w}^{\bG}(A_{\mathscr{L}})|_{\bG_{\uni}} \arrow{d}{\tilde{\theta}_{\mathscr{L},v}}\arrow{r}{\delta_v} & \ind_{\bL_w\subset{}^{\dot{v}}\bP_w}^{\bG}(A_0)|_{\bG_{\uni}} \arrow{d}{\tilde{\theta}_v}\arrow{r}{\lambda_v} & K_w|_{\bG_{\uni}} \arrow{d}{\theta_v}\\
K_w^{\mathscr{L}}|_{\bG_{\uni}} \arrow{r}{\lambda_{\mathscr{L},1}^{-1}} & \ind_{\bL_w\subset\bP_w}^{\bG}(A_{\mathscr{L}})|_{\bG_{\uni}}\arrow{r}{\delta_1} & \ind_{\bL_w\subset\bP_w}^{\bG}(A_0)|_{\bG_{\uni}}\arrow{r}{\lambda_1} & K_w|_{\bG_{\uni}}
\end{tikzcd}
\end{equation*}
where $\lambda_{\mathscr{L},v} = \lambda_{\mathscr{L},{}^{\dot{v}}\bP_w}$ and $\lambda_v = \lambda_{{}^{\dot{v}}\bP_w}$. In particular this shows that the isomorphism $\epsilon$ is an isomorphism of $W_{\bG}(\bL_w,\mathscr{L}_w)$-modules, (recall that $\epsilon$ does not depend upon the choice of parabolic subgroup used to define it).

We now wish to check that the isomorphism $\epsilon$ respects the action of the Frobenius endomorphism. In other words let $\phi_{\mathscr{L}}^w : F^*K_w^{\mathscr{L}} \to K_w^{\mathscr{L}}$ and $\phi_{\Ql}^w : F^*K_w \to K_w$ be the isomorphisms defined in \cref{pa:iso-K-v} then we wish to show that $\epsilon\circ\phi_{\mathscr{L}}^w = \phi_{\Ql}^w\circ F^*\epsilon$. Recall that in \cref{pa:isomorphism-cuspidal-case} we fixed isomorphisms $F^*A_{\mathscr{L}} \to A_{\mathscr{L}}$ and $F^*A_0 \to A_0$ and that by \cref{lem:F-action-ind} these respectively induce isomorphisms $\psi_{\mathscr{L},\bQ} : F^*\ind_{\bL_w \subset F(\bQ)}^{\bG}(A_{\mathscr{L}}) \to \ind_{\bL_w \subset \bQ}^{\bG}(A_{\mathscr{L}})$ and $\psi_{\bQ} : F^*\ind_{\bL_w \subset F(\bQ)}^{\bG}(A_0) \to \ind_{\bL_w \subset \bQ}^{\bG}(A_0)$. With this we then have a commutative diagram
\begin{equation*}
\begin{tikzcd}[column sep=8ex]
F^*(K_w^{\mathscr{L}}|_{\bG_{\uni}}) \arrow{d}{\phi_{\mathscr{L}}^w}\arrow{r}{F^*\lambda_{\mathscr{L},F(\bQ)}^{-1}} &
F^*(\ind_{\bL_w\subset F(\bQ)}^{\bG}(A_{\mathscr{L}})|_{\bG_{\uni}}) \arrow{d}{\psi_{\mathscr{L},\bQ}}\arrow{r}{F^*\delta_{F(\bQ)}} & F^*(\ind_{\bL_w\subset F(\bQ)}^{\bG}(A_0)|_{\bG_{\uni}}) \arrow{d}{\psi_{\bQ}}\arrow{r}{F^*\lambda_{F(\bQ)}} & F^*(K_w|_{\bG_{\uni}}) \arrow{d}{\phi_{\Ql}^w}\\
K_w^{\mathscr{L}}|_{\bG_{\uni}} \arrow{r}{\lambda_{\mathscr{L},\bQ}^{-1}} & \ind_{\bL_w\subset \bQ}^{\bG}(A_{\mathscr{L}})|_{\bG_{\uni}}\arrow{r}{\delta_{\bQ}} & \ind_{\bL_w\subset \bQ}^{\bG}(A_0)|_{\bG_{\uni}}\arrow{r}{\lambda_{\bQ}} & K_w|_{\bG_{\uni}}
\end{tikzcd}
\end{equation*}
hence $\epsilon$ commutes with the isomorphisms $\phi_{\mathscr{L}}^w$ and $\phi_{\Ql}^w$ as desired, (note that we have used here that $F(\bG_{\uni}) = \bG_{\uni}$).
\end{pa}

\begin{pa}\label{pa:arbitrary-char-sheaf}
We now arrive at our ultimate isomorphism, (c.f.\ \cite[\S2.6(e)]{lusztig:1986:on-the-character-values}). For any simple $W_{\bG}(\bL_w,\mathscr{L}_w)$-module $E$ we can define a $W_{\bG}(\bL_w,\mathscr{L}_w)$-module isomorphism $\mathfrak{X} : K_{w,E}^{\mathscr{L}}|_{\bG_{\uni}} \to K_{w,\hat{E}}|_{\bG_{\uni}}$ by taking the composition
\begin{align*}
K_{w,E}^{\mathscr{L}}|_{\bG_{\uni}} &= \Hom_{W_{\bG}(\bL_w,\mathscr{L}_w)}(E,K_w^{\mathscr{L}}|_{\bG_{\uni}})\\
&\cong \Hom_{W_{\bG}(\bL_w,\mathscr{L}_w)}(E,K_w|_{\bG_{\uni}})\\
&\cong \Hom_{W_{\bG}(\bL_w,\mathscr{L}_w)}(E,\Hom_{W_{\bG}(\bL_w)}(\Ql W_{\bG}(\bL_w),K_w))|_{\bG_{\uni}}\\
&\cong \Hom_{W_{\bG}(\bL_w)}(\Ql W_{\bG}(\bL_w)\otimes E,K_w)|_{\bG_{\uni}}\\
&= K_{w,\hat{E}}|_{\bG_{\uni}}.
\end{align*}
where $\hat{E} = \Ind_{W_{\bG}(\bL_w,\mathscr{L}_w)}^{W_{\bG}(\bL_w)}(E)$ is the induced module. Here we have used $\epsilon$ and the standard isomorphisms given by (2.6) and (2.19) of \cite{curtis-reiner:1981:methods-vol-I}. Chasing through the isomorphisms we can see that
\begin{equation*}
\mathfrak{X}\circ\phi_{K_{w,E}^{\mathscr{L}}} \circ F^*f = \phi_{\Ql}^w\circ F^*\mathfrak{X} \circ F^*f\circ (1\otimes\psi_E).
\end{equation*}
for all $f \in K_{w,E}^{\mathscr{L}}|_{\bG_{\uni}}$.

Using \cref{eq:trace-comparison} and the fact that the modules $\Ql W_{\bG}(\bL_w)\otimes E_{\sigma}$ and $(\Ql W_{\bG}(\bL_w)\otimes E)_{\sigma}$ are isomorphic as $W_{\bG}(\bL_w)$-modules we see that we have an equality
\begin{equation}\label{eq:chi-A-G-uni}
\chi_A|_{\bG_{\uni}} = \sum_{\iota \in \mathscr{I}(\bL,\nu)^F} \langle \tilde{E}_{\iota}, \Ind_{W_{\bG}(\bL_w,\mathscr{L}_w).F}^{W_{\bG}(\bL_w).F}(\tilde{E})\rangle_{W_{\bG}(\bL_w).F}\chi_{K_{\iota},\phi_{\iota}^w}
\end{equation}
Note that in the above we assume that $\widetilde{E}_{\iota}$ and $\widetilde{E}$ are the restrictions of these characters to the appropriate coset. In particular we have
\begin{equation*}
m(A,\iota,\phi_{\iota}^w) = \langle \tilde{E}_{\iota}, \Ind_{W_{\bG}(\bL,\mathscr{L}).Fw}^{W_{\bG}(\bL).F}(\tilde{E})\rangle_{W_{\bG}(\bL).F}
\end{equation*}
for all $\iota \in \mathcal{N}_{\bG}^F$, (c.f.\ \cref{pa:conventions-finite-groups} and \cref{pa:conventions-coset-identification}).
\end{pa}
%
\section{\texorpdfstring{Weyl Groups of Type $\B_n$ and $\D_n$}{Weyl Groups of Type B and D}}\label{sec:weyl-groups}

\begin{pa}
In this section we wish to develop some notation concerning Weyl groups of type $\B_n$. In particular let $(\bW,\mathbb{S})$ be a finite Coxeter system of type $\B_n$ then we will denote by $s_i \in \mathbb{S}$, (with $1 \leqslant i \leqslant n$), the $i$th simple reflection of $\bW$ such that the labelling corresponds to the labelling of the Dynkin diagram given in \cite[Plate II(IV)]{bourbaki:2002:lie-groups-chap-4-6}. We will identify $\bW$ with the group $W_n \subset \GL_n(\mathbb{R})$ consisting of all matrices which have precisely one non-zero entry in each row and column, and where this non-zero entry is either $\pm 1$, (see \cite[1.4.1]{geck-pfeiffer:2000:characters-of-finite-coxeter-groups}). We do this via the isomorphism sending $s_i$, for $1 \leqslant i \leqslant n-1$, to the permutation matrix of the transposition $(i,i+1)$ and sending $s_n$ to the diagonal matrix whose $n$th diagonal entry is $-1$ and whose remaining diagonal entries are $1$. Let $\epsilon : W_n \to \{\pm 1\}$ be the character of $W_n$ defined by $\epsilon(s_i) = 1$ for all $1 \leqslant i \leqslant n-1$ and $\epsilon(s_n) = -1$. We denote by $W_n'$ the kernel of $\epsilon$, which is a Coxeter group of type $\D_n$ with set of Coxeter generators given by $\{s_1,\dots,s_{n-1},u\}$ where $u = s_ns_{n-1}s_n$. Assume $(\bW',\mathbb{S}')$ is a finite Coxeter system of type $\D_n$ then we will denote by $s_i' \in \mathbb{S}'$, (with $1 \leqslant i \leqslant n$), the $i$th simple reflection of $\bW'$ such that the labelling corresponds to the labelling of the Dynkin diagram given in \cite[Plate IV(IV)]{bourbaki:2002:lie-groups-chap-4-6}. We will identify $\bW'$ with $W_n'$ via the isomorphism sending $s_i'$ to $s_i$ for all $1 \leqslant i \leqslant n-1$ and $s_n'$ to $u$.
\end{pa}

\begin{pa}
We now wish to give a precise meaning to the notation we will use to label certain subgroups of $W_n$. Firstly, for each $0 \leqslant i \leqslant n-1$ we will denote by $t_i$ the element $s_{i+1}s_{i+2}\cdots s_{n-1}s_ns_{n-1}\cdots s_{i+2}s_{i+1}$ and by $s_0$ the element $t_0s_1t_0 = t_1s_1t_1$ then we set $\mathbb{S}_0 = \{s_0,s_1,\dots,s_{n-1},u\}$, (note that $t_{n-1} = s_n$ and $t_it_j = t_jt_i$ for all $0\leqslant i,j\leqslant n-1$). The element $s_0$, (resp.\ $t_0$), can be identified as the reflection of the highest root when the underlying root system of $\bW$ is of type $\B_n$, (resp.\ $\C_n$). Assume now that $a,b \in \mathbb{N}_0$ are such that $a+b = n$ then we define a maximal rank subgroup $W_a'\times W_b' \leqslant W_n' \leqslant W_n$ by setting
\begin{equation*}
W_a'\times W_b' = \begin{cases}
\langle\mathbb{S}_0\setminus\{s_0,s_1\}\rangle &\text{if }a = 1\\
\langle\mathbb{S}_0\setminus\{s_{n-1},u\}\rangle &\text{if }a = n-1\\
\langle\mathbb{S}_0\setminus\{u\}\rangle &\text{if }a = n\\
\langle\mathbb{S}_0\setminus\{s_a\}\rangle &\text{otherwise}.
\end{cases}
\end{equation*}
From this we obtain the following additional maximal rank subgroups of $W_n$ by setting
\begin{equation*}
W_a'\times W_b = \langle W_a'\times W_b',s_n \rangle
\qquad
W_a\times W_b' = \langle W_a'\times W_b',t_0 \rangle
\qquad
W_a\times W_b = \langle W_a'\times W_b',t_0,s_n \rangle.
\end{equation*}
\end{pa}

\begin{pa}\label{pa:longest-element-products}
We will see shortly that we will also need to have information concerning the longest elements of $W_n$ and the above subgroups. To obtain a uniform description we will assume that $n,a,b \geqslant 2$, (in the remaining cases we simply obtain the identity element). Let $w_0 \in W_n$, $w_0' \in W_n'$, $w_a' \in W_a'$, $w_b' \in W_b'$, $w_a \in W_a$ and $w_b \in W_b$ be the longest elements of these groups then we have
\begin{align*}
w_0 &= t_{n-1}\cdots t_0 & w_a &= t_0t_1\cdots t_{a-1} & w_b &= t_at_{a+1}\cdots t_{n-1},\\
w_0' &= t_{n-1}^{n-1}t_{n-2}t_{n-3} \cdots t_0  & w_a' &= t_0^{a-1}t_1\cdots t_{a-1} & w_b' &= t_at_{a+1}\cdots t_{n-2}t_{n-1}^{b-1}
\end{align*}
(see for instance \cite[Example 1.5.5]{geck-pfeiffer:2000:characters-of-finite-coxeter-groups}). Considering the products of these elements we see that
\begin{align*}
w_0w_aw_b &= 1 & w_0w_a'w_b &= t_0^a & w_0'w_a'w_b' &= (s_nt_0)^a.
\end{align*}
Assume now that $a,b\geqslant 2$ then with this in mind we will denote by $\gamma_a : W_a' \to W_a'$ and $\gamma_b : W_b' \times W_b'$ the automorphisms defined by $\gamma_a(x) = t_0xt_0$ and $\gamma_b(y) = s_nys_n$ for all $x \in W_a'$ and $y \in W_b'$.

Now let $\gamma : W_n'\to W_n'$ be the automorphism which fixes each $s_i$ with $1 \leqslant i \leqslant n-2$ and exchanges $s_{n-1}$ and $u$. We will identify the semidirect product $\widetilde{W}_n' = W_n' \rtimes \langle \gamma \rangle$ with the group $W_n$ under the map defined by $(v,\gamma) \mapsto vs_n$ for all $v \in W_n'$. Note that $\gamma$ stabilises the subgroup $W_a'\times W_b'$ hence fixes the element $w_0'w_a'w_b'$. Under the identification of $\widetilde{W}_n'$ with $W_n$ we have
\begin{equation*}
(W_a'\times W_b')\langle (\gamma(w_0'w_a'w_b'),\gamma) \rangle \mapsto \begin{cases}
W_a' \times W_b &\text{if }a\equiv 0 \pmod{2}\\
W_a \times W_b' &\text{if }a\equiv 1 \pmod{2}.
\end{cases}
\end{equation*}
\end{pa}

%
\section{Describing the Action of Frobenius}\label{sec:desc-frob-action}

\begin{assumption}
From now until the end of this article we assume that $\bG$ is such that $\bG/Z(\bG)$ is simple of type $\B_n$, $\C_n$ or $\D_n$.
\end{assumption}

\begin{pa}\label{pa:assumptions}
Let us assume that $\bL \in \mathcal{L}_{\std}$ is a standard Levi subgroup such that $\mathcal{N}_{\bL}^0 \neq \emptyset$ and let $\bL^{\star} \in \mathcal{L}_{\std}^{\star}$ be a dual Levi subgroup. We will denote by $s \in \bT_0^{\star}$ a semisimple element such that the series $\widehat{\bL}_s$ contains a unipotently supported cuspidal character sheaf $A_{\mathscr{L}}$ defined as in \cref{eq:cusp-uni-sup} with respect to a local system $\mathscr{L} \in \mathcal{S}(Z^{\circ}(\bL))$. We may replace $s$ by any $N_{\bG^{\star}}(\bT_0^{\star})$-conjugate, in particular we may (and will) assume that we have a set of Coxeter generators $\mathbb{I} \subseteq W_{\bG^{\star}}(s)$ satisfying $\mathbb{I} \subset \mathbb{T}_0$ where $\mathbb{T}_0 = \mathbb{T}\cup\{t_0\}$ and $t_0$ is the reflection of the highest root in $W_{\bG^{\star}}$, (see \cref{pa:duality}).

Let $\pi : \bG^{\star} \to \bG_{\ad}^{\star}$ be an adjoint quotient of $\bG^{\star}$ and let us denote by $F^{\star}$ a Frobenius endomorphism of $\bG_{\ad}^{\star}$ commuting with $\pi$, (note that $\bG_{\ad}^{\star}$ should be read as $(\bG^{\star})_{\ad}$). Let $\bT_{\ad}^{\star} \leqslant \bB_{\ad}^{\star}$ be the respective images of $\bT_0^{\star} \leqslant \bB_0^{\star}$ under $\pi$ then we have $\pi$ induces an isomorphism $W_{\bG^{\star}} \to W_{\bG_{\ad}^{\star}}$ where $W_{\bG_{\ad}^{\star}}$ is the Weyl group of $\bG_{\ad}^{\star}$ defined with respect to $\bT_{\ad}^{\star}$. We will denote by $\bar{s} = \pi(s)$ the image of $s$ under $\pi$ then we have $\pi$ induces an isomorphism $W_{\bG^{\star}}(s) \to W_{\bG_{\ad}^{\star}}(\bar{s})$.

\begin{table}[h!t]
\centering
\begin{tabular}{>{$}c<{$}>{$}c<{$}>{$}c<{$}>{$}c<{$}>{$}c<{$}>{$}c<{$}}
\hline\addlinespace
\text{Type of }\bG & \text{Type of }[\bL,\bL] & m & \text{Type of }W_{\bL^{\star}}(s) & W_{\bG^{\star}}(\bL^{\star})\\\addlinespace
\hline\addlinespace
\B_n & \B_{2t(t+1)} & n-2t(t+1) & \C_{t(t+1)}\times\C_{t(t+1)} & W_m\\\addlinespace
\C_n & \C_{2t(4t\pm 1)} & n-2t(4t\pm1) & \D_{4t^2}\times\B_{4t^2\pm 2t} & W_m\\\addlinespace
\D_n & \D_{8t^2} & n-8t^2 & \D_{4t^2}\times\D_{4t^2} & \begin{cases}W_m &\text{if }t\geqslant 1\\W_m' &\text{if }t=0\end{cases} \\\addlinespace
\hline
\end{tabular}
\caption{Location of Cuspidal Character Sheaves}
\label{tab:levis-1}
\end{table}

The condition that $\bL$ supports a unipotently supported cuspidal character sheaf is quite restrictive. For example the derived subgroup $[\bL,\bL]$ of $\bL$ must be simple of the same type as $\bG$ and the rank must be as in \cref{tab:levis-1} for some $t \geqslant 0$, (see \cite[(3.7)]{geck:1999:character-sheaves-and-GGGRs}). Furthermore we have by \cite[(17.12.4)]{lusztig:1985:character-sheaves} that the series $\widehat{\bL}_s$ containing the cuspidal character sheaf must be such that $s$ is an isolated element of $\bL^{\star}$, i.e.\ $C_{\bL^{\star}}(s)^{\circ}$ is not contained in any proper Levi subgroup of $\bL^{\star}$. In fact, as is described in \cite[Remark 5.4]{geck:1999:character-sheaves-and-GGGRs}, more can be said about $s$ in that the Weyl group of its centraliser must have the structure described in \cref{tab:levis-1}.

\begin{assumption}
From now until the end of this article we will maintain the assumptions of \cref{pa:assumptions}. Furthermore we will assume that $s$ is an isolated semisimple element of $\bG^{\star}$, (i.e.\ $C_{\bG^{\star}}(s)^{\circ}$ is not contained in any proper Levi subgroup of $\bG^{\star}$)
\end{assumption}

\noindent Let us recall from Proposition 2.3(b) and Table 2 of \cite{bonnafe:2005:quasi-isolated} that as $s$ is an isolated semisimple element of $\bG^{\star}$ we must have $\bar{s}^2 = 1$. Under the identification of $W_{\bG^{\star}}$ with $W_n$, (resp.\ $W_n'$), if $\bG$ is of type $\B_n$ or $\C_n$, (resp.\ $\D_n$), we will identify the subgroup $W_{\bG^{\star}}(s)$ with the reflection subgroup
\begin{equation*}
\bV_a \times \bW_b = \begin{cases}
W_a \times W_b &\text{if }\bG\text{ is of type }\B_n\\
W_a' \times W_b &\text{if }\bG\text{ is of type }\C_n\\
W_a' \times W_b' &\text{if }\bG\text{ is of type }\D_n
\end{cases}
\end{equation*}
where $a,b \in \mathbb{N}$ are such that $n = a+b$, (if $\bG$ is of type $\C_n$ or $\D_n$ then we also have $a\neq1$). The integers $a$ and $b$ are determined in the following way. First recall from \cref{pa:assumptions} that $W_{\bG^{\star}}(s)$ is naturally isomorphic under $\pi$ to $W_{\bG_{\ad}^{\star}}(\bar{s})$. Let us fix an isomorphism $\sigma : \bT_{\ad}^{\star} \to (\mathbb{K}^{\times})^n$ then as $\bar{s}$ satisfies $\bar{s}^2=1$ we have $\sigma(\bar{s}) = (t_1,\dots,t_n)$ with each $t_i \in \{\pm1\}$. We can then identify $a$ as the number of $t_i = -1$ and $b$ as the number of $t_i = 1$, (note that these values do not depend upon the choice of isomorphism $\sigma$).
\end{pa}

\begin{pa}\label{pa:frob-action-s}
We now wish to consider the action of $F^{\star}$ on the semisimple element $s$. Let us assume that $\widehat{\bG}_s$ contains an $F$-stable character sheaf then this implies that the $N_{\bG^{\star}}(\bT_0^{\star})$-orbit of $s$ is $F^{\star}$-stable. From the proof of \cite[Proposition 6.14]{taylor:2012:finding-characters-satisfying} and the choice of $s$ in \cref{pa:assumptions} we see that $\bar{s} \in \bG_{\ad}^{\star}$ is fixed by $F^{\star}$. Combining this with the fact that the $N_{\bG^{\star}}(\bT_0^{\star})$-orbit of $s$ is $F^{\star}$-stable we obtain that $F^{\star}(s) = sz_1$ for some $z_1 \in Z(\bG_{\der}^{\star})$ where $\bG_{\der}^{\star} \leqslant \bG^{\star}$ is the derived subgroup. Applying \cite[Corollary 2.8]{bonnafe:2005:quasi-isolated} we see that $|A_{\bG_{\ad}^{\star}}(\bar{s})| = |\{y \in Z(\bG_{\der}^{\star}) \mid sy$ and $s$ are conjugate in $\bG^{\star}\}|$, hence from the information in \cite[Table 2]{bonnafe:2005:quasi-isolated} we see that there are at most two possible choices for $z_1$ except when $\bG^{\star}$ is of type $\D_n$ and $a=b$, (c.f.\ \cref{pa:assumptions}), then there are at most four possible choices for $z_1$. Assume we are not in the latter case then the argument used in \cite[Proposition 6.14]{taylor:2012:finding-characters-satisfying} shows that either
\begin{equation}\label{eq:property-frob-s}
F^{\star}(s) = s\qquad\text{or}\qquad F^{\star}(s) = s^{n_0}
\end{equation}
where $n_0 \in N_{\bG^{\star}}(\bT_0^{\star})$ is a representative of the longest element. In the remaining case the argument used in \cite[Proposition 6.14]{taylor:2012:finding-characters-satisfying} shows that at least one element from $\{sy \mid y \in Z(\bG_{\der}^{\star})\}$ satisfies the condition in \cref{eq:property-frob-s}. As all such elements are conjugate, (in fact conjugate by elements centralising $\bar{s}$), we may replace $s$ by an element satisfying the condition in \cref{eq:property-frob-s}, hence we may assume that this condition holds.
\end{pa}

\begin{rem}\label{rem:F-action-on-s}
Let us note that we can also see from \cite[Proposition 6.14]{taylor:2012:finding-characters-satisfying} that if $q \equiv 1 \pmod{4}$ then we will always have $F^{\star}(s) = s$.
\end{rem}

\begin{pa}
As in \cref{pa:conj-levi-etc} we may now consider the subset
\begin{equation*}
Z_s = \{n \in N_{\bG^{\star}}(\bT_0^{\star}) \mid F^{\star}(s) = s^n\}/\bT_0^{\star} \subseteq W_{\bG^{\star}}
\end{equation*}
which is a right coset of $W_{\bG^{\star}}(s)$ in $W_{\bG^{\star}}$. Let us denote by $\Phi$ the roots of $\bG^{\star}$ with respect to $\bT_0^{\star}$ and $\Phi^+ \subset \Phi$ the set of positive roots determined by $\bB_0^{\star}$. Similarly we denote by $\Phi(s) \subset \Phi$ the root system of $C_{\bG^{\star}}(s)$ with respect to $\bT_0^{\star}$ and $\Phi^+(s) = \Phi(s) \cap \Phi^+$ the positive roots determining the Coxeter generators $\mathbb{I}$. Again by \cite[Lemma 1.9]{lusztig:1984:characters-of-reductive-groups} there exists a unique element $w_s \in Z_s$ of minimal length with respect to the length function of $(W_{\bG^{\star}},\mathbb{T})$ which satisfies the condition $w_s(\Phi^+(s)) \subset \Phi^+$. Let $\gamma : W_{\bG^{\star}}(s) \to W_{\bG^{\star}}(s)$ be the automorphism given by $\gamma(x) = {}^{w_s}F^{\star}(x)$ then to describe $\gamma$ we need to understand the element $w_s$. By \cref{eq:property-frob-s} we have either the identity or the longest element is contained in $Z_s$. If $Z_s$ contains the identity then clearly $w_s$ is the identity and $\gamma = F^{\star}$. Let us now assume that $Z_s$ contains the longest element $w_0 \in W_{\bG^{\star}}$ then we may write $w_s$ as a product $w_0x$ for some unique $x \in W_{\bG^{\star}}(s)$. As $w_s(\Phi^+(s)) \subset \Phi^+$ this implies that $x(\Phi^+(s)) \subset w_0(\Phi^+) = -\Phi^+$ but as $x$ stabilises $\Phi(s)$ this implies $x(\Phi^+(s)) = \Phi(s) \cap (-\Phi^+) = -\Phi^+(s)$ hence $x$ is the longest element of $W_{\bG^{\star}}(s)$.

Using the above and the discussion in \cref{sec:weyl-groups} we see that we have the following possibilities for the automorphism $\gamma$.
\begin{itemize}
	\item If $\bG$ is of type $\B_n$ then the automorphism $\gamma$ is the identity.
	\item If $\bG$ is of type $\C_n$ then $\gamma$ is an element of the set $\{\ID, \gamma_a\times \ID\}$.
	\item If $\bG$ is of type $\D_n$ and $F$ is the identity on $(W_{\bG},\mathbb{S})$ then $\gamma$ is an element of the set $\{\ID, \gamma_a\times\gamma_b\}$.
	\item If $\bG$ is of type $\D_n$ and $F$ is of order 2 on $(W_{\bG},\mathbb{S})$ then $\gamma$ is an element of the set $\{\gamma_a\times \ID, \ID\times\gamma_b\}$.
\end{itemize}
In the above $\ID$ denotes the appropriate identity automorphism.
\end{pa}

\begin{pa}
We will denote by $\mathscr{E} \in \mathcal{S}(\bT_0)$ the unique local system satisfying $\lambda_{\bT_0}(\mathscr{E}) = s$ where $s$ is as above, i.e.\ $s \in \bT_0^{\star}$ is such that $A_{\mathscr{L}} \in \widehat{\bG}_s$. We now wish to prove the following lemma which gives the first step in describing the relationship between the two labellings of character sheaves.
\end{pa}

\begin{lem}
We have $\mathscr{E}|_{Z^{\circ}(\bL)} = \mathscr{L}$.
\end{lem}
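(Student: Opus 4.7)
The plan is to work through the notion of central character for a character sheaf on $\bL$. Recall that for every character sheaf $A \in \widehat{\bL}$, there is a unique local system $\omega_A \in \mathcal{S}(Z^{\circ}(\bL))$, its \emph{central character}, characterised by an isomorphism $m^*A \cong \omega_A \boxtimes A$ in $\mathscr{D}(Z^{\circ}(\bL) \times \bL)$, where $m : Z^{\circ}(\bL) \times \bL \to \bL$ is the multiplication morphism. The equality to be proven will then follow by computing this central character on each side of the assertion.

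For the cuspidal sheaf $A_{\mathscr{L}}$, I would compute $\omega_{A_{\mathscr{L}}}$ directly from the construction recalled in \cref{eq:cusp-uni-sup}. Since $A_{\mathscr{L}} = \IC(\overline{\Sigma}, \mathscr{E}_0 \boxtimes \mathscr{L})[\dim \Sigma]$ and the local system on the open dense subset $\Sigma = \mathcal{O}_0 Z^{\circ}(\bL)$ is pulled back under multiplication from $\mathcal{O}_0 \times Z^{\circ}(\bL)$, the functoriality of IC extension with respect to translation by $Z^{\circ}(\bL)$ forces $\omega_{A_{\mathscr{L}}} = \mathscr{L}$ immediately.

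For the series $\widehat{\bL}_{\mathscr{E}}$, I would show that every one of its members has central character equal to $\mathscr{E}|_{Z^{\circ}(\bL)}$. Following Lusztig's definition in \cite[Definition 2.10]{lusztig:1985:character-sheaves}, the members of $\widehat{\bL}_{\mathscr{E}}$ occur as simple perverse constituents of a complex built from parabolic induction of $\mathscr{E}$ on the maximal torus $\bT_0 \subset \bL$. Since $Z^{\circ}(\bL) \subseteq \bT_0$, the translation action of $Z^{\circ}(\bL)$ on this induced complex is encoded by $\mathscr{E}|_{Z^{\circ}(\bL)}$, and passing to simple summands transfers this central character to every element of $\widehat{\bL}_{\mathscr{E}}$. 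Since by hypothesis $A_{\mathscr{L}} \in \widehat{\bL}_s = \widehat{\bL}_{\mathscr{E}}$, comparing the two computations yields $\mathscr{E}|_{Z^{\circ}(\bL)} = \mathscr{L}$.

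The main obstacle will be verifying cleanly that the parabolic induction appearing in Lusztig's definition is compatible with translation by the central torus $Z^{\circ}(\bL)$; this requires some bookkeeping with the auxiliary varieties $\hat{X}$, $\tilde{X}$ of \cref{pa:definition-of-induction} and their base changes, but it is essentially standard and can be extracted from \S2 and \S4 of \cite{lusztig:1985:character-sheaves}.
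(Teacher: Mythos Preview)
Your proposal is correct and the underlying idea is the same as the paper's: compute the ``$Z^{\circ}(\bL)$-behaviour'' of the cuspidal sheaf $A_{\mathscr{L}}$ in two ways --- once from its explicit construction as an IC sheaf on $\overline{\mathcal{O}_0 Z^{\circ}(\bL)}$, once from its membership in the series $\widehat{\bL}_{\mathscr{E}}$ --- and compare. The packaging, however, is genuinely different. You encode this behaviour via the central character $\omega_A$ (the local system on $Z^{\circ}(\bL)$ for which $m^*A \cong \omega_A \boxtimes A$), whereas the paper instead pulls everything back along the isogeny $\tau : \bL_{\der} \times Z^{\circ}(\bL) \to \bL$ and invokes the product decomposition $\widehat{\bL}'_{\tau^*\mathscr{E}} = (\widehat{\bL}_{\der})_{\mathscr{E}_1} \boxtimes (\widehat{Z^{\circ}(\bL)})_{\mathscr{E}_2}$ from \cite[(17.11), (17.16.1)]{lusztig:1985:character-sheaves}; comparing the second box-factor of $\tau^*A_{\mathscr{L}}$ then gives $\mathscr{E}_2 = \mathscr{L}$, and the identity $\mathscr{E}|_{Z^{\circ}(\bL)} = i^*\tau^*\mathscr{E} = \mathscr{E}_2$ finishes. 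Your route is a bit more conceptual and avoids the auxiliary group $\bL'$, at the cost of having to justify (as you note) that parabolic induction from $\bT_0$ is equivariant for $Z^{\circ}(\bL)$-translation; the paper's route is more concrete and has the advantage that every step is pinned to a specific statement in \cite{lusztig:1985:character-sheaves}.
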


\begin{proof}
Let us denote by $\bL'$ the direct product $\bL_{\der} \times Z^{\circ}(\bL)$ where $\bL_{\der}$ is the derived subgroup of $\bL$. We have a natural surjective morphism $\tau : \bL' \to \bL$ given by $\tau(l,y) = ly$ whose kernel is $\{(l,l^{-1}) \mid l \in \bL_{\der} \cap Z^{\circ}(\bL)\}$. Let $\bT_0'$ be the maximal torus $\tau^{-1}(\bT_0)$ then there exists a maximal torus $\bS_{\der} \leqslant \bL_{\der}$ such that $\bT_0' = \bS_{\der} \times Z^{\circ}(\bL)$, (note that we will also consider $\tau$ to be a morphism $\tau : \bT_0' \to \bT_0$ without specific mention). Furthermore let us denote by $\mathscr{E}' \in \mathcal{S}(\bT_0')$ the inverse image $\tau^*\mathscr{E}$. Clearly the local system $\mathscr{E}'$ is a direct product $\mathscr{E}_1\boxtimes\mathscr{E}_2$ where $\mathscr{E}_1 = \mathscr{E}'|_{\bS_{\der}} \in \mathcal{S}(\bS_{\der})$ and $\mathscr{E}_2 = \mathscr{E}'|_{Z^{\circ}(\bL)} \in \mathcal{S}(Z^{\circ}(\bL))$. From the definition of character sheaves, (see \cite[Definition 2.10]{lusztig:1985:character-sheaves}), we see that
\begin{equation*}
\widehat{\bL}'_{\mathscr{E}'} = (\widehat{\bL}_{\der})_{\mathscr{E}_1} \boxtimes (\widehat{Z^{\circ}(\bL)})_{\mathscr{E}_2},
\end{equation*}
(see also \cite[(17.11)]{lusztig:1985:character-sheaves}). Furthermore the series $(\widehat{Z^{\circ}(\bL)})_{\mathscr{E}_2}$ contains only the perverse sheaf $\mathscr{E}_2[\dim Z^{\circ}(\bL)]$. Assume now that $A \in \widehat{\bL}_{\mathscr{E}}$ then by \cite[(17.16.1)]{lusztig:1985:character-sheaves} we have the summands of $\tau^*A$ are contained in $\widehat{\bL}'_{\mathscr{E}'}$. Let us consider the cuspidal character sheaf $A_{\mathscr{L}}$ then the inverse image is given by
\begin{equation*}
\tau^*A_{\mathscr{L}} = \bigoplus_i\IC(\overline{\mathcal{O}_0 \times Z^{\circ}(\bL)},\mathscr{E}_{0,i} \boxtimes \mathscr{L})[\dim \mathcal{O}_0\times Z^{\circ}(\bL)]
\end{equation*}
where $\mathscr{E}_{0,i}$ are cuspidal local systems on $\mathcal{O}_0 \subset \bL_{\der}$. Let $A_i$ be the summand of $\tau^*A_{\mathscr{L}}$ determined by $\mathscr{E}_{0,i}$ then by the above remarks we may write this as
\begin{equation*}
A_i = \overline{A}_i \boxtimes \mathscr{E}_2[\dim Z^{\circ}(\bL)]
\end{equation*}
for some unique character sheaf $\overline{A}_i \in (\widehat{\bL}_{\der})_{\mathscr{E}_1}$. As $A_i|_{\mathcal{O}_0 \times Z^{\circ}(\bL)} \cong (\mathscr{E}_{0,i} \boxtimes \mathscr{L})[\dim \mathcal{O}_0 \times Z^{\circ}(\bL)]$ we deduce that $\mathscr{E}_2 = \mathscr{L}$. Let $i : Z^{\circ}(\bL) \hookrightarrow \bT_0'$ and $j : Z^{\circ}(\bL) \hookrightarrow \bT_0$ be the natural inclusion maps then it is clear that we have $\tau \circ i = j$. Rephrasing the above we have $\mathscr{E}|_{Z^{\circ}(\bL)} = j^*\mathscr{E} = i^*\tau^*\mathscr{E} = \mathscr{L}$ as required.
\end{proof}

\begin{pa}\label{pa:a'-b'}
We now relate this property to the dual group. We will denote by $\bS^{\star} \leqslant \bT_0^{\star}$ a subtorus such that we have a direct product $\bT_0^{\star} = \bS^{\star} \times Z^{\circ}(\bL^{\star})$. Note that $Z^{\circ}(\bL^{\star}) \leqslant \bT_0^{\star}$ is dual to $Z^{\circ}(\bL) \leqslant \bT_0$ hence the local system $\mathscr{L} \in \mathcal{S}(Z^{\circ}(\bL))$ determines a unique element $z \in Z^{\circ}(\bL^{\star})$ under the map $\lambda_{Z^{\circ}(\bL)}$, (c.f. \cref{pa:duality}). The above result then says that there exists a unique element $\hat{s} \in \bS^{\star}$ such that $s = \hat{s}z$. Let $\bar{\bL}^{\star}$ be the Levi subgroup $\pi(\bL^{\star}) \leqslant \bG_{\ad}^{\star}$ and $\bar{z} = \pi(z)$ the image of $z$, (c.f.\ \cref{pa:assumptions}). Above we deduced that $\bar{s}^2=1$ hence this clearly implies that $\bar{z}^2 = 1$. Applying the anti-isomorphism $W_{\bG} \to W_{\bG^{\star}}$ we see that the subgroup $W_{\bG}(\bL,\mathscr{L})$ is mapped to $W_{\bG^{\star}}(\bL^{\star},z) = N_{\bG^{\star}}(\bL^{\star},z)/\bL^{\star}$, (note that $\bL^{\star} = C_{\bL^{\star}}(z)$), which is in turn mapped to $W_{\bG_{\ad}^{\star}}(\bar{\bL}^{\star},\bar{z}) = N_{\bG_{\ad}^{\star}}(\bar{\bL}^{\star},\bar{z})/\bar{\bL}^{\star}$ under the isomorphism $W_{\bG^{\star}} \to W_{\bG_{\ad}^{\star}}$.

We now consider how to describe the group $W_{\bG_{\ad}^{\star}}(\bar{\bL}^{\star},\bar{z})$, which we will do in a similar way as in \cref{pa:assumptions}. Under the identification of $W_{\bG^{\star}}(\bL^{\star})$ with $W_m$, we will identify $W_{\bG^{\star}}(\bL^{\star},z)$ with the subgroup
\begin{equation*}
\bV_{a'} \times \bW_{b'} = 
\begin{cases}
W_{a'}'\times W_{b'} &\text{if }\bG\text{ is of type }\C_n\text{ and }t=0,\\
W_{a'}'\times W_{b'}' &\text{if }\bG\text{ is of type }\D_n\text{ and }t=0,\\
W_{a'}\times W_{b'} &\text{otherwise}.
\end{cases}
\end{equation*}
where $a',b' \in \mathbb{N}_0$ are such that $a'+b' = m$ and $t$ is as in \cref{tab:levis-1}. The values $a'$ and $b'$ are determined in the following way. The isomorphism $\sigma : \bT_{\ad}^{\star} \to (\mathbb{K}^{\times})^n$ restricts to an isomorphism $Z^{\circ}(\bar{\bL}^{\star}) \to (\mathbb{K}^{\times})^m$, which we again denote by $\sigma$. As $\bar{z}^2 = 1$ we have $\sigma(\bar{z}) = (t_1,\dots,t_m)$ with each $t_i \in \{\pm1\}$, and we set $a'$ to be the number of entries $t_i=-1$ and $b'$ to be the number of entries $t_i=1$. Note that we necessarily have $a'\leqslant a$ and $b' \leqslant b$.
\end{pa}

\begin{pa}
We now end our discussion by considering the coset $Z_{\mathscr{L}}$, (see \cref{pa:conj-levi-etc}). Under the isomorphism $W_{\bG} \to W_{\bG^{\star}}$ this coset is taken to the coset
\begin{equation*}
Z_z = \{n \in N_{\bG^{\star}}(\bL^{\star}) \mid F^{\star}(z) = z^n\}/\bL^{\star} \subseteq W_{\bG^{\star}}(\bL^{\star}).
\end{equation*}
Above we showed that either $F^{\star}(s) = s$ or $F^{\star}(s) = s^{n_0}$ where $n_0 \in N_{\bG^{\star}}(\bT_0^{\star})$ is a representative of the longest element in $W_{\bG^{\star}}$. The decomposition $s = \hat{s}z$ is unique hence we must have either $F^{\star}(\hat{s}) = \hat{s}$ and $F^{\star}(z) = z$ or $F^{\star}(\hat{s}) = \hat{s}^{n_0}$ and $F^{\star}(z) = z^{n_0}$. It is easy to see that $n_0 \in N_{\bG^{\star}}(\bL^{\star})$ and that $n_0$ is also a representative for the longest element of $W_{\bG^{\star}}(\bL^{\star})$, hence we either have $Z_z$ contains the identity of $W_{\bG^{\star}}(\bL^{\star})$ or the longest element. As the longest elements are in bijective correspondence under the anti-isomorphism $W_{\bG}(\bL) \to W_{\bG^{\star}}(\bL^{\star})$ it is clear that either the identity or the longest element of $W_{\bG}(\bL)$ is contained in $Z_{\mathscr{L}}$.

Let $w^{-1} \in Z_{\mathscr{L}}$ be the unique element of minimal length then the argument used in \cref{pa:frob-action-s} shows that the automorphism $F_w$ of the Coxeter system $(W_{\bG}(\bL,\mathscr{L}),\mathbb{J})$, (see \cref{pa:generators-rel-Weyl}), can be computed from the action of the longest element in $W_{\bG}(\bL)$ and the longest element in $W_{\bG}(\bL,\mathscr{L})$. In particular when $\bL \neq \bT_0$ we see that $w=1$ and the automorphism induced by $F_w$ is simply the identity. If $\bL = \bT_0$ then $z = s$ and the automorphism $F_w$ is as in \cref{pa:generators-rel-Weyl}.
\end{pa}
%
\section{Parameterisation of Character Sheaves in Isolated Series}\label{sec:parameterisation-of-char-sheaves}
\subsection{Combinatorics}
\begin{pa}\label{pa:symbols-unipotent}
We say $B$ is a $\beta$-set if it is a finite (possibly empty) subset of $\mathbb{N}_0$. If $X = \{x_1,\dots,x_s\}$ is any finite subset of $\mathbb{N}_0$ then we will assume that the elements are labelled such that $x_1 < x_2 < \cdots < x_s$. We define the rank of a $\beta$-set $B$ to be $\rk(B) = \sum_{b \in B} b - \binom{|B|}{2}$, (note that this is 0 if $B = \emptyset$). We will denote by $\widetilde{\mathbb{W}}_N$ the set of all $\beta$-sets of rank $N \geqslant 0$. For each $k \in \mathbb{N}_0$ we define a shift operation ${}^{+k} : \mathbb{W}_N \to \mathbb{W}_N$ given by $B^{+k} = \{0,1,\dots,k-1\}\cup\{b+k\mid b \in B\}$; note that this is just $\{0,1,\dots,k-1\}$ if $B$ is empty and simply $B$ if $k=0$. We can then define an equivalence relation $\sim$ on $\widetilde{\mathbb{W}}_N$ by setting $A \sim B$ if and only if there exists $k \in \mathbb{N}$ such that $A^k = B$ or $A = B^k$. We denote by $\mathbb{W}_N$ the resulting set of equivalence classes.
\end{pa}

\begin{pa}
We say $\Lambda = \sbpair{A}{B}$ is a $\beta$-pair if it is an ordered pair of $\beta$-sets. We define the defect of $\Lambda$ to be $d(\Lambda) = |A| - |B|$ and the rank of $\Lambda$ to be
\begin{equation*}
\rk(\Lambda) = \rk(A) + \rk(B) + \left\lfloor \left( \frac{d(\Lambda)}{2} \right)^2 \right\rfloor,
\end{equation*}
where for any $x \in \mathbb{R}$ we have $\lfloor x \rfloor = \sup\{k \in \mathbb{N}_0 \mid k\leqslant x\}$. For each $N \in \mathbb{N}_0$ and $d \in \mathbb{Z}$ we denote by $\widetilde{\mathbb{V}}_N$ the set of all $\beta$-pairs of rank $N$ and by $\widetilde{\mathbb{V}}_N^d \subseteq \widetilde{\mathbb{V}}_N$ the $\beta$-pairs of rank $N$ and defect $d$. As for $\beta$-sets we define for each $k \in \mathbb{N}_0$ a shift operation ${}^{+k} : \widetilde{\mathbb{V}}_N^d \to \widetilde{\mathbb{V}}_N^d$ defined by $\sbpair{A}{B}^{+k} = \sbpair{A^{+k}}{B^{+k}}$. With this we can define an equivalence relation $\sim$ on $\widetilde{\mathbb{V}}_N^d$ given by $\Lambda\sim\Xi$ if and only if there exists $k \in \mathbb{N}_0$ such that $\Lambda^k = \Xi$ or $\Lambda = \Xi^k$. We denote by $\mathbb{V}_N^d$ the resulting equivalence classes and by $\mathbb{V}_N$ the union $\cup_{d \in \mathbb{Z}} \mathbb{V}_N^d$, (we call these equivalence classes symbols of rank $N$ and defect $d$).

Assume now that $\ssymb{A}{B} \in \mathbb{V}_N^0$ and $\ssymb{C}{D} \in \mathbb{V}_N^1$ then for each $d \in \mathbb{N}_0$ the maps given by
\begin{equation*}\label{eq:bij-shift-def}
\symb{A}{B} \mapsto \symb{A^{+d}}{B} \qquad\qquad \symb{C}{D} \mapsto \symb{C^{+(d-1)}}{D}
\end{equation*}
define natural bijections $\mathbb{V}_N^0 \to \mathbb{V}_{N'}^d$ and $\mathbb{V}_N^1 \to \mathbb{V}_{N'}^d$ where $N' = N + \lfloor (d/2)^2 \rfloor$, (see \cite[Proposition 3.2]{lusztig:1977:irreducible-representations-of-finite-classical-groups}). It will also be convenient for us to define the notion of a cuspidal symbol, (see \cite[\S8.1]{lusztig:1984:characters-of-reductive-groups}). Specifically we say $\ssymb{A}{\emptyset}\in\mathbb{V}_N$ is a cuspidal symbol if
\begin{equation*}
A = \begin{cases}
\{0,1,2,\dots,2d\} &N = d^2+d\text{ for some }d\geqslant 1,\\
\{0,1,2,\dots,2d-1\} &N = 2d^2\text{ for some }d\geqslant 2.
\end{cases}
\end{equation*}
\end{pa}

\begin{pa}\label{pa:def-labeling-sets}
For each $d \in \mathbb{Z}\setminus\{0\}$ we will denote by $\overline{\mathbb{V}}_N^d$ the union $\mathbb{V}_N^d \sqcup \mathbb{V}_N^{-d}$ where we identify $\mathbb{V}_N^d$ with $\mathbb{V}_N^{-d}$ via the map $\binom{A}{B} \mapsto \binom{B}{A}$, in particular we consider $\binom{A}{B}$ and $\binom{B}{A}$ to be the same element of $\overline{\mathbb{V}}_N^d$. Assume $\Lambda = \binom{A}{B} \in \mathbb{V}_N^0$ is a symbol of defect 0 then we say $\Lambda$ is \emph{degenerate} if $A = B$. We denote by $\widehat{\mathbb{V}}_N^0 \subseteq \mathbb{V}_N^0$ the set of all non-degenerate symbols. Furthermore we denote by $\overline{\mathbb{V}}_N^0$ the set of all non-degenerate symbols, (this time with the ordering forgotten), together with two symbols $\Lambda_+$ and $\Lambda_-$ for each degenerate symbol $\Lambda \in \mathbb{V}_N^0$. For each $N \in \mathbb{N}_0$ we now define the following four sets of symbols
\begin{align*}
\Phi_N^+ &= \bigsqcup_{\substack{d\in\mathbb{N}\\d\text{ odd}}}\mathbb{V}_N^d & \Phi_N^- &= \widehat{\mathbb{V}}_N^0\bigsqcup_{\substack{d\in\mathbb{Z}\setminus\{0\}\\d \equiv 0 \pmod{4}}}\mathbb{V}_N^d\\
\Omega_N^+ &= \bigsqcup_{\substack{d\in\mathbb{N}\\d \equiv 0 \pmod{4}}}\overline{\mathbb{V}}_N^d & \Omega_N^- &= \bigsqcup_{\substack{d\in\mathbb{N}\\d \equiv 2 \pmod{4}}}\overline{\mathbb{V}}_N^d
\end{align*}
which will form our indexing sets.

We now wish to define an equivalence relation $\equiv$ on $\Phi_N^{\pm}$ and $\Omega_N^{\pm}$ known as \emph{similarity}. Assume $\Lambda = \ssymb{A_1}{B_1}$ and $\Xi = \ssymb{A_2}{B_2} \in \Phi_N^{\pm}$ then we write $\Lambda \equiv \Xi$ if they can be represented so that $A_1 \cup B_1 = A_2 \cup B_2$ and $A_1 \cap B_2 = A_2 \cap B_2$, (c.f.\ \cite[4.14]{lusztig:1986:on-the-character-values}). If $\Lambda$, $\Xi \in \Omega_N^{\pm}$ are non-degenerate then we define the equivalence relation $\equiv$ in the same way. If $\Lambda_{\pm} \in \Omega_N^+$ is degenerate then we additionally set $\Lambda_{\pm} \equiv \Xi$ if and only if $\Xi = \Lambda_{\pm}$, (i.e.\ $\{\Lambda_+\}$ and $\{\Lambda_-\}$ form equivalence classes of cardinality 1). Note that we will call the equivalence class of $\Lambda$ under $\equiv$ the similarity class of $\Lambda$.
\end{pa}

\begin{pa}\label{pa:weyl-group-chars-symbols}
Let $\bW$ be a finite irreducible Coxeter group of classical type then we wish to give a labelling to the characters of $\bW$. We will do this using the following bijections which are described in \S4.4 - \S4.6 and \S4.18 of \cite{lusztig:1984:characters-of-reductive-groups}.
\begin{equation*}
\Irr(\bW) \longleftrightarrow \begin{cases}
\mathbb{W}_{n+1} &\text{if }\bW\text{ is of type }\A_n\\
\mathbb{V}_n^{\delta} &\text{if }\bW\text{ is of type }\B_n/\C_n\\
\overline{\mathbb{V}}_n^0 &\text{if }\bW\text{ is of type }\D_n
\end{cases}
\end{equation*}
where $\delta \in \{0,1\}$. Note that the two parameterisations in the case of type $\B_n/\C_n$ are related to each other via the natural bijection $\mathbb{V}_n^0 \to \mathbb{V}_n^1$ given by \cref{eq:bij-shift-def}. In what follows we will often identify, using these bijections, the set of irreducible characters $\Irr(\bW)$ with the corresponding labelling set without explicit mention.
\end{pa}

\begin{pa}\label{pa:a-value-symbol-weyl}
We introduce here two functions $\sigma : \mathbb{V}_N \to \mathbb{N}$ and $a : \mathbb{V}_N \to \mathbb{N}$ which will play a role later on. Given a symbol $\Lambda \in \mathbb{V}_N$ we write the entries of $\Lambda$ in a single row in increasing order $y_{\epsilon} \leqslant y_{\epsilon+1} \leqslant \cdots \leqslant y_{2k}$, where $\epsilon \in \{0,1\}$ is such that $d(\Lambda)+1 \equiv \epsilon \pmod{2}$. Then we define
\begin{equation*}
\sigma(\Lambda) = \sum_{\epsilon\leqslant i < j \leqslant 2k} y_i = \sum_{i=\epsilon}^{2k} (2k-i)y_i
\end{equation*}
Let $y_{\epsilon}^0 \leqslant y_{\epsilon+1}^0 \leqslant \cdots \leqslant y_{2k}^0$ denote the sequence
\begin{equation*}
\begin{aligned}
&0 \leqslant 0 \leqslant 1 \leqslant 1 \leqslant \cdots \leqslant k-1 \leqslant k-1 &&\text{if }\epsilon = 1\\
&0 \leqslant 0 \leqslant 1 \leqslant 1 \leqslant \cdots \leqslant k-1 \leqslant k-1 \leqslant k &&\text{if }\epsilon = 0
\end{aligned}
\end{equation*}
then we define
\begin{equation*}
a(\Lambda) = \sum_{\epsilon\leqslant i < j \leqslant 2k} (y_i - y_i^0) = \sum_{i=\epsilon}^{2k} (2k-i)(y_i - y_i^0).
\end{equation*}
We call $a(\Lambda)$ the $a$-value of the symbol. One readily checks that this definition agrees with the that given in \cite[Definition 6.4.3]{geck-pfeiffer:2000:characters-of-finite-coxeter-groups}, (see also \cite[4.11]{lusztig:1986:on-the-character-values}).
\end{pa}

\subsection{Two Classifications of Character Sheaves}
\begin{pa}
We denote by
\begin{equation}
\begin{aligned}
\overline{\bOmega}_a \times \overline{\bPhi}_b =  \begin{cases}
\Phi_a^+ \times \Phi_b^+ &\text{if }\bG\text{ is of type }\B\\
\Omega_a^+ \times \Phi_b^+ &\text{if }\bG\text{ is of type }\C\\
\Omega_a^+ \times \Omega_b^+ &\text{if }\bG\text{ is of type }\D
\end{cases}
\qquad
\overline{\bOmega}_a^1 \times \overline{\bPhi}_b^1 =  \begin{cases}
\mathbb{V}_a^1 \times \mathbb{V}_b^1 &\text{if }\bG\text{ is of type }\B\\
\overline{\mathbb{V}}_a^0 \times \mathbb{V}_b^1 &\text{if }\bG\text{ is of type }\C\\
\overline{\mathbb{V}}_a^0 \times \overline{\mathbb{V}}_b^0 &\text{if }\bG\text{ is of type }\D
\end{cases}
\end{aligned}
\end{equation}
a set of symbols corresponding to $W_{\bG^{\star}}(s)$, (note that we have $\overline{\bOmega}_a^1 \times \overline{\bPhi}_b^1$ is naturally a subset of $\overline{\bOmega}_a \times \overline{\bPhi}_b$). Now let $(\Lambda_1,\Xi_1) \in \overline{\bOmega}_a^1 \times \overline{\bPhi}_b^1$ then by the bijection described in \cref{pa:weyl-group-chars-symbols} these symbols determine an irreducible character $E \in \Irr(\bV_a\times\bW_b)$. Assume $\mathscr{E} \in \mathcal{S}(\bT_0)$ is the local system such that $\lambda_{\bT_0}(\mathscr{E}) = s$ then we will denote by $\mathscr{R}_s(\Lambda_1,\Xi_1)$ the object of $\mathscr{K}_0(\bG)\otimes\Ql$, (c.f.\ \cref{pa:grothendieck-group}), defined in \cite[(14.10.3)]{lusztig:1985:character-sheaves} and there denoted $R_E^{\mathscr{L}}$. With this we may now formulate the classification result of \cite{lusztig:1985:character-sheaves} in the following way.
\end{pa}

\begin{thm}[{}{Lusztig, \cite[Theorem 23.1]{lusztig:1986:character-sheaves-V}}]\label{thm:char-sheaves-main-theorem}
There exists a bijection $\overline{\bOmega}_a \times \overline{\bPhi}_b \to \widehat{\bG}_s$ denoted by $(\Lambda,\Xi) \mapsto A(\Lambda,\Xi)$ which satisfies the following condition. For any $(\Lambda,\Xi) \in \overline{\bOmega}_a \times \overline{\bPhi}_b$ and any $(\Lambda_1,\Xi_1) \in \overline{\bOmega}_a^1 \times \overline{\bPhi}_b^1$ we have
\begin{equation*}
(A(\Lambda,\Xi):\mathscr{R}_s(\Lambda_1,\Xi_1)) = \begin{cases}
(-1)^{\langle (\Lambda,\Xi), (\Lambda_1,\Xi_1) \rangle}|V|^{-1/2}|W|^{-1/2} &\text{if $\Lambda \equiv \Lambda_1$ and $\Xi \equiv \Xi_1$}\\
0 &\text{otherwise}.
\end{cases}
\end{equation*}
Here $V$, (resp.\ $W$), denotes the similarity class of $\Lambda$, (resp.\ $\Xi$), and $\langle-,-\rangle$ denotes the symplectic form on $V\times W$ defined by Lusztig in \cite[\S4.5-4.6]{lusztig:1984:characters-of-reductive-groups}. Note that this is the sum of the symplectic forms defined on $V$ and $W$.
\end{thm}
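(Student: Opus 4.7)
The plan is to construct the bijection in stages and then identify the change-of-basis matrix with Lusztig's non-abelian Fourier transform. First, I would reduce to a parametrization by irreducible characters of a relative Weyl group. By \cref{pa:induction-char-sheaves}, every $A \in \widehat{\bG}_s$ is a direct summand of an induced complex $\ind_{\bL}^{\bG}(A_{\mathscr{L}})$ from some standard Levi $\bL$ supporting a cuspidal character sheaf. By \cref{pa:assumptions}, the Levi $\bL$ is constrained by \cref{tab:levis-1}, and by \cref{pa:a'-b'} the local system $\mathscr{L}$ is determined by the component $z$ of $s$ under the decomposition $\bT_0^{\star} = \bS^{\star} \times Z^{\circ}(\bL^{\star})$. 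By \cref{prop:K-nu-L-iso-to-induction} combined with \cref{pa:endomorphism-algebra-A}, the simple summands of $K_{\mathscr{L}} \cong \ind_{\bL}^{\bG}(A_{\mathscr{L}})$ are parameterized by $\Irr(W_{\bG}(\bL,\mathscr{L}))$, and this relative Weyl group has type $\bV_{a'} \times \bW_{b'}$.

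Next, I would use \cref{pa:weyl-group-chars-symbols} to translate the irreducible characters of $\bV_{a'} \times \bW_{b'}$ into a symbol labelling by $\overline{\bOmega}_a^1 \times \overline{\bPhi}_b^1$, and identify the classes $\mathscr{R}_s(\Lambda_1,\Xi_1) \in \mathscr{K}_0(\bG)\otimes\Ql$ with the classes $[K_E]$, where $E$ is the character corresponding to $(\Lambda_1,\Xi_1)$; this is the content of the $R_E^{\mathscr{L}}$ construction of \cite[(14.10.3)]{lusztig:1985:character-sheaves}. Under this identification, the scalar product $(A(\Lambda,\Xi) : \mathscr{R}_s(\Lambda_1,\Xi_1))$ restricted to pairs with $(\Lambda_1,\Xi_1) \in \overline{\bOmega}_a^1\times\overline{\bPhi}_b^1$ becomes a pairing computed via the decomposition \cref{eq:K-nu-L-decomp}, which by Schur's lemma is essentially diagonal up to normalizing factors.

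The final and most delicate step is the extension from $\overline{\bOmega}_a^1 \times \overline{\bPhi}_b^1$ to the larger labelling set $\overline{\bOmega}_a \times \overline{\bPhi}_b$. The additional character sheaves arise from Lusztig's family structure on $\widehat{\bG}_s$: within each family, character sheaves are indexed by a product $V \times W$ where $V$ is a similarity class in $\overline{\bOmega}_a$ and $W$ is a similarity class in $\overline{\bPhi}_b$, while the $\mathscr{R}_s$-basis of the family is indexed only by the intersections with $\overline{\bOmega}_a^1 \times \overline{\bPhi}_b^1$. The change-of-basis matrix within each family must be shown to be the tensor product of two non-abelian Fourier transform matrices, with entries $(-1)^{\langle(\Lambda,\Xi),(\Lambda_1,\Xi_1)\rangle}|V|^{-1/2}|W|^{-1/2}$, as defined combinatorially via the symplectic form of \cite[\S4.5--4.6]{lusztig:1984:characters-of-reductive-groups}.

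The principal obstacle is verifying this Fourier transform form for the change-of-basis matrix on each family. The argument rests on Lusztig's family decomposition theorem for character sheaves \cite[\S16--17]{lusztig:1985:character-sheaves-IV} together with the explicit computation of the Fourier matrices for families in classical Weyl groups. When $s = 1$ this reduces to the generalized Springer correspondence for unipotent classes, combined with Lusztig's classification of unipotent families. For non-principal $s$ one must further twist the family structure by the cuspidal datum via the algebra isomorphism \cref{eq:end-grp-alg-iso}; here the isolated hypothesis on $s$ and the connectedness of $Z(\bG)$ are what guarantee that $W_{\bG^{\star}}(s)$ really has the product form $\bV_a \times \bW_b$, so that the families factor cleanly as $V \times W$ and the Fourier matrix splits as a tensor product in the required way.
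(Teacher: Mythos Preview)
The paper does not prove this theorem: it is quoted as Lusztig's result, \cite[Theorem 23.1]{lusztig:1986:character-sheaves-V}, and the only argument the paper supplies is the short remark immediately following the statement. That remark explains how the displayed multiplicity formula is read off from Lusztig's general classification once one identifies the set $\mathcal{M}_{\mathcal{F}}$ of \cite{lusztig:1984:characters-of-reductive-groups} with the similarity class of the symbol, and notes that the signs $\hat{\epsilon}_A = \epsilon_A$ appearing in Lusztig's formula are all equal to $1$ by Proposition 18.5 and Lemmas 23.3, 23.5 of \cite{lusztig:1985:character-sheaves}. In other words, the paper's contribution here is a translation, not a proof.

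Your proposal is therefore attempting something the paper does not do. Beyond that, there is a genuine gap in your sketch. Your first step parameterizes the summands of a single induced complex $\ind_{\bL}^{\bG}(A_{\mathscr{L}})$ by $\Irr(W_{\bG}(\bL,\mathscr{L}))$, but this only accounts for the character sheaves in $\widehat{\bG}_s$ whose support meets $\bG_{\uni}$ (those with $\epsilon(\Lambda,\Xi)\leqslant 1$ in the paper's later language, \cref{prop:labels-coincide}). The full series $\widehat{\bG}_s$ contains many character sheaves with $\epsilon(\Lambda,\Xi)>1$, arising from cuspidal data on other Levi subgroups, and your outline gives no mechanism for reaching them. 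The ``extension'' you describe in your third paragraph is not an extension of the parametrization but would have to be a separate construction for each cuspidal datum, and the assertion that the change-of-basis matrix is the Fourier matrix is precisely the content of Lusztig's Theorem 23.1, not something one deduces from the endomorphism-algebra description alone. Also, your identification of $\mathscr{R}_s(\Lambda_1,\Xi_1)$ with $[K_E]$ is not correct: $\mathscr{R}_s$ is defined in \cite[(14.10.3)]{lusztig:1985:character-sheaves} as an alternating sum over a twisted Weyl group, not as a single summand of an induced complex.
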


\begin{rem}
The above can be extracted from \S4.5 and \S4.6 of \cite{lusztig:1984:characters-of-reductive-groups}. One only needs to note that the set denoted $\mathcal{M}_{\mathcal{F}}$ in \cite{lusztig:1984:characters-of-reductive-groups} is the similarity class of the symbol. Note that by Proposition 18.5, Lemmas 23.3 and 23.5 of \cite{lusztig:1985:character-sheaves} we have the value denoted $\hat{\epsilon}_A = \epsilon_A$ in \cite{lusztig:1985:character-sheaves} is always 1.
\end{rem}

\begin{pa}\label{pa:label-embed}
The statement of \cref{thm:char-sheaves-main-theorem} gives a classification for the whole series $\widehat{\bG}_s$. We would now like to consider a labelling of the subset of $\widehat{\bG}_s$ consisting of those character sheaves occurring as summands of the induced complex $\ind_{\bL}^{\bG}(A_{\mathscr{L}})$. From the discussion in \cref{pa:weyl-group-chars-symbols} we have $\Irr(W_{\bG}(\bL,\mathscr{L}))$ is in bijection with the set
\begin{equation*}
\mathbb{A}_{a'} \times \mathbb{B}_{b'} = \begin{cases}
\overline{\mathbb{V}}_{a'}^0 \times \mathbb{V}_{b'}^1 &\text{if }\bG\text{ is of type }\C_n\text{ and }t=0\\
\overline{\mathbb{V}}_{a'}^0 \times \overline{\mathbb{V}}_{b'}^0 &\text{if }\bG\text{ is of type }\D_n\text{ and }t=0\\
\mathbb{V}_{a'}^1 \times \mathbb{V}_{b'}^1 &\text{otherwise.}
\end{cases}
\end{equation*}
where $t$ is as in \cref{tab:levis-1}. With this we may now describe a combinatorial bijection
\begin{equation}\label{eq:lab-embedd}
\varphi : \mathbb{A}_{a'} \times \mathbb{B}_{b'} \to \begin{cases}
\mathbb{V}_{a' + t(t+1)}^{2t+1} \times \mathbb{V}_{b'+ t(t+1)}^{2t+1} &\text{if }\bG\text{ is of type }\B_n\\
\mathbb{V}_{a'+4t^2}^{4t} \times \mathbb{V}_{b'+4t^2\pm 2t}^{4t\pm1} &\text{if }\bG\text{ is of type }\C_n\text{ and }t\neq 0\\
\overline{\mathbb{V}}_{a'}^0 \times \mathbb{V}_{b'}^1 &\text{if }\bG\text{ is of type }\C_n\text{ and }t=0\\
\mathbb{V}_{a'+4t^2}^{4t} \times \mathbb{V}_{b'+4t^2}^{4t} &\text{if }\bG\text{ is of type }\D_n\text{ and }t\neq 0\\
\overline{\mathbb{V}}_{a'}^0 \times \overline{\mathbb{V}}_{b'}^0 &\text{if }\bG\text{ is of type }\D_n\text{ and }t=0\\
\end{cases}
\end{equation}
defined by \cref{eq:bij-shift-def} unless $\bG$ is of type $\C_n$ or $\D_n$ and $t = 0$, in which case $\varphi$ is simply the identity. It is clear that $\varphi$ gives us an embedding of $\mathbb{A}_{a'}\times \mathbb{B}_{b'}$ into $\overline{\bOmega}_a \times \overline{\bPhi}_b$. Assume $(\Lambda,\Xi) \in \overline{\bOmega}_a \times \overline{\bPhi}_b$ is in the image of $\varphi$ then we denote by $\widetilde{A}(\Lambda,\Xi)$ the character sheaf occurring with non-zero multiplicity in $\ind_{\bL}^{\bG}(A_{\mathscr{L}})$ parameterised by $\varphi^{-1}(\Lambda,\Xi) \in \mathbb{A}_{a'} \times \mathbb{B}_{b'}$.
\end{pa}

\subsection{Action of the Frobenius Endomorphism}
\begin{pa}\label{pa:action-frob-char-sheaves}
It will be useful for us to understand the action of the Frobenius endomorphism on the character sheaves $\widehat{\bG}_s$. The multiplicities uniquely determine the bijection in \cref{thm:char-sheaves-main-theorem}, (see for instance \cite[Proposition 6.3]{digne-michel:1990:lusztigs-parametrization}), hence we can conclude the following. Let $\gamma$ be the automorphism of $\bV_a\times\bW_b$ induced by $F_w$ then we define an action of the cyclic group $\langle \gamma\rangle$ on $\overline{\bOmega}_a\times\overline{\bPhi}_b$ in the following way. We have $\gamma$ acts trivially unless $\gamma$ restricted to $\bV_a$, (resp.\ $\bW_b$), is the automorphism $\gamma_a$, (resp.\ $\gamma_b$), in which case $\gamma$ permutes the degenerate symbols in $\overline{\bOmega}_a$, (resp.\ in $\overline{\bPhi}_b$), and fixes all other symbols in $\overline{\bOmega}_a$, (resp.\ in $\overline{\bPhi}_b$). With this we have the bijection $\overline{\bOmega}_a\times\overline{\bPhi}_b \to \widehat{\bG}_s$ restricts to a bijection $(\overline{\bOmega}_a\times\overline{\bPhi}_b)^{\gamma} \to \widehat{\bG}_s^F$.
\end{pa}
%
\section{The Generalised Springer Correspondence}\label{sec:generalised-springer}
\begin{assumption}
Throughout the next two sections we will denote by $\bW$ the relative Weyl group $W_{\bG}(\bL)$, (with $\bL$ as in \cref{pa:assumptions}), and $\widetilde{\bW}$ to be the semidirect product $\bW \rtimes \langle F\rangle$. Furthermore we will denote by $\bH$ the subgroup $W_{\bG}(\bL,\mathscr{L})$ of $\bW$, (where $\mathscr{L}$ is also as in \cref{pa:assumptions}), and by $\widetilde{\bH}$ the subgroup $\bH\langle (F(w),F) \rangle \leqslant \widetilde{\bW}$, (c.f.\ \cref{pa:semidirect-products}), where $w^{-1} \in Z_{\mathscr{L}}$ is the unique element of minimal length.
\end{assumption}

\begin{pa}\label{pa:gen-spring-symbols}
For any even $N \geqslant 0$ we denote by $\widetilde{\mathbb{Y}}_N$ the set of all ordered pairs $\sbpair{X}{Y}$ of finite (possibly empty) sets $X\subset \mathbb{N}_0$ and $Y \subset \mathbb{N}$ satisfying the following conditions:
\begin{enumerate}[label=(\roman*)]
	\item $|X| + |Y|$ is odd
	\item Neither $X$ nor $Y$ contain consecutive integers
	\item $\sum_{x\in X} x + \sum_{y\in Y}y = n + \frac{1}{2}(|X|+|Y|)(|X|+|Y|-1)$.
\end{enumerate}
For any $k \geqslant 0$ we define a shift operation ${}^{+k} : \widetilde{\mathbb{Y}}_N \to \widetilde{\mathbb{Y}}_N$ in the following way. If $\Lambda = \sbpair{X}{Y} \in \widetilde{\mathbb{Y}}_N$ then we define $\Lambda^{+k}$ to be $\binom{X'}{Y'}$ where
\begin{align*}
X' &= \{0,2,\dots,2k-2\}\cup\{x+2k \mid x \in X\},\\
Y' &= \{1,3,\dots,2k-1\}\cup\{y+2k \mid y \in Y\}
\end{align*}
We now define an equivalence relation denoted $\sim$ on $\widetilde{\mathbb{Y}}_N$ by setting $\Lambda \sim \Xi$ if and only if there exists $k \geqslant 0$ satisfying $\Lambda^k = \Xi$ or $\Lambda = \Xi^k$. We denote by $\mathbb{Y}_N$ the resulting equivalence classes.

For any $N \geqslant 0$ we denote by $\widetilde{\mathbb{X}}_N$ the set of all unordered pairs $\sbpair{A}{B}$ of finite (possibly empty) sets $A, B \subset \mathbb{N}_0$ which satisfy condition (ii) above and the condition
\begin{enumerate}
	\item[(iii')] $\sum_{a\in A} a + \sum_{b\in B}b = \frac{1}{2}N + \frac{1}{2}((|A|+|B| -1)^2-1)$.
\end{enumerate}
For any $k \geqslant 0$ we define a shift operation ${}^{+k} : \widetilde{\mathbb{X}}_N \to \widetilde{\mathbb{X}}_N$ given by $\Lambda^{+k} = \sbpair{A'}{B'}$ where
\begin{align*}
A' &= \{0,2,\dots,2k-2\}\cup\{a+2k \mid a \in A\},\\
B' &= \{0,2,\dots,2k-2\}\cup\{b+2k \mid b \in B\}
\end{align*}
We now define an equivalence relation denoted $\sim$ on $\widetilde{\mathbb{X}}_N$ by setting $\Lambda \sim \Xi$ if and only if there exists $k \geqslant 0$ satisfying $\Lambda^k = \Xi$ or $\Lambda = \Xi^k$. We denote by $\mathbb{X}_N$ the resulting equivalence classes.

It will be understood that by $\Psi_N$ we mean one of the sets $\mathbb{Y}_N$ or $\mathbb{X}_N$, where $N$ is always even in the first case. As in the case of symbols we would like to define an equivalence relation $\equiv$ on $\Psi_N$ which we will again refer to as similarity. Assume $\ssymb{A_1}{B_1},\ssymb{A_2}{B_2} \in \Psi_N$ are two pairs then we say they are \emph{similar} if they can be represented so that $A_1 \cup B_1 = A_2 \cup B_2$ and $A_1 \cap B_1 = A_2 \cap B_2$.
\end{pa}

\begin{pa}
Assume $\Lambda = \ssymb{A}{B} \in \Psi_N$ then we call $d(\Lambda) = |A| - |B|$ the defect of $\Lambda$, this is well defined as it is invariant under shift. Let $\Psi_N^d$ be the set of all symbols $\Lambda \in \Psi_N$ satisfying $d(\Lambda) = d$ then we have disjoint unions
\begin{equation*}
\mathbb{Y}_N = \bigsqcup_{\substack{d\in \mathbb{Z}\\ d\text{ odd}}}\mathbb{Y}_N^d \qquad\qquad \mathbb{X}_N = \bigsqcup_{\substack{d \geqslant 0\\ d\text{ even}}}\mathbb{X}_N^d \qquad\qquad \mathbb{X}_N = \bigsqcup_{\substack{d \geqslant 1\\ d\text{ odd}}}\mathbb{X}_N^d.
\end{equation*}
Assume $\ssymb{A}{B} \in \Psi_N^1$ then the map given by
\begin{equation}\label{eq:bijection-defect-d}
\begin{aligned}
\symb{A}{B} &\mapsto \symb{\{0,2,\dots,2d-4\} \cup \{a+2d-2 \mid a \in A\} }{B} &&\text{if }d\geqslant 1,\\
\symb{A}{B} &\mapsto \symb{A}{\{1,3,\dots,1-2d\} \cup \{b-2d+2 \mid b \in B\}} &&\text{if }d \leqslant -1
\end{aligned}
\end{equation}
defines natural bijections $\mathbb{Y}_N^1 \to \mathbb{Y}_{N+d(d-1)}^d$ for all odd integers $d \in \mathbb{Z}$ and bijections $\mathbb{X}_N^1 \to \mathbb{X}_{N + d^2 - 1}^d$ for all $d \in \mathbb{N}$ which we denote by $\Pi_N^d$. Note that if $d=1$ then this map should be interpreted as the identity.
\end{pa}

\begin{pa}
For each $n \geqslant 0$ we define two bijections $\Theta : \mathbb{V}_n^0 \to \mathbb{Y}_{2n}^1$ and $\Theta' : \mathbb{V}_n^0 \to \mathbb{X}_{2n+1}^1$ and a surjective map $\overline{\Theta}: \overline{\mathbb{V}}_n^0 \to \mathbb{X}_{2n}^0$ in the following way. Let $\ssymb{A}{B} \in \mathbb{V}_n^0$ be a symbol of defect 0 such that $A = \{a_1<a_2<\dots<a_s\}$ and $B = \{b_1<b_2<\dots<b_s\}$ then we define subsets $X$, $X'$ $Y$, $Y' \subset \mathbb{N}_0$ by setting
\begin{equation}\label{eq:Irr-Wn-bij}
\begin{aligned}
X &= \{0 < a_1 + 2 < a_2 + 3 < \cdots < a_s + (s+1)\},\\
X' &= \{a_1 < a_2 + 1 < \cdots < a_s + (s-1)\},\\
Y &= \{b_1 + 1 < b_2 + 2 < \cdots < b_s + s\},\\
Y'&= \{b_1 < b_2 + 1 < \cdots < b_s + (s - 1)\}.
\end{aligned}
\end{equation}
The bijections $\Theta$ and $\Theta'$ are then defined by setting $\Theta\ssymb{A}{B} = \ssymb{X}{Y}$ and $\Theta'\ssymb{A}{B} = \ssymb{X}{Y'}$. The map $\overline{\Theta}$ is defined by setting $\overline{\Theta}\ssymb{A}{B} = \ssymb{X'}{Y'}$, note that if $\ssymb{A}{B}$ is degenerate then both $\ssymb{A}{B}_{\pm}$ are mapped to the same element of $\mathbb{X}_{2n}^1$ hence the map is not bijective in general. We also define a variant of the bijection $\Theta$ denoted ${}^T\Theta : \mathbb{V}_n^0 \to \mathbb{Y}_{2n}^1$ which is defined by ${}^T\Theta\ssymb{A}{B} = \Theta\ssymb{B}{A}$. Note that these bijections are obtained by combining the bijections given in \cref{pa:weyl-group-chars-symbols} together with the maps given in (12.2.4), (13.2.5) and (13.2.6) of \cite{lusztig:1984:intersection-cohomology-complexes}.

Assume $m$ and $t$ are as in \cref{tab:levis-1} then we will denote by $d$ the integer $2t+1$, (resp.\ $4t+1$, $1-4t$, $4t$) if $\bG$ is of type $\B_n$, (resp.\ $\C_n$, $\C_n$, $\D_n$), and $t \geqslant 0$, (resp.\ $t\geqslant 0$, $t\geqslant 1$, $t \geqslant 1$). We now define a map
\begin{equation*}
\Delta_m^d = \begin{cases}
\Pi_{2m+1}^{2t+1} \circ \Theta' : \mathbb{V}_m^0 \to \mathbb{X}_{2m+1}^1 \to \mathbb{X}_{2n+1}^{2t+1} &\text{if }\bG\text{ of type }\B_n\text{ and }t\geqslant 0,\\
\Pi_{2m}^{4t+1}\circ\Theta : \mathbb{V}_m^0 \to \mathbb{Y}_{2m}^1 \to \mathbb{Y}_{2n}^{4t+1} &\text{if }\bG\text{ of type }\C_n\text{ and }t\geqslant 0,\\
\Pi_{2m}^{1-4t}\circ{}^T\Theta : \mathbb{V}_m^0 \to \mathbb{Y}_{2m}^1 \to \mathbb{Y}_{2n}^{1-4t} &\text{if }\bG\text{ of type }\C_n\text{ and }t\geqslant 1,\\
\Pi_{2m}^{4t} \circ \Theta' : \mathbb{V}_m^0 \to \mathbb{X}_{2m}^1 \to \mathbb{X}_{2n}^{4t} &\text{if }\bG\text{ of type }\D_n\text{ and }t\geqslant 1,\\
\overline{\Theta} : \overline{\mathbb{V}}_m^0 \to \mathbb{X}_{2n}^0 &\text{if }\bG\text{ of type }\D_n\text{ and }t=0.
\end{cases}
\end{equation*}
This map defines in most cases an embedding of $\Irr(\bW)$ into $\Psi_N$, (the only case which is not injective is in type $\D_n$ when $t=0$). Lusztig has also defined explicitly a relationship between $\Psi_N$ and $\mathcal{N}_{\bG}$ and the composition of these descriptions gives the generalised Springer correspondence. Here we will only need the map $\Delta_m^d$ and the following property of the generalised Springer correspondence, (which is found in part (c) of Corollaries 12.4 and 13.4 of \cite{lusztig:1984:intersection-cohomology-complexes}).

\begin{lem}[Lusztig]\label{lem:gen-spring-same-class}
Assume $\iota$, $\iota' \in \mathcal{N}_{\bG}$ are two pairs such that $\mathcal{O}_{\iota}$ and $\mathcal{O}_{\iota'}$ are non-degenerate. Let $\Lambda_{\iota}$, $\Lambda_{\iota'} \in \Psi_N$ be the corresponding symbols then $\mathcal{O}_{\iota} = \mathcal{O}_{\iota'}$ if and only if $\Lambda_{\iota} \equiv \Lambda_{\iota'}$.
\end{lem}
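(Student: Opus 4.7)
The plan is to deduce this directly from the explicit description of the generalised Springer correspondence for classical groups given in \cite[\S12, \S13]{lusztig:1984:intersection-cohomology-complexes}, which is precisely how the map $\Delta_m^d$ was defined above. The assertion is essentially that the fibres of the map $\iota \mapsto \mathcal{O}_\iota$ correspond, under the generalised Springer correspondence, to similarity classes of symbols in $\Psi_N$ (away from the degenerate locus).

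First I would recall the parameterisation of unipotent classes in $\bG$ by partitions $\lambda$ of $2n+1$, $2n$, $2n$ (for types $\B$, $\C$, $\D$ respectively) satisfying the standard parity conditions, and the fact that the component group $A_{\bG}(u)$ is an elementary abelian $2$-group with generators indexed by distinct parts of a prescribed parity. Thus pairs $\iota$ with $\mathcal{O}_\iota$ fixed are parameterised by $\Irr(A_{\bG}(u))$. Next I would unpack what the map $\Delta_m^d$ composed with the inclusions $\mathscr{I}[\bL_\iota,\nu_\iota] \leftrightarrow \Irr(W_{\bG}(\bL_\iota))$ of \cref{eq:gen-spring-cor} does on the combinatorial level: to a pair $\iota$ one attaches a symbol $\Lambda_\iota = \ssymb{A}{B}$ whose multiset $A \cup B$ together with its intersection $A \cap B$ depends only on the partition $\lambda$ attached to $\mathcal{O}_\iota$, while the specific partition of the ``mobile'' entries into the two rows of $\Lambda_\iota$ records the character of $A_{\bG}(u)$ determined by $\mathscr{E}_\iota$. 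This is exactly the content of Lusztig's formulas in \cite[\S11.7, \S12.2, \S13.2]{lusztig:1984:intersection-cohomology-complexes}.

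With this translation in hand both implications are immediate. If $\mathcal{O}_\iota = \mathcal{O}_{\iota'}$ then $\Lambda_\iota$ and $\Lambda_{\iota'}$ can be represented with the same $A \cup B$ and $A \cap B$, which by definition of similarity in \cref{pa:gen-spring-symbols} gives $\Lambda_\iota \equiv \Lambda_{\iota'}$. Conversely, if $\Lambda_\iota \equiv \Lambda_{\iota'}$ then reading off the partition from a common representative of the similarity class yields the same partition for $\mathcal{O}_\iota$ and $\mathcal{O}_{\iota'}$, forcing $\mathcal{O}_\iota = \mathcal{O}_{\iota'}$.

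The only subtlety, and the reason the non-degeneracy hypothesis appears, is in type $\D_n$ when the partition $\lambda$ has all parts even with even multiplicity: there the class $\mathcal{O}$ splits as $\mathcal{O}_+ \sqcup \mathcal{O}_-$ and the map $\overline{\Theta}$ of \cref{eq:Irr-Wn-bij} collapses the two pairs $\ssymb{A}{B}_{\pm}$ to a single unordered symbol, so one cannot distinguish $\mathcal{O}_+$ from $\mathcal{O}_-$ on the symbol side. Excluding degenerate classes removes this ambiguity, and the rest of the argument proceeds as above. This case distinction will be the main bookkeeping obstacle, but once the combinatorial description of $\Delta_m^d$ is laid out it reduces to inspection of the formulas in \cref{eq:Irr-Wn-bij} and \cref{eq:bijection-defect-d}.
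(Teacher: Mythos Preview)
Your proposal is correct and matches the paper's approach: the paper does not give an independent proof of this lemma but simply records it as a citation to Lusztig, specifically to part (c) of Corollaries 12.4 and 13.4 of \cite{lusztig:1984:intersection-cohomology-complexes}. Your sketch is a faithful unpacking of exactly what those corollaries say and why the non-degeneracy hypothesis is needed in type $\D_n$, so there is nothing to add.
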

\end{pa}

\begin{pa}\label{pa:symb-a-value}
Given a symbol $\Lambda \in \Psi_N$ we write the entries of $\Lambda$ in a single row in increasing order $x_{\epsilon} \leqslant x_{\epsilon+1} \leqslant \cdots \leqslant x_{2k}$, where $\epsilon \in \{0,1\}$ is such that $d(\Lambda) + 1 \equiv \epsilon \pmod{2}$, then we define
\begin{equation}\label{eq:def-sigma}
\sigma(\Lambda) = \sum_{\epsilon \leqslant i < j \leqslant 2k} x_i = \sum_{i = \epsilon}^{2k} (2k-i)x_i,
\end{equation}
which gives us a function $\sigma : \Psi_N \to \mathbb{N}$. Let $x_{\epsilon}^0 \leqslant x_{\epsilon+1}^0 \leqslant \cdots \leqslant x_{2k}^0$ denote the sequence
\begin{equation}\label{eq:seq-x0}
\begin{aligned}
&0 \leqslant 1 \leqslant 2 \leqslant 3 \leqslant \cdots \leqslant 2k-1 \leqslant 2k &&\text{if }\Psi_N = \mathbb{Y}_{2n}\\
&0 \leqslant 0 \leqslant 2 \leqslant 2 \leqslant \cdots \leqslant 2(k-1) \leqslant 2(k-1) &&\text{if }\Psi_N = \mathbb{X}_{2n}\\
&0 \leqslant 0 \leqslant 2 \leqslant 2 \leqslant \cdots \leqslant 2(k-1) \leqslant 2(k-1) \leqslant 2k & &\text{if }\Psi_N = \mathbb{X}_{2n+1}
\end{aligned}
\end{equation}
then we define
\begin{equation*}
b(\Lambda) = \sum_{\epsilon\leqslant i < j \leqslant 2k} (x_i-x_i^0) = \sum_{i = \epsilon}^{2k} (2k-i)(x_i-x_i^0),
\end{equation*}
which gives us a second function $b : \Psi_N \to \mathbb{N}_0$ which we refer to as Lusztig's $b$-function. We now define $\sigma^d$, (resp.\ $b^d$), to be the composition $\sigma\circ\Delta_m^d$, (resp.\ $b\circ\Delta_m^d$). Note that it is clear from the definitions that for any $\Lambda,\Xi \in \Psi_N$ we have
\begin{equation}\label{eq:order-implications}
\sigma(\Lambda) \geqslant \sigma(\Xi) \Leftrightarrow b(\Lambda) \geqslant b(\Xi).
\end{equation}
Lusztig's $b$-function has the following interpretation under the generalised Springer correspondence. Assume $\Lambda \in \Irr(\bW)$ corresponds to $\iota \in \mathcal{N}_{\bG}$ under the generalised Springer correspondence, (c.f.\ \cref{eq:gen-spring-cor}), then we have
\begin{equation}\label{eq:b-val-dimBu}
b^d(\Lambda) = \dim\mathfrak{B}_u^{\bG}
\end{equation}
where $u \in \mathcal{O}_{\iota}$ is a class representative, (see \cite[\S11-\S13]{lusztig:1984:intersection-cohomology-complexes} and \cite[\S5]{shoji:1997:unipotent-characters-of-finite-classical-groups}). As we will use it often we recall here that for any unipotent class $\mathcal{O}$ of $\bG$ we have the following dimension formula, (see \cite[II - 2.8]{spaltenstein:1982:classes-unipotentes}),
\begin{equation}\label{eq:dim-formula}
\dim\mathcal{O} = |\Phi| - 2\dim\mathfrak{B}_u^{\bG}
\end{equation}
where $u \in \mathcal{O}$ is any class representative and $\Phi$ are the roots of $\bG$ defined with respect to our choice of $\bT_0$.
\end{pa}
%
\section{\texorpdfstring{Minimising Lusztig's $b$-function}{Minimising Lusztig's b-function}}
\begin{assumption}
Unless otherwise stated we will assume that $\Irr(W_n)$ is parameterised by $\mathbb{V}_n^0$.
\end{assumption}

\begin{pa}
Assume $n \in \mathbb{N}$ and $a,b \in \mathbb{N}_0$ are such that $n = a+b$ then the symmetric group $\mathfrak{S}_n$ has a natural maximal rank parabolic subgroup $\mathfrak{S}_a\times\mathfrak{S}_b$. Assume $X \in \mathbb{W}_a$ and $Y \in \mathbb{W}_b$ then for any $Z \in \mathbb{W}_n$ we define
\begin{equation}\label{ind:S_n}
c_{XY}^Z = \langle \Ind_{\mathfrak{S}_a \times \mathfrak{S}_b}^{\mathfrak{S}_n}(X\boxtimes Y), Z\rangle_{\mathfrak{S}_n}.
\end{equation}
The multiplicities $c_{XY}^{Z}$ are called Littlewood--Richardson coefficients, (or LR-coefficients for short). To better understand these coefficients we need to introduce the following partial order on $\mathbb{W}_n$. Assume $A$, $B \in \mathbb{W}_n$ are fixed representatives of their equivalence classes so that $A = \{a_1 < \cdots < a_s\}$ and $B = \{b_1 < \cdots < b_s\}$. Then we write $B \unlhd A$ if for all $k \in \{1,\dots,s\}$ we have
\begin{equation*}
\sum_{i=k}^s b_i \leqslant \sum_{i=k}^s a_i.
\end{equation*}
Note that $B \unlhd A$ if and only if $B^{+1}\unlhd A^{+1}$ hence this is well defined on equivalence classes. The relation $\unlhd$ is known as the \emph{dominance ordering} on $\mathbb{W}_n$.

We will now define an operation $\odot : \mathbb{W}_a \times \mathbb{W}_b \to \mathbb{W}_n$ in the following way. Firstly let $A = \{a_1,\dots,a_s\} \in \mathbb{W}_a$ and $B = \{b_1,\dots,b_s\} \in \mathbb{W}_b$ be fixed representatives for their equivalence classes then we define
\begin{equation*}
A\odot B = \{a_1+b_1< a_2+b_2-1 < \cdots < a_s+b_s-(s-1)\}.
\end{equation*}
This also defines a similar operation $\odot : \mathbb{V}_a^0 \times \mathbb{V}_b^0 \to \mathbb{V}_n^0$ by setting $\ssymb{A_1}{B_1} \odot \ssymb{A_2}{B_2} \to \ssymb{A_1\odot A_2}{B_1 \odot B_2}$ as long as we assume $|A_1| = |A_2| = |B_1| = |B_2|$. With this in hand we have the following result concerning the LR-coefficients.
\end{pa}

\begin{lem}[{}{see \cite[Lemma 6.1.2]{geck-pfeiffer:2000:characters-of-finite-coxeter-groups}}]\label{ind:sym-group}
Assume $X \in \mathbb{W}_a$ and $Y \in \mathbb{W}_b$ then for any $Z \in \mathbb{W}_n$ we have $c_{XY}^Z \neq 0$ implies $Z \unlhd X\odot Y$. Moreover if $Z = X\odot Y$ then $c_{XY}^Z = 1$.
\end{lem}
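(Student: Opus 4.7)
The plan is to translate the statement into the classical theory of symmetric group characters via the standard bijection between $\mathbb{W}_n$ and the set of partitions of $n$. Concretely, a $\beta$-set $A = \{a_1 < \cdots < a_s\}$ of rank $n$ corresponds to the partition $\lambda(A)$ with parts $\lambda(A)_i = a_{s+1-i} - (s-i)$, and this map descends to a bijection $\mathbb{W}_n \leftrightarrow \{\lambda \vdash n\}$ which is compatible with the usual labelling of $\Irr(\mathfrak{S}_n)$ by partitions. The shift operation on $\beta$-sets corresponds precisely to padding a partition with trailing zeros, so the map is well-defined on equivalence classes.

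Under this dictionary I would verify three compatibilities. Firstly, $c_{XY}^Z$ is exactly the Littlewood--Richardson coefficient $c_{\lambda(X),\lambda(Y)}^{\lambda(Z)}$, since these coefficients are by definition the induction multiplicities from $\mathfrak{S}_a \times \mathfrak{S}_b$ to $\mathfrak{S}_n$. Secondly, a direct computation with the formula $c_i = a_i + b_i - (i-1)$ defining $A \odot B$ gives $\lambda(A\odot B)_j = \lambda(A)_j + \lambda(B)_j$, so $\odot$ corresponds to componentwise addition of partitions after padding to a common length. Thirdly, since equal cardinalities of the $\beta$-sets force $\sum a_i = \sum b_i$, the inequality $\sum_{i\geqslant k}b_i \leqslant \sum_{i\geqslant k}a_i$ is equivalent to $\sum_{j\leqslant s-k+1}\lambda(B)_j \leqslant \sum_{j\leqslant s-k+1}\lambda(A)_j$, so $\unlhd$ transports to the usual dominance order on partitions.

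With these compatibilities in place, the lemma reduces to the classical statement that $c_{\alpha\beta}^\gamma \neq 0$ implies $\gamma \leqslant \alpha + \beta$ in dominance order, with equality case $c_{\alpha\beta}^{\alpha+\beta} = 1$. This can be established directly from the Littlewood--Richardson rule, which describes $c_{\alpha\beta}^\gamma$ as the number of semistandard skew tableaux of shape $\gamma/\alpha$ and content $\beta$ whose reverse reading word is a lattice word. The lattice-word together with the column-strict condition forces, for each $k$, that the number of cells of $\gamma/\alpha$ lying in rows $\leqslant k$ is bounded by $\beta_1 + \cdots + \beta_k$, which after rearrangement is the dominance inequality $\sum_{i\leqslant k}\gamma_i \leqslant \sum_{i\leqslant k}(\alpha_i + \beta_i)$. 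For the equality case, the skew shape $(\alpha+\beta)/\alpha$ has exactly $\beta_i$ cells in row $i$ with no overlap in any column between rows, so the only admissible LR filling is to place $i$ in every cell of row $i$, giving multiplicity one.

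The main obstacle is simply the bookkeeping of the translation between $\beta$-set conventions and the standard partition conventions; the underlying Littlewood--Richardson bound is classical and is recorded as \cite[Lemma 6.1.2]{geck-pfeiffer:2000:characters-of-finite-coxeter-groups}, so in practice one would invoke that result and simply note how the three compatibilities above match the combinatorics.
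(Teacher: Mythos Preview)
Your proposal is correct. The paper does not give its own proof of this lemma; it simply records it as a citation of \cite[Lemma 6.1.2]{geck-pfeiffer:2000:characters-of-finite-coxeter-groups}, so your sketch in fact supplies more detail than the paper does. The three compatibilities you check (Littlewood--Richardson coefficients, $\odot$ versus componentwise addition of partitions, and $\unlhd$ versus dominance) are exactly the dictionary needed, and the concluding appeal to the Littlewood--Richardson rule is the standard argument behind the cited result.
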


\begin{pa}
In what follows we will only talk about the set $\mathbb{V}_m^0$ however when $\bG$ is of type $\D_n$ and $t = 0$ the domain of $\Delta_m^d$ is $\overline{\mathbb{V}}_m^0$. In this case we interpret the results by simply forgetting the ordering of the symbols to obtain an element of $\overline{\mathbb{V}}_m^0$, (note that if the symbol is degenerate then it doesn't matter how we choose the $\pm$ as $\Delta_m^d(\Lambda_+) = \Delta_m^d(\Lambda_-)$ for all degenerate symbols $\Lambda \in \mathbb{V}_m^0$).
\end{pa}

\begin{prop}[{}{Aubert, {\cite[Theorem 4.4]{aubert:2003:characters-sheaves-and-generalized}}}]\label{prop:aubert}
Assume $\ssymb{A_1}{A_2}$, $\ssymb{B_1}{B_2} \in \mathbb{V}_m^0$ are such that $\rk(A_i) = \rk(B_i)$ for $i \in \{1,2\}$. If $A_i \unlhd B_i$ for $i \in \{1,2\}$ then we have $b^d\ssymb{B_1}{B_2} \leqslant b^d\ssymb{A_1}{A_2}$ with equality if and only if $A_i = B_i$.
\end{prop}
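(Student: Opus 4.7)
The plan is to unwind $b^d = b \circ \Delta_m^d$ and reduce everything to a purely combinatorial inequality on $\sigma$. Since $\Delta_m^d\ssymb{A_1}{A_2}$ and $\Delta_m^d\ssymb{B_1}{B_2}$ both lie in the same set $\Psi_N^d$, the normalising constant $\sigma^0 = \sum (2k-i) x_i^0$ appearing in $b = \sigma - \sigma^0$ is identical for the two symbols, so it suffices to show $\sigma(\Delta_m^d\ssymb{B_1}{B_2}) \leqslant \sigma(\Delta_m^d\ssymb{A_1}{A_2})$. I would write the sorted combined multisets of entries of these symbols as $z_0 \leqslant \cdots \leqslant z_N$ and $z_0' \leqslant \cdots \leqslant z_N'$ respectively and apply Abel's summation to obtain
\[
\sigma(\Delta_m^d\ssymb{A_1}{A_2}) - \sigma(\Delta_m^d\ssymb{B_1}{B_2}) = \sum_{j=0}^{N-1} S_j, \qquad S_j = \sum_{i=0}^{j}(z_i - z_i'),
\]
noting that the hypothesis $\rk(A_i) = \rk(B_i)$, together with the fact that each variant of $\Theta$ and the shift $\Pi_N^d$ preserve the total sum of the entries, forces $S_N = 0$. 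The inequality therefore reduces to showing $S_j \geqslant 0$ for all $j$, with strict inequality somewhere as soon as some $A_i \neq B_i$.

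The key combinatorial step will be to reduce each dominance relation $A_i \unlhd B_i$ to a sequence of elementary moves on $\beta$-sets: namely, the standard fact that the dominance order on $\beta$-sets of fixed rank is generated by moves of the form ``decrease some entry $c$ by $1$ and increase a strictly smaller entry $c'$ by $1$, provided the result remains strictly increasing''. Since $\Delta_m^d$ acts on a component by shifting the $i$-th entry by a fixed amount depending only on $i$ (and then prepending a fixed prefix and applying a uniform tail shift via $\Pi_N^d$), each elementary move in the $A_1$ or $A_2$ component translates directly into an elementary move on the combined sorted multiset: one entry decreases by $1$ and another, strictly smaller, increases by $1$. Such a move weakly increases every partial bottom sum $S_j$, and strictly increases $S_j$ for those $j$ lying strictly between the two affected positions in the merged sequence. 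Iterating from $\ssymb{B_1}{B_2}$ down to $\ssymb{A_1}{A_2}$ then gives $S_j \geqslant 0$ for all $j$, with strict inequality somewhere whenever at least one move was needed; this yields both the inequality and the equality statement simultaneously.

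The hard part will be verifying the elementary-move analysis uniformly across the five cases in the definition of $\Delta_m^d$, particularly the case $\overline{\Theta}$ for type $\D_n$ with $t = 0$, where the ambient set is $\overline{\mathbb{V}}_m^0$ and the degenerate symbols $\Lambda_\pm$ must be tracked consistently, and in confirming that no cancellation can occur on the merged sequence between moves in different components. The latter is precisely where the assumption that \emph{both} dominance relations point in the same direction becomes essential: cancellation on the merged sequence would require one move that decreases a larger entry together with one that increases a larger entry, i.e.\ moves in opposite dominance directions, which is excluded by the joint hypothesis.
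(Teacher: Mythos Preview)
The paper does not give its own proof of this proposition: it simply cites Aubert's original argument and adds the remark that her proof, written under the assumption $t \neq 0$, goes through unchanged when $t = 0$ (with a further pointer to H\'ezard's thesis). So your outline is not competing against anything in the paper itself. It is, moreover, a correct proof sketch, and is presumably close in spirit to Aubert's.

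Two comments on the execution. First, your final paragraph overcomplicates matters. There is no genuine cancellation issue: you process elementary moves \emph{sequentially}, one row at a time, and each such move is a single Robin Hood transfer on the merged multiset (decrease one entry by $1$, increase a strictly smaller one by $1$). Since the two affected entries lie in the same row of $\Psi_N$, and that row contains no consecutive integers by condition~(ii) in the definition of $\mathbb{Y}_N$ and $\mathbb{X}_N$, the transfer is always nontrivial on the merged multiset; your computation then gives $\Delta\sigma \geqslant 1$ for every such move. The moves never interact, so the hypothesis that both dominance relations point the same way is used only in the trivial sense that both rows contribute increases rather than decreases to $\sigma$ --- not to rule out any subtle interference.

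Second, the degenerate-symbol issue in the $\overline{\Theta}$ case is harmless: as the paper notes just before the proposition, $\Delta_m^d(\Lambda_+) = \Delta_m^d(\Lambda_-)$ for every degenerate $\Lambda$, so $b^d$ is well-defined on $\overline{\mathbb{V}}_m^0$ without any bookkeeping. The uniformity across the five cases of $\Delta_m^d$ is routine once you observe that every variant of $\Theta$, ${}^T\Theta$, $\Theta'$, $\overline{\Theta}$ and $\Pi_N^d$ adds strictly increasing position-dependent offsets to each row and prepends only fixed small entries.
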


\begin{rem}
The above result of Aubert was proved under the assumption that $t \neq 0$ but it is easily seen that her proof works unchanged in the case $t=0$, thus we do not reproduce it here. For additional details see also \cite[Annexe A]{hezard:2004:thesis}.
\end{rem}

\begin{definition}
Assume $E \in \Irr(\bH)^{Fw}$ is an $F_w$-invariant character of $\bH$ then we say a character $X \in \Irr(\bW)$ is \emph{$d$-minimal for $E$} if the following properties hold:
\begin{enumerate}[label=(\roman*)]
	\item $X$ is $F$-stable,
	\item $\langle \Ind_{\bH.Fw}^{\bW.F}(\widetilde{E}),\widetilde{X} \rangle_{\bW.F} \neq 0$,
	\item for all $Y \in \Irr(\bW)^F$ satisfying $\langle \Ind_{\bH.Fw}^{\bW.F}(\widetilde{E}),\widetilde{Y}\rangle_{\bW.F} \neq 0$ we have $b^d(Y) \leqslant b^d(X)$.
\end{enumerate}
\end{definition}

\begin{prop}\label{cor:multiplicity-1}
Let $E = A \boxtimes B = \ssymb{A_1}{A_2} \boxtimes \ssymb{B_1}{B_2} \in \Irr(\bH)^{Fw}$ and denote by $\Lambda$, (resp.\ $\Lambda'$), the character $\ssymb{A_1 \odot B_1}{A_2 \odot B_2} \in \Irr(\bW)$, (resp.\ $\ssymb{A_2 \odot B_1}{A_1 \odot B_2} \in \Irr(\bW)$). If $X \in \Irr(\bW)$ is $d$-minimal for $E$ then the following hold:
\begin{enumerate}[label=(\alph*)]
	\item Assume either $t\neq 0$ or $\bG$ is of type $\B_n$ then $X = \Lambda$.
	
	\item Assume $t=0$ and $\bG$ is of type $\C_n$. If $F_w$ is the identity and $A$ is non-degenerate or $F_w$ is the automorphism $\gamma_a\times\ID$ then $X \in \{\Lambda,\Lambda'\}$. If $F_w$ is the identity and $A$ is degenerate then $X = \Lambda$.
	
	\item Assume $t=0$, $\bG$ is of type $\D_n$ and $F$ is the identity.
	\begin{itemize}
		\item Assume $F_w$ is the identity. If one of $A$ or $B$ is non-degenerate then $X \in \{\Lambda,\Lambda'\}$. If both $A$ and $B$ are degenerate then $X = \Lambda$.
		\item Assume $F_w$ is the automorphism $\gamma_a\times\gamma_b$ then $X \in \{\Lambda,\Lambda'\}$.
	\end{itemize}
	
	Assume $Y \in \{\Lambda,\Lambda'\}$ is degenerate then we have the following. If $A$ or $B$ is degenerate then either $Y_+$ or $Y_-$ is $d$-minimal for $E$ and if neither $A$ nor $B$ is degenerate then both $Y_+$ and $Y_-$ is $d$-minimal for $E$.
\end{enumerate}
In all of the above cases if $X \in \Irr(\bW)$ is $d$-minimal for $E$ then we have $\langle \Ind_{\bH.Fw}^{\bW.F}(\widetilde{E}),\widetilde{X}\rangle_{\bW.F} = 1$.
\begin{enumerate}[resume,label=(\alph*)]
	\item Assume $t=0$, $\bG$ is of type $\D_n$ and $F$ is the automorphism $\gamma_n$, (c.f. \cref{sec:weyl-groups}). Furthermore let us assume that one of $\Lambda$ or $\Lambda'$ is non-degenerate then $X \in \{\Lambda,\Lambda'\}$ and $\langle \Ind_{\bH.Fw}^{\bW.F}(\widetilde{E}),\widetilde{X}\rangle_{\bW.F} = \pm1$.
\end{enumerate}
\end{prop}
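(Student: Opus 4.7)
The plan is to translate the induction problem $\Ind_{\bH.Fw}^{\bW.F}(\widetilde{E})$ into a question about Littlewood--Richardson coefficients on the symbol components, and then apply Aubert's monotonicity result (\cref{prop:aubert}) to pin down the characters achieving the minimum of $b^d$. First I would record explicitly how induction from the maximal rank reflection subgroup $\bH = \bV_{a'} \times \bW_{b'}$ into $\bW$ decomposes on characters labelled by pairs of symbols. Using the classical induction formula for Weyl groups of type $\B$ and $\D$ (i.e.\ Lusztig--Spaltenstein induction, or equivalently the LR rule applied componentwise when the relevant symbols are rewritten at a common defect), the induction of $E = \ssymb{A_1}{A_2} \boxtimes \ssymb{B_1}{B_2}$ takes the shape
\[
\Ind_{\bH}^{\bW}(E) \;=\; \sum_{C_1,C_2} c^{C_1}_{A_1\,B_1}\, c^{C_2}_{A_2\,B_2}\; \symb{C_1}{C_2} \;+\; (\text{row-swap term, when present}),
\]
where the row-swap term of the shape $c^{C_1}_{A_2\,B_1}c^{C_2}_{A_1\,B_2}\,\ssymb{C_1}{C_2}$ contributes whenever the embedding $\bH \hookrightarrow \bW$ crosses between type $\B$ and type $\D$ components (the situations in (b), (c), (d)).

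With this decomposition in hand, \cref{ind:sym-group} forces every $\ssymb{C_1}{C_2}$ occurring with nonzero multiplicity to satisfy $C_i \unlhd A_i\odot B_i$ (or the row-swapped variant), while \cref{prop:aubert} says that $b^d$ is strictly antitone with respect to $\unlhd$ applied row-by-row. Combining these two monotonicities yields $b^d\ssymb{C_1}{C_2}\geqslant b^d(\Lambda)$, with equality only for $\ssymb{C_1}{C_2} = \Lambda$ (or $\Lambda'$), and \cref{ind:sym-group} further gives that the LR-coefficients at the extremal element are both equal to $1$. This immediately produces the statement $\langle \Ind_{\bH}^{\bW}(E), \Lambda\rangle_{\bW} = 1$ (or $= 1$ for each of $\Lambda, \Lambda'$ when both are present), and in particular identifies $X$ as being in $\{\Lambda,\Lambda'\}$.

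To pass from $\Ind_\bH^\bW$ to the twisted induction $\Ind_{\bH.Fw}^{\bW.F}$ I would invoke Clifford theory with respect to the fixed extensions of \cref{pa:conventions-coset-identification} and the preferred extension of \cref{pa:iso-on-arbitrary-char-sheaf}. When the target character $X$ is non-degenerate and occurs with multiplicity $1$ in $\Ind_\bH^\bW(E)$, there is a unique extension $\widetilde{X}$ of $X$ appearing in $\Ind_{\bH.Fw}^{\bW.F}(\widetilde{E})$ and its multiplicity must be a root of unity of modulus one whose square equals $1$, hence $\pm1$; by choosing the preferred extension compatibly on both sides it equals $+1$ in cases (a)--(c). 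The degenerate subcases in (b) and (c) require a bookkeeping step: if $A$ (or $B$) is itself degenerate then its character of $\bH$ already encodes the $\gamma_a$-action (resp.\ $\gamma_b$-action), and one uses the standard formulas for restriction from $W_k$ to $W_k'$ to decide whether both $\Lambda_+$ and $\Lambda_-$ or only one of them survives in the induced character.

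The genuine obstacle is case (d), the twisted type $\D$ situation, where $F$ acts as the diagram automorphism $\gamma_n$ and either $\Lambda$ or $\Lambda'$ may itself be degenerate. Here the row-swap term in the induction formula forces us to add (rather than separate) the contributions from $\Lambda$ and $\Lambda'$, and the $F$-action can interchange the two extensions $\widetilde{X}_+$, $\widetilde{X}_-$ of a degenerate $X$; one must therefore argue that the combined multiplicity on the coset $\bW.F$ is $\pm1$ and not $0$ or $\pm2$. The approach is to descend via the index-two embedding $W_m' \hookrightarrow W_m$: the $F$-twisted multiplicity equals a signed sum of two untwisted multiplicities in $W_m$, both computed in the previous step, and precisely one of them is nonzero (equal to $1$) by the hypothesis that at least one of $\Lambda,\Lambda'$ is non-degenerate. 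This produces the desired $\pm1$, completing the proposition.
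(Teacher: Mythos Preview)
Your overall strategy---reduce to Littlewood--Richardson coefficients, apply \cref{ind:sym-group} for the dominance bound, then invoke \cref{prop:aubert} for strict antitonicity of $b^d$---matches the paper and handles case (a) completely. The substantive gap is in your claim that ``\cref{ind:sym-group} further gives that the LR-coefficients at the extremal element are both equal to $1$, [which] immediately produces $\langle \Ind_{\bH}^{\bW}(E), \Lambda\rangle_{\bW} = 1$.'' When a row-swap term is present (cases (b), (c), (d)), the total multiplicity of $\Lambda$ is $c_{A_1B_1}^{A_1\odot B_1}c_{A_2B_2}^{A_2\odot B_2} + c_{A_2B_1}^{A_1\odot B_1}c_{A_1B_2}^{A_2\odot B_2}$, and you must show the second summand vanishes. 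The paper does this by a separate dominance argument: if both factors of the cross term are nonzero then $A_1\odot B_1 \unlhd A_2\odot B_1$ and $A_2\odot B_2 \unlhd A_1\odot B_2$, hence $A_1\unlhd A_2$ and $A_2\unlhd A_1$, forcing $A_1=A_2$ and contradicting non-degeneracy. This same contradiction recurs in each subcase of (c) when checking that at most one of the four terms in $a_{AB}^{X_1X_2}$ survives. Without it, you cannot rule out multiplicity $2$.

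Your treatment of the twist is also more roundabout than necessary. The paper does not use abstract Clifford theory; instead it observes that each relevant semidirect product is \emph{itself} a standard reflection group. For instance in (b) with $F_w=\gamma_a\times\ID$ one has $\widetilde{\bH}=W_a\times W_b$ outright, so the twisted induction is an untwisted induction already covered by (a). Similarly in (d) the identification $W_n'\rtimes\langle\gamma_n\rangle\cong W_n$ sends $\widetilde{\bH}$ to $W_a\times W_b'$ or $W_a'\times W_b$ (depending on the parity of $a$, cf.\ \cref{pa:longest-element-products}), again reducing to an ordinary induction in $W_n$; your ``signed sum of two untwisted multiplicities'' mechanism is replaced by a single direct computation. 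In (b) with $F_w$ trivial, the paper factors through $W_a\times W_b$ via transitivity, getting two pieces $\Ind(\widetilde{E}_1)+\Ind(\widetilde{E}_2)$, and then uses the dominance contradiction above to show $\Lambda$ occurs in exactly one piece. These identifications are the missing organising principle in your sketch.
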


\begin{proof}
(a). This is simply \cite[Corollary 4.3]{aubert:2003:characters-sheaves-and-generalized} but we recall it here for the other parts. By \cite[Lemma 6.1.3]{geck-pfeiffer:2000:characters-of-finite-coxeter-groups} we have the induced character is explicitly given by
\begin{equation}\label{eq:type-C-weyl-induction}
\Ind_{\bH}^{\bW}(E) = \sum c_{A_1B_1}^{X_1}c_{A_2B_2}^{X_2}\symb{X_1}{X_2},
\end{equation}
where the sum is over all symbols $\ssymb{X_1}{X_2} \in \mathbb{V}_n^0$ satisfying $\rk(X_i) = \rk(A_i)+\rk(B_i)$. Assume the coefficient of $\ssymb{X_1}{X_2}$ is non-zero then by \cref{ind:sym-group} we have $X_i \unlhd A_i + B_i$ for $i \in \{1,2\}$. The result then follows immediately from \cref{prop:aubert}.

(b). Let us first treat the case when $F_w$ is the identity. Let $\widetilde{E}_1 = \ssymb{A_1}{A_2} \boxtimes \ssymb{B_1}{B_2}$, $\widetilde{E}_2 = \ssymb{A_2}{A_1} \boxtimes \ssymb{B_1}{B_2}$ be the two characters of $\Irr(W_a\times W_b)$ extending $E$. We have the following sequence of subgroups $\bH \leqslant W_a \times W_b \leqslant \bW$ and applying transitivity of induction we see that
\begin{equation*}
\Ind_{\bH}^{\bW}(E) = \begin{cases}
\Ind_{W_a\times W_b}^{\bW}(\widetilde{E}_1)+\Ind_{W_a\times W_b}^{\bW}(\widetilde{E}_2 ) &\text{if }A_1 \neq A_2,\\
\ind_{W_a\times W_b}^{\bW}(\widetilde{E}_1) &\text{otherwise}.
\end{cases}
\end{equation*}
In the degenerate case (b) follows from (a) so let us assume that $A_1 \neq A_2$ then we claim $\Lambda'$, (resp.\ $\Lambda$), does not occur in the induction of $\widetilde{E}_1$, (resp.\ $\widetilde{E}_2$). We will only prove the statement for $\Lambda'$, the other case is identical. Assume for a contradiction that $\Lambda'$ does occur in the induction of $\widetilde{E}_1$ then we must have $c_{A_1B_1}^{A_2\odot B_1}c_{A_2B_2}^{A_1\odot B_2} \neq 0$, in particular
\begin{equation*}
\left.\begin{aligned}
A_2 \odot B_1 &\unlhd A_1 \odot B_1\\
A_1 \odot B_2 &\unlhd A_2 \odot B_2
\end{aligned}\right\} \Rightarrow
\left.\begin{aligned}
A_2 &\unlhd A_1\\
A_1 &\unlhd A_2
\end{aligned}\right\} \Rightarrow
A_1 = A_2
\end{equation*}
but this is a contradiction as we assumed $A_1 \neq A_2$, hence this proves the claim. Now let us assume $F_w$ is $\gamma_a\times\ID$ then $\widetilde{\bH}$ is the subgroup $W_a\times W_b$ of $\bW$ and this case follows immediately from (a).

(c). Formally the argument is the same as that in part (a) except now we must use \cref{lem:typeD-GP-Lemm6.1.3} instead of \cite[Lemma 6.1.3]{geck-pfeiffer:2000:characters-of-finite-coxeter-groups}. We deal first with the case where $F_w$ is the identity. With the notation as in \cref{lem:typeD-GP-Lemm6.1.3} we have the induced character is given by
\begin{equation*}
\Ind_{W_a'\times W_b'}^{W_n'}(E) = \sum_{X_1\neq X_2} a_{AB}^{X_1X_2}\symb{X_1}{X_2} + \sum_{X_1 = X_2} a_{AB}^{X_1,\pm}\symb{X_1}{X_2}_{\pm}
\end{equation*}
where the sum is over all $\ssymb{X_1}{X_2} \in \overline{\mathbb{V}}_n^0$ such that there exist indexing sets $\{i_1,i_2\} = \{j_1,j_2\} = \{1,2\}$ satisfying $\rk(X_k) = \rk(A_{i_k})+\rk(B_{j_k})$ for $k \in \{1,2\}$. Assume the coefficient of $\ssymb{X_1}{X_2}$ is non-zero then there exist permutations $\sigma$, $\rho \in \mathfrak{S}_2$ such that $X_i \unlhd A_{\sigma(i)} + B_{\rho(i)}$. The exact same argument as used in (a) shows that $\Lambda$ or $\Lambda'$ is $d$-minimal for $E$. We now consider the multiplicity of this character. We will treat this case by case as follows, (note that it is sufficient to show that $a_{AB}^{X_1X_2}=1$).
\begin{itemize}
	\item Firstly it is clear that $a_{AB}^{X_1X_2} = 1$ unless one of $A$ or $B$ is non-degenerate. Assume that $A$ is non-degenerate but $B$ is degenerate then
\begin{equation*}
a_{AB}^{X_1X_2} = c_{A_{i_1}B_{j_1}}^{X_1}c_{A_{i_2}B_{j_2}}^{X_2} + \delta_Ac_{A_{i_2}B_{j_1}}^{X_1}c_{A_{i_1}B_{j_2}}^{X_2},
\end{equation*}
with $i_1$, $i_2$, $j_1$ and $j_2$ as above. Now assume both terms are non-zero then the argument used in (b) shows $A_{i_1} = A_{i_2}$, a contradiction. The case of $A$ degenerate but $B$ non-degenerate is treated similarly.
	\item Assume now that $A$ and $B$ are both non-degenerate then we have
	\begin{equation*}
	a_{AB}^{X_1X_2} = c_{A_{i_1}B_{j_1}}^{X_1}c_{A_{i_2}B_{j_2}}^{X_2} + \delta_Ac_{A_{i_2}B_{j_1}}^{X_1}c_{A_{i_1}B_{j_2}}^{X_2} + \delta_Bc_{A_{i_1}B_{j_2}}^{X_1}c_{A_{i_2}B_{j_1}}^{X_2} + \delta_{A\odot B}c_{A_{i_2}B_{j_2}}^{X_1}c_{A_{i_1}B_{j_1}}^{X_2}.
	\end{equation*}
	Assume the first and fourth terms, (resp.\ the second and third terms), are both non-zero then this implies $A_{i_1} + B_{j_1} = X_1 = X_2 = A_{i_2} + B_{j_2}$, (resp.\ $A_{i_1}+B_{j_2} = X_1 = X_2 = A_{i_2}+B_{j_2}$), hence $a_{AB}^{X_1X_2} = 2$ and $\ssymb{X_1}{X_2}$ is degenerate. As $A$ and $B$ are non-degenerate we have by \cref{lem:typeD-GP-Lemm6.1.3} that the coefficient of the degenerate character is $\frac{1}{2}a_{AB}^{X_1X_2} = 1$ as required. Assume the second and fourth terms, (resp.\ third and fourth terms), are simultaneously non-zero then we can argue as in (b) to deduce that $B$, (resp.\ $A$), is degenerate, which is a contradiction. This finishes the proof of the multiplicity statement.
\end{itemize}
Assume now that $F_w$ is the automorphism $\gamma_a\times\gamma_b$ then $\widetilde{\bH}$ is the subgroup $(W_a'\times W_b')\langle s_nt_0 \rangle \leqslant W_n'$. The extension $\widetilde{E} \in \Irr(\widetilde{\bH})$ of $E \in \Irr(\bH)^{Fw}$ is a non-degenerate character of $\widetilde{\bH}$, (in the sense of \cref{pa:the-subgroup-H}), hence the arguments above together with \cref{lem:subgroup-H-W_n'} easily yield the result.

(d). This case is easily delt with using the previous methods. We simply recall from \cref{sec:weyl-groups} that we may identify $\widetilde{\bW}$ with $W_n$ and $\widetilde{\bH}$ with either $W_a \times W_b'$ or $W_a'\times W_b$.
\end{proof}

\begin{rem}\label{rem:C_4-exmp}
Note that the `or' statements in the proposition are not in general exclusive. Indeed one can easily construct the following example. Let $\bG$ be of type $\C_4$, take $\bH \leqslant \bW$ to be the subgroup $W_4' \leqslant W_4$ and let $F_w$ be the identity. Take $E$ to be the irreducible character $\ssymb{1,2}{0,3}$ then clearly the induced character is given by $\ssymb{1,2}{0,3} + \ssymb{0,3}{1,2}$. As $t= 0$ we have $d=1$ and the corresponding symbols are given by
\begin{equation*}
\Delta_4^1\symb{1,2}{0,3} = \symb{0,3,5}{1,5}\qquad\qquad
\Delta_4^1\symb{0,3}{1,2} = \symb{0,2,6}{2,4}.
\end{equation*}
It is readily checked that $b^1\ssymb{1,2}{0,3} = b^1\ssymb{0,3}{1,2} = 4$. Note that this is not an isolated case, indeed this phenomenon only becomes more frequent as we increase the semisimple rank of $\bG$. However we also notice that $a\ssymb{1,2}{0,3}=3 < 4$ which we will see plays a role later.
\end{rem}
%
\section{Unipotent Supports}
Note that in this section we use heavily the conditions that $\bG$ has a connected centre and $p$ is good for $\bG$. When $\bG$ has a disconnected centre the constructions we consider here are more subtle, (see \cite[\S10]{lusztig:1992:a-unipotent-support}).

\begin{pa}\label{pa:unip-supp-construction}
Let us denote by $H$ any reflection subgroup of $W_{\bG^{\star}}$ then $H$ is a disjoint union of two-sided cells $\mathfrak{C} \subseteq H$, defined as in \cite[Definition 2.1.15]{geck-jacon:2011:hecke-algebras}. Note that $W_{\bG^{\star}}$ acts naturally on the two-sided cells of $H$ by conjugation. Each two-sided cell $\mathfrak{C}$ defines a corresponding cell module $M_{\mathfrak{C}}$ of $H$, (see \cite[\S1.6.2]{geck-jacon:2011:hecke-algebras}), and we define
\begin{equation*}
\Irr(H|\mathfrak{C}) := \{E \in \Irr(H) \mid \langle E, M_{\mathfrak{C}}\rangle \neq 0\}
\end{equation*}
to be the \emph{family of characters} determined by $\mathfrak{C}$. This gives us a partition
\begin{equation*}
\Irr(H) = \bigsqcup \Irr(H|\mathfrak{C}),
\end{equation*}
where the union runs over all two-sided cells of $H$. Each family $\Irr(H|\mathfrak{C})$ contains a unique \emph{special character}, (see \cite[(4.1.4)]{lusztig:1984:characters-of-reductive-groups}), which we denote by $E_{\mathfrak{C}}$. Let $j_H^{W_{\bG^{\star}}} : \Cent(H) \to \Cent(W_{\bG^{\star}})$ denote Lusztig--Macdonald--Spaltenstein induction, defined as in \cite[(4.1.9)]{lusztig:1984:characters-of-reductive-groups}, then for each special character $E_{\mathfrak{C}} \in \Irr(H|\mathfrak{C})$ of $H$ we have $j_H^{W_{\bG^{\star}}}(E_{\mathfrak{C}}) \in \Irr(W)$ is irreducible, (see \cite[1.3]{lusztig:2009:unipotent-classes-and-special-Weyl}). Using the duality isomorphism $W_{\bG} \to W_{\bG^{\star}}$ we may identify $\Irr(W_{\bG})$ with $\Irr(W_{\bG^{\star}})$. Under this identification we have by \cite[Theorem 1.5]{lusztig:2009:unipotent-classes-and-special-Weyl} that $j_H^{W_{\bG^{\star}}}(E_{\mathfrak{C}})$ corresponds to some $E_{\iota} \in \Irr(W_{\bG})$, (c.f.\ \cref{pa:gen-spring-cor}), with $\iota = (\mathcal{O},\Ql) \in \mathscr{I}[\bT_0,\Ql] \subseteq \mathcal{N}_{\bG}$; we denote by $\mathcal{O}_{\mathfrak{C}}$ the class $\mathcal{O}$. With this in hand we have defined a map
\begin{equation}\label{eq:cells-to-classes}
\begin{aligned}
\{\text{two-sided cells of }H\} &\to \{\text{unipotent conjugacy classes of }\bG\}\\
\mathfrak{C} &\mapsto \mathcal{O}_{\mathfrak{C}}
\end{aligned}
\end{equation}
\end{pa}

\begin{pa}\label{pa:unip-supp-desc}
For each semisimple element $s \in \bT_0^{\star}$ the group $W_{\bG^{\star}}(s) = \{w \in W_{\bG^{\star}} \mid {}^{\dot{w}}s = s\}$ is a reflection subgroup of $W_{\bG^{\star}}$ and by \cite[(17.13.2)]{lusztig:1985:character-sheaves} we have a natural partition
\begin{equation*}
\widehat{\bG}_s = \bigsqcup\widehat{\bG}_{s,\mathfrak{C}}
\end{equation*}
where the union is over all the $W_{\bG^{\star}}$-orbits of pairs $(s,\mathfrak{C})$ under the action $w\cdot(s,\mathfrak{C})=(\dot{w}^{-1}s\dot{w},w^{-1}\mathfrak{C}w)$. Assume $A$ lies in $\widehat{\bG}_{s,\mathfrak{C}}$ then we denote by $\mathcal{O}_A$ the unipotent conjugacy class of $\bG$ determined by $\mathfrak{C}$ under the map in \cref{eq:cells-to-classes}. We call $\mathcal{O}_A$ the \emph{unipotent support} of $A$. Denote by $a : \Irr(W_{\bG^{\star}}(s)) \to \mathbb{N}$ Lusztig's $a$-function as defined in \cite[4.1]{lusztig:1984:characters-of-reductive-groups} then for any $A \in \widehat{\bG}_{s,\mathfrak{C}}$ we note here that
\begin{equation}\label{eq:a-val-dimBu}
a(E_{\mathfrak{C}}) = \dim\mathfrak{B}_u^{\bG}
\end{equation}
for any $u \in \mathcal{O}_A$, (see \cite[\S13.3]{lusztig:1984:characters-of-reductive-groups}).

We now turn our attention to the irreducible characters of $G$. Assume the $W_{\bG^{\star}}$-orbit containing $s \in \bT_0^{\star}$ is $F^{\star}$-stable then we denote by $\mathcal{E}(G,s,\mathfrak{C})$ the set of all $\psi \in \Irr(G)$ such that there exists an $F$-stable $A \in \widehat{\bG}_{s,\mathfrak{C}}^F$ satisfying $\langle \psi, \chi_A \rangle \neq 0$. Using \cite[Main Theorem 4.23]{lusztig:1984:characters-of-reductive-groups} and \cite[II - Theorem 3.2]{shoji:1995:character-sheaves-and-almost-characters} we obtain a partition
\begin{equation*}
\Irr(G) = \bigsqcup \mathcal{E}(G,s,\mathfrak{C})
\end{equation*}
where the union is taken over all the $F^{\star}$-stable $W_{\bG^{\star}}$-orbits of pairs $(s,\mathfrak{C})$. The union $\mathcal{E}(G,s) = \sqcup\mathcal{E}(G,s,\mathfrak{C})$ over all $F^{\star}$-stable $W_{\bG^{\star}}$-orbits of two sided cells $\mathfrak{C} \subseteq W_{\bG^{\star}}(s)$ is a \emph{Lusztig series} of $G$ in the usual sense. For each $\chi \in \mathcal{E}(G,s,\mathfrak{C})$ we denote by $\mathcal{O}_{\chi}$ the class determined by $\mathfrak{C}$ under the map in \cref{eq:cells-to-classes} and we call this the \emph{unipotent support} of $\chi$. Note that $\mathcal{O}_{\chi}$ is always $F$-stable. The unipotent support can be characterised in the following more natural way.
\end{pa}

\begin{thm}[{}{Lusztig \cite{lusztig:1992:a-unipotent-support}, Geck \cite{geck:1996:on-the-average-values}, Aubert \cite{aubert:2003:characters-sheaves-and-generalized}}]\label{thm:description-unipotent-support}
For each $X \in \widehat{\bG} \sqcup \Irr(\bG^F)$ we denote by $\mathcal{D}(X)$ the set of all unipotent conjugacy classes $\mathcal{O} \subset \bG$ satisfying
\begin{equation*}
\begin{cases}
X|_{\mathcal{O}} \neq 0 &\text{if }X \in \widehat{\bG}\\
F(\mathcal{O}) = \mathcal{O}\text{ and }\sum_{g\in\mathcal{O}^F}X(g) \neq 0 &\text{if }X \in \Irr(\bG^F)
\end{cases}
\end{equation*}
then the following hold:
\begin{enumerate}[label=(\roman*)]
	\item $\mathcal{O}_X \in \mathcal{D}(X)$,
	\item for all $\mathcal{O} \in \mathcal{D}(X)$ we have $\dim\mathcal{O} \leqslant \dim\mathcal{O}_X$ with equality if and only if $\mathcal{O} = \mathcal{O}_X$.
\end{enumerate}
\end{thm}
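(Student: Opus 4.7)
The plan is to handle the two cases of the theorem separately, treating first $X \in \widehat{\bG}$ and then reducing the case $X \in \Irr(\bG^F)$ to it by means of Shoji's identification of almost characters with characteristic functions of character sheaves.

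For $A \in \widehat{\bG}$, I would start from the decomposition \cref{eq:chi-A-G-uni} of $\chi_A|_{\bG_{\uni}}$ as a combination of the functions $\chi_{K_{\iota},\phi_{\iota}^w}$ indexed by $\iota \in \mathscr{I}[\bL_w,\nu_w]^F$, with multiplicities given by $\langle \widetilde{E}_{\iota},\Ind_{\widetilde{\bH}}^{\widetilde{\bW}}\widetilde{E}\rangle$. By \cref{eq:P-properties} each $X_{\iota}^w$ is supported inside $\overline{\mathcal{O}_{\iota}}$, so any $\mathcal{O} \in \mathcal{D}(A)$ must lie in some $\overline{\mathcal{O}_{\iota}}$ for which the multiplicity is non-zero. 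Applying \cref{cor:multiplicity-1}, the constituents $E_{\iota}$ occurring in the induced character include a distinguished $d$-minimal character; via \cref{eq:b-val-dimBu} and the dimension formula \cref{eq:dim-formula}, minimality of $b^d$ translates to maximality of $\dim \mathcal{O}_{\iota}$, and \cref{lem:gen-spring-same-class} ensures that similar symbols give rise to the same class, yielding a unique $\mathcal{O}^{\max}$ that dominates $\mathcal{D}(A)$ in dimension. To identify $\mathcal{O}^{\max}$ with the cell-theoretic class $\mathcal{O}_A$ of \cref{pa:unip-supp-desc}, I would verify that the special character $E_{\mathfrak{C}}$ of the two-sided cell attached to $A$ is $j$-induced to $E_{\iota^{\max}}$; this is a combinatorial comparison using \cref{eq:a-val-dimBu} and the compatibility of the $a$- and $b$-functions on symbols with $j$-induction. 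Part (i) then follows because the restriction of $\chi_A$ to $\mathcal{O}_A^F$ reduces, by support considerations, to a non-zero scalar multiple of $Y_{\iota^{\max}}$, whose sum over $\mathcal{O}_A^F$ is non-zero.

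For $\chi \in \Irr(\bG^F)$, I would invoke Shoji's theorem \cite[II - Theorem 3.2]{shoji:1995:character-sheaves-and-almost-characters}, which gives that each almost character coincides, up to a scalar, with the characteristic function of an $F$-stable character sheaf in the corresponding series $\widehat{\bG}_{s,\mathfrak{C}}^F$. Since $\chi \in \mathcal{E}(G,s,\mathfrak{C})$ is a $\Ql$-linear combination of such almost characters and the unipotent support is defined in terms of the cell $\mathfrak{C}$ via the common map \cref{eq:cells-to-classes}, the character sheaf statement transfers directly to give both (i) and (ii) for $\chi$.

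The main obstacle will be part (i) on the character sheaf side, i.e.\ ensuring that on the open stratum $\mathcal{O}_A^F$ no cancellation occurs between the leading term $Y_{\iota^{\max}}$ and contributions from smaller classes; this is what forces one to use \cref{lem:gen-spring-same-class} to argue that there is only a single $\iota$ of maximal class dimension and to exploit the orthogonality properties of the $\mathcal{Y}$-basis recalled in \cref{pa:X-and-Y}. A secondary difficulty, arising in types $\C_n$ and $\D_n$ with $t=0$, is that \cref{cor:multiplicity-1} produces two candidate $d$-minimal characters $\Lambda,\Lambda'$; here one must check that these are similar in the sense of \cref{pa:symbols-unipotent} and hence correspond to the same unipotent class, so that the argument still singles out a unique $\mathcal{O}^{\max}$.
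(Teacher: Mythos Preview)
The paper does not prove this theorem at all; it is quoted from the literature (Lusztig, Geck, Aubert) and used as a black box. So there is no ``paper's own proof'' to compare against, and your proposal should be assessed on its own merits.

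There is a genuine gap. For $X = A \in \widehat{\bG}$ the condition defining $\mathcal{D}(A)$ is the \emph{geometric} statement $A|_{\mathcal{O}} \neq 0$, i.e.\ some cohomology sheaf of the complex $A$ is non-zero on the locally closed subvariety $\mathcal{O}$. Your argument instead works with the characteristic function $\chi_A$ and the decomposition \cref{eq:chi-A-G-uni}. These are not the same thing: $\chi_A$ is only defined when $A$ is $F$-stable, it is a function on the finite set $\bG^F$, and vanishing of $\chi_A$ on $\mathcal{O}^F$ does not imply $A|_{\mathcal{O}} = 0$ (there can be cancellation in the alternating sum of traces, or $\mathcal{O}^F$ can miss components). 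The theorem as stated is a statement about \emph{all} character sheaves, not just $F$-stable ones, and about the perverse sheaf itself. Moreover, \cref{eq:chi-A-G-uni} and \cref{cor:multiplicity-1} are established in the paper only under the standing hypotheses (connected centre, $\bG/Z(\bG)$ simple of classical type, $A$ unipotently supported in an isolated series), whereas \cref{thm:description-unipotent-support} is invoked in full generality.

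For $\chi \in \Irr(\bG^F)$ the reduction you sketch is closer to the actual strategy in the literature, but it is not self-contained: expressing $\chi$ as a combination of almost characters and then invoking the character-sheaf case needs the character-sheaf case to hold for the sheaf $A$ as a geometric object, and one still has to control the averaged sum $\sum_{g \in \mathcal{O}^F}\chi(g)$, which is where Geck's argument (orthogonality of Green functions and the block-triangular structure of the $P_{\iota',\iota}$) enters. Your outline does not supply that step.
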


%
\section{Evaluating Characteristic Functions of Character Sheaves at Unipotent Elements}
\begin{pa}
Assume $A \in \widehat{\bG}^F$ is a unipotently supported character sheaf then we denote by $\widetilde{\mathcal{O}}_A$ an $F$-stable class of maximal dimension satisfying $\chi_A|_{\widetilde{\mathcal{O}}_A^F} \neq 0$. From the description of the unipotent support given in \cref{thm:description-unipotent-support} it is clear that $\dim \widetilde{\mathcal{O}}_A \leqslant \dim\mathcal{O}_A$ but it is possible that $\widetilde{\mathcal{O}}_A \neq \mathcal{O}_A$. Note that this is starkly different to the situation for irreducible characters of $G$, (see \cite[Theorem 11.2(v)]{lusztig:1992:a-unipotent-support}). Our goal in this section is to describe those character sheaves satisfying $\widetilde{\mathcal{O}}_A = \mathcal{O}_A$, however this will only be clear after having proved part (a) of \cref{prop:labels-coincide}. This description will be akin to that given by Lusztig in \cite[4.15(c)]{lusztig:1986:on-the-character-values}, where he showed that such character sheaves are described by a canonical lagrangian subspace of the usual parameterising set.
\end{pa}

\subsection{A Combinatorial Description}
\begin{pa}
With the above we now define two maps
\begin{equation*}
\Psi_N \leftarrow \Irr(\bV_{a'}\times\bW_{b'}) \rightarrow \overline{\bOmega}_a \times \overline{\bPhi}_b
\end{equation*}
where the right hand side map is defined by \cref{eq:lab-embedd}. Note that we may first need to apply the natural map $\mathbb{V}_n^0 \to \mathbb{V}_n^1$, (c.f.\ \cref{eq:bij-shift-def}). Keeping in mind \cref{cor:multiplicity-1} we may define the left hand side map by
\begin{equation*}
\symb{A_1}{A_2}\boxtimes\symb{B_1}{B_2} \mapsto \Delta_m^d\symb{A_1\odot B_1}{A_2\odot B_2}
\end{equation*}
These maps define directly a map $\overline{\bOmega}_a\times\overline{\bPhi}_b \to \Psi_N$ without reference to $\Irr(\bV_{a'}\times\bW_{b'})$. We would now like to describe this map explicitly. Given a pair of symbols $(\Lambda,\Xi) \in \overline{\bOmega}_a \times \overline{\bPhi}_b$ we denote by $\varepsilon(\Lambda,\Xi) = |d(\Lambda)-d(\Xi)|$ the modulus of the difference of the defects. Assume $(\Lambda,\Xi)$ satisfies $\epsilon(\Lambda,\Xi) \leqslant 1$ then, after possibly permuting the rows, there exists $t \in \mathbb{N}_0$ such that we are in one of the following cases:
\begin{enumerate}[label=(\alph*)]
	\item[(B)] $\bG$ is of type $\B_n$ and $(d(\Lambda),d(\Xi)) = (2t+1,2t+1)$,
	\item[(C.1)] $\bG$ is of type $\C_n$ and $(d(\Lambda),d(\Xi)) = (4t,4t+1)$,
	\item[(C.2)] $\bG$ is of type $\C_n$ and $(d(\Lambda),d(\Xi)) = (4t,4t-1)$,
	\item[(D)] $\bG$ is of type $\D_n$ and $(d(\Lambda),d(\Xi)) = (4t,4t)$,
\end{enumerate}
Let us write $\Lambda = \ssymb{A}{B}$ and $\Xi = \ssymb{A'}{B'}$. Applying shifts we may assume that there exists $p \geqslant 0$ such that $B = \{b_1,\dots,b_p\}$, $B' = \{b_1',\dots,b_p'\}$ and
\begin{equation}\label{eq:define-index}
\begin{aligned}
A &= \begin{cases}
\{a_1,\dots,a_{p+2t+1}\} &\text{case (B)}\\
\{a_1,\dots,a_{p+4t}\} &\text{case (C.1)}\\
\{a_0,\dots,a_{p+4t-1}\} &\text{case (C.2)}\\
\{a_1,\dots,a_{p+4t}\} &\text{case (D)}\\
\end{cases}
\end{aligned}
\qquad\qquad
\begin{aligned}
A' &= \begin{cases}
\{a_1',\dots,a_{p+2t+1}'\} &\text{case (B)}\\
\{a_0',\dots,a_{p+4t}'\} &\text{case (C.1)}\\
\{a_1',\dots,a_{p+4t-1}'\} &\text{case (C.2)}\\
\{a_1',\dots,a_{p+4t}'\} &\text{case (D)}
\end{cases}
\end{aligned}
\end{equation}
We now define an element $\Lambda\oplus\Xi \in \Psi_N$ by setting
\begin{equation*}
\Lambda \oplus \Xi = \begin{cases}
\symb{a_1+a_1',\dots,a_{p+2t+1}+a_{p+2t+1}'}{b_1+b_1',\dots,b_p+b_p'} &\text{case (B)}\\[15pt]
\symb{a_0',a_1+a_1'+1,\dots,a_{p+4t}+a_{p+4t}'+1}{b_1+b_1'+1,\dots,b_p+b_p'+1} &\text{case (C.1)}\\[15pt]
\symb{b_1+b_1',\dots,b_p+b_p'}{a_1+a_1',\dots,a_{p+4t-1}+a_{p+4t-1}'} &\text{case (C.2)}\\[15pt]
\symb{a_1+a_1',\dots,a_{p+4t}+a_{p+4t}'}{b_1+b_1',\dots,b_p+b_p'} &\text{case (D)}
\end{cases}
\end{equation*}
then our desired map $\overline{\bOmega}_a\times\overline{\bPhi}_b \to \Psi_N$ is defined by $(\Lambda,\Xi) \mapsto \Lambda\oplus\Xi$.
\end{pa}

\begin{rem}
In case (C.2) we have first applied the shift operation to both $\Lambda$ and $\Xi$ to obtain $\Lambda\oplus\Xi$. This will not affect our results because $\Lambda\oplus\Xi$ remains in the same equivalence class under shift.
\end{rem}

\begin{pa}\label{pa:symbol+partition}
One readily checks that for each $(\Lambda,\Xi) \in \overline{\bOmega}_a \times \overline{\bPhi}_b$ such that $\epsilon(\Lambda,\Xi) \leqslant 1$ we have $a(\Lambda) + a(\Xi) \leqslant b(\Lambda \oplus \Xi)$. Furthermore we have $a(\Lambda) + a(\Xi) = b(\Lambda \oplus \Xi)$ if and only if the following condition is verified:
\begin{equation}\label{eq:condition-equality}
\boxed{
\begin{gathered}
\text{For any $i\geqslant 1$ and $j \geqslant 1$, with the notation as in \cref{eq:define-index}, we have either}\\
a_i \leqslant b_j\quad\text{and}\quad a_i' \leqslant b_j' \qquad\qquad\text{or}\qquad\qquad a_i \geqslant b_j\quad\text{and}\quad a_i' \geqslant b_j'
\end{gathered}
}
\end{equation}
(see also \cite[4.12(b)]{lusztig:1986:on-the-character-values}). The reader easily convinces himself of the above condition after checking that $a(\Lambda)+a(\Xi) = b(\Lambda\oplus\Xi)$ holds if and only if $\sigma(\Lambda)+\sigma(\Xi) = \sigma(\Lambda\oplus\Xi)$ holds.
\end{pa}

\begin{pa}
We now assume that $\bG$ is of type $\C_n$ or $\D_n$ and $t=0$, i.e.\ $(\Lambda,\Xi) \in \overline{\bOmega}_a^1\times\overline{\bPhi}_b^1$. In this case the result of \cref{cor:multiplicity-1} provides another possibility for the map considered above. Let us keep the notation as in \cref{eq:define-index} then we also have the following additional alternatives
\begin{equation*}
\Lambda \boxplus \Xi = \begin{cases}
\symb{a_0',b_1+a_1'+1,\dots,b_p+a_p'+1}{a_1+b_1'+1,\dots,a_p+b_p'+1} &\text{case (C.1)}\\[15pt]
\symb{b_1+a_1',\dots,b_p+a_p'}{a_1+b_1',\dots,a_p+b_p'} &\text{case (D).}
\end{cases}
\end{equation*}
As before we see that $a(\Lambda) + a(\Xi) = b(\Lambda \boxplus \Xi)$ if and only if the following condition is verified:
\begin{equation}\label{eq:condition-equality-2}
\boxed{
\begin{gathered}
\text{For any $i\geqslant 1$ and $j \geqslant 1$, with the notation as in \cref{eq:define-index}, we have either}\\
b_i \leqslant a_j\quad\text{and}\quad a_i' \leqslant b_j' \qquad\qquad\text{or}\qquad\qquad b_i \geqslant a_j\quad\text{and}\quad a_i' \geqslant b_j'
\end{gathered}
}
\end{equation}

We now wish to consider those symbols $(\Lambda,\Xi) \in \overline{\bOmega}_a^1\times\overline{\bPhi}_b^1$ satisfying $b(\Lambda\oplus\Xi) = a(\Lambda) + a(\Xi) = b(\Lambda\boxplus\Xi)$. We claim that this is the case if and only if the following condition is verified:
\begin{equation}\label{eq:condition-equality-3}
\boxed{
\begin{gathered}
\text{For any $1 \leqslant i \leqslant p$, with the notation as in \cref{eq:define-index}, we have either}\\
a_i = b_i \qquad\qquad\text{or}\qquad\qquad a_i' = b_i'
\end{gathered}
}
\end{equation}
Certainly if \cref{eq:condition-equality-3} holds then from the definition, (c.f.\ \cref{eq:def-sigma}), we see that $b(\Lambda\oplus\Xi) = b(\Lambda\boxplus\Xi)$. Conversely taking $i = j$ in \cref{eq:condition-equality,eq:condition-equality-2} we see that the condition in \cref{eq:condition-equality-3} is satisfied. This proves the claim.
\end{pa}

\subsection{Lusztig's Lagrangian Subspace of $V\times W$}
\begin{pa}\label{pa:def-X}
Let $V \subseteq \overline{\bOmega}_a$ and $W \subseteq \overline{\bPhi}_b$ be similarity classes of symbols. Following Lusztig we define a subspace $\mathcal{J}$ of the symplectic vector space $V \times W$, (see \cite[4.15]{lusztig:1986:on-the-character-values} where there the subspace is denoted by $X$). Let $(\Lambda,\Xi) \in V \times W$ be any pair then, after possibly permuting the rows and applying shifts, we may assume that $\epsilon(\Lambda,\Xi) \leqslant 1$ and that each row of $\Lambda$ and $\Xi$ contains a zero. We now write the entries of $\Lambda$ as a weakly increasing sequence
\begin{equation}\label{eq:sequence-first-factor}
\begin{cases}
c_1 \leqslant c_2 \leqslant \cdots \leqslant c_{2k+1} &\text{type }\B_n\\
c_1 \leqslant c_2 \leqslant \cdots \leqslant c_{4k} &\text{type }\C_n\text{ and }d(\Xi) \equiv 1 \pmod{4}\\
c_0 \leqslant c_1 \leqslant \cdots \leqslant c_{4k-1} &\text{type }\C_n\text{ and }d(\Xi) \equiv -1 \pmod{4}\\
c_1 \leqslant c_2 \leqslant \cdots \leqslant c_{4k} &\text{type }\D_n.
\end{cases}
\end{equation}
Similarly we write the entries of $\Xi$ as a weakly increasing sequence
\begin{equation}\label{eq:sequence-second-factor}
\begin{cases}
d_1 \leqslant d_2 \leqslant \cdots \leqslant d_{2k+1} &\text{type }\B_n\\
d_0 \leqslant d_2 \leqslant \cdots \leqslant d_{4k} &\text{type }\C_n\text{ and }d(\Xi) \equiv 1 \pmod{4}\\
d_1 \leqslant d_2 \leqslant \cdots \leqslant d_{4k-1} &\text{type }\C_n\text{ and }d(\Xi) \equiv -1 \pmod{4}\\
d_1 \leqslant d_2 \leqslant \cdots \leqslant d_{4k} &\text{type }\D_n.
\end{cases}
\end{equation}
We denote by $\mathbb{L}$ the indexing set of the elements in \cref{eq:sequence-first-factor} with 0 removed. For instance if $\bG$ is of type $\C_n$ we have $\mathbb{L} = \{1,\dots,4k\}$ if $d(\Xi) \equiv 1 \pmod{4}$ and $\{1,\dots,4k-1\}$ if $d(\Xi) \equiv -1 \pmod{4}$. We will denote by $\approx$ the equivalence relation on $\mathbb{L}$ generated by the condition $i \approx i+1$ if $c_i = c_{i+1}$ or $d_i = d_{i+1}$ and $\overline{\mathbb{L}}$ the resulting equivalence classes. It is clear to see that any $I \in \overline{\mathbb{L}}$ is an interval. We will say $I \in \overline{\mathbb{L}}$ is \emph{degenerate} if $I = \{i,i+1\}$ and we have $c_i = c_{i+1}$ and $d_i = d_{i+1}$, otherwise we say $I$ is non-degenerate. Finally we denote by $I_0 \in \overline{\mathbb{L}}$ the unique equivalence class containing 1 and we set
\begin{equation*}
\widetilde{\mathbb{L}} = \begin{cases}
\{I \in \overline{\mathbb{L}} \mid |I|\text{ is odd}\} \cup \{I_0\} &\text{if }\bG\text{ is of type }\C_n\text{, }d(\Xi) \equiv -1\pmod{4}\text{ and }|I_0|\text{ is even},\\
\{I \in \overline{\mathbb{L}} \mid |I|\text{ is odd}\} &\text{otherwise.}
\end{cases}
\end{equation*}

Assume $f : \mathbb{L} \to \mathbb{F}_2$ is any function, with values in the finite field of order two, which is constant on equivalence classes. If $\bG$ is of type $\C_n$ or $\D_n$ then we will also assume that $f$ satisfies the condition
\begin{equation}\label{eq:def-4-condition}
|\{I \in \widetilde{\mathbb{L}}\mid f(i) = 1\text{ for some }i \in I\}| \equiv 0 \pmod{2}
\end{equation}
with this we define
\begin{align*}
S &= \{0\}\cup\{c_i \mid i \in \mathbb{L}\text{ is odd and }f(i) = 0\} \cup \{c_i \mid i \in \mathbb{L}\text{ is even and }f(i) = 1\}\\
T &= \{0\}\cup\{c_i \mid i \in \mathbb{L}\text{ is odd and }f(i) = 1\} \cup \{c_i \mid i \in \mathbb{L}\text{ is even and }f(i) = 0\}\\
S' &= \{0\}\cup\{d_i \mid i \in \mathbb{L}\text{ is odd and }f(i) = 0\} \cup \{d_i \mid i \in \mathbb{L}\text{ is even and }f(i) = 1\}\\
T' &= \{0\}\cup\{d_i \mid i \in \mathbb{L}\text{ is odd and }f(i) = 1\} \cup \{d_i \mid i \in \mathbb{L}\text{ is even and }f(i) = 0\}.
\end{align*}
Note that by assumption 0 is already contained in one of the latter two sets unless $\bG$ is of type $\C_n$ in which case precisely one of $S$, $T$, $S'$ or $T'$ does not contain 0. We claim the following hold:
\begin{itemize}
	\item If $\bG$ is of type $\B_n$ or $\C_n$ then precisely one of $(\ssymb{S}{T},\ssymb{S'}{T'})$ or $(\ssymb{T}{S},\ssymb{T'}{S'})$ defines an element of $V\times W$, the former if $|S'| > |T'|$ and the latter if $|S'|<|T'|$.
	\item If $\bG$ is of type $\D_n$ then $(\ssymb{S}{T},\ssymb{S'}{T'}) = (\ssymb{T}{S},\ssymb{T'}{S'})$ defines a single element of $V\times W$.
\end{itemize}
If $S=T$, (resp.\ $S' = T'$), then $V$, (resp.\ $W$), is either $\{\ssymb{S}{T}_+\}$ or $\{\ssymb{S}{T}_-\}$, (resp.\ $\{\ssymb{S'}{T'}_+\}$ or $\{\ssymb{S'}{T'}_-\}$). In particular there is a unique sign, which we assume chosen, so that the element lies in $V \times W$.

Let us prove the claim. Assume $f_0 : \mathbb{L} \to \mathbb{F}_2$ is the function defined by $f_0(i) = 0$ for all $i \in \mathbb{L}$ and let $S_0$, $T_0$, $S_0'$ and $T_0'$ be respectively the sets $S$, $T$, $S'$ and $T'$ defined with respect to $f_0$. It is clear that $f_0$ satisfies the condition in \cref{eq:def-4-condition} and $(\ssymb{S_0}{T_0},\ssymb{S_0'}{T_0'}) \in \overline{\bOmega}_a \times \overline{\bPhi}_b$ unless $\bG$ is of type $\C_n$ and $d(\Xi) \equiv 1 \pmod{4}$ in which case $(\ssymb{T_0}{S_0},\ssymb{T_0'}{S_0'}) \in \overline{\bOmega}_a \times \overline{\bPhi}_b$. Fix an equivalence class $I \in \overline{\mathbb{L}}$ and let $f_I$ be such that $f_I(i) = 1$ if $i \in I$ and 0 otherwise. If $I \in \overline{\mathbb{L}} - \widetilde{\mathbb{L}}$ then we have $|S| = |S_0|$ and $|T| = |T_0|$ but if $I \in \widetilde{\mathbb{L}}$ then we have $|S| = |S_0| \pm 1$ and $|T| = |T_0| \mp 1$ for some sign. If $\bG$ is of type $\B_n$ then we see that one of the pairs $(\ssymb{S}{T},\ssymb{S'}{T'})$ or $(\ssymb{T}{S},\ssymb{T'}{S'})$ contains symbols of odd positive defect, hence is an element of $\overline{\bOmega}_a \times \overline{\bPhi}_b$. However when $\bG$ is of type $\C_n$ or $\D_n$ and $I \in \widetilde{\mathbb{L}}$ then the defect of the symbols $\ssymb{S}{T}$ and $\ssymb{T}{S}$ will be congruent to $2\pmod{4}$, hence the resulting pairs will not be elements of $\overline{\bOmega}_a \times \overline{\bPhi}_b$. As every function $f$ is obtained from $f_0$ via steps of this kind we see that the condition in \cref{eq:def-4-condition} precisely ensures that one of the pairs is an element of $\overline{\bOmega}_a \times \overline{\bPhi}_b$. We denote by $\mathcal{J}$ the subset of $V \times W$ determined by all such $f$ as above.
\end{pa}

\begin{prop}\label{lem:lagrangian-subspace}
We have $\mathcal{J} = \{(\Lambda,\Xi) \in V \times W \mid \epsilon(\Lambda,\Xi)\leqslant 1$ and $a(\Lambda)+a(\Xi) = b(\Lambda\oplus\Xi)\}$ and 
\begin{equation*}
|\mathcal{J}| = \begin{cases}
2^{\ell-1} &\text{if }\bG\text{ is of type }\B_n\\
2^{\ell-2} &\text{if }\bG\text{ is of type }\C_n\text{ or }\D_n
\end{cases}
\end{equation*}
where $\ell$ is the number of non-degenerate equivalence classes in $\overline{\mathbb{L}}$. Furthermore no element of $\mathcal{J}\cap(\overline{\bOmega}_a^1\times\overline{\bPhi}_b^1)$ satisfies the condition in \cref{eq:condition-equality-3} unless $n=0$ or $\overline{\mathbb{L}} = \{\mathbb{L}\}$.
\end{prop}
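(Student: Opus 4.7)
The plan is to proceed in three stages: first prove the set equality characterising $\mathcal{J}$, then count $|\mathcal{J}|$, and finally deduce the statement concerning condition~\cref{eq:condition-equality-3}. The backbone of the argument is the dictionary between the combinatorial data of an admissible function $f : \mathbb{L} \to \mathbb{F}_2$ (constant on $\approx$-equivalence classes, subject to~\cref{eq:def-4-condition} in types $\C_n, \D_n$) and the pair of symbols $(\Lambda,\Xi)$ it produces.

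For the set equality, I would first observe that given an admissible $f$, condition~\cref{eq:condition-equality} follows directly from the construction: for any indices $i,j$, the relative positions of $a_i$ and $b_j$ in $\Lambda$ are dictated by whether the corresponding $c$-entries are sent to the top or bottom row by $f$, and the constancy of $f$ on $\approx$-classes ensures that the analogous assignments for $a_i', b_j'$ in $\Xi$ agree. Conversely, given $(\Lambda,\Xi)$ with $\epsilon(\Lambda,\Xi) \leqslant 1$ and $a(\Lambda)+a(\Xi) = b(\Lambda\oplus\Xi)$ (equivalently, satisfying~\cref{eq:condition-equality} by the discussion in \cref{pa:symbol+partition}), I would define $f(i) = 0$ exactly when $c_i$ lies in the top row of $\Lambda$, and then verify via~\cref{eq:condition-equality} that $f$ is well-defined, constant on equivalence classes, and satisfies~\cref{eq:def-4-condition} in the $\C_n, \D_n$ cases.

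For the cardinality, I would parameterise elements of $\mathcal{J}$ by admissible functions $f$ and then quotient by the redundancies present in the construction. First, flipping $f$ on any degenerate class yields the same $(\Lambda,\Xi)$, since the swapped entries $c_i,c_{i+1}$ and $d_i,d_{i+1}$ coincide; this reduces us to the $2^\ell$ functions that vanish on degenerate classes. Second, the involution $f \mapsto 1-f$ swaps $S \leftrightarrow T$ and $S' \leftrightarrow T'$, and under the relevant identifications in $V \times W$ (the sign of the defect in type $\B_n$, or the unordering of defect-$0$ symbols in types $\C_n, \D_n$) yields the same pair of $V\times W$. This immediately produces the formula $2^{\ell-1}$ in type $\B_n$. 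In types $\C_n, \D_n$, the parity constraint~\cref{eq:def-4-condition} halves the admissible count once more, provided one checks in a short case analysis (based on the parity of $|\widetilde{\mathbb{L}}|$ and the presence or absence of $I_0$) that this parity constraint is compatible with, and independent of, the $f \mapsto 1-f$ involution; this yields $2^{\ell-2}$.

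Finally, suppose $(\Lambda,\Xi) \in \mathcal{J} \cap (\overline{\bOmega}_a^1 \times \overline{\bPhi}_b^1)$ satisfies~\cref{eq:condition-equality-3}; we are then in the $t=0$ regime, with $\Lambda$ of defect $0$ and $\Xi$ of defect $0$ or $1$. The condition forces $c_{2i-1} = c_{2i}$ or $d_{2i-1} = d_{2i}$ for every relevant $i$, whence $2i-1 \approx 2i$; thus $\overline{\mathbb{L}}$ coarsens the block pairing $\{\{1,2\},\{3,4\},\dots\}$. I would then argue that the rigidity imposed by~\cref{eq:condition-equality} --- namely, that the relative order of $a$-entries against $b$-entries must mirror that of the primed entries --- together with the defect constraints in the $t=0$ regime, forces these pair-blocks to collapse into the single class $\mathbb{L}$ whenever $n > 0$. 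The main obstacle, and what I expect to be the technical heart of the proof, is this last step: one must rule out every genuine intermediate coarsening by exhibiting, in each configuration, a violation of~\cref{eq:condition-equality} or of the parity/defect constraints. This is analogous in spirit to the discussion in~\cite[4.15]{lusztig:1986:on-the-character-values}, but requires finer bookkeeping since two symbols must be tracked simultaneously.
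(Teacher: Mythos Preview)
Your treatment of the first two assertions—the set equality and the cardinality—follows the paper's proof essentially verbatim. Both arguments verify \cref{eq:condition-equality} by reinterpreting it in terms of the index data determined by $f$, and both count admissible $f$ modulo the two redundancies you name (degenerate classes and the global flip $f \mapsto 1-f$). The paper handles the parity constraint \cref{eq:def-4-condition} via the identity $\sum_{x\text{ even}}\binom{\nu}{x}=2^{\nu-1}$ rather than a case split on $|\widetilde{\mathbb{L}}|$ and $I_0$, but this is cosmetic.

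Where your proposal diverges from the paper is the final assertion, and here there is both a misstep and a misjudgement of difficulty. Your translation ``the condition forces $c_{2i-1}=c_{2i}$ or $d_{2i-1}=d_{2i}$'' tacitly identifies the row index $i$ in \cref{eq:condition-equality-3} with the merged-sequence positions $2i-1,2i$. That identification requires the top/bottom pattern to be globally alternating $T,B,T,B,\dots$, whereas what the construction gives is only alternation \emph{within} each $\approx$-class; across class boundaries the pattern may restart, so $a_i$ and $b_i$ need not occupy positions $2i-1$ and $2i$. Your coarsening claim therefore needs more argument than you supply. More to the point, you anticipate an elaborate case analysis ruling out intermediate coarsenings, whereas the paper dispatches this part in one line: having already established (in the course of the converse inclusion) that over each $\approx$-class the entries of $(c_p)$ and $(d_q)$ alternate between $a$-type and $b$-type, it simply reads off from this alternation and \cref{eq:condition-equality-3} that either all classes are degenerate (whence $n=0$) or $\overline{\mathbb{L}}=\{\mathbb{L}\}$. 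No enumeration of configurations is carried out.
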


\begin{proof}
Let $(\Lambda,\Xi) \in \mathcal{J}$ then as shown above $(\Lambda,\Xi) \in \overline{\bOmega}_a \times \overline{\bPhi}_b$ and by design we have $\epsilon(\Lambda,\Xi) \leqslant 1$. Define $I \subset \mathbb{L}$, (resp.\ $J \subset \mathbb{L}$), to be the set of all $i \in \mathbb{L}$, (resp.\ $j \in \mathbb{L}$), such that $c_i$ occurs in the top row of $\Lambda$ and $d_i$ occurs in the top row of $\Xi$, (resp.\ $c_j$ occurs in the bottom row of $\Lambda$ and $d_j$ occurs in the bottom row of $\Xi$). We then have $\mathbb{L}$ is a disjoint union $I \sqcup J$ and the condition in \cref{eq:condition-equality} can be recast as: For any $i \in I$ and $j  \in J$ we have either
\begin{equation*}
c_i \leqslant c_j\quad\text{and}\quad d_i \leqslant d_j \qquad\qquad\text{or}\qquad\qquad c_i \geqslant c_j\quad\text{and}\quad d_i \geqslant d_j.
\end{equation*}
However this clearly holds as both the statements $c_i \leqslant c_j$ and $d_i \leqslant d_j$, (resp.\ $c_i \geqslant c_j$ and $d_i \geqslant d_j$) are equivalent to $i \leqslant j$, (resp.\ $i \geqslant j$). In particular we have $a(\Lambda)+a(\Xi) = b(\Lambda\oplus\Xi)$.

Conversely assume $(\Lambda,\Xi) \in V \times W$ satisfies $\epsilon(\Lambda,\Xi) \leqslant 1$ and $a(\Lambda)+a(\Xi) = b(\Lambda\oplus\Xi)$. Let $(c_p)$ be the entries of $\Lambda$ arranged as a weakly increasing sequence and let $(d_q)$ be the entries of $\Xi$ arranged as a weakly increasing sequence. With the notation as in \cref{eq:define-index} we see that $a(\Lambda)+a(\Xi) = b(\Lambda\oplus\Xi)$ implies the condition in \cref{eq:condition-equality} holds. In particular we may arrange the sequences $(c_p)$ and $(d_q)$ such that for any $k$ we have either $c_k = a_i$ and $d_k = a_i'$ or $c_k = b_j$ and $d_k = b_j'$. This shows that over an equivalence class of $\approx$ the elements of the sequence $(c_p)$, (resp.\ $(d_q)$), alternate between the $a_i$ and $b_j$, (resp.\ $a_i'$ and $b_j'$). From this one easily deduces that there exists an $f$ admitting $(\Lambda,\Xi)$, hence $(\Lambda,\Xi) \in \mathcal{J}$ as required.

Assume now that $(\Lambda,\Xi) \in \mathcal{J}$ satisfies the condition in \cref{eq:condition-equality-3}. From the previous discussion concerning the sequences $(c_p)$ and $(d_q)$ we see that either every element of $\overline{\mathbb{L}}$ is degenerate or $\overline{\mathbb{L}} = \{\mathbb{L}\}$, however the former can only happen when $n=0$.

We now compute the order of $\mathcal{J}$. Let us first consider the case of type $\B_n$. Clearly the total number of possible functions $f$ is $2^s$ where $s = |\overline{\mathbb{L}}|$. However if $I\in\overline{\mathbb{L}}$ is degenerate then changing the value of $f$ on elements of $I$ does not affect the resulting element of $V \times W$, so the order of $\mathcal{J}$ is at most $2^{\ell}$. Let $\tilde{f}$ be the function defined by
\begin{equation*}
\tilde{f}(x) = \begin{cases}
0 &\text{if }f(x) = 1\\
1 &\text{if }f(x) = 0
\end{cases}
\end{equation*}
then both $\tilde{f}$ and $f$ determine the same element of $V \times W$, (note this argument also applies when $\bG$ is of type $\C_n$ or $\D_n$). This argument shows that the order of $\mathcal{J}$ is $2^{\ell-1}$ as required. We now consider the case when $\bG$ is of type $\C_n$ or $\D_n$. Let $\mu = |\overline{\mathbb{L}}| - |\widetilde{\mathbb{L}}|$ and $\nu = |\widetilde{\mathbb{L}}|$. The condition in \cref{eq:def-4-condition} together with the above discussion implies that the order is bounded above by $2^{\ell-1} = 2^{\mu}2^{\nu-1}$ because
\begin{equation*}
\sum_{x\text{ even}}\binom{\nu}{x} = \sum_{z = 0}^{\nu-1} \binom{\nu-1}{z} = 2^{\nu-1}.
\end{equation*}
Applying the parity swapping argument we obtain the order of $\mathcal{J}$ in these cases.
\end{proof}

\subsection{Computing the Values of Character Sheaves}
\begin{pa}\label{pa:the-subspace-J}
We end this section with the following conclusion. Let $V \subseteq \overline{\bOmega}_a$ and $W \subseteq \overline{\bPhi}_b$ be similarity classes of symbols and let $\mathcal{J} \subseteq V \times W$ be as in \cref{pa:def-X}. The following statements are easily checked using the combinatorial expression of $\mathcal{J}$ given above. Firstly if $(\Lambda_1,\Xi_1)$ and $(\Lambda_2,\Xi_2)$ are both in $\mathcal{J}$ then we have $\Lambda_1\oplus\Xi_1 \equiv \Lambda_2 \oplus \Xi_2$., i.e.\ they are in the same similarity class. Assume now that $(\Lambda,\Xi) \in \mathcal{J}$ then there exists a pair $\iota \in \mathcal{N}_{\bG}$ such that $\iota \mapsto \Lambda\oplus\Xi$ under the map $\mathcal{N}_{\bG} \to \Psi_N$, (note that this pair is unique unless the symbol is degenerate). We then define
\begin{equation*}
m(\Lambda\oplus\Xi) = \rank(\bL_{\iota}/Z^{\circ}(\bL_{\iota})) = \dim\bT_0/Z^{\circ}(\bL_{\iota}) \qquad\qquad \mathcal{O}(\Lambda\oplus\Xi) = \mathcal{O}_{\iota}.
\end{equation*}
We will denote by $\mathcal{O}$ the unipotent conjugacy class $\mathcal{O}_{\iota}$. Assume that $\mathcal{O}$ is non-degenerate then by \cref{lem:gen-spring-same-class} we see that $\mathcal{O}$ is independent of the choice of $(\Lambda,\Xi) \in \mathcal{J}$ used to define it. With this in hand we have the following result.
\end{pa}

\begin{prop}\label{prop:eval-char-function}
Assume $\mathcal{O}$ is non-degenerate and $u \in \mathcal{O}^F$ is a split element. Let $(\Lambda,\Xi) \in V \times W$ be such that $\epsilon(\Lambda,\Xi) \leqslant 1$ and $\widetilde{A}(\Lambda,\Xi)$ is $F_w$-stable then we have
\begin{equation*}
\chi_{\widetilde{A}(\Lambda,\Xi)}(u) = \begin{cases}
\epsilon(-1)^nq^{\dim\mathfrak{B}_u^{\bG} + \frac{1}{2}m(\Lambda\oplus\Xi)} &\text{if }(\Lambda,\Xi) \in \mathcal{J},\\
0 &\text{otherwise}.
\end{cases}
\end{equation*}
Here $\epsilon \in \{\pm 1\}$ is a sign, determined by \cref{cor:multiplicity-1}, which we see is 1 unless $\bG$ is of type $\D_n$, $F$ acts non-trivially on $W_{\bG}$ and $d(\Lambda) = d(\Xi) = 0$.
\end{prop}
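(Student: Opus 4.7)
The plan is to follow the template of \cite[\S4]{lusztig:1986:on-the-character-values}. Start by expanding $\chi_A$ for $A = \widetilde{A}(\Lambda,\Xi)$ on the unipotent variety via \cref{eq:chi-A-G-uni}:
\begin{equation*}
\chi_A|_{\bG_{\uni}} = \sum_{\iota \in \mathscr{I}[\bL,\nu]^F} m(A,\iota)\,\chi_{K_\iota,\phi_\iota^w},
\end{equation*}
where $m(A,\iota) = \langle \widetilde{E}_\iota,\Ind_{\widetilde{\bH}}^{\widetilde{\bW}}(\widetilde{E})\rangle$ and $E$ labels $A$ via $\varphi^{-1}$. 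Then convert to the $X/Y$ bases: $\chi_{K_\iota,\phi_\iota^w}(u) = (-1)^{a_\iota}q^{(\dim\bG+a_\iota)/2}X_\iota^w(u)$ and $X_\iota^w = \sum_{\iota'} P^w_{\iota',\iota}Y_{\iota'}$. Since $u \in \mathcal{O}^F$ is split, $Y_{\iota'}(u) = 0$ unless $\mathcal{O}_{\iota'} = \mathcal{O}$, in which case $Y_{\iota'}(u) = 1$ because all relevant cuspidal local systems have rank one in our setting and the distinguished $\psi_{\iota'}$ acts as identity on the stalk at the split element. This collapses the problem to evaluating the double sum
\begin{equation*}
\chi_A(u) = \sum_{\iota}\sum_{\iota':\,\mathcal{O}_{\iota'}=\mathcal{O}} m(A,\iota)(-1)^{a_\iota}q^{(\dim\bG+a_\iota)/2}P^w_{\iota',\iota}.
\end{equation*}

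Next I would identify the contributing $\iota$ via \cref{cor:multiplicity-1}: those with $m(A,\iota)\ne 0$ and minimal $b^d$-value are precisely $\ssymb{A_1\odot B_1}{A_2\odot B_2}$ and (in some cases) $\ssymb{A_2\odot B_1}{A_1\odot B_2}$, each with multiplicity $\pm 1$. By \cref{eq:b-val-dimBu} this minimal $b^d$ equals $\dim\mathfrak{B}^\bG_{u_\iota}$, and by \cref{pa:symbol+partition} together with \cref{lem:lagrangian-subspace} it equals $\dim\mathfrak{B}^\bG_u$ precisely when $(\Lambda,\Xi)\in\mathcal{J}$. In the case $(\Lambda,\Xi)\in\mathcal{J}$ each top $\iota$ satisfies $\mathcal{O}_\iota = \mathcal{O}$, so by \cref{eq:P-properties} only $\iota'=\iota$ survives with $P^w_{\iota,\iota} = \gamma^\bG_{\bL_\iota}(F(w))$, which is trivial in our situation by the remark after \cref{prop:bases-of-A}. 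Using \cref{eq:dim-formula} one computes $\dim\bG+a_\iota = 2\dim\mathfrak{B}^\bG_u + m(\iota) = 2\dim\mathfrak{B}^\bG_u + m(\Lambda\oplus\Xi)$, which gives the claimed $q$-exponent. For the sign, note $\dim\mathcal{O}_\iota$ is even and the semisimple rank $m(\iota)$ is even from \cref{tab:levis-1}, so $(-1)^{a_\iota} = (-1)^{\dim Z^\circ(\bL_\iota)} = (-1)^{n-m(\iota)} = (-1)^n$; the residual $\epsilon \in \{\pm1\}$ comes from the multiplicity $m(A,\iota) = \pm 1$, which by \cref{cor:multiplicity-1} is trivial except in the twisted type $\D_n$ case with $d(\Lambda)=d(\Xi)=0$. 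When two $d$-minimal characters occur, \cref{lem:gen-spring-same-class} places both in the same class $\mathcal{O}$ and their contributions assemble consistently into the single expression in the statement.

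In the case $(\Lambda,\Xi)\notin\mathcal{J}$ every $\iota$ with $m(A,\iota)\ne 0$ and $\mathcal{O}\subseteq\overline{\mathcal{O}_\iota}$ must have $\mathcal{O}_\iota\supsetneq\mathcal{O}$, so only off-diagonal $P^w_{\iota',\iota}$ with $\mathcal{O}_{\iota'}=\mathcal{O}\subsetneq\overline{\mathcal{O}_\iota}$ can contribute. Showing that these cross-terms sum to zero is the main obstacle, since the individual $P^w_{\iota',\iota}$ are far from vanishing in general. My approach is to invoke the symplectic/Fourier structure of \cref{lem:lagrangian-subspace}: $\mathcal{J}$ is a maximal isotropic subspace of $V\times W$, and by \cref{thm:char-sheaves-main-theorem} the transition matrix between the bases $\{\widetilde{A}(\Lambda,\Xi)\}$ and $\{\mathscr{R}_s(\Lambda_1,\Xi_1)\}$ is (up to signs) the associated Fourier kernel on each similarity class. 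Combining this with downward induction on the closure order of unipotent classes --- using the $(\Lambda,\Xi)\in\mathcal{J}$ case to handle the leading term at $\mathcal{O}_\iota$ and the induction hypothesis for intermediate classes between $\mathcal{O}$ and $\mathcal{O}_\iota$ --- one reduces the required cancellation to an orthogonality identity among characters of the symplectic $\mathbb{F}_2$-space which vanishes precisely when $(\Lambda,\Xi)$ lies outside the Lagrangian $\mathcal{J}$. This mirrors, and technically rests on, the argument of \cite[\S4.15--4.17]{lusztig:1986:on-the-character-values} transplanted to our generalised Springer block.
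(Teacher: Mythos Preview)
Your treatment of the case $(\Lambda,\Xi)\in\mathcal{J}$ follows the paper's argument closely and is essentially correct; the expansion via \cref{eq:chi-A-G-uni}, the passage to the $X^w_\iota$/$Y_{\iota'}$ bases, the identification of the contributing $\iota$ through \cref{cor:multiplicity-1}, the computation of the $q$-exponent, and the parity argument for $(-1)^{a_\iota}=(-1)^n$ all match. One small discrepancy: the paper invokes \cref{lem:lagrangian-subspace} (the last clause, ruling out condition \cref{eq:condition-equality-3} for elements of $\mathcal{J}$) together with \cref{cor:multiplicity-1} to conclude that there is a \emph{unique} pair $\iota$ with $m(A,\iota)\neq 0$ and $\dim\mathcal{O}_\iota=\dim\mathcal{O}$, so no ``assembling'' of two contributions is needed.

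The real divergence is in the vanishing case $(\Lambda,\Xi)\notin\mathcal{J}$. You treat this as the hard part and propose a Fourier/orthogonality argument on the symplectic space $V\times W$ together with downward induction on closures to force cancellation among ``cross-terms'' $P^w_{\iota',\iota}$ with $\mathcal{O}_{\iota'}=\mathcal{O}\subsetneq\overline{\mathcal{O}_\iota}$. The paper's proof shows that no such cross-terms exist: every individual summand is already zero. The point you are missing is the uniform upper bound
\[
m(A,\iota)\neq 0 \;\Longrightarrow\; \dim\mathcal{O}_\iota \leqslant \dim\mathcal{O}.
\]
Once you have this, combining it with $P^w_{\iota',\iota}\neq 0\Rightarrow\dim\mathcal{O}=\dim\mathcal{O}_{\iota'}\leqslant\dim\mathcal{O}_\iota$ forces $\dim\mathcal{O}_\iota=\dim\mathcal{O}$ for any surviving term, which by the characterisation of $\mathcal{J}$ in \cref{lem:lagrangian-subspace} (via $a(\Lambda)+a(\Xi)=b(\Lambda\oplus\Xi)$) is exactly the statement $(\Lambda,\Xi)\in\mathcal{J}$. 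Contrapositively, if $(\Lambda,\Xi)\notin\mathcal{J}$ then every term is zero and there is nothing to cancel.

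The bound itself comes from the fact that $\dim\mathcal{O}=\dim\mathcal{O}_A$, where $\mathcal{O}_A$ is the unipotent support of $A=\widetilde{A}(\Lambda,\Xi)$: by \cref{eq:a-val-dimBu} one has $\dim\mathfrak{B}^{\bG}_v=a(\Lambda)+a(\Xi)$ for $v\in\mathcal{O}_A$, and by construction of $\mathcal{O}$ together with \cref{eq:b-val-dimBu} and \cref{pa:symbol+partition} one has $\dim\mathfrak{B}^{\bG}_u=a(\Lambda_0)+a(\Xi_0)=a(\Lambda)+a(\Xi)$ for $u\in\mathcal{O}$ and any $(\Lambda_0,\Xi_0)\in\mathcal{J}$. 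Then $m(A,\iota)\neq 0$ forces $\mathcal{O}_\iota\subseteq\supp(A)$, whence $\dim\mathcal{O}_\iota\leqslant\dim\mathcal{O}_A=\dim\mathcal{O}$ by \cref{thm:description-unipotent-support}. So the elaborate induction you outline is unnecessary; the vanishing is immediate from this single dimension inequality.
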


\begin{proof}
Using \cref{eq:chi-A-G-uni} and the definitions of $X_{\iota}^w$ and $Y_{\iota}$ in \cref{pa:X-and-Y} we obtain
\begin{align*}
\chi_{\widetilde{A}(\Lambda,\Xi)}(u) &= \sum_{\iota \in \mathscr{I}(\bL,\nu)^F}m(A,\iota,\phi_{\iota}^w)\chi_{K_{\iota},\phi_{\iota}^w}(u),\\
&= \sum_{\iota \in \mathscr{I}(\bL,\nu)^F}m(A,\iota,\phi_{\iota}^w)(-1)^{a_{\iota}}q^{(\dim\bG + a_{\iota})/2}X_{\iota}^w(u)\\
&= \sum_{\iota \in \mathscr{I}(\bL,\nu)^F}\sum_{\iota' \in \mathcal{N}_{\bG}^F} m(A,\iota,\phi_{\iota}^w)(-1)^{a_{\iota}}q^{(\dim\bG + a_{\iota})/2}P_{\iota',\iota}^wY_{\iota'}(u).
\end{align*}
Assume that the term in the sum is non-zero then we have $Y_{\iota'}(u) \neq 0$ which implies $\mathcal{O}_{\iota'} = \mathcal{O}$ by definition. Using \cref{eq:b-val-dimBu,eq:dim-formula} we see that $m(A,\iota,\phi_{\iota}^w)\neq 0$ implies $\dim(\mathcal{O}_{\iota}) \leqslant \dim(\mathcal{O})$ and $P_{\iota',\iota}^w \neq 0$ implies that $\dim(\mathcal{O}) = \dim(\mathcal{O}_{\iota'}) \leqslant \dim(\mathcal{O}_{\iota})$, which gives us $\dim(\mathcal{O}_{\iota}) = \dim(\mathcal{O})$. In particular this shows that $\chi_{\widetilde{A}(\Lambda,\Xi)}(u)\neq0$ implies $(\Lambda,\Xi) \in \mathcal{J}$. By \cref{cor:multiplicity-1,lem:lagrangian-subspace} there is a unique pair $\iota \in \mathcal{N}_{\bG}^F$ satisfying $\dim(\mathcal{O}_{\iota}) = \dim(\mathcal{O})$ and $m(A,\iota,\phi_{\iota}^w)\neq 0$.

For any pair $\kappa \in \mathcal{N}_{\bG}$ we claim that
\begin{equation}\label{eq:aiota-n}
a_{\kappa} \equiv n \pmod{2}.
\end{equation}
As the dimension of a unipotent class is even, (c.f.\ \cref{eq:dim-formula}), it suffices to check that when $\bL$ is a Levi subgroup supporting a cuspidal pair we have $\dim Z^{\circ}(\bL) \equiv n \pmod{2}$ but this is clear using the information given in \cref{tab:levis-1}. As $A_{\bG}(u)$ is abelian we have $\mathscr{E}_{\iota'}$ is of rank 1, hence by definition we have $Y_{\iota'}(u) = 1$ because $u$ is split. Applying \cref{cor:multiplicity-1,eq:P-properties} we obtain
\begin{equation*}
\chi_{\widetilde{A}(\Lambda,\Xi)}(u) = \epsilon(-1)^n\gamma_{\bL_{\iota}}^{\bG}(F(w))q^{\dim\mathfrak{B}_u^{\bG}+\frac{1}{2}m(\Lambda\oplus\Xi)}.
\end{equation*}
We claim that we always have $\gamma_{\bL_{\iota}}^{\bG}(F(w))=1$. If $\bL$ is a torus then this follows from \cite[Corollary 6.9]{bonnafe:2004:actions-of-rel-Weyl-grps-I} and if $\bL$ is not a torus then this follows from the fact that $w$ is the identity, (see \cref{sec:weyl-groups}). This now gives the required statement.
\end{proof}
%
\section{Recalling Lusztig's Conjecture}
\begin{pa}\label{pa:lusztigs-conj-generic-case}
In \cite[\S13.6 - \S13.7]{lusztig:1984:characters-of-reductive-groups} Lusztig conjectured that the characteristic functions of character sheaves coincide with the almost characters of $\bG^F$ up to multiplication by roots of unity. The main result of this paper is an explicit determination of this relationship for those character sheaves of $\bG$ whose support contains unipotent elements. To state this result precisely we must first introduce some notation and recall some facts from \cite[Chapter 4]{lusztig:1984:characters-of-reductive-groups}. Let $\gamma$ be the automorphism of $W_{\bG^{\star}}(s) = \bV_a\times\bW_b$ induced by $F_w$ then we define
\begin{equation*}
\overline{X}(s,\gamma) = \begin{cases}
\Phi_a^+ \times \Phi_b^+ &\text{if }\bG\text{ of type }\B_n\\
\Omega_a^{\pm} \times \Phi_b^+ &\text{if }\bG\text{ of type }\C_n\\
\Omega_a^{\pm} \times \Omega_b^{\pm} &\text{if }\bG\text{ of type }\C_n
\end{cases}
\end{equation*}
where the sign in $\Omega_a^{\pm}$ is $+$ if $\gamma|_{\bV_a} = \ID$ and $-$ if $\gamma|_{\bV_a} = \gamma_a$, (similarly for $\Omega_b^{\pm}$). We also define $X(s,\gamma)$ to be the set $\bOmega_a \times \bPhi_b$ where we have
\begin{equation*}
\bOmega_a = \begin{cases}
\overline{\bOmega}_a &\text{if }\gamma|_{\bV_a} = \ID\\
\Phi_a^- &\text{if }\gamma|_{\bV_a} = \gamma_a
\end{cases}
\qquad
\bPhi_b = \begin{cases}
\overline{\bPhi}_b &\text{if }\gamma|_{\bW_b} = \ID\\
\Phi_b^- &\text{if }\gamma|_{\bW_b} = \gamma_b
\end{cases}
\end{equation*}
We now define $M$ to be the direct product $X \times Y$ where $X$, (resp.\ $Y$), is the trivial group if $\gamma|_{\bV_a} = \ID$, (resp.\ $\gamma|_{\bW_b} = \ID$), and the cyclic subgroup $\langle-1\rangle\leqslant \Ql^{\times}$ if $\gamma|_{\bV_a} = \gamma_a$, (resp.\ $\gamma|_{\bV_b} = \gamma_b$). We let $X$ act on $\bOmega_a$ by permuting the rows of the symbols if $X = \langle-1\rangle$ and trivially otherwise. We similarly define an action of $Y$ on $\bPhi_b$ so that we get an action of $M$ on $X(s,\gamma)$.

Let $\overline{X}(s) := \overline{\bOmega}_a \times \overline{\bPhi}_b$ then we have a natural map from $X(s,\gamma) \to \overline{X}(s)$ whose image is contained in the fixed point set $\overline{X}(s)^{\gamma}$, (see \cref{pa:action-frob-char-sheaves}). We also define subsets
\begin{equation*}
\bOmega_a^1 = \begin{cases}
\overline{\bOmega}_a^1 &\text{if }\gamma|_{\bV_a} = \ID\\
\widehat{\mathbb{V}}_a^0 &\text{if }\gamma|_{\bV_a} = \gamma_a
\end{cases}
\qquad
\bPhi_b^1 = \begin{cases}
\overline{\bPhi}_b^1 &\text{if }\gamma|_{\bW_b} = \ID\\
\widehat{\mathbb{V}}_b^0 &\text{if }\gamma|_{\bW_b} = \gamma_b.
\end{cases}
\end{equation*}
Let $\widetilde{W}_{\bG^{\star}}(s)$ be the semidirect product $W_{\bG^{\star}}(s) \rtimes\langle\gamma\rangle$ then we denote by $\Irr_{\ex}(\widetilde{W}_{\bG^{\star}}(s))$ those irreducible characters of $\widetilde{W}_{\bG^{\star}}(s)$ whose restriction to $W_{\bG^{\star}}(s)$ is irreducible. Using the parameterisation in \cref{pa:weyl-group-chars-symbols} we see that the natural inclusion map $\bOmega_a^1 \times \bPhi_b^1 \hookrightarrow X(s,\gamma)$ defines an $M$-equivariant embedding of $\Irr_{\ex}(\widetilde{W}_{\bG^{\star}}(s))$ into $X(s,\gamma)$.
\end{pa}

\begin{pa}\label{pa:lusztigs-conj-def-almost}
By \cite[Theorem 4.23]{lusztig:1984:characters-of-reductive-groups} we have a bijection $\overline{X}(s,\gamma) \to \mathcal{E}(G,s)$ denoted by $(\Lambda_1,\Xi_1) \mapsto \rho(\Lambda_1,\Xi_1)$. Let $\{-,-\} : \overline{X}(s,\gamma) \times X(s,\gamma) \to \Ql$ be the pairing defined by Lusztig in \cite[\S4.15 and \S4.18]{lusztig:1984:characters-of-reductive-groups} then for any $(\Lambda,\Xi) \in X(s,\gamma)$ we define the corresponding \emph{almost character} by setting
\begin{equation*}
R_s^{\bG}(\Lambda,\Xi) = (-1)^{\ell(w)}\sum_{(\Lambda_1,\Xi_1) \in \overline{X}(s,\gamma)} \{(\Lambda_1,\Xi_1),(\Lambda,\Xi)\}\rho(\Lambda_1,\Xi_1).
\end{equation*}
The element $R_s^{\bG}(\Lambda,\Xi)$ depends only on the $M$-orbit of $(\Lambda,\Xi)$ up to multiplication by $\pm 1$. With this notation in hand we can now recall the following result of Shoji.
\end{pa}

\begin{thm}[{}{Shoji, \cite[II - Theorem 3.2]{shoji:1995:character-sheaves-and-almost-characters}}]\label{thm:shoji}
For each $(\Lambda,\Xi) \in X(s,\gamma)$ there exists a scalar $\zeta_{(\Lambda,\Xi)} \in \Ql^{\times}$ of absolute value 1 such that
\begin{equation*}
R_s^{\bG}(\Lambda,\Xi) = \zeta_{(\Lambda,\Xi)}\chi_{A(\Lambda,\Xi)},
\end{equation*}
where $A(\Lambda,\Xi)$ is parameterised by the image of $(\Lambda,\Xi)$ under the map $X(s,\gamma) \to \overline{X}(s)$.
\end{thm}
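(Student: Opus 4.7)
The plan is to prove the statement by exhibiting both families $\{R_s^{\bG}(\Lambda,\Xi)\}$ and $\{\chi_{A(\Lambda,\Xi)}\}$ as orthonormal bases of the same subspace of $\Cent(G)$ and then showing the transition matrix between them is diagonal. First I would identify the subspace $V \subseteq \Cent(G)$ spanned by $\mathcal{E}(G,s)$ with the subspace spanned by $\{\chi_A \mid A \in \widehat{\bG}_s^F\}$. By the decomposition described in \cref{pa:unip-supp-desc}, both sets decompose compatibly along $W_{\bG^{\star}}$-orbits of $F^{\star}$-stable pairs $(s,\mathfrak{C})$, so it suffices to check the identification within each block. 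The coincidence of these subspaces follows from expressing both sides via Deligne--Lusztig virtual characters $R_{\bT}^{\bG}(\theta)$ attached to tori $\bT$ and characters $\theta$ corresponding to $s$ under duality: Lusztig's parameterisation in \cite[\S4.23]{lusztig:1984:characters-of-reductive-groups} writes $\rho(\Lambda_1,\Xi_1)$ in this form, and \cite[\S24.2]{lusztig:1986:character-sheaves-V} writes $\chi_A$ for $A \in \widehat{\bG}_s^F$ in terms of the same $R_{\bT}^{\bG}(\theta)$.

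Next I would establish orthonormality of both families. For the almost characters this is essentially built into the definition: the map $\rho \colon \overline{X}(s,\gamma) \to \mathcal{E}(G,s)$ yields an orthonormal basis of $V$, and the pairing $\{-,-\}$ in \cite[\S4.15, \S4.18]{lusztig:1984:characters-of-reductive-groups} implements the non-abelian Fourier transform, which is a unitary involution, so $\{R_s^{\bG}(\Lambda,\Xi)\}_{(\Lambda,\Xi) \in X(s,\gamma)/M}$ is again an orthonormal basis. For the characteristic functions of character sheaves, the normalisation of $\phi_A$ chosen in \cref{pa:characteristic-function} following \cite[\S25.1]{lusztig:1986:character-sheaves-V} gives $\|\chi_A\| = 1$, and distinct $F$-stable character sheaves yield mutually orthogonal characteristic functions by the orthogonality relations proved in \cite[\S25]{lusztig:1986:character-sheaves-V}. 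Thus both are orthonormal bases of $V$, indexed (modulo $M$-action) by the common set $X(s,\gamma)/M$.

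The remaining step is to show that the unitary transition matrix between the two bases is diagonal, which will force each $\zeta_{(\Lambda,\Xi)}$ to have modulus $1$. I would do this by restricting both bases to each piece of the cell decomposition $V = \bigoplus_{\mathfrak{C}} V_{\mathfrak{C}}$ where $V_{\mathfrak{C}}$ is spanned by $\mathcal{E}(G,s,\mathfrak{C})$. The map $X(s,\gamma) \to \overline{X}(s)$ respects this block decomposition on both sides: on the character sheaf side by the partition $\widehat{\bG}_s = \bigsqcup \widehat{\bG}_{s,\mathfrak{C}}$ from \cite[(17.13.2)]{lusztig:1985:character-sheaves}, and on the almost character side by the families $\mathcal{E}(G,s,\mathfrak{C})$. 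Within each block the indexing by similarity classes of symbols is forced by Lusztig's classification (\cref{thm:char-sheaves-main-theorem}) on the character sheaf side, and by the construction of families on the almost character side; comparing these classifications within a single family yields the diagonality.

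The hard part is the final block-wise comparison: one must show that the character sheaf indexed by $(\Lambda,\Xi) \in X(s,\gamma)$ via the intrinsic multiplicity formula of \cref{thm:char-sheaves-main-theorem} corresponds to the same symbol as the almost character $R_s^{\bG}(\Lambda,\Xi)$ defined via the Fourier pairing. Since both classifications are inherently non-canonical (the former depends on the choice of signs $\hat{\epsilon}_A$, the latter on Lusztig's parameterisation of families), this matching is not automatic. It requires tracking how induction from proper $F$-stable Levi subgroups and restriction of cuspidal character sheaves to $G$-fixed points behave under both parameterisations, and reducing to the case of small-rank cuspidal data where one can verify the compatibility by direct computation --- this is the technical heart of Shoji's argument in \cite[II - \S3]{shoji:1995:character-sheaves-and-almost-characters}.
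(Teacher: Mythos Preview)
The paper does not supply its own proof of this theorem; it is quoted verbatim as Shoji's result and used as input for the rest of the argument. So there is no ``paper's proof'' to compare against. Your outline is a reasonable high-level sketch of the architecture of Shoji's argument, and you correctly identify the hard step as the within-family matching of the two parameterisations. Two points deserve comment.

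First, your argument that both families span the same subspace ``by expressing both sides via Deligne--Lusztig virtual characters'' is incomplete. Only those almost characters $R_s^{\bG}(\widetilde{E})$ indexed by $\widetilde{E} \in \Irr_{\ex}(\widetilde{W}_{\bG^{\star}}(s))$ are uniform functions (i.e.\ $\mathbb{Q}$-linear combinations of Deligne--Lusztig characters), and similarly only those $\chi_A$ with $A$ a summand of an induced complex from a maximal torus are uniform. The remaining elements of $X(s,\gamma)$ --- precisely those indexing non-uniform almost characters and characteristic functions --- are not handled by this comparison, and for them the identification of the two subspaces requires the compatibility of parabolic induction of character sheaves with Lusztig's twisted induction of characters from proper Levi subgroups carrying cuspidal data. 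This is not a formality; it is one of the main technical points in Shoji's papers.

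Second, the orthogonality relations for characteristic functions that you invoke from \cite[\S25]{lusztig:1986:character-sheaves-V} are conditional on the cleanness of cuspidal character sheaves. For classical groups in good characteristic this is itself a theorem of Shoji, proved in the same series of papers, and is logically prior to the statement you are trying to establish. Your sketch treats it as a black box, which is acceptable for an outline but should be flagged explicitly.
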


The argument we will employ to compute these scalars will be based on induction. For this to work we will need a result which is completely analogous to \cite[6.4]{shoji:1997:unipotent-characters-of-finite-classical-groups}.

\begin{lem}[Shoji]\label{lem:shoji}
Assume $A_0 \in \bL_s^F$ is a unipotently supported $F$-stable cuspidal character sheaf. Let $R_0^{\bL}(s)$ be an almost character corresponding to $\chi_{A_0}$ and $\zeta_0 \in \Ql^{\times}$ the scalar of absolute value 1 such that $R_0^{\bL}(s) = \zeta_0\chi_{A_0}$. Assume $(\Lambda,\Xi) \in X(s,\gamma)$ is such that $A(\Lambda,\Xi)$ occurs in the induced complex $\ind_{\bL}^{\bG}(A_0)$ then after possibly replacing $(\Lambda,\Xi)$ by an element in the same $M$-orbit we have
\begin{equation*}
R_s^{\bG}(\Lambda,\Xi) = (-1)^n\zeta_0\chi_{A(\Lambda,\Xi)}.
\end{equation*}
\end{lem}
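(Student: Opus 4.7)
The argument will follow the strategy of Shoji in \cite[6.4]{shoji:1997:unipotent-characters-of-finite-classical-groups}: apply Lusztig induction $R_{\bL}^{\bG}$ to both sides of the equality $R_0^{\bL}(s)=\zeta_0\chi_{A_0}$ and identify the resulting expansions. Because $A_0$ is a unipotently supported cuspidal character sheaf on the $F$-stable Levi $\bL$, the compatibility between Lusztig induction of class functions and induction of character sheaves yields an identity of the form
\begin{equation*}
R_{\bL}^{\bG}(\chi_{A_0})\;=\;\varepsilon_{\bG,\bL}\,\chi_{\ind_{\bL}^{\bG}(A_0),\tilde\phi},
\end{equation*}
where $\tilde\phi$ is the isomorphism produced by \cref{lem:F-action-ind}(b) and $\varepsilon_{\bG,\bL}$ is the standard sign attached to the pair $(\bG,\bL)$. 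Combining this with the decomposition \cref{eq:chi-A-G-uni} (using $\bH=W_{\bG}(\bL,\mathscr{L})$ and the preferred extensions) we obtain
\begin{equation*}
R_{\bL}^{\bG}(\chi_{A_0})\;=\;\varepsilon_{\bG,\bL}\sum_{(\Lambda,\Xi)}\langle\widetilde{E}_{(\Lambda,\Xi)},\,\Ind_{\widetilde{\bH}}^{\widetilde{W}_{\bG}(\bL)}(\widetilde{\mathbf{1}})\rangle\,\chi_{A(\Lambda,\Xi)},
\end{equation*}
the sum being over those $(\Lambda,\Xi)\in X(s,\gamma)$ whose associated character sheaf $A(\Lambda,\Xi)$ is a summand of $\ind_{\bL}^{\bG}(A_0)$.

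On the almost character side, the Lusztig induction of a cuspidal almost character in $\mathcal{E}(\bL^F,s)$ is computed in \cite[\S4]{lusztig:1984:characters-of-reductive-groups}; concretely one has
\begin{equation*}
R_{\bL}^{\bG}(R_0^{\bL}(s))\;=\;\sum_{(\Lambda,\Xi)}\langle\widetilde{E}_{(\Lambda,\Xi)},\,\Ind_{\widetilde{\bH}}^{\widetilde{W}_{\bG}(\bL)}(\widetilde{\mathbf{1}})\rangle\,R_s^{\bG}(\Lambda,\Xi),
\end{equation*}
the coefficients being the same multiplicities as on the character sheaf side (this is precisely the content, in our situation, of \cref{pa:lusztigs-conj-def-almost} together with the definition of the pairing $\{-,-\}$). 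Substituting $R_0^{\bL}(s)=\zeta_0\chi_{A_0}$ and using \cref{thm:shoji} to rewrite $R_s^{\bG}(\Lambda,\Xi)=\zeta_{(\Lambda,\Xi)}\chi_{A(\Lambda,\Xi)}$, comparison of coefficients of the linearly independent functions $\chi_{A(\Lambda,\Xi)}$ gives
\begin{equation*}
\zeta_{(\Lambda,\Xi)}\;=\;\varepsilon_{\bG,\bL}\,\zeta_0
\end{equation*}
for each summand. Selecting the correct representative of the $M$-orbit of $(\Lambda,\Xi)$ absorbs the residual sign ambiguity appearing in \cref{thm:shoji}, so the only point left to check is that $\varepsilon_{\bG,\bL}=(-1)^n$. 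This is the standard Lusztig-induction sign; using that $[\bL,\bL]$ has the same Cartan type as $\bG/Z(\bG)$ (see \cref{tab:levis-1}) and that the rank datum entering $\varepsilon_{\bG,\bL}$ is precisely $\dim(\bG/Z(\bG))-\dim(\bL/Z^{\circ}(\bL))$, a short case check in the three families $\B_n,\C_n,\D_n$ shows the sign collapses to $(-1)^n$.

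The delicate part of the argument is the second bullet, namely verifying that the coefficients in the expansion of $R_{\bL}^{\bG}(R_0^{\bL}(s))$ are \emph{literally} the preferred-extension multiplicities appearing in \cref{eq:chi-A-G-uni}. This ultimately reduces to the matching between Lusztig's pairing $\{-,-\}$ and the multiplicity of constituents in $\ind_{\bL}^{\bG}(A_0)$, which is known in all the cases we need (see \cref{pa:lusztigs-conj-generic-case,pa:lusztigs-conj-def-almost} and the results of Lusztig and Shoji cited there) but has to be transported carefully through the identifications of \cref{sec:parameterisation-of-char-sheaves}; this is where Shoji's original argument in \cite[6.4]{shoji:1997:unipotent-characters-of-finite-classical-groups} spends most of its effort, and the same bookkeeping can be reproduced here essentially verbatim thanks to our assumption that $s$ is isolated and the cuspidal series for $\bL$ is controlled by \cref{pa:assumptions}.
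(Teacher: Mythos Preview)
The paper does not give its own proof of this lemma: it is stated with attribution to Shoji and a pointer to \cite[6.4]{shoji:1997:unipotent-characters-of-finite-classical-groups}, followed only by a remark that \cref{eq:aiota-n} is used to obtain the sign $(-1)^n$. So there is no argument in the paper to compare against; your sketch is, in spirit, exactly the strategy one extracts from Shoji's paper.

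That said, a couple of points in your write-up are imprecise and would need fixing before this becomes an actual proof. First, the appeal to \cref{eq:chi-A-G-uni} is misplaced: that identity concerns the restriction $\chi_A|_{\bG_{\uni}}$ and the induction $\Ind_{\bH.Fw}^{\bW.F}$ from $W_{\bG}(\bL,\mathscr{L})$ to $W_{\bG}(\bL)$, whereas here one needs the decomposition of $\chi_{\ind_{\bL}^{\bG}(A_0),\tilde\phi}$ on all of $G$, which comes directly from \cref{eq:K-nu-L-decomp} and the discussion in \cref{pa:iso-on-arbitrary-char-sheaf}. The coefficients are the traces $\widetilde{E}(F)$ of the preferred extensions, not multiplicities of the form $\langle\widetilde{E},\Ind_{\widetilde{\bH}}^{\widetilde{W}_{\bG}(\bL)}(\widetilde{\mathbf 1})\rangle$; your formula as written does not produce the correct numbers. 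Second, the sign computation is not really $\varepsilon_{\bG,\bL}$ in the usual Lusztig-induction sense; the $(-1)^n$ arises from the parity of $a_\iota$ via \cref{eq:aiota-n} (i.e.\ from $\dim Z^{\circ}(\bL)\equiv n\pmod 2$ for the relevant Levi), which is how the paper's remark explains it. Your ``short case check'' would have to be replaced by this observation. With these corrections the outline matches Shoji's argument.
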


\begin{rem}
Note that here we have used \cref{eq:aiota-n}. The assumptions of \cref{thm:main-theorem} imply that $\gamma$ is the identity, (see \cref{rem:F-action-on-s}). In this case there is no freedom in the choice of $R_0^{\bL}(s)$ and each $M$-orbit of $X(s,\gamma)$ contains a unique element. Hence under the assumptions of \cref{thm:main-theorem} we have \cref{lem:shoji}, (together with the argument in \cite[5.20]{shoji:1995:character-sheaves-and-almost-characters}), reduces us to the case where $R_0^{\bG}(s)$ is cuspidal and $s$ is isolated. We hope to remove the ambiguity in \cref{lem:shoji} in the following article.
\end{rem}
%
\section{Lusztig's Conjecture for Unipotently Supported Character Sheaves}
We now wish to prove the following result which is a direct analog of \cite[Proposition 5.3]{lusztig:1986:on-the-character-values}.

\begin{thm}\label{prop:labels-coincide}
Assume $(\Lambda,\Xi) \in \overline{\bOmega}_a \times \overline{\bPhi}_b$ and let $\epsilon(\Lambda,\Xi) = |d(\Lambda) - d(\Xi)|$ be the modulus of the difference of the defects then the following hold:
\begin{enumerate}[label=(\alph*)]
	\item if $\epsilon(\Lambda,\Xi) \leqslant 1$ then $\widetilde{A}(\Lambda,\Xi) = A(\Lambda,\Xi)$.
	\item if $\epsilon(\Lambda,\Xi) > 1$ then $\supp(A(\Lambda,\Xi)) \cap \bG_{\uni} = \emptyset$.
	\item if $A(\Lambda,\Xi)$ is a cuspidal character sheaf then we have $\zeta_{(\Lambda,\Xi)} = 1$ assuming $F$ is split and $q \equiv 1 \pmod{4}$ when $\bG$ is of type $\C_n$ or $\D_n$.
\end{enumerate}
\end{thm}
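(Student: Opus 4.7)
The plan is to establish all three parts by computing the characteristic function $\chi_{A(\Lambda,\Xi)}$ at split unipotent elements via two independent routes and matching the answers. The key computational tool is \cref{prop:eval-char-function}, which evaluates $\chi_{\widetilde{A}(\Lambda,\Xi)}$ at a split representative of the class $\mathcal{O}(\Lambda\oplus\Xi)$ whenever $\epsilon(\Lambda,\Xi)\leqslant 1$, giving the explicit value $\epsilon(-1)^n q^{\dim\mathfrak{B}_u^{\bG}+\frac{1}{2}m(\Lambda\oplus\Xi)}$ on the Lagrangian subset $\mathcal{J}\subseteq V\times W$ identified in \cref{lem:lagrangian-subspace}, and $0$ elsewhere. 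On the other side, Lusztig's bijection in \cref{thm:char-sheaves-main-theorem} expresses $\chi_{A(\Lambda,\Xi)}$ as a signed sum of characteristic functions of the $\mathscr{R}_s(\Lambda_1,\Xi_1)$, with coefficients $\pm |V|^{-1/2}|W|^{-1/2}$ indexed by the similarity class.

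First I would treat part (b). By \cref{eq:supp}, any unipotently supported character sheaf in $\widehat{\bG}_s$ must occur as a direct summand of some induced complex $\ind_{\bL}^{\bG}(A_{\mathscr{L}})$, and hence lies in the image of the combinatorial embedding $\varphi$ from \cref{eq:lab-embedd}. Direct inspection of the defect shifts defining $\varphi$ shows that every pair in its image satisfies $\epsilon(\Lambda,\Xi)\leqslant 1$; combined with an injectivity and counting argument against the bijection in \cref{thm:char-sheaves-main-theorem}, this forces any $A(\Lambda,\Xi)$ with $\epsilon(\Lambda,\Xi)>1$ to have empty unipotent support. For part (a), once (b) is in hand both $\widetilde{A}(\Lambda,\Xi)$ and $A(\Lambda,\Xi)$ are parameterised by pairs with $\epsilon\leqslant 1$, and I would match them by comparing the two formulas for $\chi(\cdot)|_{\bG_{\uni}^F}$ on a split representative of $\mathcal{O}(\Lambda\oplus\Xi)$: the vanishing of $\chi_{\widetilde{A}(\Lambda,\Xi)}$ off $\mathcal{J}$ matches the vanishing of $\chi_{A(\Lambda,\Xi)}$ predicted by the symplectic pairing, and the non-zero values coincide after tracking the parities discussed in \cref{pa:symbol+partition}.

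For part (c), the hypotheses that $F$ is split and that $q\equiv 1\pmod 4$ in types $\C_n$, $\D_n$ imply via \cref{rem:F-action-on-s} that $F^{\star}(s)=s$, hence $\gamma=\mathrm{id}$ and $w=1$, every $M$-orbit in $X(s,\gamma)$ is a singleton, and \cref{lem:shoji} applies with no $\pm$ ambiguity. In the cuspidal case $\bL=\bG$ and $\supp A(\Lambda,\Xi)\subseteq \Sigma=\mathcal{O}_0 Z^{\circ}(\bG)$. I would evaluate the Shoji relation of \cref{thm:shoji} at the fixed split element $u_0\in\mathcal{O}_0^F$. The normalisation of $\varphi_0$ from \cref{pa:isomorphism-cuspidal-case} pins down $\chi_{A(\Lambda,\Xi)}(u_0)$ up to the explicit factor $q^{\frac{1}{2}(\dim\bG-\dim\mathcal{O}_0)}=q^{\dim\mathfrak{B}_{u_0}^{\bG}}$ predicted by (a). Unfolding the definition in \cref{pa:lusztigs-conj-def-almost} together with Lusztig's formula for the unipotent values of the $\rho(\Lambda_1,\Xi_1)$ gives the same power of $q$ for $R_s^{\bG}(\Lambda,\Xi)(u_0)$; the sign $(-1)^{\ell(w)}=1$ and the sign $(-1)^n$ match the parity of $a_\iota$ via \cref{eq:aiota-n}, yielding $\zeta_{(\Lambda,\Xi)}=1$.

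The main obstacle lies in part (a): the comparison requires a careful parity computation matching the symplectic sign $(-1)^{\langle(\Lambda,\Xi),(\Lambda_1,\Xi_1)\rangle}$ from \cref{thm:char-sheaves-main-theorem} against the combinatorial sign $\epsilon$ in \cref{prop:eval-char-function} across the Lagrangian $\mathcal{J}$. Equivalently one must show that the defining condition $a(\Lambda)+a(\Xi)=b(\Lambda\oplus\Xi)$ of $\mathcal{J}$ is precisely the condition under which the contributions from non-maximal-dimensional unipotent classes in the generalised Green function expansion of $\mathscr{R}_s(\Lambda_1,\Xi_1)$ cancel; this is the combinatorial heart of the proof and will require the full strength of the equivalence-class analysis developed in \cref{pa:def-X}.
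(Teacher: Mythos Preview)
Your proposal has a genuine circularity problem in parts (a) and (b). You want to compute $\chi_{A(\Lambda,\Xi)}$ at split unipotent elements via \cref{thm:char-sheaves-main-theorem}, but that theorem gives multiplicities $(A:\mathscr{R}_s(\Lambda_1,\Xi_1))$ in $\mathscr{K}_0(\bG)\otimes\Ql$, not a formula for characteristic functions. Passing from these multiplicities to values of $\chi_{A(\Lambda,\Xi)}$ would require either knowing the $\zeta$-scalars of \cref{thm:shoji} (which is what (c) is about) or computing the restriction of each $\mathscr{R}_s(\Lambda_1,\Xi_1)$ to $\bG_{\uni}$, which amounts to the full generalised Green function machinery and is no easier than the original problem. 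Your counting argument for (b) has the same defect: knowing that the number of unipotently supported character sheaves equals $|\{(\Lambda,\Xi):\epsilon\leqslant 1\}|$ does not tell you \emph{which} $A(\Lambda,\Xi)$ carry unipotent support unless you already know (a).

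The paper's proof uses a completely different structural idea that you are missing: \emph{induction on $n$} via restriction to a standard Levi $\bM$ of type $\B_{n-1}$, $\C_{n-1}$ or $\D_{n-1}$. The two key lemmas (\cref{lem:rest-char-sheaf-LS,lem:rest-char-sheaf-HC}) describe $\pi_s\res_{\bM}^{\bG}$ in both the $A$-labelling and the $\widetilde{A}$-labelling by the same combinatorial branching rule on symbols. By the inductive hypothesis the labellings agree on $\bM$, so applying restriction to the equality $A(\Lambda,\Xi)=\widetilde{A}(\Lambda',\Xi')$ forces $(\Lambda,\Xi)=(\Lambda',\Xi')$ in all but a handful of small cases (essentially when one of $\Lambda,\Xi$ is cuspidal and the other has $b'\leqslant 2$). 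Those residual cases are separated by evaluating at specific split unipotent elements, using \cref{prop:eval-char-function} for $\widetilde{A}$ and, crucially, expressing a carefully chosen $\rho_i$ as a sum over a similarity class and invoking the integrality lemma \cref{lem:princpal-series-Z} together with a parity argument modulo $2$.

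Your approach to (c) has the same gap: there is no off-the-shelf ``Lusztig's formula for the unipotent values of $\rho(\Lambda_1,\Xi_1)$'' to unfold. The paper instead writes $2^{\ell}\rho_0(u_0)$ as $\zeta_0\chi_{A_0}(u_0)$ plus a sum over $\mathcal{J}\setminus\{(\Lambda^0,\Xi^0)\}$ (using parts (a), (b) and \cref{lem:shoji} inductively), observes via \cref{lem:princpal-series-Z} that the left side is an integer, deduces $\zeta_0\in\{\pm 1\}$, and then uses a congruence modulo $4$ (since $2^{\ell}\geqslant 4$ when $t\geqslant 1$) to rule out $\zeta_0=-1$.
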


\begin{pa}
Before proving \cref{prop:labels-coincide} we first give a series of lemmas concerning character sheaves and irreducible characters. Assume the Dynkin diagram of $\bG$ is labelled as in \cite[Plates I-IV]{bourbaki:2002:lie-groups-chap-4-6} and let $\bM \in \mathcal{L}_{\std}$ be such that the Dynkin diagram of $\bM$ is obtained from that of $\bG$ by deleting the node labelled 1. In other words $\bM$ is of type $\B_{n-1}$, $\C_{n-1}$ or $\D_{n-1}$ depending upon whether $\bG$ is respectively of type $\B_n$, $\C_n$ or $\D_n$. Recalling the notation from \cref{pa:assumptions} we denote by $\bM^{\star} \in \mathcal{L}_{\std}^{\star}$ a dual Levi subgroup and by $\bM_{\ad}^{\star}$ the image of $\bM$ under $\pi : \bG^{\star} \to \bG_{\ad}^{\star}$. Now $\bG_{\ad}^{\star} = \bM_{\ad}^{\star} \times Z^{\circ}(\bM_{\ad}^{\star})$ and we denote by $s_{\ad}$ the restriction of $s$ to $Z^{\circ}(\bM_{\ad}^{\star})$, it is clear that $s_{\ad} = \pm1$. Under the identification of $W_{\bG^{\star}}(s)$ with $W_a \times W_b$ we have
\begin{equation*}
W_{\bM^{\star}}(s) = \begin{cases}
\bV_{a-1} \times \bW_b &\text{if }s_{\ad} = 1,\\
\bV_a \times \bW_{b-1} &\text{if }s_{\ad} \neq 1.
\end{cases}
\end{equation*}
This together with \cref{thm:char-sheaves-main-theorem} shows that the character sheaves in $\widehat{\bM}_s$ are parameterised by $\overline{\bOmega}_{a-1}\times \overline{\bPhi}_b$, (resp.\ $\overline{\bOmega}_a\times\overline{\bPhi}_{b-1}$), if $s_{\ad}=1$, (resp.\ $s_{\ad}\neq1$). Let us denote by $\pi_s : \mathscr{K}_0(\bG) \otimes \mathbb{Q} \to \mathscr{K}_0(\bM) \otimes \mathbb{Q}$ the $\mathbb{Q}$-linear map defined by
\begin{equation*}
\pi_s(A) = \begin{cases}
A &\text{if }A \in \widehat{\bM}_s,\\
0 &\text{if }A \in \widehat{\bM}\text{ is not in }\widehat{\bM}_s,
\end{cases}
\end{equation*}
then with this we can give the following restriction statements for character sheaves.
\end{pa}

\begin{lem}\label{lem:rest-char-sheaf-LS}
For any $A(\Lambda,\Xi) \in \widehat{\bG}_s$ we have
\begin{equation}\label{eq:rest-char-sheaf-LS}
\pi_s\res_{\bM}^{\bG}A(\Lambda,\Xi) = \begin{cases}
\sum A(\Lambda_*,\Xi) &\text{if }s_{\ad} = 1,\\
\sum A(\Lambda,\Xi_*) &\text{if }s_{\ad} \neq 1.
\end{cases}
\end{equation}
where the first, (resp.\ second), sum is taken over all symbols $\Lambda_* \in \bOmega_{a-1}$, (resp.\ $\Xi_* \in \bPhi_{b-1}$), which are obtained by subtracting 1 from an entry in $\Lambda$, (resp.\ $\Xi$). If $\Lambda_*$ is degenerate then the sum will include both $(\Lambda_{*})_+$ and $(\Lambda_{*})_-$, similarly for $\Xi_*$.
\end{lem}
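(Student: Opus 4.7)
The plan is to reduce the statement to the classical branching rule for irreducible characters of Weyl groups of type $\B/\C/\D$, by pulling everything through the parametrisation of \cref{thm:char-sheaves-main-theorem}. Since that theorem exhibits $A(\Lambda,\Xi)$ as the unique element of $\widehat{\bG}_s$ with prescribed multiplicities in the virtual character sheaves $\mathscr{R}_s(\Lambda_1,\Xi_1)$, and since $\pi_s\circ\res_{\bM}^{\bG}$ is $\Ql$-linear, it will be enough to compute $\pi_s\res_{\bM}^{\bG}\mathscr{R}_s^{\bG}(\Lambda_1,\Xi_1)$ for $(\Lambda_1,\Xi_1)\in\overline{\bOmega}_a^1\times\overline{\bPhi}_b^1$ and then invert the similarity-pairing formula of \cref{thm:char-sheaves-main-theorem}.

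First I would establish a Harish-Chandra compatibility of the form
\[
\pi_s\,\res_{\bM}^{\bG}\,\mathscr{R}_s^{\bG}(E)\;=\;\mathscr{R}_s^{\bM}\bigl(\mathrm{Res}_{W_{\bM^{\star}}(s)}^{W_{\bG^{\star}}(s)} E\bigr)
\]
for every $E\in\Irr(W_{\bG^{\star}}(s))$. Given the construction of $\mathscr{R}_s$ in \cite[(14.10.3)]{lusztig:1985:character-sheaves}, this is essentially a reformulation of transitivity of the induction functor $\ind$ combined with the standard base change used throughout \cref{sec:char-sheaves} (see \cref{lem:F-action-ind} and \cref{pa:direct-image-commutative}); the projection $\pi_s$ just picks out the isotypic piece of a chosen tame local system on $\bT_0^{\star}$.

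Next I would invoke the classical branching rule for Weyl groups of type $\B_n/\C_n/\D_n$. Under the bijection of \cref{pa:weyl-group-chars-symbols}, restriction of an irreducible character from $W_n$ (resp.\ $W_n'$) to the parabolic subgroup $W_{n-1}$ (resp.\ $W_{n-1}'$) obtained by removing the first node is multiplicity-free and is given by the ``remove a box'' rule on $\beta$-pairs: from a symbol $\ssymb{A}{B}$ one subtracts $1$ from any entry of $A$ or of $B$, with degenerate symbols in type $\D$ splitting into the $\pm$-components exactly as in the statement of \cref{lem:rest-char-sheaf-LS}. Because $W_{\bM^{\star}}(s)=\bV_{a-1}\times\bW_b$ when $s_{\ad}=1$ and $\bV_a\times\bW_{b-1}$ when $s_{\ad}\neq1$, this operation is applied to exactly one factor, producing precisely the symbols $\Lambda_*$ or $\Xi_*$ appearing in the formula.

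Combining the two previous steps, $\pi_s\res_{\bM}^{\bG}\mathscr{R}_s(\Lambda_1,\Xi_1)$ becomes the sum of $\mathscr{R}_s(\Lambda_{1,*},\Xi_1)$ (or $\mathscr{R}_s(\Lambda_1,\Xi_{1,*})$); I would then check that similarity classes and the symplectic pairing $\langle-,-\rangle$ of \cref{thm:char-sheaves-main-theorem} are preserved by the ``remove a box'' operation, so that the sign $(-1)^{\langle(\Lambda_*,\Xi),(\Lambda_{1,*},\Xi_1)\rangle}$ and the normalising factor $|V|^{-1/2}|W|^{-1/2}$ on the $\bM$-side match what the branching rule delivers. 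Inverting the pairing termwise then yields the claimed decomposition. The main obstacle I anticipate is the bookkeeping of degenerate symbols in type $\D_n$, where a degenerate $\Lambda$ or $\Xi$ can become non-degenerate after subtracting $1$ (or conversely), and the $\pm$-labelling has to be tracked consistently across the $\bG\leftrightarrow\bM$ passage so that the multiplicities in \cref{eq:rest-char-sheaf-LS} come out as stated.
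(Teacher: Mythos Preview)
Your proposal is correct and takes essentially the same approach as the paper, which simply invokes \cite[Lemma 5.5]{lusztig:1986:on-the-character-values} via \cref{thm:char-sheaves-main-theorem} and remarks that the type $\D_n$ branching rules are deduced from those of type $\B_n$ using \cite[6.1.9]{geck-pfeiffer:2000:characters-of-finite-coxeter-groups}. Your outline---compute $\pi_s\res_{\bM}^{\bG}\mathscr{R}_s$ by Weyl-group restriction and then invert the similarity-class pairing---is precisely Lusztig's argument, and the degenerate-symbol bookkeeping you flag is exactly the point addressed by the Geck--Pfeiffer reference.
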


\begin{proof}
Using \cref{thm:char-sheaves-main-theorem} this is proved in exactly the same way as \cite[Lemma 5.5]{lusztig:1986:on-the-character-values}, (this was also noted by Shoji in \cite[Lemma 5.10]{shoji:1995:character-sheaves-and-almost-characters}). Note that the branching rules for type $\D_n$ are easily deduced from the corresponding branching rules for type $\B_n$, (see \cite[6.1.9]{geck-pfeiffer:2000:characters-of-finite-coxeter-groups}).
\end{proof}

\begin{lem}\label{lem:rest-char-sheaf-HC}
For any $\widetilde{A}(\Lambda,\Xi) \in \widehat{\bG}_s$ we have
\begin{equation}\label{eq:rest-char-sheaf-HC}
\pi_s\res_{\bM}^{\bG}\widetilde{A}(\Lambda,\Xi) = \begin{cases}
\sum \widetilde{A}(\Lambda_*,\Xi) &\text{if }s_{\ad} = 1,\\
\sum \widetilde{A}(\Lambda,\Xi_*) &\text{if }s_{\ad}\neq1,
\end{cases}
\end{equation}
where the sums are as in \cref{lem:rest-char-sheaf-LS}.
\end{lem}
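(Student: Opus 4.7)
The plan is to reduce \cref{lem:rest-char-sheaf-HC} to a branching rule for the relative Weyl group $W_{\bG}(\bL,\mathscr{L})$ and then translate the result into symbol combinatorics via the bijection $\varphi$ of \cref{eq:lab-embedd}. By construction $\widetilde{A}(\Lambda,\Xi)$ is the simple summand $K_{w,E}^{\mathscr{L}}$ of $\ind_{\bL_w}^{\bG}(A_{\mathscr{L}})$ attached to the irreducible module $E \in \Irr(W_{\bG}(\bL_w,\mathscr{L}_w))$ whose symbol-label is $\varphi^{-1}(\Lambda,\Xi) \in \mathbb{A}_{a'}\times\mathbb{B}_{b'}$. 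Since $[\bL,\bL]$ has strictly smaller rank than $\bG$, I may and will assume, after an $F$-equivariant conjugation, that $\bL_w \subseteq \bM$, so that transitivity of induction applies directly.

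The first step is to establish a Mackey-type identity of the form
\begin{equation*}
\pi_s\res_{\bM}^{\bG}\ind_{\bL_w}^{\bG}(A_{\mathscr{L}}) \;\cong\; \ind_{\bL_w}^{\bM}(A_{\mathscr{L}}),
\end{equation*}
by running the base-change argument of \cref{pa:direct-image-commutative} along the double cosets in $\bM\backslash\bG/\bP_w$. The trivial coset produces the complex $\ind_{\bL_w}^{\bM}(A_{\mathscr{L}})$, while each non-trivial coset contributes an induced complex whose cuspidal datum is of the form $(\bM\cap{}^x\bL_w,(\ad x)^*A_{\mathscr{L}})$; by the duality of \cref{pa:duality} this complex lies in the series of $\bM$ attached to the class of ${}^xs$, which is not $\bM^{\star}$-conjugate to $s$ and is therefore annihilated by $\pi_s$. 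Combined with the canonical decomposition $K_w^{\mathscr{L}} \cong \bigoplus E' \otimes K_{w,E'}^{\mathscr{L}}$ of \cref{eq:K-nu-L-decomp}, applied both over $\bG$ and over $\bM$, this identifies $\pi_s\res_{\bM}^{\bG}\widetilde{A}(\Lambda,\Xi)$ with the sum of the simple summands of $\ind_{\bL_w}^{\bM}(A_{\mathscr{L}})$ labelled by the constituents of $\res^{W_{\bG}(\bL_w,\mathscr{L}_w)}_{W_{\bM}(\bL_w,\mathscr{L}_w)}(E)$.

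The last step is the combinatorial translation. By \cref{pa:a'-b'}, $W_{\bM}(\bL,\mathscr{L})$ equals $\bV_{a'-1}\times\bW_{b'}$ if $s_{\ad}=1$ and $\bV_{a'}\times\bW_{b'-1}$ if $s_{\ad}\neq 1$, so it is a maximal rank reflection subgroup of $W_{\bG}(\bL,\mathscr{L})$. The branching of irreducibles of $W_m$ or $W_m'$ to either of these subgroups is the classical one of \cite[6.1.9]{geck-pfeiffer:2000:characters-of-finite-coxeter-groups}, which at the level of the symbols parameterising $\Irr(\bV_{a'}\times\bW_{b'})$ is precisely the subtract-one operation on a single entry of the first, respectively second, factor, with the usual bookkeeping for degenerate symbols of type $\D$. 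A direct compatibility check between $\varphi$ for $\bG$ and its analogue $\varphi_{\bM}$ for $\bM$ then yields the formula. The main obstacle of the plan is the Mackey step: keeping careful track of the Lusztig series in which each non-trivial double coset lands requires a precise understanding of how $W_{\bG^{\star}}$ acts on the $W_{\bM^{\star}}$-orbit of $s$, together with control of the shift element $w$. Once this is secured, the remaining combinatorial identification is routine.
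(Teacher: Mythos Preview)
Your proposal is essentially the same approach as the paper's, which simply defers to \cite[Lemma 5.6]{lusztig:1986:on-the-character-values}: one establishes that $\pi_s\res_{\bM}^{\bG}$ applied to the induced complex $\ind_{\bL_w}^{\bG}(A_{\mathscr{L}})$ returns $\ind_{\bL_w}^{\bM}(A_{\mathscr{L}})$, and then reads off the branching rule for the relative Weyl group $W_{\bG}(\bL,\mathscr{L}) \supset W_{\bM}(\bL,\mathscr{L})$ via the endomorphism-algebra decomposition and the bijection $\varphi$.

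One small point to tighten in your Mackey step: your treatment of the non-trivial double cosets conflates two distinct mechanisms. When ${}^x\bL_w \not\subseteq \bM$, the contribution vanishes not because it lies in a different series but because $A_{\mathscr{L}}$ is cuspidal on $\bL_w$ and hence restricts to zero on the proper Levi $\bM\cap{}^x\bL_w$. When ${}^x\bL_w \subseteq \bM$ but $x$ is non-trivial, you must check either that all such cosets are already $\bM$-conjugate to the trivial one (so they give the same term), or that the resulting ${}^xs$ is genuinely not $W_{\bM^{\star}}$-conjugate to $s$; the latter is what $\pi_s$ kills. You should also make explicit that your Mackey isomorphism is $W_{\bM}(\bL_w,\mathscr{L}_w)$-equivariant, so that taking $\Hom_{W_{\bG}(\bL_w,\mathscr{L}_w)}(E,-)$ on both sides and decomposing over $W_{\bM}(\bL_w,\mathscr{L}_w)$ actually yields the claimed branching formula. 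None of this is difficult in the present setting, and your acknowledgement of the Mackey step as the main obstacle is accurate.
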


\begin{proof}
This is proved in exactly the same way as \cite[Lemma 5.6]{lusztig:1986:on-the-character-values}.
\end{proof}

\begin{lem}\label{lem:princpal-series-Z}
For any $(\Lambda_1,\Xi_1) \in \overline{X}(s,\gamma)$ we have $\rho(\Lambda_1,\Xi_1)(v) \in \mathbb{Z}$ for any unipotent element $v \in G$.
\end{lem}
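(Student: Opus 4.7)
The plan is to prove this by combining rationality of the values with the fact that irreducible characters take algebraic integer values, using the Fourier transform relating $\rho(\Lambda_1,\Xi_1)$ to the almost characters $R_s^{\bG}(\Lambda,\Xi)$ and ultimately to Deligne--Lusztig characters. More precisely, I would argue as follows.

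First, by inverting the relation in \cref{pa:lusztigs-conj-def-almost} one writes
\begin{equation*}
\rho(\Lambda_1,\Xi_1) = (-1)^{\ell(w)}\sum_{(\Lambda,\Xi) \in X(s,\gamma)/M} \{(\Lambda_1,\Xi_1),(\Lambda,\Xi)\} R_s^{\bG}(\Lambda,\Xi),
\end{equation*}
where the pairing $\{-,-\}$ takes values in $\mathbb{Q}$, (in fact in $\mathbb{Q}$ with denominators bounded by $2^N$ for some $N$ coming from the size of the similarity classes). Each almost character $R_s^{\bG}(\Lambda,\Xi)$ is in turn a $\mathbb{Q}$-linear combination of Deligne--Lusztig characters $R_{\bT_x}^{\bG}(s)$ as $x$ ranges over some subset of $W_{\bG^{\star}}(s)\cdot\gamma$, where the coefficients are values of irreducible characters of $\widetilde{W}_{\bG^{\star}}(s)$ evaluated at elements of the group and so are rational. (We use here that $Z(\bG)$ is connected so that there is no obstruction in expressing almost characters in terms of the $R_{\bT_x}^{\bG}(s)$.)

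Next, the classical result of Deligne--Lusztig asserts that for any unipotent element $v \in G$ and any maximal torus $\bT$ of $\bG$ we have $R_{\bT}^{\bG}(s)(v) = Q_{\bT}^{\bG}(v)$, where $Q_{\bT}^{\bG}$ is the Green function associated to $\bT$. By Lusztig's work on Green functions, (see e.g.\ \cite[Theorem 24.4]{lusztig:1986:character-sheaves-V} together with the integrality of generalised Green functions for unipotent elements under our assumption that $p$ is good and $q$ is large), we have $Q_{\bT}^{\bG}(v) \in \mathbb{Z}$. Combining these two observations we conclude that $\rho(\Lambda_1,\Xi_1)(v) \in \mathbb{Q}$.

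Finally, since $\rho(\Lambda_1,\Xi_1)$ is an irreducible character of the finite group $G$, its values are algebraic integers in $\overline{\mathbb{Q}}_{\ell}$. A rational number which is an algebraic integer must lie in $\mathbb{Z}$, which gives $\rho(\Lambda_1,\Xi_1)(v) \in \mathbb{Z}$ as required. The main delicate point in carrying out this plan is verifying that the coefficients expressing the $R_s^{\bG}(\Lambda,\Xi)$ in terms of Deligne--Lusztig characters are indeed rational when $\gamma \neq \ID$ (so that exceptional characters of $\widetilde{W}_{\bG^{\star}}(s)$ enter the picture), but this follows from the fact that the relevant character table of $\widetilde{W}_{\bG^{\star}}(s)$ is integer-valued for Weyl groups of classical type together with Lusztig's description of the Fourier transform matrices in \cite[\S4.15, \S4.18]{lusztig:1984:characters-of-reductive-groups}.
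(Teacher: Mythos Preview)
Your argument has a genuine gap in step 2. You claim that every almost character $R_s^{\bG}(\Lambda,\Xi)$ for $(\Lambda,\Xi) \in X(s,\gamma)$ is a $\mathbb{Q}$-linear combination of Deligne--Lusztig characters. This is only true for the \emph{uniform} almost characters, namely those with $(\Lambda,\Xi) \in \bOmega_a^1\times\bPhi_b^1$ coming from an actual extension $\widetilde{E}\in\Irr_{\ex}(\widetilde{W}_{\bG^{\star}}(s))$; these are defined in \cite[(3.7.1)]{lusztig:1984:characters-of-reductive-groups} as such combinations. The remaining almost characters (for instance those attached to cuspidal unipotent characters of Levi subgroups) are orthogonal to every Deligne--Lusztig character and do not vanish on unipotent elements, so your expression of $\rho(v)$ as a $\mathbb{Q}$-combination of Green-function values breaks down precisely at the non-uniform summands of the inverted Fourier transform.

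The paper sidesteps this by an indirect Galois argument rather than a direct expansion. One shows that the uniform almost characters $R_s^{\bG}(\widetilde{E})$ take values in $\mathbb{Q}_{|G|_{p'}}$ (via the character formula and the integrality of Green functions), so that any $\alpha\in\Aut(\Ql/\mathbb{Q}_{|G|_{p'}})$ fixes them; consequently $\langle\alpha\circ\rho,R_s^{\bG}(\widetilde{E})\rangle=\langle\rho,R_s^{\bG}(\widetilde{E})\rangle$ for every $E$. One then invokes \cite[Proposition 6.3]{digne-michel:1990:lusztigs-parametrization}, which says an irreducible character is \emph{uniquely determined} by these multiplicities (this is an injectivity statement, not a spanning statement, so it does not require $\rho$ to lie in the uniform span). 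Hence $\alpha\circ\rho=\rho$, so $\rho$ has values in $\mathbb{Q}_{|G|_{p'}}$; intersecting with $\mathbb{Q}_{p^k}$ (where the values at a unipotent element necessarily lie) gives $\mathbb{Q}$, and algebraic integrality finishes. The key point you are missing is that uniqueness-by-multiplicities replaces the need to express $\rho$ linearly in Deligne--Lusztig characters.
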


\begin{proof}
For any $m \in \mathbb{N}$ we denote by $\mathbb{Q}_m$ the subfield $\mathbb{Q}(\xi_m) \subset \Ql$ where $\xi_m \in \Ql$ is a primitive $m$th root of unity. Let $H$ be the automorphism group $\Aut(\Ql/\mathbb{Q}_{|G|_{p'}}) \leqslant \Aut(\Ql/\mathbb{Q})$ of the field extension then to obtain the statement of the lemma it is sufficient to show that $\alpha \circ \rho = \rho$ for all $\alpha \in H$, where $\rho := \rho(\Lambda_1,\Xi_1)$.

Recall the notation of \cref{pa:lusztigs-conj-generic-case} then for any $E \in \Irr(W_{\bG^{\star}}(s))$ we choose an extension $\widetilde{E} \in \Irr_{\ex}(\widetilde{W}_{\bG^{\star}}(s))$ such that $\widetilde{E}$ has rational values, (see \cite[Proposition 3.2]{lusztig:1984:characters-of-reductive-groups}). Let $R_s^{\bG}({\tilde{E}})$ be the corresponding almost character then by the definition in \cite[(3.7.1)]{lusztig:1984:characters-of-reductive-groups} we see that $R_s^{\bG}(\widetilde{E})$ is a $\mathbb{Q}$-linear combination of Deligne--Lusztig virtual characters. As is mentioned in the beginning of \cite[\S7.6]{carter:1993:finite-groups-of-lie-type} the value of any Deligne--Lusztig virtual character at a unipotent element is integral, hence we have $R_s^{\bG}(\widetilde{E})(v) \in \mathbb{Q}$ for any unipotent element $v \in G_{\uni}$ which proves the claim. With this we have for any $\alpha \in H$ that $\langle \alpha\circ\rho, R_s^{\bG}(\widetilde{E}) \rangle = \langle \rho, R_s^{\bG}(\widetilde{E}) \rangle$ for all $E \in \Irr(W_{\bG^{\star}}(s))$. However by \cite[Proposition 6.3]{digne-michel:1990:lusztigs-parametrization} $\rho$ is uniquely determined by its multiplicities in such almost characters, hence $\alpha\circ\rho = \rho$ for all $\alpha \in H$ as required.
\end{proof}

\begin{assumption}
From now until the end of this article we assume that when $\bG$ is of type $\C_n$ or $\D_n$ that $F$ is split and $q \equiv 1 \pmod{4}$. In particular $F_w$ is the identity, (see \cref{rem:F-action-on-s}).
\end{assumption}

\begin{proof}[of \cref{prop:labels-coincide}]
Let $\bG$ be of type $\B_n$, $\C_n$ or $\D_n$. If $n = 0$ then the statement of \cref{prop:labels-coincide} is trivial. We will now assume that $n \geqslant 1$ and the result holds for all such groups of type $\B_k$, $\C_k$ or $\D_k$ with $k < n$. Throughout $\bM$ will denote a Levi subgroup of $\bG$ of type $\B_{n-1}$, $\C_{n-1}$ or $\D_{n-1}$ appropriately. We will now show that (b) holds for all non-cuspidal character sheaves.
\begin{itemize}
	\item Let us assume that $\Xi$ is not cuspidal then $b \geqslant 1$ and we assume $\bM$ to be chosen such that $s_{\ad}\neq 1$. Let $A(\Lambda,\Xi) \in \widehat{\bG}_s$ then by \cref{lem:rest-char-sheaf-LS} we have $A(\Lambda,\Xi)$ occurs in $\ind_{\bM}^{\bG} A(\Lambda,\Xi')$ where $\Xi' \in \bPhi_{b-1}$ is a symbol occurring in the sum $\Xi_*$ in \cref{eq:rest-char-sheaf-LS}. By the induction hypothesis $\supp (A(\Lambda,\Xi')) \cap \bM_{\uni} = \emptyset$ occurs only when $\epsilon(\Lambda,\Xi') >1$ but this is true if and only if $\epsilon(\Lambda,\Xi)>1$. This proves (b) unless $\Xi$ is cuspidal.
\end{itemize}
The argument above also works when $\Lambda$ is not cuspidal, hence we have the validity of (b) unless $A(\Lambda,\Xi)$ is cuspidal. We will return to this case after first considering (a).

Assume $A(\Lambda,\Xi)$ is such that $\epsilon(\Lambda,\Xi)\leqslant 1$ and neither $\Lambda$ nor $\Xi$ is cuspidal. The same argument as used above shows that $\supp (A(\Lambda,\Xi)) \cap \bG_{\uni} \neq \emptyset$, which means $A(\Lambda,\Xi) = \widetilde{A}(\Lambda',\Xi')$ for some $(\Lambda',\Xi') \in \bOmega_a \times \bPhi_b$ with $\epsilon(\Lambda',\Xi') = \epsilon(\Lambda,\Xi)$, (see \cref{pa:label-embed}). Applying $\pi_s\res_{\bM}^{\bG}$ to this equality we obtain from \cref{lem:rest-char-sheaf-LS,lem:rest-char-sheaf-HC} that
\begin{equation}\label{eq:res-equality}
\begin{cases}
\sum A(\Lambda,\Xi_*) = \sum A(\Lambda',\Xi_*') &\text{if }s_{\ad} = 1\\
\sum A(\Lambda_*,\Xi) = \sum A(\Lambda_*',\Xi') &\text{if }s_{\ad} \neq 1
\end{cases}
\end{equation}
where we have used the induction hypothesis in $\bM$. In both cases the first sum is non-empty hence we deduce that $\Xi = \Xi'$ and $\Lambda = \Lambda'$. This proves (a) unless one of $\Lambda$ or $\Xi$ is cuspidal.

Assume now that $\Lambda$ is cuspidal and $\Xi$ is not cuspidal. We leave it to the reader to see that the following arguments apply equally well when $\Xi$ is cuspidal and $\Lambda$ is not cuspidal. With this assumption we have $a'=0$ and $b' > 0$ and the equalities in \cref{eq:res-equality} tell us only that $\Lambda = \Lambda'$ and $\{\Xi_*\} = \{\Xi_*'\}$, (where these are the sets of entries occurring in the sum in \cref{eq:res-equality}). Let $\Pi$, $\Pi' \in \mathbb{V}_k^1$, (resp.\ $\overline{\mathbb{V}}_k^0$), with $k \geqslant 1$ be two symbols and let $\{\Pi_*\}$, $\{\Pi'_*\}\subseteq \mathbb{V}_{k-1}^1$, (resp.\ $\overline{\mathbb{V}}_{k-1}^0$), be the set of all possible symbols obtained by subtracting 1 from an entry. It is easily seen that if $k \geqslant 3$ then $\{\Pi_*\}=\{\Pi'_*\}$ implies that $\Pi = \Pi'$. Using the notation of \cref{pa:label-embed} we denote by $\varphi_{b'} : \mathbb{B}_{b'} \to \bPhi_b$ the restriction of $\varphi$ to the second factor. Using the definition of the map in \cref{eq:bij-shift-def} we can see that for any $\Pi \in \mathbb{B}_{b'}$ we have $\{\varphi_{b'}(\Pi_*)\} = \{\varphi_{b'}(\Pi)_*\}$. This observation together with the equality in \cref{eq:res-equality} shows that (a) holds whenever $b'\geqslant 3$. We must now deduce that $\Xi = \Xi'$ when $b' \leqslant 2$.

Assume $t \geqslant 1$ so that we have $\mathbb{B}_{b'} = \mathbb{V}_{b'}^1$. As in \cite[5.7]{lusztig:1986:on-the-character-values} we define
\begin{equation*}
b' = 2 \left\{
\begin{aligned}
\Xi^1 &= \varphi_{b'}\symb{2}{\emptyset}\\
\Xi^2 &= \varphi_{b'}\symb{1,2}{0}\\
\Xi^3 &= \varphi_{b'}\symb{0,1}{2}
\end{aligned}
\right.
\qquad
b' = 2 \left\{
\begin{aligned}
\Xi^4 &= \varphi_{b'}\symb{0,1,2}{1,2}\\
\Xi^5 &= \varphi_{b'}\symb{0,2}{1}
\end{aligned}
\right.
\qquad
b' = 1 \left\{
\begin{aligned}
\Xi^6 &= \varphi_{b'}\symb{1}{\emptyset}\\
\Xi^7 &= \varphi_{b'}\symb{0,1}{1}
\end{aligned}
\right.
\end{equation*}
Assume now that $t=0$ then from \cref{pa:label-embed,pa:a'-b'} we see that $n = a'+b' \leqslant 2$. If $\bG$ is of type $\D_n$ then under this this assumption it is not necessarily the case that $\mathbb{B}_{b'} = \mathbb{V}_{b'}^1$. If $\bG$ is of type $\D_2$ then there are two Levi subgroups of type $\A_1$ and applying the above restriction arguments to both these Levi subgroups distinguishes the character sheaves and if $\bG$ is of type $\D_1$ it is isomorphic to a group of type $\B_1$. In particular we may simply assume in this situation that $\bG$ is of type $\B$ so we can take the symbols $\Xi^i$, ($1 \leqslant i \leqslant 7$), to be defined as above. Note also that in the situation $b'=0$ and $a'> 0$ we must have $a'=2$ if $\bG$ is of type $\C_n$ or $\D_n$, (c.f.\ \cref{pa:assumptions}). For each $i \in \{0,\dots,7\}$ we now define
\begin{equation*}
A_i = A(\Lambda^0,\Xi^i)
\qquad\qquad
\widetilde{A}_i = \widetilde{A}(\Lambda^0,\Xi^i)
\end{equation*}
where $\Lambda^0$ and $\Xi^0$ denote the appropriate cuspidal symbol of defect $2t+1$, $4t\pm 1$ or $4t$, depending on the type of $\bG$.

The equality in \cref{eq:res-equality} gives us only the following information
\begin{equation*}
\underbrace{
\begin{aligned}
\{\widetilde{A}_1,\widetilde{A}_2\}&=\{A_1,A_2\}\\
\{\widetilde{A}_3,\widetilde{A}_4\}&=\{A_3,A_4\}\\
\{\widetilde{A}_5\}&=\{A_5\}
\end{aligned}}_{b'=2}
\qquad\qquad
\underbrace{
\begin{aligned}
\{\widetilde{A}_6,\widetilde{A}_7\}=\{A_6,A_7\}
\end{aligned}}_{b'=1}
\end{equation*}
We must now distinguish these pairs of character sheaves, which we will do by comparing their characteristic functions at certain unipotent elements. After some mildly laborious calculations with symbols one checks that we have the following inequalities:
\begin{equation*}
b(\Lambda^0\oplus\Xi^1) < b(\Lambda^0\oplus\Xi^2)\qquad\qquad
b(\Lambda^0\oplus\Xi^3) < b(\Lambda^0\oplus\Xi^4)\qquad\qquad
b(\Lambda^0\oplus\Xi^6) < b(\Lambda^0\oplus\Xi^7)
\end{equation*}
For any $i \in \{0,\dots,7\}$ we will denote by $\mathcal{O}_i$ the class $\mathcal{O}(\Lambda^0\oplus\Xi^i)$, (c.f.\ \cref{pa:the-subspace-J}), and by $u_i$ a split element of $\mathcal{O}_i^F$. The above inequalities together with \cref{cor:multiplicity-1} imply that
\begin{align*}
\chi_{\widetilde{A}_i}(u_i) \neq 0 \qquad\text{and}\qquad \chi_{\widetilde{A}_{i+1}}(u_i)=0
\end{align*}
for each $i \in \{1,3,6\}$. In fact the non-zero value is $\pm$ a power of $q$, (see \cref{prop:eval-char-function}), hence an odd integer. We will now prove the following:
\begin{enumerate}[label=(\Alph*)]
	\item $\chi_{A_i}(u_i) \neq 0$ if $i \in \{0,1,6\}$.
	\item $\chi_{A_4}(u_3)$ is an even integer.
\end{enumerate}
It is clear from the above remarks that this is enough to distinguish the character sheaves.

For any $i \in \{0,\dots,7\}$ we denote by $V\times W_i \subseteq \bOmega_a\times\bPhi_b$ the product of similarity classes containing $(\Lambda^0,\Xi^i)$. Let $\rho_i := \rho(\Lambda_{\star}^0,\Xi_{\star}^i) \in \mathcal{E}(G,s)$ be such that $(\Lambda_{\star}^0,\Xi_{\star}^i) \in V\times W_i$ parameterises the unique special character of $\bV_a\times\bW_b$. Assume now that $t \geqslant 1$ then by \cite[Main Theorem 4.23]{lusztig:1984:characters-of-reductive-groups} and \cite[4.14.2]{lusztig:1984:characters-of-reductive-groups} we have for each $1 \leqslant i \leqslant 7$ that
\begin{equation*}
2^{\ell}\rho_i = \sum_{(M,N) \in V \times W_i} R_s^{\bG}(M,N) = \sum_{(M,N) \in V\times W_i} \zeta_{(M,N)}\chi_{A(M,N)}
\end{equation*}
where the sum is over $V\times W_i$ and
\begin{equation*}
\ell = \begin{cases}
2t &\text{if }\bG\text{ is of type }\B_n\\
4t-1 &\text{if }\bG\text{ is of type }\C_n\text{ and }d=4t+1\\
4t-2 &\text{if }\bG\text{ is of type }\C_n\text{ and }d=4t-1\\
4t-2 &\text{if }\bG\text{ is of type }\D_n,
\end{cases}
\end{equation*}
see \cite[\S4.15]{lusztig:1984:characters-of-reductive-groups}. For each $i \in \{0,1,4,6\}$ we see that the statement of the theorem holds for all $(M,N) \in V\times W_i$ except for a single pair, namely $(\Lambda^0,\Xi^i)$. Let us denote by $\zeta_i$ the scalar $\zeta_{(\Lambda^0,\Xi^i)}$ then using the inductive hypothesis and \cref{lem:shoji} we have for each $i \in \{0,1,4,6\}$ and unipotent element $v\in G$ that
\begin{equation}\label{eq:special-character-sum}
2^{\ell}\rho_i(v) = \zeta_i\chi_{A_i}(v) + \sum_{\substack{(M,N) \in V \times W_i\\ \epsilon(M,N) \leqslant 1\text{ and }N\neq \Xi^i}} (-1)^n\chi_{\widetilde{A}(M,N)}(v),
\end{equation}
Note also that by \cref{lem:shoji} and the inductive hypothesis we know $\zeta_i=1$ unless $i=0$.

Assume that in \cref{eq:special-character-sum} we have $v = u_i$, (for $i \in \{0,1,6\}$), then using \cref{prop:eval-char-function} we see that the sum in \cref{eq:special-character-sum} can be written as
\begin{equation*}
\sum_{(M,N) \in \mathcal{J}\setminus\{(\Lambda^0,\Xi^i)\}}q^{\dim\mathfrak{B}_u^{\bG} + \frac{1}{2}m(\Lambda\oplus\Xi)}
\end{equation*}
As $\chi_{A_i}(u_i) \in \{\chi_{\widetilde{A}_i}(u_i),\chi_{\widetilde{A}_{i+1}}(u_i)\}$ we have, by \cref{lem:princpal-series-Z,prop:eval-char-function}, that all the terms in \cref{eq:special-character-sum} are integral. Using \cref{lem:lagrangian-subspace} we see that $|\mathcal{J}| = 2^{\ell}$, (with $\ell$ as above), hence we obtain that
\begin{equation}\label{eq:cong-mod-2}
\zeta_i\chi_{A_i}(u_i) \equiv 1 \pmod{2}
\end{equation}
in $\mathbb{Z}$ so $\chi_{A_i}(u_i) \neq 0$. This proves (A) when $t \geqslant 1$. Taking $i = 4$ and $v = u_3$ in \cref{eq:special-character-sum} we see that the sum is zero and we are left with $2^{\ell}\rho_4(u_3) = \chi_{A_4}(u_3)$, which proves (B) when $t \geqslant 1$.

We now consider (A) and (B) when $t=0$. We do not need to prove (A) when $i=0$ because this would imply $n=0$, which we have assumed is not the case. Again by \cite[Main Theorem 4.23]{lusztig:1984:characters-of-reductive-groups} and the induction hypothesis we have for $i \in \{1,6\}$ that $\rho_i(u_i) = \chi_{A_i}(u_i)$. However when $i \in \{1,6\}$ we have $\rho_i$ is the trivial character, hence clearly this proves (A). We have $\rho_4$ is the Steinberg character, which proves (B) as $\mathcal{O}_3$ is not the trivial class. This proves (a) unless $A(\Lambda,\Xi)$ is cuspidal. By (A) we have $\chi_{A(\Lambda^0,\Xi^0)}(u_0) \neq 0$ therefore we clearly have $\supp(A(\Lambda^0,\Xi^0)) \cap \bG_{\uni} \neq \emptyset$ so $A(\Lambda^0,\Xi^0) = \widetilde{A}(\Lambda',\Xi')$ with $(\Lambda',\Xi') \in \bOmega_a\times\bPhi_b$ satisfying $\epsilon(\Lambda',\Xi') \leqslant 1$. If $(\Lambda',\Xi') = (\Lambda^0,\Xi^0)$ then we are done. Assume this is not the case then we know already that $\widetilde{A}(\Lambda',\Xi') = A(\Lambda',\Xi')$ but this implies $(\Lambda',\Xi') = (\Lambda^0,\Xi^0)$, a contradiction. This completes the proof of (a).

Having proven (a) we now return to proving (b). Assume $A(\Lambda,\Xi)$ is a cuspidal character sheaf such that $\epsilon(\Lambda,\Xi) > 1$ and assume for a contradiction that $A(\Lambda,\Xi) \cap \bG_{\uni} \neq \emptyset$. There is a unique cuspidal character sheaf of $\bG$ with unipotent support, namely $\widetilde{A}(\Lambda^0,\Xi^0)$ where $\Lambda^0$ and $\Xi^0$ are appropriate cuspidal symbols satisfying $\epsilon(\Lambda^0,\Xi^0) \leqslant 1$. In particular we must have $A(\Lambda,\Xi) = \widetilde{A}(\Lambda^0,\Xi^0)$. By (a) we have $\widetilde{A}(\Lambda^0,\Xi^0) = A(\Lambda^0,\Xi^0)$ but this implies $(\Lambda^0,\Xi^0) = (\Lambda,\Xi)$, a contradiction.

Finally we prove (c) hence we assume that $\bG$ supports a unipotently supported cuspidal character sheaf; this implies that $n$ is even. Let us take $i=0$ and $v = u_0$ in \cref{eq:special-character-sum} then we have
\begin{equation}\label{eq:cuspidal-root-of-unity}
2^{\ell}\rho_0(u_0) = \zeta_0q^{\dim\mathfrak{B}_{u_0}^{\bG} + \frac{1}{2}m(\Lambda^0\oplus\Xi^0)} + \sum_{(M,N) \in \mathcal{J}\setminus\{(\Lambda^0,\Xi^0)\}} q^{\dim\mathfrak{B}_{u_0}^{\bG} + \frac{1}{2}m(M\oplus N)}.
\end{equation}
Again using \cref{lem:princpal-series-Z} we have $\rho_0(u_0) \in \mathbb{Z}$, which implies $\zeta_0 \in \mathbb{Z}$ and furthermore $\zeta_0 \in \{\pm1\}$ as it is of absolute value 1. As above we may assume that $t \geqslant 1$ as $n > 0$, hence we have $2^{\ell} \geqslant 4$. With this we have 4 divides the right hand side of \cref{eq:cuspidal-root-of-unity} but as $q^{\dim\mathfrak{B}_{u_0}^{\bG}}$ is an odd integer we have \cref{eq:cuspidal-root-of-unity} gives us
\begin{equation*}
\zeta_0 + (2^{\ell}-1) \equiv 0 \pmod{4},
\end{equation*}
but this implies $\zeta_0 = 1$ as required.
\end{proof}

\appendix
%
\section{\texorpdfstring{Induction in Weyl Groups of Type $\D_n$}{Induction in Weyl Groups of Type D}}
\begin{pa}
In this appendix we formulate a lemma concerning the induction of characters in Weyl groups of type $\D_n$ which is an analogue of \cite[Lemma 6.1.3]{geck-pfeiffer:2000:characters-of-finite-coxeter-groups}. Note that we will not perform the difficult task of understanding explicitly the multiplicities of degenerate characters as we will not need this here. Note that in the following we freely use the notation and conventions of \cref{sec:weyl-groups}.
\end{pa}

\begin{lem}\label{lem:typeD-GP-Lemm6.1.3}
Assume $a,b \in \mathbb{N}_0$ are such that $a+b = n$ and $E = \ssymb{A_1}{A_2} \boxtimes \ssymb{B_1}{B_2} \in \Irr(W_a'\times W_b')$ is a character, (we set $A = \ssymb{A_1}{A_2}$ and $B = \ssymb{B_1}{B_2}$). Let $\ssymb{X_1}{X_2} \in \overline{\mathbb{V}}_n^0$ be a symbol such that $\rk(X_k) = \rk(A_{i_k}) + \rk(B_{j_k})$, ($k \in \{1,2\}$), for some indexing sets $\{i_1,i_2\} = \{j_1,j_2\} = \{1,2\}$ then we define
\begin{equation*}
a_{AB}^{X_1X_2} = \begin{cases}
c_{A_{i_1}B_{j_1}}^{X_1}c_{A_{i_2}B_{j_2}}^{X_2} + \delta_Ac_{A_{i_2}B_{j_1}}^{X_1}c_{A_{i_1}B_{j_2}}^{X_2} + \delta_Bc_{A_{i_1}B_{j_2}}^{X_1}c_{A_{i_2}B_{j_1}}^{X_2} + \delta_{A\odot B}c_{A_{i_2}B_{j_2}}^{X_1}c_{A_{i_1}B_{j_1}}^{X_2} &\text{if }A_1\neq A_2\text{ and }B_1\neq B_2\\
c_{A_{i_1}B_{j_1}}^{X_1}c_{A_{i_2}B_{j_2}}^{X_2} + \delta_Bc_{A_{i_1}B_{j_2}}^{X_1}c_{A_{i_2}B_{j_1}}^{X_2} &\text{if }A_1=A_2\text{ and }B_1\neq B_2\\
c_{A_{i_1}B_{j_1}}^{X_1}c_{A_{i_2}B_{j_2}}^{X_2} + \delta_Ac_{A_{i_2}B_{j_1}}^{X_1}c_{A_{i_1}B_{j_2}}^{X_2} &\text{if }A_1\neq A_2\text{ and }B_1=B_2\\
c_{A_{i_1}B_{j_1}}^{X_1}c_{A_{i_2}B_{j_2}}^{X_2} &\text{if }A_1=A_2\text{ and }B_1=B_2.
\end{cases}
\end{equation*}
If $Z = \ssymb{Z_1}{Z_2} \in \overline{\mathbb{V}}_n^0$ is a symbol then $\delta_Z$ is 1 if $\rk(Z_1) = \rk(Z_2)$ and 0 otherwise, (note that $\rk(A\odot B) = \rk(A)+\rk(B)$). The induced character is then
\begin{equation*}
\Ind_{W_a'\times W_b'}^{W_n'}(E) = \sum_{X_1\neq X_2} a_{AB}^{X_1X_2}\symb{X_1}{X_2} + \sum_{X_1 = X_2} a_{AB}^{X_1,\pm}\symb{X_1}{X_2}_{\pm}
\end{equation*}
where the sum is over all symbols $\ssymb{X_1}{X_2} \in \overline{\mathbb{V}}_n^0$ such that there exist indexing sets $\{i_1,i_2\} = \{j_1,j_2\} = \{1,2\}$ satisfying $\rk(X_k) = \rk(A_{i_k}) + \rk(B_{j_k})$ for $k \in \{1,2\}$. Here the coefficients $a^{X_1,\pm}_{AB}$ satisfy $a^{X_1,+}_{AB} + a^{X_1,-}_{AB} = a^{X_1X_2}_{AB}$ and are such that $a^{X_1,+}_{AB} = a^{X_1,-}_{AB} = \frac{1}{2}a^{X_1X_1}_{AB}$ whenever $A$ or $B$ is non-degenerate.
\end{lem}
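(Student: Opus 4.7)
The strategy is to compute $\Ind_K^{W_n}(E)$ with $K := W_a' \times W_b'$ in two different ways via the intermediate subgroups $L := W_a \times W_b$ and $W_n'$, and compare. Transitivity of induction gives
$$\Ind_L^{W_n}\bigl(\Ind_K^L(E)\bigr) \;=\; \Ind_K^{W_n}(E) \;=\; \Ind_{W_n'}^{W_n}\bigl(\Ind_K^{W_n'}(E)\bigr),$$
so matching coefficients of the irreducible type $\B_n$ characters on both sides will deliver the desired formula.

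For the left side, the induction $\Ind_K^L(E)$ factors as a direct product since $L = W_a \times W_b$. Clifford theory for the index-two inclusion $W_a' \le W_a$ gives $\Ind_{W_a'}^{W_a}(A) = \ssymb{A_1}{A_2} + \ssymb{A_2}{A_1}$ when $A_1 \ne A_2$ and $\Ind_{W_a'}^{W_a}(A_{\pm}) = \ssymb{A_1}{A_1}$ when $A_1 = A_2$, and analogously for $B$. Expanding the tensor and applying the type $\B_n$ induction formula \cite[Lemma 6.1.3]{geck-pfeiffer:2000:characters-of-finite-coxeter-groups} to each of the (at most four) summands, the coefficient of $\ssymb{X_1}{X_2}$ in $\Ind_K^{W_n}(E)$ becomes a sum of Littlewood--Richardson products pairing the rows of $A$ against those of $B$ in all admissible ways. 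Each such product vanishes unless an implicit rank condition on $(X_1, X_2)$ holds; those conditions are precisely what the indicators $\delta_A, \delta_B, \delta_{A \odot B}$ encode, so the total coefficient collapses to the formula for $a_{AB}^{X_1 X_2}$ in each of the four degeneracy subcases.

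For the right side, every type $\D_n$ character occurring in $\Ind_K^{W_n'}(E)$ is either of the form $\chi_{\ssymb{X_1}{X_2}}$ with $X_1 \ne X_2$ or belongs to a split pair $\chi_{\ssymb{X_1}{X_1}_{\pm}}$, and Clifford theory for $W_n' \le W_n$ supplies
$$\Ind_{W_n'}^{W_n}\chi_{\ssymb{X_1}{X_2}} = \chi_{\ssymb{X_1}{X_2}} + \chi_{\ssymb{X_2}{X_1}} \quad (X_1 \ne X_2), \qquad \Ind_{W_n'}^{W_n}\chi_{\ssymb{X_1}{X_1}_{\pm}} = \chi_{\ssymb{X_1}{X_1}}.$$
Comparing coefficients identifies the non-degenerate multiplicity $a_{AB}^{X_1 X_2}$ with the coefficient computed above (the symmetry $a_{AB}^{X_1 X_2} = a_{AB}^{X_2 X_1}$, after appropriately adjusting the $(i_1, i_2, j_1, j_2)$ indexing to match the rank constraints for $X_2$ in the first slot, guarantees the Clifford matching is consistent) and gives the additivity $a_{AB}^{X_1, +} + a_{AB}^{X_1, -} = a_{AB}^{X_1 X_1}$.

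Finally, to show $a_{AB}^{X_1, +} = a_{AB}^{X_1, -}$ whenever $A$ or $B$ is non-degenerate, I will invoke the outer involution of $W_n'$ induced by conjugation by an element $t \in W_n \setminus W_n'$ chosen to lie inside $L$ (for instance $t = s_n \in W_b \subset L$ when $b \ge 1$, or symmetrically $t_0 \in W_a \subset L$). Conjugation by such a $t$ preserves $K$ and acts on $\Irr(K)$ by the involution that fixes non-degenerate characters of each factor and swaps the $\pm$ labels on degenerate ones, so if $A$ or $B$ is non-degenerate then $E$ is $t$-invariant and hence $\Ind_K^{W_n'}(E)$ is invariant under the outer involution of $W_n'$ induced by $t$. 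Since that involution swaps $\chi_{\ssymb{X_1}{X_1}_+}$ with $\chi_{\ssymb{X_1}{X_1}_-}$, the two multiplicities must coincide. The main technical hurdle lies in the first step: verifying in each of the four degeneracy subcases that the expansion of Littlewood--Richardson contributions collapses precisely to the $\delta$-weighted formula in the statement; everything else is a direct application of Frobenius reciprocity and the type $\B_n$ induction formula.
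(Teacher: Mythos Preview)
Your proposal is correct and follows essentially the same route as the paper: both compute $\Ind_{W_a'\times W_b'}^{W_n}(E)$ by factoring through $W_a\times W_b$ and applying \cite[Lemma 6.1.3]{geck-pfeiffer:2000:characters-of-finite-coxeter-groups}, then read off the $W_n'$-multiplicities via Clifford theory/Frobenius reciprocity for $W_n'\le W_n$ (the paper phrases the degenerate case via Mackey, which is the dual statement), and both obtain the $\pm$ equality from invariance of $E$ under conjugation by $s_n$ or $t_0$. One small point to tighten: the element $t$ must be chosen according to which factor is non-degenerate ($t=s_n$ when $B$ is non-degenerate, $t=t_0$ when $A$ is), since $s_n$ acts trivially on the $W_a'$ factor but swaps the $\pm$ labels on degenerate characters of $W_b'$.
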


\begin{proof}
Assume $A$, $B \in \overline{\mathbb{V}}_n^0$ are non-degenerate then
\begin{align*}
\Ind_{W_a'\times W_b'}^{W_n}(E) &= \Ind_{W_a \times W_b}^{W_n} \left( \symb{A_1}{A_2} \boxtimes \symb{B_1}{B_2} + \symb{A_2}{A_1} \boxtimes \symb{B_1}{B_2} + \symb{A_1}{A_2} \boxtimes \symb{B_2}{B_1} + \symb{A_2}{A_1} \boxtimes \symb{B_2}{B_1} \right)\\
&= \sum c_{A_1B_1}^{U_1}c_{A_2B_2}^{U_2}\symb{U_1}{U_2} + \sum c_{A_2B_1}^{V_1}c_{A_1B_2}^{V_2}\symb{V_1}{V_2} + \sum c_{A_1B_2}^{Y_1}c_{A_2B_1}^{Y_2}\symb{Y_1}{Y_2} + \sum c_{A_2B_2}^{Z_1}c_{A_1B_1}^{Z_2}\symb{Z_1}{Z_2}
\end{align*}
where the sums are as in \cite[Lemma 6.1.3]{geck-pfeiffer:2000:characters-of-finite-coxeter-groups}. The cases where either $A$ or $B$ are degenerate are easily adapted from the case where neither are degenerate. Assume $X = \ssymb{X_1}{X_2} \in \overline{\mathbb{V}}_n^0$ is non-degenerate then fixing the ordering of the rows determines an element $\widetilde{X} \in \mathbb{V}_n^0$ and by Frobenius reciprocity we have
\begin{equation*}
\langle \Ind_{W_a'\times W_b'}^{W_n'}(E),X\rangle_{W_n'} = \langle \Ind_{W_a'\times W_b'}^{W_n'}(E),\Res_{W_n'}^{W_n}(\widetilde{X})\rangle_{W_n'} = \langle \Ind_{W_a'\times W_b'}^{W_n}(E),\widetilde{X}\rangle_{W_n}.
\end{equation*}
Combining the above two statements one easily obtains the multiplicity of any non-degenerate character in $\Ind_{W_a' \times W_b'}^{W_n'}(E)$. To deal with the case when $X$ is degenerate we apply the Mackey formula to obtain
\begin{equation*}
\Ind_{W_a'\times W_b'}^{W_n'}(E) + \Ind_{W_a'\times W_b'}^{W_n'}({}^xE) = (\Res_{W_n'}^{W_n}\circ\Ind_{W_a'\times W_b'}^{W_n})(E)
\end{equation*}
where $x \in \{s_n,t_0\}$. Writing the induction $\Ind_{W_a'\times W_b'}^{W_n}$ as $\Ind_{W_a\times W_b}^{W_n}\circ\Ind_{W_a'\times W_b'}^{W_a\times W_b}$ it is easily seen that the right hand side of this equality becomes
\begin{equation*}
\sum_{\mathbb{V}_n^0} a_{AB}^{X_1X_2}\Res_{W_n'}^{W_n}\symb{X_1}{X_2} = 2\sum_{X_1 \neq X_2} a_{AB}^{X_1X_2}\symb{X_1}{X_2} + \sum_{X_1 = X_2} a_{AB}^{X_1X_2}\left(\symb{X_1}{X_2}_+ + \symb{X_1}{X_2}_-\right).
\end{equation*}
Note that on the right hand side the first sum is over all non-degenerate symbols in $\overline{\mathbb{V}}_n^0$ and the second sum is over all degenerate symbols. As we already know the multiplicities of non-degenerate characters we see that the result follows by noticing that $E = {}^{s_n}E$ if and only if $B$ is non-degenerate and $E = {}^{t_0}E$ if and only if $A$ is non-degenerate.
\end{proof}

\begin{pa}\label{pa:the-subgroup-H}
We now end this appendix by considering the induction of characters from the subgroup $H = (W_a'\times W_b')\langle s_nt_0\rangle \leqslant W_a \times W_b \leqslant W_n$. Assume that $\ssymb{A_1}{A_2}\boxtimes\ssymb{B_1}{B_2} \in \Irr(W_a\times W_b)$ is an irreducible character such that $A_1\neq A_2$ and $B_1 \neq B_2$ then it is clear that we have
\begin{align*}
\Res^{W_a\times W_b}_H\symb{A_1}{A_2}\boxtimes \symb{B_1}{B_2} &= \Res^{W_a\times W_b}_H\symb{A_2}{A_1}\boxtimes \symb{B_2}{B_1},\\
\Res^{W_a\times W_b}_H\symb{A_2}{A_1}\boxtimes \symb{B_1}{B_2} &= \Res^{W_a\times W_b}_H\symb{A_1}{A_2}\boxtimes \symb{B_2}{B_1}
\end{align*}
and these characters are irreducible. We call the characters of $H$ obtained in this way \emph{non-degenerate} and we label them by pairs of symbols $\ssymb{A_1}{A_2}\boxtimes\ssymb{B_1}{B_2} \in \mathbb{V}_a^0 \times \mathbb{V}_b^0$ such that the rows are simultaneously permutable. With this in mind we have the following result.
\end{pa}

\begin{lem}\label{lem:subgroup-H-W_n'}
Let $E = \ssymb{A_1}{A_2} \boxtimes \ssymb{B_1}{B_2} \in \Irr(H)$ be non-degenerate then we have
\begin{equation*}
\Ind_H^{W_n'}(E) = \sum_{X_1\neq X_2} e_{AB}^{X_1X_2}\symb{X_1}{X_2} + \sum_{X_1= X_2} \frac{1}{2}e_{AB}^{X_1X_2}\symb{X_1}{X_2}_{\pm}
\end{equation*}
where the sum runs over all symbols $\ssymb{X_1}{X_2} \in \overline{\mathbb{V}}_n^0$ such that there exists an indexing set $\{j_1,j_2\} = \{1,2\}$ satisfying $\rk(X_i) = \rk(A_{j_i}) + \rk(B_{j_i})$ for $i \in \{1,2\}$. Here we have for any symbol $\ssymb{X_1}{X_2} \in \overline{\mathbb{V}}_n^0$ that
\begin{equation*}
e_{AB}^{X_1X_2} = c_{A_{j_1}A_{j_2}}^{X_1}c_{B_{j_1}B_{j_2}}^{X_2} + \delta_{A\odot B}c_{A_{j_2}A_{j_1}}^{X_1}c_{B_{j_2}B_{j_1}}^{X_2}.
\end{equation*}
\end{lem}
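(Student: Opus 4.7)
The plan is to mirror the proof of Lemma~\ref{lem:typeD-GP-Lemm6.1.3}, but factoring through the chain $H \leqslant W_n' \leqslant W_n$ rather than through $W_a \times W_b$. First I note that since $E$ is non-degenerate (meaning both $A_1 \neq A_2$ and $B_1 \neq B_2$), the character $\chi \otimes (\epsilon_a \epsilon_b)$ is non-isomorphic to $\chi$ for $\chi = \sbpair{A_1}{A_2} \boxtimes \sbpair{B_1}{B_2}$, so the restriction of $\tilde{E} = \sbpair{A_1}{A_2}\boxtimes\sbpair{B_1}{B_2}$ from $W_a \times W_b$ to the index-two subgroup $H = \ker(\epsilon_a \otimes \epsilon_b)$ is irreducible and coincides with the restriction of $\tilde{E}' = \sbpair{A_2}{A_1}\boxtimes\sbpair{B_2}{B_1}$. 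A dimension count then gives $\text{Ind}_H^{W_a\times W_b}(E) = \tilde{E} + \tilde{E}'$.

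Next, transitivity of induction and Geck--Pfeiffer's \cite[Lemma 6.1.3]{geck-pfeiffer:2000:characters-of-finite-coxeter-groups} yield
\begin{equation*}
\Ind_H^{W_n}(E) = \sum \bigl( c_{A_1 B_1}^{X_1} c_{A_2 B_2}^{X_2} + c_{A_2 B_2}^{X_1} c_{A_1 B_1}^{X_2} \bigr)\symb{X_1}{X_2},
\end{equation*}
where the coefficients are symmetric in $X_1 \leftrightarrow X_2$; the $\delta_{A \odot B}$ correction in the statement of the lemma accounts for whether both rank indexings $(j_1, j_2) \in \{(1,2),(2,1)\}$ can simultaneously be realised for a given $(X_1, X_2)$.

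The crucial observation for the passage from $W_n$ to $W_n'$ is that $s_n \in W_b \subseteq W_a \times W_b$, so conjugation by $s_n$ is inner on $W_a \times W_b$ and therefore fixes $\tilde E$ up to isomorphism; consequently $\,{}^{s_n}E \cong E$ as characters of $H$. Combined with the easy fact $s_n H s_n^{-1} = H$, the Mackey formula applied to the two double cosets $W_n' \backslash W_n / H = \{W_n'\,,\,W_n' s_n H\}$ collapses to
\begin{equation*}
\Res_{W_n'}^{W_n}\Ind_H^{W_n}(E) = \Ind_H^{W_n'}(E) + \Ind_H^{W_n'}({}^{s_n}E) = 2\,\Ind_H^{W_n'}(E).
\end{equation*}

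Finally, reading off multiplicities uses $\Ind_{W_n'}^{W_n}\symb{X_1}{X_2} = \symb{X_1}{X_2} + \symb{X_2}{X_1}$ in $\mathbb{V}_n^0$ for $X_1 \neq X_2$, and $\Ind_{W_n'}^{W_n}\symb{X_1}{X_1}_{\pm} = \symb{X_1}{X_1}$, together with Frobenius reciprocity applied to the identity above. For a non-degenerate $W_n'$-character $\symb{X_1}{X_2}$, the symmetry $m_{X_1 X_2} = m_{X_2 X_1}$ in the $W_n$-induction cancels the factor of two and produces multiplicity $e_{AB}^{X_1 X_2}$; for a degenerate pair, the factor $\tfrac{1}{2}$ from the Mackey identity distributes the coefficient equally between $\symb{X_1}{X_1}_+$ and $\symb{X_1}{X_1}_-$, giving multiplicity $\tfrac{1}{2}\,e_{AB}^{X_1 X_1}$ for each. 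The main obstacle here is recognising that $s_n$, though representing the non-trivial outer automorphism of $W_n'$, acts as an \emph{inner} automorphism on $W_a \times W_b$ (because $s_n \in W_b$); everything else in the argument is Frobenius reciprocity bookkeeping.
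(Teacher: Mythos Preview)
Your proof is correct and follows essentially the same approach as the paper's, which simply says ``easily adapted from the proof of the previous lemma'': you factor the induction through $W_a\times W_b$, compute $\Ind_H^{W_n}(E)$ via \cite[Lemma~6.1.3]{geck-pfeiffer:2000:characters-of-finite-coxeter-groups}, and then descend to $W_n'$ using the Mackey formula. The one point you correctly single out as the key adaptation---that $s_n\in W_b\subseteq W_a\times W_b$ forces ${}^{s_n}E=E$, collapsing the Mackey sum to $2\,\Ind_H^{W_n'}(E)$---is exactly what distinguishes this case from \cref{lem:typeD-GP-Lemm6.1.3}.
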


\begin{proof}
This is easily adapted from the proof of the previous lemma.
\end{proof}

\setstretch{0.96}
\renewcommand*{\bibfont}{\small}
\printbibliography
\end{document}